\numberwithin{equation}{section}
\theoremstyle{plain}
\newtheorem{thm}{\protect\theoremname}[section]
  \theoremstyle{plain}
  \newtheorem{conjecture}[thm]{\protect\conjecturename}
  \theoremstyle{plain}
  \newtheorem{prop}[thm]{\protect\propositionname}
  \theoremstyle{definition}
  \newtheorem{defn}[thm]{\protect\definitionname}
  \theoremstyle{remark}
  \newtheorem{rem}[thm]{\protect\remarkname}
\newcounter{myparagraph}[subsection]
\newcommand{\myparagraph}{\refstepcounter{myparagraph}}
\renewcommand{\themyparagraph}{{\arabic{section}.\arabic{subsection}.\alph{myparagraph}}}
\newcommand*{\para}[1]{\vskip0.3cm\noindent\hspace{-3pt}
\myparagraph{\bf \themyparagraph.{{\,#1}}}}
\newcounter{myobs}[thm]
\newenvironment{myitem}{\begin{list}{}{
\setlength{\leftmargin}{0.8cm}
\setlength{\itemindent}{-0.5cm}
\setlength{\itemsep}{2pt}
}}{\end{list}}
\def\hsp#1{{\hspace{ #1 pt}}}
  \providecommand{\conjecturename}{Conjecture}
  \providecommand{\definitionname}{Definition}
  \providecommand{\propositionname}{Proposition}
  \providecommand{\remarkname}{Remark}
\providecommand{\theoremname}{Theorem}
\begin{document}
\selectlanguage{english}%
\global\long\def\ve{\mathcal{\varepsilon}}
\global\long\def\st{\;\mathcal{\text{s.t.}}\;}
\global\long\def\vp{\mathcal{\varphi}}
\global\long\def\c{\circ}

\global\long\def\cO{\mathcal{O}}

\global\long\def\cA{\mathcal{A}}

\global\long\def\cB{\mathcal{B}}

\global\long\def\cD{\mathcal{D}}

\global\long\def\cE{\mathcal{E}}

\global\long\def\cF{\mathcal{F}}

\global\long\def\cM{\mathfrak{M}}

\global\long\def\cU{\mathcal{U}}
\global\long\def\cV{\mathcal{V}}
\global\long\def\cW{\mathcal{W}}

\global\long\def\cN{\mathcal{N}}
\global\long\def\cH{\mathcal{H}}

\global\long\def\idR{1_{\mathbb{R}^{n}}}

\global\long\def\id{\mathrm{id}}

\global\long\def\bR{\mathbb{R}}

\global\long\def\mbR{\mathbb{R}}
\global\long\def\mbC{\mathbb{C}}

\global\long\def\mbP{\mathbb{P}}

\global\long\def\mbZ{\mathbb{Z}}

\global\long\def\bS{\bar{S}}
\global\long\def\bT{\bar{T}}

\global\long\def\tPd{\widetilde{\mathbb{P}}_{\Delta}}

\selectlanguage{american}%
\def\m{\hsp{-4}}

\def\n{\hsp{-2}}

~

\begin{flushright}
\par\end{flushright}

\title{Mirror Symmetry of Calabi-Yau Manifolds Fibered by (1,8)-Polarized
Abelian Surfaces}

\author{Shinobu Hosono and Hiromichi Takagi}
\begin{abstract}
We study mirror symmetry of a family of Calabi-Yau manifolds fibered
by (1,8)-polarized abelian surfaces with Euler characteristic zero.
By describing the parameter space globally, we find all expected boundary
points (LCSLs), including those correspond to Fourier-Mukai partners.
Applying mirror symmetry at each boundary point, we calculate Gromov-Witten
invariants ($g\leq2$) and observe nice (quasi-)modular properties
in their potential functions. We also describe degenerations of Calabi-Yau
manifolds over each boundary point. 
\end{abstract}

\maketitle
\selectlanguage{english}%
{\small{}\tableofcontents{}}{\small \par}

\selectlanguage{american}%

\section{\textbf{Introduction}}

Since the discovery of mirror symmetry of Calabi-Yau manifolds and
its surprising applications to Gromov-Witten invariants \cite{Candelas},
moduli spaces of Calabi-Yau manifolds as well as geometry of Calabi-Yau
manifolds are attracting attentions from both mathematics and physics.
After several decades from the discovery, a large number of interesting
Calabi-Yau manifolds are now known and many of them have been studied
in details. In this paper, to explore mirror symmetry in terms of
interesting Calabi-Yau manifolds, we will study a special type of
Calabi-Yau threefolds which have fibrations by abelian surfaces and
have vanishing Euler numbers. 

Mirror symmetry of Calabi-Yau threefolds exchanges the so-called $A$-side
(related to Hodge decomposition $H^{1,1}(M))$ of the (stringy) moduli
space of a Calabi-Yau manifold $M$ with the $B$-side (related to
$H^{2,1}(M^{\vee}))$ of a mirror Calabi-Yau manifold $M^{\vee}$.
More precisely $A$ and $B$ sides of $M$ are interchanged to $B$
and $A$ sides of $M^{\vee}$, which implies the isomorphisms $H^{1,1}(M)\simeq H^{2,1}(M^{\vee})$
and $H^{2,1}(M)\simeq H^{1,1}(M^{\vee})$. In most examples of Calabi-Yau
threefolds, since they have small Hodge numbers $h^{1,1}(M)$ while
large $h^{2,1}(M)$, our descriptions of mirror symmetry of the moduli
spaces are restricted to one part of the moduli space related to $H^{1,1}(M)\simeq H^{2,1}(M^{\vee})$.
For Calabi-Yau threefolds having vanishing Euler numbers, since the
equality $h^{1,1}(M)=h^{2,1}(M)$ holds, we can expect to describe
the entire moduli spaces if $h^{1,1}(M)$ is small. Our Calabi-Yau
threefolds provide simple but non-trivial examples of such Calabi-Yau
manifolds. 

Calabi-Yau threefolds which we will study in this paper are known
for long since the investigations of explicit equations of $(1,d)$-polarized
abelian surfaces by Gross and Popescu \cite{GP-A,GP-CYI}. Abelian
surfaces with $(1,d)$-polarization have embeddings in $\mbP^{d-1}$,
if $d\geq5$, by the polarization $\mathcal{L}$ which is very ample.
It was found \cite{GP-CYI} that, in many cases, \textcolor{black}{a
$(1,d)$-polarized abelian surface $(A,\mathcal{L})$ is embedded
in $\mathbb{P}^{d-1}$ by the canonical theta functions of the space
$\Gamma(A,\mathcal{L})$. By taking a part of the equations of $A$
in $\mathbb{P}^{d-1}$ and taking a suitable small resolution, a smooth
Calabi-Yau threefold is obtained, which contains $A$ as a fiber of
an abelian fibration. }Among several examples, we will be concerned
with a Calabi-Yau threefold fibered by $(1,8)$-polarized abelian
surfaces, for which we have $h^{1,1}=h^{2,1}=2$. Following \cite{GP-CYI},
we denote this Calabi-Yau threefold by $V_{8,w}^{1}$. 

By construction, Calabi-Yau threefold $V_{8,w}^{1}$ admits a free
action by Heisenberg group $\cH_{8}=\langle\sigma,\tau\rangle$, which
acts on $V_{8,w}^{1}$ as $\mbZ_{8}\times\mbZ_{8}$. Quotients $V_{8,w}^{1}/\mbZ_{8}\times\mbZ_{8}$
and $V_{8,w}^{1}/\mbZ_{8}$ by a subgroup $\mbZ_{8}\subset\mbZ_{8}\times\mbZ_{8}$
are studied as examples of Calabi-Yau threefolds of non-trivial fundamental
groups. In particular, the Brauer group of $V_{8,w}^{1}$ was calculated
to study its relation to non-trivial fundamental group in \cite{GP}.
The following conjecture has arisen in these calculations:
\begin{conjecture}[Gross-Pavanelli \cite{Pav,GP} ]
\label{conj:GrossPav}Mirror of the Calabi-Yau manifold $V_{8,w}^{1}$
is given by $V_{8,w}^{1}/\mbZ_{8}$ with a subgroup $\mbZ_{8}\subset\mbZ_{8}\times\mbZ_{8}$.
Then mirror of the quotient $V_{8,w}^{1}/\mbZ_{8}$ is given by $V_{8,w}^{1}/\mbZ_{8}\times\mbZ_{8}$. 
\end{conjecture}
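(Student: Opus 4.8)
The conjecture is a statement about the stringy moduli spaces, so the plan is to produce one common mirror family and to locate, inside its complex-structure moduli space, the large complex structure limit points (LCSLs) corresponding under the $A/B$ correspondence to $V_{8,w}^{1}$, to $V_{8,w}^{1}/\mbZ_{8}$ and to $V_{8,w}^{1}/\mbZ_{8}\times\mbZ_{8}$; here the two instances of a $\mbZ_{8}$-quotient must be read with respect to appropriate (possibly distinct) subgroups $\mbZ_{8}\subset\mbZ_{8}\times\mbZ_{8}$, so that the resulting correspondence is involutive. First I would build the mirror family $\cW$ of $V_{8,w}^{1}$. Since $V_{8,w}^{1}$ is cut out of $\mbP^{7}$ by Heisenberg-invariant equations of a $(1,8)$-polarized abelian surface, a mirror construction of Greene--Plesser / Batyrev--Borisov type should identify $\cW$ with a crepant resolution of one such quotient; because the Euler numbers of all of these threefolds vanish and $h^{1,1}(V_{8,w}^{1})=h^{2,1}(V_{8,w}^{1})=2$, the common family is two-dimensional. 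The relative SYZ picture --- fiberwise duality of the $(1,8)$-polarized abelian fibers, in the spirit of Gross--Popescu \cite{GP-A,GP-CYI} --- shows that $\cW$ again carries an abelian-surface fibration, and it is this picture that makes the conjecture plausible: reducing $V_{8,w}^{1}$ modulo a $\mbZ_{8}$ refines the lattice of the dual fiber torus, i.e.\ keeps the same variation of Hodge structure while changing its integral structure, which moves the LCSL.

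Next I would carry out the $B$-model analysis. Derive the Picard--Fuchs system for the periods of the holomorphic $3$-form of $\cW$; choose a good partial compactification $\overline{\mathcal{M}}$ of the moduli space and enumerate its boundary divisors and the strata of their intersections; then single out the $0$-dimensional strata carrying maximally unipotent monodromy. The abelian-fibration structure forces an $SL(2)$ (theta-function) factor in the local monodromy, which organizes these boundary points and, in particular, produces the ones related by Fourier--Mukai; one then matches each of them to $V_{8,w}^{1}$, to $V_{8,w}^{1}/\mbZ_{8}$ (for each relevant subgroup) and to $V_{8,w}^{1}/\mbZ_{8}\times\mbZ_{8}$.

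To verify a given matching I would, at the corresponding LCSL, read off from the leading behaviour of a normalized period the classical triple intersection form, the linear form $D\mapsto\int c_{2}\cup D$ and the Euler number (zero), and check them against the intersection theory on that threefold computed from the Gross--Popescu small resolution, the numbers rescaling by the order of the quotient in the expected way. Then solve the mirror map, compute the Yukawa couplings and hence the genus-$0$ Gromov--Witten invariants, and --- via a holomorphic-anomaly / quasi-modular ansatz for the higher-genus potentials --- the invariants up to genus $2$; finally check that the invariants of $V_{8,w}^{1}$ and of its quotients are related by the expected pushforward / multiple-cover relations and that the (quasi-)modularity of the potentials is compatible across the paired LCSLs. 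Coincidence of the full integral variations of Hodge structure at the paired LCSLs is precisely the content of the conjecture.

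The hard part will be the global --- rather than local --- half of the second step, together with the integral refinement in the third. One must prove that $\overline{\mathcal{M}}$ genuinely contains \emph{all} the predicted LCSLs and that they are pairwise distinct, which is a statement about the global geometry of the discriminant locus and of the monodromy group, not something visible in local power series; and one must decide which LCSL is mirror to which quotient by matching not merely Hodge numbers and numerical invariants but the integral structure on $H^{\mathrm{even}}$ together with its $B$-field twist. It is exactly this torsion-level datum --- the Brauer class computed in \cite{GP} --- that distinguishes $V_{8,w}^{1}/\mbZ_{8}$ from $V_{8,w}^{1}/\mbZ_{8}\times\mbZ_{8}$, so controlling it, and not merely its Hodge-theoretic shadow, is where the argument will be delicate.
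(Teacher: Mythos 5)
Your overall strategy---construct the mirror family globally, enumerate the LCSLs of a good compactification of its moduli space, and match each one to $V_{8,w}^{1}$ or one of its quotients by comparing classical intersection data, Gromov--Witten invariants up to genus two, and the integral structure---is essentially the strategy the paper follows, and your verification checklist (leading period asymptotics against the cubic form, $c_{2}\cdot H$ and $\chi=0$; mirror map; Yukawa couplings; BCOV recursion and quasi-modularity) coincides with what is done in Sections 4--8. You also correctly flag the two genuinely hard points, namely the global enumeration of LCSLs and the integral refinement.

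There is, however, one conceptual gap in your framing that would make the plan fail as stated: you cannot realize all three of $V_{8,w}^{1}$, $V_{8,w}^{1}/\mbZ_{8}$ and $V_{8,w}^{1}/\mbZ_{8}\times\mbZ_{8}$ as mirrors at LCSLs of \emph{one} family. The conjecture itself forces two distinct mirror families: if the mirror of $V_{8,w}^{1}$ is the family of $V_{8,w}^{1}/\mbZ_{8}$, then the LCSL mirror to $V_{8,w}^{1}/\mbZ_{8}$ must lie in the moduli space of the family of $V_{8,w}^{1}/\mbZ_{8}\times\mbZ_{8}$, not in the first family. The paper's resolution is that the two families $\cV_{\mbZ_{8}}^{1}\rightarrow\mbP_{\Delta}$ and $\cV_{\mbZ_{8}\times\mbZ_{8}}^{1}\rightarrow\mbP_{\Delta}$ live over the \emph{same} parameter space and satisfy the \emph{same} rational Picard--Fuchs system, so the boundary points $A,B$ (mirror to the Fourier--Mukai partners $V_{8,w}^{1}$ and $V_{8,w}^{1}/\mbZ_{8}\times\mbZ_{8}$) and $C$ (mirror to $V_{8,w}^{1}/\mbZ_{8}$) all appear in one picture but belong to different integral local systems (Proposition \ref{prop:MS-Result-Main}). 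The data that decide which point belongs to which family are not the Brauer class you invoke but (i) the conifold factor in $F_{1}$: over the principal discriminant the eight ODPs form a single $\tau$-orbit, giving one ODP in $V_{8,w}^{1}/\mbZ_{8}$ (exponent $-\frac{1}{6}$ at $A,B$) versus a lens space $S^{3}/\mbZ_{8}$ in the full quotient (exponent $-\frac{8}{6}$ at $C$), see Remarks \ref{rem:Conifold-A} and \ref{rem:Conifold-Factor-C}; and (ii) the connection matrices, with $U_{AB}$ integral symplectic but $U_{CA}$ symplectic and non-integral (Propositions \ref{prop:Conection-Matrices}, \ref{prop:Int-str-AB-C}). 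A single-family analysis at $C$ would therefore produce a non-integral monodromy representation and an inconsistent conifold exponent. Two smaller omissions: the quotient of the base $\mbP_{w}^{2}$ must be taken by the specific subgroup $G_{0}$ of the normalizer $\cN\cH_{8}/\cH_{8}$ (a Batyrev--Borisov recipe does not apply to these Heisenberg-invariant complete intersections), and the LCSLs $B$ and $C$ only become visible after resolving, respectively, a fourth-order tangency of the principal discriminant with a boundary divisor and a triple intersection of discriminant components, so the enumeration of maximally unipotent boundary points requires these explicit blow-ups.
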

This conjecture was partly confirmed by showing that $V_{8,w}^{1}$
and $V_{8,w}^{1}/\mbZ_{8}\times\mbZ_{8}$ are derived equivalent \cite{Sch}
(see also \cite{Bak}) to each other; this is consistent to the fact
that these two Calabi-Yau manifolds have the same mirror manifold
$V_{8,w}^{1}/\mbZ_{8}$. In this paper, constructing families $\cV_{\mbZ_{8}}^{1}$
and $\cV_{\mbZ_{8}\times\mbZ_{8}}^{1}$ of Calabi-Yau manifolds for
$V_{8,w}^{1}/\mbZ_{8}$ with $\mbZ_{8}=\langle\tau\rangle$ and $V_{8,w}^{1}/\mbZ_{8}\times\mbZ_{8}$,
respectively, we will answer affirmatively to the above conjecture
(\textbf{Proposition \ref{prop:MS-Result-Main}}). 

Actually, mirror symmetry of Calabi-Yau manifold $V_{8,w}^{1}$ was
first studied\textit{ locally} near a special boundary point by Pavanelli
\cite{Pav} by calculating Gromov-Witten invariants of $V_{8,w}^{1}$
assuming that $V_{8,w}^{1}$ is self-mirror. We extend his local calculations
to global ones by making families of Calabi-Yau manifolds $\cV_{\mbZ_{8}}^{1}\rightarrow\mbP_{\Delta}$
and $\cV_{\mbZ_{8}\times\mbZ_{8}}^{1}\rightarrow\mbP_{\Delta}$ over
a toric variety $\mbP_{\Delta}$ of dimension two. We will find in
Section \ref{sec:P-Delta-global} that there are degeneration points
$A,B,C$ and $A',B',C'$ on a suitable resolution of $\mbP_{\Delta}$,
where we observe the following mirror correspondences:
\begin{equation}
\;A\;\;\leftrightarrow\;\;V_{8,w}^{1}\,\,,\quad B\;\;\leftrightarrow\;\;V_{8,w}^{1}/\mbZ_{8}\times\mbZ_{8}\,\,,\quad C\;\;\leftrightarrow\;\;V_{8,w}^{1}/\mbZ_{8}\,\label{eq:ABC-CY-intro}
\end{equation}
and $A',B',C'$ corresponding to birational models of each. We confirm
these correspondences by calculating Gromov-Witten invariants of stable
maps up to genus $g=2$ for $A,B$ and to $g=1$ for $C$. 

From the calculations of Gromov-Witten invariants, we will find that
the generating functions of these invariants are written in terms
of quasi-modular forms in a similar way to the case of rational elliptic
surfaces \cite{HSS,HST1}. For $M=A,B,C$, we introduce counting functions
by 
\[
Z_{g,n}^{M}(q)=\sum_{d\geq0}N_{g}^{M}(d,n)q^{d},\,\,\,\,\Big(Z_{g,n}^{M}(q)=\sum_{d\geq0}N_{g}^{M}(d,n)q^{2d}\,\,\text{for }C\Big)
\]
for Gromov-Witten invariants of Calabi-Yau manifolds which correspond
to $A,B,C$ by (\ref{eq:ABC-CY-intro}). We remark that Calabi-Yau
manifolds $V_{8,w}^{1},V_{8,w}^{1}/\mbZ_{8}\times\mbZ_{8}$ and $V_{8,w}^{1}/\mbZ_{8}$
have fibrations by abelian surfaces. Then the invariants $N_{g}^{M}(d,n)$
are related to counting numbers of genus $g$ curves of degree $d$
which intersect $n$ times with a fiber abelian surface, i.e., $n$-sections
if $g=0$ (cf. \cite{HSS}). 
\begin{conjecture}[\textbf{Observation \ref{obs:ZA_01}, equations (\ref{eq:Zg0B-A-relation}),
(\ref{eq:Zg0C-A-relation})}]
The generating functions $Z_{0,n}^{A}(q)$ have the following forms
\[
Z_{0,n}^{A}(q)=P_{0,n}^{A}(E_{2},E_{4},E_{6})\left(\frac{64}{\bar{\eta}(q)^{8}}\right)^{n},
\]
where $\bar{\eta}(q):=\Pi_{n\geq1}(1-q^{n})$ and $P_{0,n}^{A}$ are
quasi-modular forms of weight $4(n-1)$ in terms of Eisenstein series
$E_{2}(q),E_{4}(q)$ and $E_{6}(q)$ with $P_{0,1}^{A}=1$. The generating
functions $Z_{0,n}^{M}(q)$ for $M=B,C$ are given by 
\[
Z_{0,n}^{B}(q)=\frac{1}{64}Z_{0,n}^{A}(q^{8}),\quad Z_{0,n}^{C}(q)=\frac{1}{8}Z_{0,n}^{A}(q^{2}).
\]

\end{conjecture}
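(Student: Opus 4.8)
The plan is to compute the genus-$0$ invariants $N_0^A(d,n)$ by mirror symmetry at the boundary point $A$, to recognize the resulting generating series as quasi-modular, and then to transport the answer to $B$ and $C$ using the global geometry of $\mbP_{\Delta}$.

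\emph{Step 1 (invariants at $A$).} Near the point $A$ on the resolution of $\mbP_{\Delta}$ one writes down the Picard--Fuchs system governing the periods of the mirror family and solves it in the usual Frobenius fashion: since $h^{2,1}=2$ there are one holomorphic solution, two single-logarithmic, two double-logarithmic, and one triple-logarithmic solution. Forming the two canonical (flat) coordinates, let $q$ be the exponentiated mirror coordinate attached to the abelian-surface fiber direction and $p$ the one attached to the complementary (``section'') direction. Special geometry then yields the genus-$0$ prepotential, whose instanton part is $\sum_{d,n}N_0^A(d,n)\,q^d p^n$ after removing multicover corrections; the coefficient of $p^n$ is $Z_{0,n}^A(q)$. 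The case $n=1$ should reproduce the geometric count of $1$-sections and be checked directly against $64\,\bar{\eta}(q)^{-8}$, which together with the normalization $P_{0,1}^A=1$ fixes the constant $64$.

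\emph{Step 2 (modularity).} The monodromy of the Picard--Fuchs system around the discriminant component over which the abelian-surface fiber degenerates generates a subgroup $\Gamma\subset SL(2,\mbZ)$ acting on $q=e^{2\pi i\tau}$; I would first identify $\Gamma$ and the induced weight, arguing as in the rational elliptic surface analysis of \cite{HSS,HST1} that $Z_{0,n}^A(q)\,\bar{\eta}(q)^{8n}$ is forced (up to the constant $64^n$) to be a quasi-modular form for $\Gamma$ of weight $4(n-1)$, the net weight $-4$ being independent of $n$, with the $E_2$-dependence encoding the non-holomorphic completion demanded by the holomorphic anomaly equation in the fiber direction. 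Granting this structural input, the space of quasi-modular forms of the relevant weight is finite-dimensional with explicit basis given by monomials $E_2^{a}E_4^{b}E_6^{c}$, so $P_{0,n}^A$ is pinned down by matching the first few Taylor coefficients of $Z_{0,n}^A$ from Step 1, and the identity is then verified to higher order as a consistency check.

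\emph{Step 3 (the $B$ and $C$ relations).} Running the same mirror-symmetry computation at the boundary points $B$ and $C$, one reads off from the global description of $\mbP_{\Delta}$ in Section~\ref{sec:P-Delta-global} that the local mirror coordinates at $B$ (resp.\ $C$) are obtained from those at $A$ by an explicit change of variables which on the fiber parameter reads $q\mapsto q^8$ (resp.\ $q\mapsto q^2$) and which rescales the holomorphic three-form, hence the normalization of the prepotential, by $\frac{1}{64}$ (resp.\ $\frac{1}{8}$) --- the order of the group $\mbZ_8\times\mbZ_8$ (resp.\ $\mbZ_8$) by which one quotients. This reproduces $Z_{0,n}^B(q)=\frac{1}{64}Z_{0,n}^A(q^8)$ and $Z_{0,n}^C(q)=\frac{1}{8}Z_{0,n}^A(q^2)$. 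Equivalently, one can argue geometrically: an $n$-section of $V_{8,w}^1/G$ of fiber-degree $d$ pulls back along the \'etale $|G|$-cover to curves on $V_{8,w}^1$, and tracking fiber-degrees and automorphism factors gives exactly the stated substitution and prefactor; this should also be cross-checked against independently computed $N_0^B(d,n)$ and $N_0^C(d,n)$.

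\emph{Main obstacle.} Step 2 is the crux. Matching finitely many coefficients merely \emph{confirms} a guessed modular form; a genuine proof would require either a closed holomorphic-anomaly-type recursion determining all $P_{0,n}^A$ at once, or an independent model for $Z_{0,n}^A(q)$ (for instance a reduction of the fiberwise counting problem to a rational elliptic surface, or a Jacobi-form ansatz for the full two-variable generating function). Establishing the precise group $\Gamma$, the weight $4(n-1)$, and the exact $E_2$-content rigorously is where the difficulty concentrates --- which is why the statement is recorded as a conjecture.
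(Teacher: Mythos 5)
Your outline follows the paper's own route almost exactly: the invariants at $A$ are extracted from the Picard--Fuchs system and the mirror map as in your Step~1; the quasi-modularity of $Z_{0,n}^{A}$ is only \emph{verified} numerically (up to degree $d\le 70$ and $n\le 9$) and recorded as an Observation, which is why the statement is a conjecture --- so your honest assessment of Step~2 as the unproven crux coincides with the paper's own stance; and the $B$-relation is obtained by the covering argument of your Step~3 (see \ref{para:B-g0}): the free $\mbZ_{8}\times\mbZ_{8}$-action identifies $64$ rational curves into one, and $\pi^{*}H_{Y}=8H_{X}$, $\pi^{*}A_{Y}=A_{X}$ give the degree rescaling $d\mapsto 8d$ together with the prefactor $\frac{1}{64}$. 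The one place where your stated mechanism would fail is the point $C$: there the pullback relations are $\pi^{*}H_{Z}=H_{X}$, $\pi^{*}A_{Z}=A_{X}$ (see (\ref{eq:pull-back-C})), so ``tracking fiber-degrees'' along the \'etale $\mbZ_{8}$-cover produces \emph{no} change of degree, only the factor $\frac{1}{8}$; the substitution $q\mapsto q^{2}$ does not come from the covering at all, but from the mirror map at $C$, which forces the identification $\kappa_{t}=t_{1}(2H_{Z})+t_{2}A_{Z}$ in (\ref{eq:kappa-C-2HA}) (it is $2H_{Z}$, not the generator $H_{Z}$ of the Picard group, that pairs with the canonical local solutions at $C$; see Remark \ref{rem:Kappa-C-prob}), and this doubling is then built into the definition $Z_{0,n}^{C}(q)=\sum_{d}N_{0}^{C}(d,n)\,q^{2d}$. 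Your proposed monodromy identification of a group $\Gamma$ in Step~2 goes beyond anything attempted in the paper and, if carried out, would upgrade the Observation to a theorem; note, however, that the higher-genus data suggest the relevant ring is generated by $E_{2}$ and level-two theta constants rather than by $E_{2},E_{4},E_{6}$ alone, so the genus-zero statement with $E_{4},E_{6}$ is itself a nontrivial simplification that any structural argument would have to explain.
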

For higher genus calculations, we use the so-called BCOV holomorphic
anomaly equation \cite{BCOV1,BCOV2} to determine $Z_{g,n}^{M}(q)$.
For lower $g$ and $n$, we find the following forms of $Z_{g,n}^{M}(q)$,
which we state as a conjecture in general.
\begin{conjecture}[\textbf{Observations \ref{obs:Z11-A}, \ref{obs:Z11-B}, Conjectures
\ref{conj:Zgn-A}, \ref{conj:Zgn-B}}]
\label{conj:Main-R3}The generating functions $Z_{g,n}^{M}(q)$ $(M=A,B)$
are written by quasi-modular forms;
\[
Z_{g,n}^{A}(q)=P_{g,n}^{A}(E_{2},S,T,U)\left(\frac{64}{\bar{\eta}(q)^{8}}\right)^{n},\;\;\;Z_{g,n}^{B}(q)=P_{g,n}^{B}(E_{2},S,T,U)\left(\frac{1}{\bar{\eta}(q^{8})^{8}}\right)^{n},
\]
where $P_{g,n}^{A}$ and $P_{g,n}^{B}$ are polynomials of degree
2(g+n-1) of Eisenstein series $E_{2}$ and $S:=\theta_{3}(q)^{4},T:=\theta_{3}(q^{2})^{4},U:=\theta_{3}(q)^{2}\theta_{3}(q^{2})^{2}$
with the theta function $\theta_{3}(q)=\sum_{n\in\mbZ}q^{n^{2}}$. 
\end{conjecture}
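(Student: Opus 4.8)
The plan is to compute the $Z_{g,n}^{M}$ recursively from the BCOV holomorphic anomaly equation on the two-dimensional moduli space $\mbP_{\Delta}$ of the family $\cV^{1}\to\mbP_{\Delta}$ near the large complex structure limit point $A$ (resp.\ $B$), and then to show by induction on the genus that the displayed quasi-modular form is preserved by the recursion. The input is the genus-zero statement above: each $Z_{0,n}^{A}$ is $\bigl(64/\bar{\eta}(q)^{8}\bigr)^{n}$ times a quasi-modular form $P_{0,n}^{A}(E_{2},E_{4},E_{6})$ of weight $4(n-1)$, which via the abelian-surface fibration is the ``fiberwise'' modularity familiar from rational elliptic surfaces \cite{HSS,HST1}. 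The structural point is that, after restriction to the fiber modular parameter $\tau$ (with $q=e^{2\pi i\tau}$), the relevant ring of modular forms for the congruence subgroup $\Gamma$ attached to the $(1,8)$-polarization is $\mathbb{C}[S,T,U]/(U^{2}-ST)$, so that $E_{4}$ and $E_{6}$ lie in $\mathbb{C}[S,T,U]$ and the whole ring $\mathbb{C}[E_{2},S,T,U]$ is closed under $q\,\tfrac{d}{dq}$ by Ramanujan-type identities; moreover $q\,\tfrac{d}{dq}\log\bar{\eta}(q)=-\tfrac{1}{24}E_{2}$, so differentiating the prefactor $\bigl(64/\bar{\eta}(q)^{8}\bigr)^{n}$ reproduces $\tfrac{n}{3}E_{2}$ times that prefactor.

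First I would fix the special-geometry data near $A$: flat coordinates (one of which is $\tau$), the Yukawa couplings, the Kähler metric, and the BCOV propagators, identifying the $\tau$-direction propagator with $E_{2}$ up to modular pieces. Then I would run the induction on $g$: assuming that for all $g'<g$ and all $n'$ the $Z_{g',n'}^{A}$ have the claimed shape with $P_{g',n'}^{A}$ of degree $2(g'+n'-1)$, the anomaly equation writes $\bar{\partial}F_{g}$ as a sum of propagators paired with products $F_{g_{1}}F_{g_{2}}$ ($g_{1}+g_{2}=g$) and with $DF_{g-1}$; extracting the coefficient of each power $Q^{n}$ of the base Kähler parameter and using the two closure facts above, one obtains that $Z_{g,n}^{A}$ equals $\bigl(64/\bar{\eta}(q)^{8}\bigr)^{n}$ times a polynomial in $E_{2},S,T,U$ plus a holomorphic ambiguity. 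A bookkeeping of weights through the recursion (each propagator contributes one factor of weight $2$, each product of two amplitudes accounts for the remaining shift, and the prefactors multiply to $(\,\cdot\,)^{n}$) yields precisely degree $2(g+n-1)$. The case $M=B$ follows from the same computation performed at the boundary point $B\leftrightarrow V_{8,w}^{1}/\mbZ_{8}\times\mbZ_{8}$, where the substitution $q\mapsto q^{8}$ governs the fiber coordinate; equivalently one propagates the relation $Z_{0,n}^{B}(q)=\tfrac{1}{64}Z_{0,n}^{A}(q^{8})$ through the recursion.

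The decisive and only genuinely hard step is fixing the holomorphic ambiguity at each genus. One must show that it equals $\bigl(64/\bar{\eta}(q)^{8}\bigr)^{n}$ times a \emph{modular} form (no $E_{2}$) of weight $4(g+n-1)$ for $\Gamma$, with poles only at the cusps already carried by the prefactor, hence a polynomial of degree $2(g+n-1)$ in $S,T,U$; and that the boundary data — the ``gap condition'' at every component of the conifold discriminant of $\cV^{1}\to\mbP_{\Delta}$, regularity at the other large-volume points and at the Fourier--Mukai and birational partners $A',B',C,C'$, the constant-map (Euler-characteristic) contribution, and the Castelnuovo-type vanishing of low-degree invariants at high genus — suffice to pin it down uniquely. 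This is exactly what is verified directly for small $(g,n)$ in the Observations cited; a uniform proof would require a global control of the discriminant and of the monodromy-invariant ring on $\mbP_{\Delta}$, in the same spirit as the modular bootstrap for rational elliptic surfaces in \cite{HSS,HST1}, and it is for this reason that the statement is recorded as a conjecture beyond the range of explicit computation.
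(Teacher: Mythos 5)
The statement you are asked to prove is recorded in the paper as a \emph{conjecture}: the paper's own support for it consists of explicit verification in a finite range ($g=0$, $n\leq 9$ to degree $\leq 70$; $g=1$, $n\leq 3$; $g=2$, $n\leq 2$), obtained from exactly the machinery you name --- mirror maps and Yukawa couplings for $g=0$, the BCOV $F_{1}$ formula with exponents $r_{k}$ fixed by vanishing BPS numbers for $g=1$, and the BCOV recursion with propagators $S^{ab}$ and a holomorphic ambiguity $f_{2}$ for $g=2$. Your proposal is honest in conceding that fixing the holomorphic ambiguity at each genus is the step nobody can currently do uniformly, and that this is why the statement remains a conjecture; on that central point you and the paper agree. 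Note also that at $g=2$ the paper does not use gap conditions at the conifold (it only mentions them as a prospective tool for $g=3$); instead it pins down $f_{2}$ by demanding that $F_{2}^{A}$ and $F_{2}^{B}$ be restrictions of one rational function on $\mbP_{\Delta}$ related by the transformation rules $(\mathrm{t}1)'$, $(\mathrm{t}2)'$, together with vanishing of $n_{2}^{M}(d_{1},d_{2})$ for $d_{1}\leq 5$ at both boundary points simultaneously. This global two-point consistency is the paper's substitute for your list of boundary conditions and is worth flagging as the actual mechanism used.

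Beyond the admitted gap, two of your load-bearing structural claims are not established and cannot be treated as routine. First, the identification of a ``fiber modular parameter $\tau$'' with $q=e^{2\pi i\tau}$ and of the corresponding propagator with $E_{2}$ up to modular pieces is borrowed from the rational elliptic surface setting, but here the fibers are $(1,8)$-polarized abelian surfaces, not elliptic curves, and the paper explicitly states that the modular anomaly equation which determines $P_{g,n}$ recursively in \cite{HST1,HST2} does not carry over (``it seems that things are more complicated in the present case''). So the closure of the BCOV recursion within $\mathbb{C}[E_{2},S,T,U]$, which your induction needs even before the ambiguity is addressed, is itself an open structural assertion, not a consequence of Ramanujan identities alone. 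Second, a small but real slip: $q\,\frac{d}{dq}\log\bar{\eta}(q)=\frac{1}{24}\left(E_{2}(q)-1\right)$, not $-\frac{1}{24}E_{2}(q)$ (your formula holds, with the opposite sign, for $\eta$ rather than $\bar{\eta}$); this does not break closure of the ring under differentiation, but your weight bookkeeping should carry the constant. As a proof your proposal therefore does not close; as a description of the evidence it matches the paper's route, with the caveats above.
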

We verify the above conjecture for $g=0,n\leq9$ and $g=1,n\leq3$.
Also, in Subsection \ref{sub:Fg2-AB}, we verify this for $g=2$ and
$n\leq2$ by using BCOV recursion relations. For $M=C$, genus two
calculations are slightly different from the cases $A$ and $B$.
Because of this, we couldn't determine unknown parameters completely.
However we observe from the calculations for $g=0,1$ that some simplifications
occur as in the following form (\textbf{Conjecture \ref{conj:Zgn-C}});
\[
Z_{g,n}^{C}(q)=P_{g,n}^{C}(E_{2}(q^{2}),E_{4}(q^{2}),E_{6}(q^{2}))\left(\frac{8}{\bar{\eta}(q^{2})^{8}}\right)^{n},
\]
where $P_{g,n}^{C}$ are quasi-modular forms of weight $4(g+n-1)$. 

It should be noted that Calabi-Yau manifolds $V_{8,w}^{1}$ and $V_{8,w}^{1}/\mbZ_{8}\times\mbZ_{8}$
are derived equivalent by fiberwise Fourier-Mukai transformations
\cite{Bak,Sch}. The generating functions $Z_{g,n}^{A}(q)$ and $Z_{g,n}^{B}(q)$
count $n$-sections (of genus $g$), hence under the Fourier-Mukai
transformations, these counting problems correspond to suitable moduli
problems associated with stable sheaves of rank $n$ on the dual Calabi-Yau
manifold. Our results on $Z_{g,n}^{A}(q)$ and $Z_{g,n}^{B}(q)$ suggest
that there exist nice moduli spaces of sheaves on both Calabi-Yau
manifolds $V_{8,w}^{1}$ and $V_{8,w}^{1}/\mbZ_{8}\times\mbZ_{8}$.
Actually, we will observe some simplifications in Conjecture \ref{conj:Main-R3}
when $n=1$ and for $g=0,1,2$; which we summarize in general as 
\begin{conjecture}[\textbf{Conjecture \ref{conj:Conj.Zg1-AB}}]
When $n=1$, Conjecture \ref{conj:Main-R3} simplifies to 
\[
Z_{g,1}^{A}(q)=P_{g,1}(E_{2},S,T,U)\frac{64}{\bar{\eta}(q)^{8}},\;\;\;Z_{g,1}^{B}(q)=P_{g,1}(E_{2},S,T,U)\frac{1}{\bar{\eta}(q^{8})^{8}},
\]
where $P_{g,1}=P_{g,1}^{A}=P_{g,1}^{B}$ is a polynomial of $E_{2},S,T$
and $U$ of degree $2g$. 
\end{conjecture}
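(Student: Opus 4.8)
The plan is to deduce Conjecture~\ref{conj:Conj.Zg1-AB} from the $n=1$ specialization of Conjecture~\ref{conj:Main-R3} (in the range $g\le 2$ where the latter is verified) together with the observation that the boundary points $A$ and $B$ are two large complex structure limits of \emph{one and the same} family $\cV_{\mbZ_8}^1\to\mbP_\Delta$: by the mirror correspondences (\ref{eq:ABC-CY-intro}), $\cV_{\mbZ_8}^1$ is mirror to $V_{8,w}^1$ near $A$ and to $V_{8,w}^1/\mbZ_8\times\mbZ_8$ near $B$, which reflects the fiberwise Fourier--Mukai equivalence of \cite{Bak,Sch}. Hence the periods, Yukawa couplings, discriminant and---granting the BCOV recursion---every genus-$g$ potential form a single global object on $\mbP_\Delta$, and the only difference between the $A$- and $B$-expansions is the choice of local coordinate at the two cusps together with the local monodromy weight responsible for the $\bar{\eta}$-prefactor. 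This is exactly the mechanism behind the already established genus-$0$ relation $Z_{0,n}^B(q)=\tfrac1{64}Z_{0,n}^A(q^8)$ of (\ref{eq:Zg0B-A-relation}), and it is what forces $P_{g,1}^A$ and $P_{g,1}^B$ to be the same polynomial in $E_2,S,T,U$ while the prefactors differ only by $\bar{\eta}(q)^{-8}\leftrightarrow\bar{\eta}(q^8)^{-8}$ and the factor $64$.

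Concretely I would proceed in three steps. \emph{Step 1 (one-modulus reduction).} For $n=1$ one extracts from the full two-parameter genus-$g$ free energy of $\cV_{\mbZ_8}^1$ the coefficient of the first power of the abelian-fibre Kähler modulus, leaving a generating function in the single base parameter. Near $A$ (resp.\ $B$) the base direction is a finite cover of a genus-zero modular curve, so its mirror map and Yukawa coupling are rational in $\theta_3(q)$, which is precisely what produces the ring $\mbC[S,T,U]$ with $S=\theta_3(q)^4$, $T=\theta_3(q^2)^4$, $U=\theta_3(q)^2\theta_3(q^2)^2$, while the fibre direction contributes the transcendental factor $64\,\bar{\eta}(q)^{-8}$ (resp.\ $\bar{\eta}(q^8)^{-8}$). \emph{Step 2 (BCOV run).} I would solve the holomorphic anomaly equation recursively on $\cV_{\mbZ_8}^1\to\mbP_\Delta$: the non-holomorphic recursion presents $F_g$ as a polynomial in the propagators with coefficients rational on $\mbP_\Delta$, and after passing to the quasi-modular frame this becomes the assertion that $Z_{g,1}^M(q)$ divided by its $\bar{\eta}$-prefactor is a degree-$2g$ element of $\mbC[E_2,S,T,U]$, i.e.\ the polynomial $P_{g,1}^M$ of Conjecture~\ref{conj:Main-R3} with $n=1$. \emph{Step 3 (fixing ambiguities and comparing).} I would pin down the finitely many coefficients of $P_{g,1}^M$ ($g\le2$) using: vanishing of constant-map contributions (recall $\chi(V_{8,w}^1)=0$, so the usual $\chi\,\zeta(3-2g)$ terms drop), the conifold gap condition at the components of the discriminant of $\cV_{\mbZ_8}^1$, regularity at the remaining boundary divisors, and matching against the low-degree Gromov--Witten numbers already available ($g=0$ from the stated closed formulas, $g=1$ from Observations~\ref{obs:Z11-A},~\ref{obs:Z11-B}, and $g=2$, $n\le 2$, from Subsection~\ref{sub:Fg2-AB}). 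Carrying this out at $A$ and at $B$ and comparing term by term then yields $P_{g,1}^A=P_{g,1}^B$ for $g\le2$.

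The main obstacle---and the reason the statement stays a conjecture---is twofold. First, for general $g$ there is no \emph{a priori} guarantee that the degree-$2g$ quasi-modular ansatz together with the gap and regularity conditions has a unique solution, i.e.\ that the holomorphic ambiguity is fully fixed; so the verification is genuinely case-by-case. Second, upgrading the heuristic ``$A$ and $B$ are two cusps of one B-model, hence $P_{g,1}^A=P_{g,1}^B$'' into a proof would require a precise statement that the fiberwise Fourier--Mukai equivalence of \cite{Bak,Sch} identifies the all-genus A-model potentials of $V_{8,w}^1$ and $V_{8,w}^1/\mbZ_8\times\mbZ_8$ under the corresponding change of Kähler coordinates---an autoequivalence-invariance of higher-genus curve counts that is expected but not within reach of current techniques. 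Absent that, the honest deliverable is a derivation of $Z_{g,1}^A$ and $Z_{g,1}^B$ for $g\le2$ exhibiting the common polynomial $P_{g,1}$, together with the structural explanation of why the phenomenon should persist for all $g$.
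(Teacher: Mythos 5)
Your proposal is correct in substance and follows essentially the same route as the paper: the statement is genuinely left as a conjecture, supported only by explicit verification at $g=0$ (Observation \ref{obs:ZA_01} and (\ref{eq:Zg0B-A-relation})), $g=1$ (Observations \ref{obs:Z11-A}, \ref{obs:Z11-B}), and $g=2$ (Subsection \ref{sub:Fg2-AB}), together with the Fourier--Mukai/S-duality heuristic for why the rank-one ($n=1$) case should share a single polynomial $P_{g,1}$. The only minor divergence is in how the $g=2$ holomorphic ambiguity is pinned down: the paper does not invoke a conifold gap condition there (it mentions gap conditions only as a prospective tool for $F_3$), but instead imposes the low-degree vanishings $n_2^M(d_1,d_2)=0$ for $d_1\le 5$ \emph{combined with} the requirement that $f_2^A$ and $f_2^B$ be one and the same rational function on $\mbP_{\Delta}$ (Proposition \ref{prop:FAB-related-on-PD}), which is the precise form of your ``single global object'' principle.
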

The above two conjectures are reminiscent of the quasi-modular forms
we encountered in a similar geometric setting of rational elliptic
surfaces \cite{HST1}. In the latter case, we have a modular anomaly
equation which enables us to determine $P_{g,n}$ recursively and
also a nice closed formula for $Z_{g,1}(q)$ \cite[Thm.4.7]{HST2}.
However, it seems that things are more complicated in the present
case. 

Studying degenerations of the families over $A,B$ and $C$ is also
interesting, since we should be able to apply explicitly several ideals
of geometric mirror symmetry such as SYZ geometric mirror construction
\cite{SYZ} and Gross-Siebert program \cite{GS1,GS2}. We find in
\textbf{Propositions~\ref{prop:Degen-B},~\ref{prop:Degen-C},~\ref{prop:actions-on-ABC}}
that the type of degenerations are the same for $A,B$ and $C$, but
the group actions of $\mbZ_{8}\times\mbZ_{8}$ differ for these three
degenerations. Also in \textbf{Proposition \ref{prop:Conection-Matrices}},
we solve the connection problem of local solutions of Picard-Fuchs
equations at each degeneration point. We observe that the connection
matrix $U_{AB}$ has a simple interpretation from the fact that $A$
and $B$ correspond to Fourier-Mukai partners $V_{8,w}^{1}$ and $V_{8,w}^{1}/\mbZ_{8}\times\mbZ_{8}$,
respectively. More detailed and global analysis of the geometric mirror
symmetry via degenerations are left for future investigations.

~

Below we briefly describe the construction of this paper. In Section
2, we will summarize $(1,8)$-polarized abelian surfaces and their
embedding into $\mbP^{7}$, which give rise to the Calabi-Yau manifold
$V_{8,w}^{1}$. Known properties of $V_{8,w}^{1}$ are also summarized
to be used in the subsequent sections. In Section 3, we start with
{}{a complete intersection of four quadratics in $\mathbb{P}^{7}$
and define $V_{8,w}^{1}$ as a small resolution of it. Since $V_{8,w}^{1}$
contains parameters $w=[w_{0},w_{1},w_{2}]\in\mbP_{w}^{2}$, we obtain
a family $\mathcal{V}^{1}$ of $V_{8,w}^{1}$ over $\mathbb{P}_{w}^{2}$.}
We describe a certain symmetry of the four quadratic equations, and
show that there are two possible families $\cV_{\mbZ_{8}}^{1}\rightarrow\mbP_{\Delta}$
and $\cV_{\mbZ_{8}\times\mbZ_{8}}^{1}\rightarrow\mbP_{\Delta}$ as
a quotient of the family $\cV^{1}\rightarrow\mbP_{w}^{2}$ by the
symmetry. In Section 4, we describe discriminant loci of these families
over $\mbP_{\Delta}$ and find the degeneration points $A,B,C$ and
$A',B',C'$ of the families. In Section 5, we reproduce Picard-Fuchs
differential equations satisfied by period integrals of the families,
and find that they are identical for $\cV_{\mbZ_{8}}^{1}\rightarrow\mbP_{\Delta}$
and $\cV_{\mbZ_{8}\times\mbZ_{8}}^{1}\rightarrow\mbP_{\Delta}$. We
determine Gromov-Witten invariants ($g=0,1$) using mirror symmetry
near the boundary points $A$ and $A'$. In Section 6, noticing tangential
intersections of a component of the discriminant with a boundary divisor,
we find the degeneration point $B$. Applying mirror symmetry to $B$,
we find Gromov-Witten invariants ($g=0,1,2$) of $V_{8,w}^{1}/\mbZ_{8}\times\mbZ_{8}$.
In Section 7, we find the boundary points $C$ and $C'$ in $\mbP_{\Delta}$.
Calculating genus one Gromov-Witten invariants, and comparing the
potential function $F_{1}^{C}$ with $F_{1}^{A}$, we find that these
two points do not represent degenerations of the same family. The
results from Section 5 to Section 7 are summarized in Proposition
\ref{prop:MS-Result-Main}. In Section 8, we will calculate connection
matrices which relate the local solutions around $A,B,C$; and confirm
that the integral structures from $A,B$ and $C$ differ from each
other. For future investigations, we finally determine the degenerations
of Calabi-Yau manifolds (up to the quotients by finite groups) over
$A,B$ and $C$. A brief summary and related topics are discussed
in Section 9. In Appendices A to F, we describe formulas which we
use (and also derive) in the text. 

~

~

\noindent \textbf{Acknowledgements:} The authors would like to thank
Daisuke Inoue for explaining his recent results \cite{Inoue} to them.
S.H. would like to thank Atsushi Kanazawa and Daisuke Inoue for bringing
his attention to the work \cite{Pav}. {}{The authors
are grateful to the referees as well as editors for making variable
comments and suggestions which improved the presentation of this paper.}
This work is supported in part by Grant-in Aid Scientific Research
(C 20K03593, A 18H03668 S.H. and C 16K05090 H.T.).

~

\vskip3cm

\section{\textbf{Calabi-Yau manifolds fibered by abelian surfaces }}

\subsection{Calabi-Yau complete intersections}

Here we summarize minimal generalities on abelian surfaces with $(1,d)$
polarization and Calabi-Yau threefolds fibered by such surfaces following
the reference \cite{GP-CYI}. 

\para{} Let $(A,\mathcal{L})$ be a general (1,8) polarized abelian
surface. It is known \cite[\S 10.4]{BL1} that the linear system $|\mathcal{L}|$
admits an embedding of $A$ into $\mathbb{P}(H^{0}(\mathcal{L})^{\vee})=\mathbb{P}^{7}$
with its image of degree 16. There is a natural morphism from $A$
to the dual abelian surface $\hat{A}$, $\phi_{\mathcal{L}}:A\rightarrow\hat{A}$
defined by $x\mapsto t_{x}^{*}\mathcal{L}\otimes\mathcal{L}^{-1}$
with the translation $t_{x}$ by $x\in A$. Then the kernel $K(\mathcal{L})$
of $\phi_{\mathcal{L}}$ is isomorphic to $\mathbb{Z}_{8}\times\mathbb{Z}_{8}$.
Since we have $t_{x}^{*}\mathcal{L}\simeq\mathcal{L}$ for $x\in K(\mathcal{L})$,
one would expect the corresponding linear action of $H^{0}(\mathcal{L})$;
but actually the resulting action is given by a linear representation
of of a central extension of the group $K(\mathcal{L})$. The central
extension is known to be isomorphic to the Heisenberg group
\[
\cH_{8}=\langle\sigma,\tau\mid\sigma^{8}=\id,\tau^{8}=\id,[\sigma,\tau]=\sigma\tau\sigma^{-1}\tau^{-1}=\xi\id\rangle,
\]
and the linear representation acts on the homogeneous coordinates
$x_{i}$~(i=0,..,7) of $\mathbb{P}(H^{0}(\mathcal{L})^{\vee})$ as
\begin{equation}
\sigma(x_{i})=x_{i-1},\,\,\,\tau(x_{i})=\xi^{-i}x_{i}\,\,\,\,\,\,(\xi^{8}=1).\label{eq:sigma-tau-def}
\end{equation}

\para{} If we require $\mathcal{L}$ to be symmetric, i.e., $(-1)_{A}^{*}\mathcal{L}\simeq\mathcal{L}$,
then the vector space $H^{0}(\mathcal{L})$ becomes invariant under
the involution:
\[
\iota(x_{i})=x_{-i}\,\,\,\,\,\,\,\,\,\,(i\in\mathbb{Z}_{8})
\]
We decompose $\mathbb{C}^{8}$ of $\mathbb{P}^{7}=\mathbb{P}(\mathbb{C}^{8})$
into the $(+1)$ and $(-1)$ eigenspaces of this involution, and denote
the corresponding projective subspaces by $\mathbb{P}_{+}$ and $\mathbb{P}_{-}$.
The eigenspace $\mathbb{P}_{-}^{2}:=\mathbb{P}_{-}$ has the following
form,
\begin{equation}
\begin{aligned}\mathbb{P}_{-}^{2}=\left\{ [0,y_{1},y_{2},y_{3},0,-y_{3},-y_{2},-y_{1}]\in\mathbb{P}^{7}\right\} ,\end{aligned}
\label{eq:P2-}
\end{equation}
 and plays a role in defining the Calabi-Yau spaces which we will
study.

\para{} Let $\cH_{8}':=\langle\sigma^{4},\tau^{4}\rangle\subset\cH_{8}$,
and consider $\cH_{8}'$-invariant quadrics $H^{0}(\cO_{\mbP^{7}}(2))^{\cH_{8}'}$.
The group $\cH_{8}$ acts on these $\cH_{8}'$-invariant quadrics.
Then the space of invariants decomposes into three isomorphic and
irreducible 4-dimensional representations of $\cH_{8}$ as follows:
\[
H^{0}(\mathcal{O}_{\mathbb{P}^{7}}(2))^{\cH_{8}'}=\bigoplus_{i=0}^{2}\langle F_{i},\sigma F_{i},\sigma^{2}F_{i},\sigma^{3}F_{i}\rangle
\]
where 
\[
F_{0}=x_{0}^{2}+x_{4}^{2},\,\,\,\,\,\,F_{1}=x_{1}x_{7}+x_{3}x_{5},\,\,\,\,\,\,F_{2}=x_{2}x_{6}.
\]

\begin{prop}[{\cite[Remark 6.1]{GP-CYI}}]
 For each $y\in\mathbb{P}_{-}^{2}$, define polynomials in the subspace
of $H^{0}(\mathcal{O}_{\mathbb{P}^{7}}(2))^{\cH_{8}'}$ by 
\[
f:=y_{1}y_{3}F_{0}-y_{2}^{2}F_{1}+(y_{1}^{2}+y_{3}^{2})F_{2},\,\,\,\,\,\,\sigma(f),\,\,\,\,\sigma^{2}(f),\,\,\,\,\,\,\sigma^{3}(f).
\]
 These polynomials vanish along the $\cH_{8}$ orbit of $y\in\mathbb{P}_{-}^{2}$
in $\mathbb{P}^{7}$. 
\end{prop}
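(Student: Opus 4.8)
The plan is to reduce the assertion to a single one‑point check and then verify that check by substitution. First I would record two elementary symmetries of $f$. Every monomial appearing in $F_0,F_1,F_2$ — that is, $x_0^2,x_4^2,x_1x_7,x_3x_5,x_2x_6$ — has index‑sum $\equiv 0\pmod 8$, so the diagonal generator $\tau\colon x_i\mapsto\xi^{-i}x_i$ fixes each $F_i$, hence fixes $f$; likewise the cyclic shift $\sigma^4$ fixes each $F_i$, hence fixes $f$. It follows that the $\langle\sigma\rangle$‑orbit of $f$ is just $\{f,\sigma f,\sigma^2 f,\sigma^3 f\}$, which span a four‑dimensional $\cH_8$‑stable subspace (the ``diagonal'' copy of the irreducible $\cH_8$‑module $\langle F_i,\sigma F_i,\sigma^2 F_i,\sigma^3 F_i\rangle$). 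Now the $\cH_8$‑orbit of $y$ in $\mathbb{P}^7$ is by construction invariant under $\cH_8$, so it is enough to show that $f$ alone vanishes on it; and since a general orbit point has the shape $\sigma^a\tau^b y$ while $\tau f=f$ and $\sigma^4 f=f$, the value $f(\sigma^a\tau^b y)$ coincides with one of the four numbers $f(y),(\sigma f)(y),(\sigma^2 f)(y),(\sigma^3 f)(y)$. Hence everything reduces to: $f,\sigma f,\sigma^2 f,\sigma^3 f$ all vanish at the point $y$.

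Second I would run this fourfold check. Substituting the coordinates $[0,y_1,y_2,y_3,0,-y_3,-y_2,-y_1]$ of $y$, the vanishing of $x_0$ and $x_4$ gives $F_0(y)=0$, $F_1(y)=-(y_1^2+y_3^2)$ and $F_2(y)=-y_2^2$, so $f(y)=y_1y_3\cdot 0-y_2^2\bigl(-(y_1^2+y_3^2)\bigr)+(y_1^2+y_3^2)(-y_2^2)=0$ — the cancellation being precisely the one the combination $y_1y_3F_0-y_2^2F_1+(y_1^2+y_3^2)F_2$ was designed to produce. For $k=1,2,3$ I would compute the shifts $\sigma^kF_0,\sigma^kF_1,\sigma^kF_2$ (for instance $\sigma F_0=x_7^2+x_3^2$, $\sigma F_1=x_0x_6+x_2x_4$, $\sigma F_2=x_1x_5$, and analogously for $k=2,3$), evaluate them at $y$, and observe that in each case the three resulting terms again sum to zero, so $(\sigma^k f)(y)=0$.

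I expect no genuine obstacle here: the content is the bookkeeping of indices modulo $8$ in the first paragraph, together with the fact that $f$ is exactly the $\cH_8'$‑invariant quadric for which the three $F_i$‑contributions cancel on $\mathbb{P}_-^2$. If one wishes to avoid even the mild reduction above, one can evaluate $f$ on a general orbit point $\sigma^a\tau^b y$ head‑on: the roots of unity $\xi^{-i}$ produced by $\tau^b$ always occur in pairs whose exponents add up to a multiple of $8$, so $\tau^b y$ returns the very same values $F_0=0,\ F_1=-(y_1^2+y_3^2),\ F_2=-y_2^2$ as $y$, while $\sigma^a$ only relabels which coordinates are read off; the cancellation is then manifest at every point of the orbit simultaneously.
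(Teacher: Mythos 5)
Your proof is correct. The paper itself gives no argument for this proposition --- it is quoted from Gross--Popescu \cite[Remark 6.1]{GP-CYI} --- so there is nothing to compare against; your two-step verification (the index-sum computation showing $\tau f=f$ and $\sigma^4 f=f$, which reduces the orbit statement to vanishing at the single point $y$, followed by the explicit evaluation of $F_0,F_1,F_2$ and their shifts on $[0,y_1,y_2,y_3,0,-y_3,-y_2,-y_1]$) is sound and supplies a complete, self-contained proof of the cited fact.
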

\para{} The group $\cH_{8}'$ acts on $\mathbb{P}_{-}^{2}$ as $\mathbb{Z}_{2}\times\mathbb{Z}_{2}$
generated by $(y_{1},y_{2},y_{3})\mapsto(-y_{3},-y_{2},-y_{1})$ and
$(y_{1},y_{2},y_{3})\mapsto(-y_{1},y_{2},-y_{3})$. We define the
quotient $\mathbb{P}_{\omega}^{2}=\mathbb{P}_{-}^{2}/\mathbb{Z}_{2}\times\mathbb{Z}_{2}$
by the relation 
\begin{equation}
[w_{0},w_{1},w_{2}]:=[2y_{1}y_{3},-y_{2}^{2},y_{1}^{2}+y_{3}^{2}].\label{eq:omega2y}
\end{equation}
 Using these invariants, we write the above four quadratic equations
as 

\begin{equation}
\begin{alignedat}{3} &  & f_{1}(\omega,x) &  &  & =\frac{w_{0}}{2}(x_{0}^{2}+x_{4}^{2})+w_{1}(x_{1}x_{7}+x_{3}x_{5})+w_{2}x_{2}x_{6},\\
 &  & f_{2}(\omega,x) &  &  & =\frac{w_{0}}{2}(x_{1}^{2}+x_{5}^{2})+w_{1}(x_{2}x_{0}+x_{4}x_{6})+w_{2}x_{3}x_{7},\\
 &  & f_{3}(\omega,x) &  &  & =\frac{w_{0}}{2}(x_{2}^{2}+x_{6}^{2})+w_{1}(x_{3}x_{1}+x_{5}x_{7})+w_{2}x_{4}x_{0},\\
 &  & f_{4}(\omega,x) &  &  & =\frac{w_{0}}{2}(x_{3}^{2}+x_{7}^{2})+w_{1}(x_{4}x_{2}+x_{6}x_{0})+w_{2}x_{5}x_{1}.
\end{alignedat}
\label{eq:def-eqs-f}
\end{equation}

\para{} Four quadratic equations define Calabi-Yau complete intersections
in $\mbP^{7}$. 
\begin{defn}
For each $w\in\mathbb{P}_{w}^{2}$, we define a variety 
\begin{equation}
V_{8,w}:=\left\{ f_{1}(w,x)=\cdots=f_{4}(w,x)=0\right\} \,\subset\,\mathbb{P}^{7}.\label{eq:defeq-V1}
\end{equation}
\end{defn}
\begin{thm}[{\cite[Theorem 6.5]{GP-CYI}}]
 For general $w\in\mathbb{P}_{w}^{2}$, the variety $V_{8,w}$ is
a $(2,2,2,2)$ complete intersection Calabi-Yau variety which is singular
exactly at 64 ODPs. These 64 ODPs are given by the $\cH_{8}$-orbit
of $y\in\mathbb{P}_{-}^{2}$ in $\mathbb{P}^{7}$ for $y$ given by
(\ref{eq:omega2y}) .
\end{thm}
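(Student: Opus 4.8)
The plan is to handle the three assertions separately: that $V_{8,w}$ is a codimension-four complete intersection with trivial dualizing sheaf, that the $\mathcal{H}_{8}$-orbit of $y$ consists of $64$ ordinary double points of $V_{8,w}$, and that $V_{8,w}$ has no other singular points. The first two will be reduced to finite verifications --- by semicontinuity over $\mathbb{P}_{w}^{2}$ and by the Heisenberg equivariance of the equations, respectively --- while the third is the substantive point.

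\textbf{Complete intersection and Calabi-Yau property.} The locus of $w\in\mathbb{P}_{w}^{2}$ for which $\{f_{1}=\dots=f_{4}=0\}$ fails to be of pure dimension $3$ is Zariski closed, so it suffices to exhibit one value $w_{0}$ (a random rational point will do) at which the quadrics (\ref{eq:def-eqs-f}) cut out a scheme of dimension $3$; this is a routine Gr\"obner-basis computation. For general $w$ the scheme $V_{8,w}$ is then a local complete intersection of codimension $4$ in $\mathbb{P}^{7}$, so the four quadrics form a regular sequence and adjunction yields $\omega_{V_{8,w}}\cong\mathcal{O}_{V_{8,w}}\bigl((2+2+2+2)-8\bigr)=\mathcal{O}_{V_{8,w}}$. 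Once the third step shows the singular locus is finite, $V_{8,w}$ is $R_{1}+S_{2}$ hence normal, and being a positive-dimensional complete intersection in projective space it is connected, hence irreducible; so it is a Calabi-Yau variety in the required Gorenstein sense.

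\textbf{The $64$ nodes.} Under the substitution (\ref{eq:omega2y}) the point $y=[0,y_{1},y_{2},y_{3},0,-y_{3},-y_{2},-y_{1}]$ of (\ref{eq:P2-}) is the one attached to $w$, and a direct comparison shows that $f_{1}$ coincides with the polynomial $f$ of the preceding Proposition, while $f_{2},f_{3},f_{4}$ are $\sigma^{3}f,\sigma^{2}f,\sigma f$; hence by that Proposition all four $f_{i}$ vanish on the $\mathcal{H}_{8}$-orbit $\Sigma_{w}$ of $y$, i.e.\ $\Sigma_{w}\subseteq V_{8,w}$. Since $\mathcal{H}_{8}$ acts on $\mathbb{P}^{7}$ through $K(\mathcal{L})\cong\mathbb{Z}_{8}\times\mathbb{Z}_{8}$ by the monomial substitutions (\ref{eq:sigma-tau-def}), a check of stabilizers shows $\Sigma_{w}$ consists of exactly $64$ points for general $w$, and that it does not depend on the preimage $y$ chosen (the four preimages forming a single $\mathcal{H}_{8}'$-orbit). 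Because (\ref{eq:sigma-tau-def}) permutes $f_{1},\dots,f_{4}$ up to roots of unity, the rank of the $4\times8$ Jacobian $\bigl(\partial f_{i}/\partial x_{j}\bigr)$ is constant along $\Sigma_{w}$; so it remains to evaluate it at the single point $y$. First I would check the rank there equals $3$; then, choosing $\lambda=(\lambda_{i})$ with $\sum_{i}\lambda_{i}\,df_{i}(y)=0$, verify that the Hessian of $\sum_{i}\lambda_{i}f_{i}$, restricted to the smooth complete intersection of any three of the $f_{i}$ at $y$, is nondegenerate. This is a finite computation in the parameters $y_{1},y_{2},y_{3}$, and it exhibits each of the $64$ points of $\Sigma_{w}$ as an ordinary double point.

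\textbf{No other singular points.} This is where the real difficulty lies. Consider the incidence scheme $\mathcal{Z}=\{(w,p)\in\mathbb{P}_{w}^{2}\times\mathbb{P}^{7}:p\in\mathrm{Sing}\,V_{8,w}\}$, cut out by the $f_{i}$ together with the $4\times4$ minors of the Jacobian, and its sublocus $\mathcal{Z}_{0}=\overline{\{(w,p):p\in\Sigma_{w}\}}$ of orbit points, which dominates $\mathbb{P}_{w}^{2}$ with general fiber of length $64$. First I would show $\mathrm{Sing}\,V_{8,w}$ is finite for general $w$ --- for instance by verifying $0$-dimensionality at one explicit $w_{0}$ and invoking upper semicontinuity of fiber dimension --- so that, for general $w$, the fiber $\mathrm{Sing}\,V_{8,w}$ receives contributions only from the finitely many components of $\mathcal{Z}$ that dominate $\mathbb{P}_{w}^{2}$. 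It then remains to prove $\mathcal{Z}=\mathcal{Z}_{0}$ over a general point, i.e.\ that $\mathrm{Sing}\,V_{8,w}=\Sigma_{w}$ set-theoretically. The structural simplification I would exploit is that $\tau^{4}\in\mathcal{H}_{8}'$ acts by $x_{i}\mapsto(-1)^{i}x_{i}$, so the $\mathcal{H}_{8}'$-invariance of the $f_{i}$ forces each of them to involve only monomials $x_{a}x_{b}$ with $a\equiv b\pmod 2$; consequently the $4\times8$ Jacobian, after reordering columns, splits into two $4\times4$ blocks $\bigl[\,J_{\mathrm{ev}}\mid J_{\mathrm{od}}\,\bigr]$, with $J_{\mathrm{ev}}$ linear in $x_{0},x_{2},x_{4},x_{6}$ and $J_{\mathrm{od}}$ linear in $x_{1},x_{3},x_{5},x_{7}$, which should render the rank-$\le3$ locus amenable to elimination. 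I expect this last step --- solving $f_{1}=\dots=f_{4}=0$ together with the vanishing of the Jacobian minors and showing the solution set is exactly the $64$-point orbit --- to be the main obstacle, and it is presumably the point at which \cite{GP-CYI} resorts to an explicit (computer-assisted) computation.
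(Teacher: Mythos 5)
This statement is quoted from Gross--Popescu \cite[Theorem 6.5]{GP-CYI}; the present paper gives no proof of it, so the only meaningful comparison is with the argument in that reference and with the internal soundness of your plan. Your first two steps are fine: semicontinuity plus one explicit fibre does handle the complete-intersection property, adjunction gives $\omega_{V_{8,w}}\cong\mathcal{O}_{V_{8,w}}$, the identification $f_{1}=f$, $f_{2}=\sigma^{3}f$, $f_{3}=\sigma^{2}f$, $f_{4}=\sigma f$ is correct, and $\mathcal{H}_{8}$-equivariance legitimately reduces the ODP verification to the single point $y$ (the Jacobian at $g\cdot y$ differs from that at $y$ by invertible row and column operations). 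The observation that $\tau^{4}$-invariance splits the Jacobian into even and odd $4\times4$ blocks is correct and genuinely useful.

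The gap is your third step, which is the entire content of the theorem beyond routine checking and which you defer to an unexecuted elimination. Two concrete points. First, if you intend to run the incidence-scheme argument from a single $w_{0}$, finiteness of $\mathrm{Sing}\,V_{8,w_{0}}$ is not enough to conclude $\mathcal{Z}=\mathcal{Z}_{0}$ generically: a second component of $\mathcal{Z}$ dominating $\mathbb{P}_{w}^{2}$ could meet $\mathcal{Z}_{0}$ over $w_{0}$ and separate elsewhere. What suffices is the stronger (still finite) verification that the singular subscheme of $V_{8,w_{0}}$, cut out by the $f_{i}$ together with the $4\times4$ minors, has length exactly $64$, i.e.\ is reduced; then upper semicontinuity of fibre length for the finite part of $\mathcal{Z}\rightarrow\mathbb{P}_{w}^{2}$ caps the general fibre at $64$, and $\mathcal{Z}_{0}$ already supplies $64$ distinct points. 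Second, the proof in \cite{GP-CYI} is not an elimination at all: it rests on the structural fact, recalled in the present paper right after the theorem (\cite[Theorem 6.7]{GP-CYI}), that $V_{8,w}$ is swept out by a pencil of $(1,8)$-polarized abelian surfaces whose base locus is exactly the orbit $\Sigma_{w}$. Smoothness away from $\Sigma_{w}$ is then deduced from the structure of the members of the pencil (smooth abelian surfaces degenerating to translation scrolls over finitely many parameters), and the ordinary double points at the base locus come from the local geometry of the pencil there; this also explains, rather than merely verifies, the number $64$. Your block-diagonal Jacobian may well make the direct computation feasible, but as written the proposal proves the easy parts and outlines, without carrying out, the hard one.
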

For general $w\in\mathbb{P}_{w}^{2}$, the Calabi-Yau variety $V_{8,w}$
is a pencil of (1,8)-polarized abelian surfaces \cite[Theorem 6.7]{GP-CYI}.
Blowing up $V_{8,w}$ along a smooth (1,8)-polarized abelian surface
$A$, we obtain a small resolution $V_{8,w}^{2}\rightarrow V_{8,w}$
with 64 exceptional $\mathbb{P}^{1}$s for the 64 ODPs. We denote
by $V_{8,w}^{1}\rightarrow V_{8,w}$ the small resolution obtained
by flopping the 64 $\mathbb{P}^{1}$s in $V_{8,w}^{2}$. \textcolor{black}{We
refer to \cite[p.213]{GP-CYI} for details.}
\begin{thm}[{\cite[Theorem 6.9]{GP-CYI}}]
 Let $V_{8,w}^{1}\rightarrow V_{8,w}$ be the small resolution above
for a general $w\in\mathbb{P}_{w}^{2}$. Then \end{thm}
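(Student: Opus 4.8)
The plan is to read the statement as a bundle of assertions about $V_{8,w}^{1}$---that it is a smooth Calabi--Yau threefold with $K\cong\cO$ and $\pi_{1}=0$, with Hodge numbers $h^{1,1}=h^{2,1}=2$ and hence Euler number $e=0$, carrying a fibration by $(1,8)$-polarized abelian surfaces over $\mathbb{P}^{1}$ together with a free action of $\cH_{8}$ descending to $\mathbb{Z}_{8}\times\mathbb{Z}_{8}$---and to verify these in that order, using the two models $V_{8,w}\leftarrow V_{8,w}^{2}=\mathrm{Bl}_{A}V_{8,w}$ and the flop $V_{8,w}^{1}$ of $V_{8,w}^{2}$.

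I would begin with smoothness and triviality of the canonical class. By the preceding theorem $V_{8,w}$ has exactly $64$ ordinary double points and is smooth elsewhere; blowing up the Weil divisor $A$, a pencil member through all $64$ nodes, replaces each local model $xy=zw$ by its standard small resolution with exceptional $\mathbb{P}^{1}$, so $V_{8,w}^{2}$ (a blow-up, hence projective) and its flop $V_{8,w}^{1}$ are smooth. Triviality of the canonical bundle follows from adjunction for the $(2,2,2,2)$ complete intersection ($2+2+2+2=8=\dim\mathbb{P}^{7}+1$ gives $K_{V_{8,w}}\cong\cO$) together with the crepancy of small resolutions of ordinary double points; and $V_{8,w}^{1}$ is simply connected since the complete intersection is (Lefschetz) and a small resolution preserves $\pi_{1}$.

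For the Hodge numbers, $h^{1,1}$ is controlled by the Lefschetz hyperplane theorem, which gives Picard rank $1$ (generated by $\cO(1)$) for the complete intersection; adjoining the exceptional divisor of $V_{8,w}^{2}=\mathrm{Bl}_{A}V_{8,w}$ gives $h^{1,1}(V_{8,w}^{2})=2$, and a flop does not change $h^{1,1}$. For $h^{2,1}$ I would compute $e$. A generic smooth $(2,2,2,2)$ Calabi--Yau $\widetilde V\subset\mathbb{P}^{7}$ has $h^{1,1}=1$ and, by the usual polynomial count, $h^{2,1}=4\cdot 36-\dim GL_{4}-\dim PGL_{8}=65$, hence $e(\widetilde V)=-128$; degenerating to the nodal $V_{8,w}$ replaces each Milnor fibre (homotopy equivalent to $S^{3}$, Euler number $0$) by a contractible cone point (Euler number $1$), raising $e$ by $1$ per node, so $e(V_{8,w})=-64$; and the small resolution replaces each node by a $\mathbb{P}^{1}$, raising $e$ by another $1$ per node, so $e(V_{8,w}^{1})=0$. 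Therefore $h^{2,1}(V_{8,w}^{1})=h^{1,1}(V_{8,w}^{1})-\tfrac12 e(V_{8,w}^{1})=2$. (One can instead read $e=0$ off the abelian fibration once every singular fibre is shown to have Euler number zero.)

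There remain the fibration and the group action, where the real work lies. The pencil of $(1,8)$-polarized abelian surfaces on $V_{8,w}$ from the cited theorem has base locus inside the nodal set, so after the small resolution it becomes a morphism $V_{8,w}^{1}\to\mathbb{P}^{1}$ whose general fibre is a smooth $(1,8)$-abelian surface. For the symmetry, $(\ref{eq:sigma-tau-def})$ shows that $\sigma$ cyclically permutes $f_{1},\dots,f_{4}$ of $(\ref{eq:def-eqs-f})$ while $\tau$ rescales them, so the ideal is $\cH_{8}$-stable and $\cH_{8}$ acts on $V_{8,w}$; the centre $\langle\xi\rangle$ acts by scalars, hence trivially on $\mathbb{P}^{7}$, so the action descends to $\mathbb{Z}_{8}\times\mathbb{Z}_{8}$ and lifts to the canonical resolution $V_{8,w}^{1}$ (the $64$ nodes form a single $\cH_{8}$-orbit). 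The point I expect to be genuinely delicate is freeness: one must check that no non-trivial element of $\mathbb{Z}_{8}\times\mathbb{Z}_{8}$ fixes a point of $V_{8,w}$---i.e.\ that the eigenspaces of each such element miss the equations $(\ref{eq:def-eqs-f})$ for general $w$---and that the $\cH_{8}$-orbit of the $64$ exceptional curves is permuted without fixed points. Equally delicate is making $h^{2,1}=2$ (equivalently $e=0$) rigorous rather than heuristic: the conifold bookkeeping above secretly requires that the $64$ vanishing $3$-spheres of $\widetilde V$ span only a one-dimensional subspace of $H_{3}(\widetilde V;\mathbb{Q})$, i.e.\ satisfy $63$ independent homological relations---exactly the condition making the small resolution projective, and the place where the existence of the single divisor $A$ through all $64$ nodes must really be used. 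Those homological and fixed-point inputs are the crux; the rest is routine.
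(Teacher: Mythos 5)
First, a point of comparison: the paper offers no proof of this statement at all --- it is quoted verbatim from Gross--Popescu \cite[Theorem 6.9]{GP-CYI} --- so there is no internal argument to measure yours against, and what follows assesses your reconstruction on its own terms. Your treatment of part (1) (the pencil becomes a morphism after the small resolution) and the Euler-number bookkeeping $e(\widetilde V)=-128\rightarrow e(V_{8,w})=-64\rightarrow e(V_{8,w}^{1})=0$ follow the standard route for conifold transitions and are fine as a sketch; the material on $K\cong\cO$, $\pi_{1}$ and the free $\cH_{8}$-action is not part of this theorem but does no harm.

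The concrete error is in your final paragraph, and it sits exactly at the step you yourself identify as the crux. You assert that the $64$ vanishing $3$-spheres of the smooth $(2,2,2,2)$ model $\widetilde V$ ``span only a one-dimensional subspace of $H_{3}(\widetilde V;\mathbb{Q})$, i.e.\ satisfy $63$ independent homological relations.'' This is backwards. In the transition $\widetilde V\rightsquigarrow V_{8,w}^{1}$ with $N=64$ nodes, the increase in $h^{1,1}$ equals the number of independent relations among the vanishing spheres and the drop in $h^{2,1}$ equals the rank of the subspace they span; to pass from $(h^{1,1},h^{2,1})=(1,65)$ to $(2,2)$ the spheres must span a $63$-dimensional subspace and satisfy exactly \emph{one} relation. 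With your version the same formulas would give $h^{1,1}(V_{8,w}^{1})=64$, contradicting the theorem. The rank-one statement you have in mind lives on the other side of the transition: it is the $64$ exceptional curves that span a one-dimensional subspace of $H_{2}(V_{8,w}^{1},\mathbb{Q})$ (they form a single $\cH_{8}$-orbit, hence are all homologous). Relatedly, your earlier derivation of $h^{1,1}(V_{8,w}^{2})=2$ by ``adjoining the exceptional divisor'' is loose: the exceptional locus of a small resolution is not a divisor; what raises the Picard rank is that the Weil divisor $A$ becomes Cartier, and showing that the rank rises by exactly one (equivalently, that the defect of the nodal $V_{8,w}$ is one) is precisely the nontrivial input from Gross--Popescu that your sketch does not supply.
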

\begin{enumerate}
\item $V_{8,w}^{1}$ has a fibration over $\mathbb{P}^{1}$ with the fiber
$(1,8)$-polarized abelian surfaces,
\item the Hodge numbers are given by $h^{1,1}(V_{8,w}^{1})=h^{2,1}(V_{8,w}^{1})=2$.
\end{enumerate}
It is known that the resolution $V_{8,w}^{2}$ also has an abelian
surface fibration over $\mathbb{P}^{1}$. 

\para{} \label{para:divisors- H1A1-and-H2A2}When we will discuss
mirror symmetry, we will need the cubic forms and also linear forms
on $H^{2}$. Here we summarize these topological invariants for $V_{8,w}^{i}$. 

Let $H_{1}$ be the pullback of of the hyperplane section of $V_{8,w}$,
and $A_{1}$ be the class of a fiber abelian surface in $V_{8,w}^{1}$.
Then we have 
\begin{equation}
\begin{aligned}H_{1}^{3}=H_{1}^{2}A_{1}=16, &  &  & H_{1}A_{1}^{2}=A_{1}^{3}=0\\
c_{2}(V_{8,w}^{1})H_{1}=64, &  &  & c_{2}(V_{8,w}^{1})A_{1}=0.
\end{aligned}
\label{eq:yukawaV1}
\end{equation}
\textcolor{black}{where $c_{2}(V_{8,w}^{1})H_{1}=64$ follows from
the Riemann-Roch theorem and $c_{2}(V_{8,w}^{1})A_{1}=0$ follows
since $A_{1}$ is the class of a fiber of an abelian fibration.} Also
the ample cone $Amp(V_{8,w}^{1})\subset H^{2}(V_{8,w}^{1},\mathbb{R})$
is generated by $H_{1},A_{1}$. 

For the other resolution $V_{8,w}^{2}$, we write the birational map
$\phi:V_{8,w}^{2}\dashrightarrow V_{8,w}^{1}$ and define the pullbacks
by $H_{2}=\phi^{*}(H_{1})$ and $A_{2}=\phi^{*}(A_{1})$, which generate
$H^{2}(V_{8,w}^{1},\mathbb{R})$. \textcolor{black}{Then the cubic
forms and linear forms of $V_{8,w}^{2}$ are} given by\textcolor{black}{
\begin{equation}
\begin{aligned}H_{2}^{3}=H_{2}^{2}A_{2}=16, &  &  & H_{2}A_{2}^{2}=0,\,\,\,\,A_{2}^{3}=-64\\
c_{2}(V_{8,w}^{2})H_{2}=64, &  &  & c_{2}(V_{8,w}^{2})A_{2}=128,
\end{aligned}
\label{eq:yulawaV2}
\end{equation}
since we have $A_{2}^{3}=A_{1}^{3}-n_{0}$ in terms of the number
$n_{0}=64$ of flopping curves. The number $128$
in the second line follows from a relation $c_{2}(V_{8,w}^{2})A_{2}=c_{2}(V_{8,w}^{1})A_{1}+2\,n_{0}$
which we derive from Riemann-Roch theorem by showing 
$\chi(\mathcal{O}_{V_{8,w}^{2}}(A_{2}))=\chi(\mathcal{O}_{V_{8,w}^{1}}(A_{1}))$.
The ample cone of $V_{8,w}^{2}$ is generated by} 
\[
H_{2},\,\,\,\,\tilde{A_{2}}:=2H_{2}-A_{2},
\]
\textcolor{black}{(see \cite[Prpo.6.14]{GP-CYI}}) for which we have
$H_{2}^{3}=H_{2}^{2}\tilde{A}_{2}=16$, $H_{2}\tilde{A}_{2}^{2}=\tilde{A_{2}^{3}}=0$
and also $c_{2}(V_{8,w}^{2})H_{2}=64$, $c_{2}(V_{8,w}^{2})\tilde{A}_{2}=0$.

\subsection{More on the small resolutions $V_{8,w}^{1}$ and $V_{8,w}^{2}$}

We summarize known properties on the abelian surface fibration $X:=V_{8,w}^{1}\rightarrow\mathbb{P}^{1}$. 

\para{} \label{para: V1-summary} About the fibration $V_{8,w}^{1}\rightarrow\mathbb{P}^{1}$
for a general $w\in\mathbb{P}_{w}^{2}$, the following facts are known
in \cite{GP-CYI,GP};
\begin{enumerate}
\item There are exactly $64$ sections $\sigma_{k}$ given by the exceptional
curves of the flop.
\item Every smooth fiber $F$ is a $(1,8)$-polarized abelian surface with
its polarization $\mathcal{L}=\cO_{V_{8,w}^{1}}(H_{1})\vert_{F}$.
The 64 points $F\cap\sigma_{k}$ are exactly the kernel $K(\mathcal{L})$
of the polarization $\mathcal{L}$.
\item There are exactly 8 singular fibers, each of which is the elliptic
translation scroll obtained from an elliptic normal curve $E$ in
$\mathbb{P}^{7}$ with a point $e\in E$. 
\item The group $\cH_{8}$ acts freely on $V_{8,w}^{1}$, making the 64
sections into a single orbit. The action restricted on each smooth
fiber coincides with that of the kernel $K(\mathcal{L})$ of the polarization.
On a singular fiber, which is a translation scroll over an elliptic
curve $E$, the action is represented by the natural translations
by an 8-torsion point of $E$. 
\end{enumerate}
{}{In this paper, we denote by $H_{X}:=H_{1}$ and $A_{X}:=A_{1}$,
respectively, the restriction of the hyperplane class of $\mbP^{7}$
and the fiber class of the fibration $X=V_{8,w}^{1}\rightarrow\mbP^{1}$.
Also we denote by $\sigma_{X}$ and $\ell$ one of the 64 sections
and a line in a singular fiber, respectively. Then, from the relations
\[
H_{X}.\ell=1,\,\,H_{X}.\sigma_{X}=0;\quad A_{X}.\ell=0,\,\,A_{X}.\sigma=1,
\]
we see that $H_{X}$ and $A_{X}$ generate $Pic(V_{8,w}^{1})$ modulo
torsions. We denote by $E_{X}$ the class of an elliptic curve in
(3) above. Then we have $H_{X}.E_{X}=8$ since elliptic curves $E$
in (3) are $\mathcal{H}_{8}$-invariant curves of degree $8$ in $\mathbb{P}^{7}$
\cite[Thm.3.1]{GP-A}. }

We note that the action of $\cH_{8}$ on $\mathbb{P}^{7}$ may be
regarded as that of $\mathbb{Z}_{8}\times\mathbb{Z}_{8}$ generated
by $\sigma,\tau$.

\para{} We take a subgroup $\mathbb{Z}_{8}\subset\mathbb{Z}_{8}\times\mathbb{Z}_{8}$.
Since $\mbZ_{8}\times\mbZ_{8}$ acts freely on $V_{8,w}^{1}$, we
have three Calabi-Yau manifolds
\begin{equation}
V_{8,w}^{1},\,\,\,\,\,V_{8,w}^{1}/\mathbb{Z}_{8}\times\mathbb{Z}_{8}\,\,\,\text{and}\,\,V_{8,w}^{1}/\mathbb{Z}_{8}\label{eq:3CYquotients}
\end{equation}
with the same hodge numbers. As we summarized in Conjecture \ref{conj:GrossPav},
it is conjectured in \cite{GP,Pav} that these three Calabi-Yau manifolds
are related by mirror symmetry. This conjecture has nicely been supported
by the following theorem:
\begin{thm}[{\cite[Theorem 4.1]{Sch}}]
The two Calabi-Yau manifolds $V_{8,w}^{1}$ and $V_{8,w}^{1}/\mathbb{Z}_{8}\times\mathbb{Z}_{8}$
are derived equivalent.
\end{thm}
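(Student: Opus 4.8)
\emph{Proof proposal.} Write $X:=V_{8,w}^{1}$ and let $\pi:X\to\mbP^{1}$ be the abelian-surface fibration, whose generic fiber is a $(1,8)$-polarized abelian surface $(A,\mathcal{L})$; recall that $\cH_{8}$, hence the subgroup $G:=\mbZ_{8}\times\mbZ_{8}=K(\mathcal{L})$, acts freely on $X$, restricting on each smooth fiber to translation by $K(\mathcal{L})$. Put $Y:=X/G$ with its induced fibration $\bar{\pi}:Y\to\mbP^{1}$; then $q:X\to Y$ is \'etale of degree $64$ and $Y$ is a smooth projective threefold with torsion canonical class. I would prove $D^{b}(X)\simeq D^{b}(Y)$ by means of a relative Fourier--Mukai transform along the fibration, using that the fiberwise quotient of $A$ by $K(\mathcal{L})$ is the passage to the dual surface $\widehat{A}$.

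First I set up the dual fibration and reduce the problem. Let $U\subset\mbP^{1}$ be the locus over which $\pi$ is smooth. Over $U$ the sheaf $\cO_{X}(H_{X})|_{X_{U}}$ is a relative polarization of type $(1,8)$, so $\phi_{\mathcal{L}}:X_{U}\to\mathrm{Pic}^{0}(X_{U}/U)$ is an isogeny of abelian schemes over $U$ with kernel exactly the subgroup scheme through which $G$ acts; hence $Y_{U}=X_{U}/G$ is canonically the relative dual $\widehat{X_{U}}:=\mathrm{Pic}^{0}(X_{U}/U)$. Let $\widehat{X}\to\mbP^{1}$ be a smooth projective (torsion-canonical) threefold compactifying $\widehat{X_{U}}\to U$ over which the relative Poincar\'e bundle extends nicely --- concretely, a relative minimal model of $\widehat{X_{U}}\to U$; $Y$ itself is such a compactification, and any two are isomorphic in codimension one, so $\widehat{X}$ is obtained from $Y$ by a finite chain of flops of curves lying over the $8$ points of $\mbP^{1}\setminus U$. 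By Bridgeland's theorem that flops of threefolds induce derived equivalences, $D^{b}(Y)\simeq D^{b}(\widehat{X})$, so it suffices to build an exact equivalence $D^{b}(X)\simeq D^{b}(\widehat{X})$.

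For this I use a relative Fourier--Mukai kernel. On $X_{U}\times_{U}\widehat{X_{U}}$ there is the relative Poincar\'e line bundle $\mathcal{P}_{U}$, normalized along one of the $64$ sections $\sigma_{X}$; each of its fiberwise transforms $D^{b}(A)\to D^{b}(\widehat{A})$ is an equivalence by Mukai's theorem, and the usual relative criterion --- base change to points of $U$ together with the spanning class of skyscraper sheaves --- shows that $\Phi_{\mathcal{P}_{U}}:D^{b}(X_{U})\to D^{b}(\widehat{X_{U}})$ is an equivalence over $U$. The plan is then: (i) extend $\mathcal{P}_{U}$ to a perfect complex $\mathcal{P}$ on $X\times_{\mbP^{1}}\widehat{X}$, flat over both factors, whose restriction over each of the $8$ special points is the Poincar\'e-type kernel attached to the degenerate fiber; and (ii) verify that $\Phi_{\mathcal{P}}$ is still an equivalence, e.g.\ by showing that the transposed-dual kernel $\mathcal{P}^{\vee}$ (suitably shifted) gives both adjoints and that the unit and counit maps to $\cO_{\Delta}$ are isomorphisms --- automatic over $U$, and to be established along the $8$ singular fibers. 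Those fibers are the elliptic translation scrolls $S_{E,e}$ built from an elliptic normal octic $E\subset\mbP^{7}$ and an $8$-torsion point $e\in E$; here one identifies the dual object (a limit of the Poincar\'e family) as the translation scroll over the dual curve $\widehat{E}$, reducing the restricted transform --- up to the $\mbP^{1}$-ruling --- to the well-understood Fourier--Mukai transform of $E$ and of its nodal/cuspidal degenerations. Steps (i) and (ii), i.e.\ constructing the Poincar\'e sheaf across these non-normal fibers and checking the equivalence criterion there, form the technical core of the argument and are the step I expect to be the principal obstacle; once they are in place the composite $D^{b}(X)\xrightarrow{\ \sim\ }D^{b}(\widehat{X})\xrightarrow{\ \sim\ }D^{b}(Y)$ proves the claim. (An equivalent packaging: $D^{b}(Y)\simeq D^{b}_{G}(X)$ since $q$ is \'etale Galois, and the relative transform sends the translation $G$-action on $X$ to the action on $\widehat{X}$ by tensoring fiberwise with the torsion line bundles $\mathcal{P}_{g}$, $g\in G$; identifying $D^{b}_{G}(\widehat{X})$ for that action with the derived category of the quotient of $\widehat{X}$ by the dual translations $\widehat{G}$ --- once more a threefold birational to $X$ --- gives the same conclusion.)
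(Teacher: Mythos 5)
This theorem is not proved in the paper at all: it is imported verbatim from Schnell as \cite[Theorem 4.1]{Sch}, and the paper elsewhere only records that the equivalence is realized ``by fiberwise Fourier--Mukai transformations'' citing \cite{Bak,Sch}. Your strategy --- identify $X/G$ over the smooth locus $U$ with the relative dual abelian scheme via $\phi_{\mathcal{L}}$ (whose kernel is $K(\mathcal{L})=G$), transform by the relative Poincar\'e bundle normalized along a section, and absorb the difference between compactifications of the dual family by Bridgeland's flop equivalence --- is precisely the architecture of the cited sources, so you are reconstructing the known route rather than proposing a different one.

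The genuine gap is the one you flag yourself, and it is not a removable technicality: it is where essentially all of the content of \cite{Sch} and \cite{Bak} lives. Over the $8$ points of $\mbP^{1}\setminus U$ the fibers are elliptic translation scrolls, which are non-normal surfaces, and (i) the existence of a perfect, bi-flat extension of the Poincar\'e kernel to $X\times_{\mbP^{1}}\widehat{X}$ across these fibers, (ii) the identification of the degenerate ``dual'' fiber, and (iii) the verification of the equivalence criterion there (simplicity, orthogonality and spanning of the restricted kernels) are asserted but not carried out. Without them your argument only yields an equivalence $D^{b}(X_{U})\simeq D^{b}(Y_{U})$ over the open part, which does not imply the global statement; note also that the identification of $V_{8,w}^{1}/G$ with the dual $(1,8)$-fibration globally (not just over $U$) is itself a cited input, \cite[Lem.5.4]{Sch}, that your flop step silently presupposes. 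In the literature this core is handled either by Bak's spectral construction of the kernel across the translation scrolls or by Schnell's packaging of $Y$ as a fine relative moduli space of sheaves on $X$ to which a Bridgeland-type criterion applies; some such argument must be supplied before the composite $D^{b}(X)\simeq D^{b}(\widehat{X})\simeq D^{b}(Y)$ is legitimate.
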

Note that this theorem is consistent with Conjecture \ref{conj:GrossPav}
from the viewpoint of homological (or categorical) mirror symmetry
\cite{Ko}. In this paper, we will find that the subgroup $\mbZ_{8}=\langle\tau\rangle\subset\mbZ_{8}\times\mbZ_{8}$
is a suitable choice for the conjecture to hold. Then, we will show
an affirmative answer to Conjecture \ref{conj:GrossPav} by finding
degenerations of Calabi-Yau manifolds $V_{8,w}^{1}/\mbZ_{8}$ and
$V_{8,w}^{1}/\mbZ_{8}\times\mbZ_{8}$ where the conjectured mirror
symmetry arises. 

\vskip3cm

\section{\textbf{Families }$\protect\cV^{1},\,\,\protect\cV^{1}/\mathbb{Z}_{8},$
\textbf{and $\protect\cV^{1}/\mathbb{Z}_{8}\times\mathbb{Z}_{8}$
over }$\mathbb{P}_{\omega}^{2}$}

{}{The small resolutions $V_{8,w}^{1}$ of the $(2,2,2,2)$
complete intersection (\ref{eq:defeq-V1}) in $\mbP^{7}$ form a family
$\mathcal{V}^{1}$ of over an open set of $\mathbb{P}_{w}^{2}$. In
what follows, we simply write this family by $\mathcal{V}^{1}\rightarrow\mathbb{P}_{w}^{2}$
with understanding that the actual family is defined over the set
of general points of $\mathbb{P}_{w}^{2}$. The projective space $\mathbb{P}_{w}^{2}$
here should be considered as a compactification of the parameter space
of the family. }

\subsection{Symmetry of the family $\mathcal{V}^{1}\rightarrow\mathbb{P}_{\omega}^{2}$ }

As described in \ref{para: V1-summary}, the Heisenberg group $\cH_{8}$
(or $\mathbb{Z}_{8}\times\mathbb{Z}_{8}$) acts on each fiber of the
family $\mathcal{V}^{1}\rightarrow\mathbb{P}_{w}^{2}$. Actually,
this action extends as a symmetry of the family to a larger group
$\cN\cH_{8}$ in 
\begin{equation}
1\rightarrow\cH_{8}\rightarrow\cN\cH_{8}\rightarrow SL_{2}(\mathbb{Z}_{8})\rightarrow1.\label{eq:NH8-def}
\end{equation}
Here the group $\cN\cH_{8}$ is the normalizer of $\cH_{8}$ in $GL(\mbC^{8})$,
and is generated by 
\[
S=\frac{1}{2\sqrt{2}}\left(\xi^{-\frac{1}{2}(i-j)^{2}}\right)_{0\leq i,j\leq7},\,\,\,T=\frac{1}{2\sqrt{2}}\left(\xi^{-ij}\right)_{0\leq i,j\leq7}
\]
and $\sigma,\tau\in\cH_{8}$, where $\xi=e^{2\pi i/8}$. The adjoint
action of $g\in\cN\cH_{8}$ on $\cH_{8}/[\cH_{8},\cH_{8}]\simeq\mbZ_{8}\times\mbZ_{8}$
by $\sigma^{a}\tau^{b}\mapsto g\sigma^{a}\tau^{b}g^{-1}$ gives the
surjective homomorphism to $SL_{2}(\mathbb{Z}_{8})$, which is described
by $S\mapsto\mathtt{S}:=\left(\begin{smallmatrix}1 & 1\\
0 & 1
\end{smallmatrix}\right)$ and $T\mapsto\mathtt{T}:=\left(\begin{smallmatrix}0 & -1\\
1 & 0
\end{smallmatrix}\right)$ for $S,T\in\mathcal{NH}_{8}$. {}{Let $g.x$ represent
the linear action of $g\in\cN\cH_{8}$ on $x\in\mathbb{C}^{8}$. We
define the following representation $\rho$ which comes from the linear
actions of $S,T\in\cN\cH_{8}$ on $y\in\mathbb{P}_{-}^{2}\subset\mathbb{P}^{7}=\mathbb{P}(\mathbb{C}^{8})$
through the relation (\ref{eq:omega2y}). }
\begin{defn}
\label{def:rho-matrix}We define a representation $\rho:\cN\cH_{8}/\cH_{8}\rightarrow GL(\mathbb{C}^{3})$
by

\[
\rho(S)=\frac{\xi}{2}\left(\begin{matrix}\xi^{7} & 2 & \xi^{3}\\
1 & 0 & 1\\
\xi^{3} & 2 & \xi^{7}
\end{matrix}\right),\,\,\,\rho(T)=\frac{\xi^{6}}{2}\left(\begin{matrix}1 & 2 & 1\\
1 & 0 & -1\\
1 & -2 & 1
\end{matrix}\right),\,\,\,\,\rho(h)=\id\,\,\text{for }h\in\cH_{8}.
\]
\end{defn}
\begin{rem}
{}{(1) It should be noted that we impose the condition
$\rho(h)=\mathrm{id}$ for $h\in\mathcal{H}_{8}$. (2) Since the relation
(\ref{eq:omega2y}) is a projective relation, we can only determine
the matrices $\rho(S)$ and $\rho(T)$ up to constant factors, say
$a,b,$ for these matrices respectively. We write $\rho(S)=a\,\rho(S)_{0},\rho(T)=b\,\rho(T)_{0}$
with normalizing $\rho(S)_{0}$ and $\rho(T)_{0}$ so that their (1,2)
entries are equal to 1. By the condition $\rho(\cH_{8})=\id$, we
can find that $(a,b)=(\xi,\xi^{6}),(\xi^{3},1),(\xi^{5},\xi^{2})$
and $(\xi^{7},\xi^{4})$ are the only possible values for the constants
(see \cite{HT-math-c} for calculations). Here, depending the choice
of $(a,b)$, the image of $\rho$ varies; it is isomorphic to $SL_{2}(\mathbb{Z}_{8})/(\mathbb{Z}_{8})^{\times}$
for $(\xi^{3},1),(\xi^{7},\xi^{4})$ and $SL_{2}(\mathbb{Z}_{8})/\left\{ 1,5\right\} $
for $(\xi,\xi^{6}),(\xi^{5},\xi^{2})$ (see the proposition below
for this result and the definition of $\{1,5\}$). In the above definition,
we have chosen $(\xi,\xi^{6})$ so that we have a larger image.}$\hfill\square$
\end{rem}
The following proposition describes the action of $\cN\cH_{8}$ on
the family $\mathcal{V}^{1}$. 
\begin{prop}
\label{prop:NH-action-on-f} ~~

\begin{myitem}

\item{$(1)$} The group $\cN\cH_{8}$ acts linearly on the defining
equations (\ref{eq:def-eqs-f}) by 
\begin{equation}
f_{i}(\rho(g).\omega,g.x)=\sum_{j}c_{ij}(g)f_{j}(\omega,x),\label{eq:g-act-f}
\end{equation}
where $g.x$ represents the natural linear action of $g\in\cN\cH_{8}$
on $x\in\mathbb{C}^{8}$, and the group homomorphism $R(g)=\left(c_{ij}(g)\right)$
is determined by 
\begin{equation}
\left(\begin{smallmatrix}0 & 0 & 0 & 1\\
1 & 0 & 0 & 0\\
0 & 1 & 0 & 0\\
0 & 0 & 1 & 0
\end{smallmatrix}\right),\left(\begin{smallmatrix}1 & 0 & 0 & 0\\
0 & \xi^{6} & 0 & 0\\
0 & 0 & \xi^{4} & 0\\
0 & 0 & 0 & \xi^{2}
\end{smallmatrix}\right),\frac{\xi}{2}\times\left(\begin{smallmatrix}\xi^{7} & \xi^{6} & \xi^{3} & \xi^{6}\\
\xi^{6} & \xi^{7} & \xi^{6} & \xi^{3}\\
\xi^{3} & \xi^{6} & \xi^{7} & \xi^{6}\\
\xi^{6} & \xi^{4} & \xi^{6} & \xi^{7}
\end{smallmatrix}\right),\,\frac{\xi^{6}}{2}\times\left(\begin{smallmatrix}1 & 1 & 1 & 1\\
1 & \xi^{6} & \xi^{4} & \xi^{2}\\
1 & \xi^{4} & 1 & \xi^{4}\\
1 & \xi^{2} & \xi^{4} & \xi^{6}
\end{smallmatrix}\right)\label{eq:R-matrix-apxA}
\end{equation}
for $g=\sigma,\tau,S,T$, respectively, with the relation $R(gh)=R(g)R(h)$.

\item{$(2)$} The kernel of $\rho:\cN\cH_{8}\rightarrow GL(\mathbb{C}^{3})$
is given by $\left\langle \cH_{8},u_{1},u_{5}\right\rangle $ with
$u_{i}:=(S^{8-i}T)^{3}$. \item{$(3)$} The image of $\rho:\cN\cH_{8}\rightarrow GL(\mathbb{C}^{3})$
is isomorphic to $SL_{2}(\mathbb{Z}_{8})/\left\{ 1,5\right\} $, where
$\{1,5\}$ is the subgroup of the units $(\mathbb{Z}_{8})^{\times}=\left\{ 1,3,5,7\right\} $
{}{with $(\mathbb{Z}_{8})^{\times}\,\mathrm{id}_{2}\subset SL_{2}(\mathbb{Z}_{8})$.}

\end{myitem}\end{prop}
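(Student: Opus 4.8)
The plan is to obtain (1) by checking the equivariance identity on the four generators of $\cN\cH_{8}$ and propagating, and then to deduce (2) and (3) from a single determination of the kernel of the representation of $SL_{2}(\mathbb{Z}_{8})$ induced by $\rho$.

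For (1), recall that $\cN\cH_{8}$ is generated by $\sigma,\tau,S,T$ and that $\rho$ is a well-defined homomorphism (Definition~\ref{def:rho-matrix} and the Remark, where $\rho$ is the linear lift of the projective action on $\mathbb{P}_{\omega}^{2}$ normalized so that $\rho(\cH_{8})=\mathrm{id}$). Since (\ref{eq:g-act-f}) is a polynomial identity in $(\omega,x)$, it is enough to verify it for $g=\sigma,\tau,S,T$: if it holds for $g$ and for $h$, then applying the $g$-identity at the point $(\rho(h).\omega,\,h.x)$ and then the $h$-identity yields it for $gh$ with $R(gh)=R(g)R(h)$, and the $c_{ij}(g)$ are uniquely pinned down because $f_{1},\dots,f_{4}$ are linearly independent quadrics for $\omega\neq0$. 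For $g=\sigma$ the cyclic shift of the $x_{i}$ permutes $f_{1},\dots,f_{4}$ cyclically (with $\rho(\sigma)=\mathrm{id}$), and for $g=\tau$ the scaling $x_{i}\mapsto\xi^{-i}x_{i}$ multiplies $f_{i}$ by $\xi^{-2(i-1)}$; both are read off directly from (\ref{eq:def-eqs-f}) and give the first two matrices of (\ref{eq:R-matrix-apxA}). For $g=S$ and $g=T$ one substitutes the explicit $8\times8$ matrices into (\ref{eq:def-eqs-f}), substitutes $\rho(S),\rho(T)$ into the coefficients $w_{k}$, expands, and checks that the resulting quadric equals the asserted combination of the $f_{j}$; this is a finite, if lengthy, calculation producing the last two matrices of (\ref{eq:R-matrix-apxA}).

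For (2), the inclusion $\langle\cH_{8},u_{1},u_{5}\rangle\subseteq\ker\rho$: since $\rho(\cH_{8})=\mathrm{id}$ it remains to see $\rho(u_{1})=\rho(u_{5})=\mathrm{id}$, and by the homomorphism property $\rho(u_{i})=(\rho(S)^{8-i}\rho(T))^{3}$. Here $u_{1}=(S^{7}T)^{3}$ actually lies in $\cH_{8}$, because $(\mathtt{S}^{7}\mathtt{T})^{3}=\mathrm{id}$ in $SL_{2}(\mathbb{Z}_{8})$ (a one-line check) and the kernel of $\cN\cH_{8}\to SL_{2}(\mathbb{Z}_{8})$ is $\cH_{8}$ by (\ref{eq:NH8-def}); so $\rho(u_{1})=\mathrm{id}$ is automatic, and the substantive point is the direct matrix identity $(\rho(S)^{3}\rho(T))^{3}=\mathrm{id}$, since $u_{5}$ maps to the nontrivial central element $5\,\mathrm{id}_{2}$. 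For the reverse inclusion, and for (3): $\cH_{8}\subseteq\ker\rho$, so $\rho$ descends to $\bar\rho:SL_{2}(\mathbb{Z}_{8})\to GL(\mathbb{C}^{3})$, and under (\ref{eq:NH8-def}) the subgroup $\langle\cH_{8},u_{1},u_{5}\rangle$ is exactly the preimage of $\langle\mathrm{id}_{2},5\,\mathrm{id}_{2}\rangle=\{1,5\}$; hence both remaining assertions are equivalent to $\ker\bar\rho=\{1,5\}$, i.e. $|\mathrm{Im}\,\rho|=|SL_{2}(\mathbb{Z}_{8})|/2=192$, and then (3) is immediate. To establish this, use that $\ker\bar\rho$ is a normal subgroup of $SL_{2}(\mathbb{Z}_{8})$ containing the central element $5\,\mathrm{id}_{2}$, and run through the (classical) normal-subgroup lattice of $SL_{2}(\mathbb{Z}/8)$, governed by the congruence filtration $\Gamma(8)\subset\Gamma(4)\subset\Gamma(2)\subset SL_{2}(\mathbb{Z}_{8})$ with abelian layers $\Gamma(2^{k})/\Gamma(2^{k+1})\cong\mathfrak{sl}_{2}(\mathbb{F}_{2})$: it suffices to exhibit, in every normal subgroup strictly larger than $\{1,5\}$, an element not killed by $\bar\rho$. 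Concretely, $\rho(T)^{2}=\bar\rho(-\mathrm{id}_{2})=\xi^{4}\mathrm{id}_{3}=-\mathrm{id}_{3}\neq\mathrm{id}$ shows that the intersection of $\ker\bar\rho$ with the centre $(\mathbb{Z}_{8})^{\times}\mathrm{id}_{2}$ is exactly $\{1,5\}$ (in particular $-\mathrm{id}_{2}\notin\ker\bar\rho$); $\rho(S)^{4}=\bar\rho(\mathtt{S}^{4})\neq\mathrm{id}_{3}$ (a direct computation) shows $\bar\rho$ does not factor through $SL_{2}(\mathbb{Z}_{4})$, which together with the $SL_{2}(\mathbb{F}_{2})$-module decomposition of $\Gamma(4)/\Gamma(8)$ forces $\ker\bar\rho\cap(\Gamma(4)/\Gamma(8))=\langle 5\,\mathrm{id}_{2}\rangle$; and $\rho(S)\rho(T)=\bar\rho(\mathtt{S}\mathtt{T})$ has order $6$ (its cube equals $\rho(T)^{2}=-\mathrm{id}_{3}$), which excludes a ``$3$-part'' in $\ker\bar\rho$. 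Combining these, $\ker\bar\rho=\{1,5\}$, whence $\ker\rho=\langle\cH_{8},u_{1},u_{5}\rangle$ and $\mathrm{Im}\,\rho\cong SL_{2}(\mathbb{Z}_{8})/\{1,5\}$.

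The heart of the argument is this last step --- showing $|\mathrm{Im}\,\rho|=192$, i.e. ruling out any normal subgroup of $SL_{2}(\mathbb{Z}/8)$ larger than $\{1,5\}$ inside $\ker\bar\rho$. The individual matrix identities ($(\rho(S)^{3}\rho(T))^{3}=\mathrm{id}_{3}$, $\rho(T)^{2}=-\mathrm{id}_{3}$, the orders of $\rho(S)$ and of $\rho(S)\rho(T)$) and the generator-by-generator verification in (1) are mechanical; it is the group-theoretic bookkeeping inside $SL_{2}(\mathbb{Z}/8)$ --- or, equivalently, a direct machine computation (cf. \cite{HT-math-c}) --- that carries the weight.
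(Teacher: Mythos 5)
Your proof of (1) follows the same route as the paper's (generator-by-generator verification of (\ref{eq:g-act-f}), deferred to explicit computation), with the useful added remark that the identity propagates to products because $f_{1},\dots,f_{4}$ are linearly independent, so only the four generators need checking. For (2) and (3), however, you take a genuinely different path. The paper verifies $\rho((S^{7}T)^{3})=\rho((S^{3}T)^{3})=\mathrm{id}$ and then simply \emph{counts} the image, finding $384/2$ elements by machine computation; you instead reduce both statements to the single claim $\ker\bar\rho=\{1,5\}$ for the induced map $\bar\rho$ on $SL_{2}(\mathbb{Z}_{8})$, and pin this down structurally: $\ker\bar\rho$ is normal and contains $5\,\mathrm{id}_{2}$, and the handful of matrix identities $\rho(T)^{2}=-\mathrm{id}_{3}$, $\rho(S)^{4}\neq\mathrm{id}_{3}$, $(\rho(S)^{3}\rho(T))^{3}=\mathrm{id}_{3}$, together with the module structure of the congruence layers $\Gamma(2^{k})/\Gamma(2^{k+1})\cong\mathfrak{sl}_{2}(\mathbb{F}_{2})$ (whose only proper nonzero submodule is the line of scalars), exclude every larger normal subgroup. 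This buys a hand-checkable argument resting on four or five small matrix evaluations rather than a $384$-element enumeration, and it also makes explicit that $\rho(u_{1})=\mathrm{id}$ is automatic from exactness of (\ref{eq:NH8-def}) since $(\mathtt{S}^{7}\mathtt{T})^{3}=\mathrm{id}_{2}$. The cost is that the normal-subgroup bookkeeping is only sketched: to complete it one must still show that every nontrivial normal subgroup of $SL_{2}(\mathbb{Z}_{4})$ contains $-\mathrm{id}_{2}$ (e.g.\ by observing that all involutions of $SL_{2}(\mathbb{Z}_{4})$ lie in $\Gamma(2)$ and that the Sylow $3$-subgroup is not normal), and then run the orbit argument in the coset $-\mathrm{id}\cdot\Gamma(4)$ to see that meeting that coset forces $-\mathrm{id}_{2}\in\ker\bar\rho$, contradicting $\rho(T)^{2}=-\mathrm{id}_{3}$. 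These steps all go through, so the argument is sound, but as written they carry roughly the same amount of unshown work as the paper's appeal to \cite{HT-math-c}.
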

\begin{proof}
(1) We verify the claim by direct evaluations which we leave to the
reader (see \cite{HT-math-c}). (2),(3) By definition, there is a
group homomorphism $SL_{2}(\mathbb{Z}_{8})\simeq\mathcal{NH}_{8}/\mathcal{H}_{8}\rightarrow\mathrm{Im}\,\rho$.
The units $(\mathbb{Z}_{8})^{\times}\subset SL_{2}(\mathbb{Z}_{8})$
are written by $(\mathtt{S}^{7}\mathtt{T})^{3},(\mathtt{S}^{5}\mathtt{T})^{3},(\mathtt{S}^{3}\mathtt{T})^{3}$
$(\mathtt{S}\mathtt{T})^{3}$ for $1,3,5,7$, respectively. Evaluating
the corresponding matrices, we find that $\rho((S^{7}T)^{3})=\rho((S^{3}T)^{3})=\id$.
Then by counting the elements in the image directly as $384/2$$(=|SL_{2}(\mathbb{Z}_{8})/\left\{ 1,5\right\} |$)
in \cite{HT-math-c}, we conclude the claim. \end{proof}
\begin{defn}
We introduce the factor group $G_{\rho}:=\cN\cH_{8}/\left\langle \cH_{8},u_{1},u_{5}\right\rangle $
by 
\[
1\rightarrow\left\langle \cH_{8},u_{1},u_{5}\right\rangle \rightarrow\cN\cH_{8}\overset{q}{\rightarrow}G_{\rho}\rightarrow1
\]
with ${{}u_{i}:=(S^{8-i}T)^{3}}$. The group $G_{\rho}$
is isomorphic to $\mathrm{Im}\,\rho\subset GL(\mathbb{C}^{3})$ and
also to $SL_{2}(\mbZ_{8})/\left\{ 1,5\right\} $. 
\end{defn}
The symmetry relation (\ref{eq:g-act-f}) clearly entails the isomorphisms
\begin{equation}
V_{8,\omega}\simeq V_{8,\rho(g)\omega},\label{eq:V=00003DgV}
\end{equation}
which were observed in \cite{GP-CYI}.

\subsection{Degenerations of the family $\protect\cV^{1}$}

A general fiber $V_{8,\omega}^{1}$ of the family $\mathcal{V}^{1}\rightarrow\mathbb{P}_{\omega}^{2}$
has a fibration over $\mathbb{P}^{1}$ by (1,8)-polarized abelian
surfaces. Gross and Popescu \cite{GP-CYI} describe the family $\mathcal{V}^{1}\rightarrow\mathbb{P}_{\omega}^{2}$
as a fibration of (1,8) abelian surfaces over a conic bundle over
$\mathbb{P}_{w}^{2}$. Studying degenerations of the family, they
showed rationality of the moduli space $\mathcal{M}_{(1,8)}^{lev}$
of (1,8)-polarized abelian surfaces. Here for our later purposes,
we summarize their description on the discriminant loci of the family. 

The variety $V_{8,\omega}$ is given as a complete intersection of
four quadrics in $\mathbb{P}^{7}$. Depending on the degenerations
of the quadrics, the following three different components of the discriminant
are recognized:
\begin{equation}
\begin{aligned} & D_{s}=\left\{ 2w_{1}^{4}-w_{0}w_{2}(w_{0}^{2}+w_{2}^{2})=0\right\} ,\,\,\,\,L_{1}=\left\{ w_{1}(w_{0}^{2}-w_{2}^{2})=0\right\} ,\\
 & \;\;\;\;\;\;L_{2}=\left\{ \left((w_{0}+w_{2})^{4}-(w_{0}-w_{2})^{4}\right)dis_{0}=0\right\} ,
\end{aligned}
\label{eq:disDsL1L2}
\end{equation}
where we define 
\[
dis_{0}:=\left((w_{0}+w_{2})^{4}-(2w_{1})^{4}\right)\left((w_{0}-w_{2})^{4}+(2w_{1})^{4}\right).
\]
The group $\cN\cH_{8}$ acts on these discriminant loci through the
representation $\rho$ in Proposition \ref{prop:NH-action-on-f}.
It is easy to see that these three components are invariant under
the actions $S$ and $T$. While $D_{s}$ is irreducible, $L_{1}$
and $L_{2}$ consist of lines which are exchanged under $S$ and $T$
. Then, due to the symmetry relation (\ref{eq:V=00003DgV}), it is
sufficient to see the degenerations over $D_{s}$, and the lines $\left\{ w_{1}=0\right\} $,
$\left\{ w_{0}=0\right\} $ in $L_{1}$ and $L_{2}$, respectively. 
\begin{prop}
\label{prop:V1-degeneration-loci}Over the three discriminant loci,
$V_{8,\omega}$ degenerates as follows:\begin{myitem}

\item{$(1)$} Over a general point on $L_{1}$, $V_{8,w}$ degenerates
to a join of two elliptic quartic normal curves.

\item{$(2)$} Over a general point on $L_{2}$, $V_{8,w}$ has 72$($=64+8$)$
ordinary points but still possesses a pencil of abelian surfaces.

\item{$(3)$} Over a general point on $D_{s}$, the conic bundle over
$\mathbb{P}_{w}^{2}$ degenerates and a pencil of abelian surfaces
of $V_{8,w}$ breaks down accordingly.

\end{myitem}\end{prop}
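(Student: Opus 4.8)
The plan is to use the $\cN\cH_{8}$-symmetry (\ref{eq:V=00003DgV}) to reduce each of the three components to a single representative and then to read the degeneration off the specialized equations (\ref{eq:def-eqs-f}), bringing in Gross--Popescu's conic-bundle description \cite{GP-CYI} only for $D_{s}$. As noted just before the statement, since $S$ and $T$ permute the lines comprising $L_{1}$ and $L_{2}$, it suffices to treat the lines $\{w_{1}=0\}\subset L_{1}$ and $\{w_{0}=0\}\subset L_{2}$, and, $D_{s}$ being irreducible, a general point of $D_{s}$.

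For $(1)$: putting $w_{1}=0$ in (\ref{eq:def-eqs-f}), the quadrics $f_{1},f_{3}$ involve only the even coordinates $x_{0},x_{2},x_{4},x_{6}$ and $f_{2},f_{4}$ only the odd coordinates $x_{1},x_{3},x_{5},x_{7}$. Writing $\mathbb{P}^{7}=\mathbb{P}_{\mathrm{ev}}^{3}\ast\mathbb{P}_{\mathrm{od}}^{3}$ as the join of the two coordinate $\mathbb{P}^{3}$'s, this exhibits $V_{8,w}|_{w_{1}=0}=\{f_{1}=f_{2}=f_{3}=f_{4}=0\}$ as the join of $C_{\mathrm{ev}}:=\{f_{1}=f_{3}=0\}\subset\mathbb{P}_{\mathrm{ev}}^{3}$ and $C_{\mathrm{od}}:=\{f_{2}=f_{4}=0\}\subset\mathbb{P}_{\mathrm{od}}^{3}$, each a complete intersection of two quadrics in $\mathbb{P}^{3}$. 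To conclude one checks that both are smooth for general $[w_{0}:w_{2}]$: each of $f_{1},f_{3}$ has Gram determinant $-w_{0}^{2}w_{2}^{2}/16\neq0$, hence rank $4$, and the discriminant of the pencil $\lambda f_{1}+\mu f_{3}$ equals $\frac{1}{16}(\lambda^{2}w_{0}^{2}-\mu^{2}w_{2}^{2})(\mu^{2}w_{0}^{2}-\lambda^{2}w_{2}^{2})$, which has four distinct roots whenever $w_{0}^{4}\neq w_{2}^{4}$; the pencil $\lambda f_{2}+\mu f_{4}$, obtained from $\lambda f_{1}+\mu f_{3}$ by the coordinate shift $\sigma$, has the same discriminant. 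Hence $C_{\mathrm{ev}}$ and $C_{\mathrm{od}}$ are smooth elliptic quartic normal curves and $V_{8,w}|_{w_{1}=0}$ is their join.

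For $(2)$: putting $w_{0}=0$, all four quadrics in (\ref{eq:def-eqs-f}) become squarefree, and a Jacobian computation shows that the $64$ nodes at the $\cH_{8}$-orbit of the point $y$ of (\ref{eq:omega2y}) remain ordinary double points while exactly $8$ new ordinary double points appear, one on each of the $8$ special members of the pencil --- the loci where the degree-$8$ elliptic normal curve underlying the corresponding translation scroll acquires a node. Since $\{w_{0}=0\}$ is not contained in $D_{s}$ (it meets $D_{s}$ only at $[0:0:1]$), the abelian fibration does not degenerate over a general point of $\{w_{0}=0\}$ by $(3)$, so the pencil of abelian surfaces survives, giving the stated $72=64+8$ ordinary points. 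For $(3)$: following \cite{GP-CYI}, $V_{8,w}$ is a fibration of $(1,8)$-polarized abelian surfaces over a conic bundle $\mathcal{Q}\to\mathbb{P}_{w}^{2}$ whose fibre $\mathcal{Q}_{w}$ is the $\mathbb{P}^{1}$ of the abelian fibration; one writes the symmetric matrix of $\mathcal{Q}_{w}$ in terms of $w_{0},w_{1},w_{2}$, computes its determinant, and identifies its vanishing with $2w_{1}^{4}-w_{0}w_{2}(w_{0}^{2}+w_{2}^{2})=0$, i.e.\ with $D_{s}$. Over such $w$ the conic $\mathcal{Q}_{w}$ becomes a line pair or a double line, the base $\mathbb{P}^{1}$ of the abelian fibration breaks up or collapses, and the pencil of abelian surfaces degenerates accordingly.

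The main obstacle is the portion of $(3)$ --- and the ``pencil survives'' half of $(2)$ --- that rests on making the conic-bundle presentation of \cite{GP-CYI} explicit: one must unwind their identification of $V_{8,w}$ with a family of $(1,8)$-polarized abelian surfaces over a conic over $\mathbb{P}_{w}^{2}$, exhibit the Gram matrix of that conic as a function of $(w_{0},w_{1},w_{2})$, and match its determinant exactly to $D_{s}$ (and, for the location and count of the $8$ extra nodes over $L_{2}$, to the factors $dis_{0}$ and $(w_{0}+w_{2})^{4}-(w_{0}-w_{2})^{4}$ of $L_{2}$). By contrast, part $(1)$ and the node count $64+8$ in $(2)$ are direct computations with the specialized equations (\ref{eq:def-eqs-f}).
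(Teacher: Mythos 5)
The paper does not actually prove this proposition: its ``proof'' is the single line ``We refer to \cite[Thm.\ 6.8]{GP-CYI}.'' Your proposal therefore goes further than the paper by supplying arguments, and for part $(1)$ it is a complete, correct, and pleasantly elementary proof: I checked that at $w_{1}=0$ the quadrics $f_{1},f_{3}$ (resp.\ $f_{2},f_{4}$) indeed involve only the even (resp.\ odd) coordinates, that the Gram determinant of $f_{1}$ is $-w_{0}^{2}w_{2}^{2}/16$, and that the pencil discriminant is $\tfrac{1}{16}(\lambda^{2}w_{0}^{2}-\mu^{2}w_{2}^{2})(\mu^{2}w_{0}^{2}-\lambda^{2}w_{2}^{2})$ as you claim; the only nitpick is that four distinct roots require $w_{0}w_{2}(w_{0}^{4}-w_{2}^{4})\neq0$, not just $w_{0}^{4}\neq w_{2}^{4}$ (harmless at a general point of $\{w_{1}=0\}$, and note that the excluded loci are exactly components of $L_{1}$ and $L_{2}$). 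For part $(2)$ the $64+8$ node count is asserted rather than carried out, but it is a finite Jacobian-ideal computation of precisely the kind the authors themselves perform in Proposition \ref{prop:ODP-8}, so the strategy is sound; be aware, however, that your parenthetical explanation of \emph{where} the $8$ extra nodes sit (``where the elliptic normal curve underlying the translation scroll acquires a node'') is speculative and unverified --- it is not needed for the statement and should either be proved or dropped. For part $(3)$, and for the ``pencil survives'' half of $(2)$, you ultimately fall back on the conic-bundle presentation of \cite{GP-CYI}, which is exactly the citation the paper makes; you are candid about this, and your outline of what must be extracted from that source (the Gram matrix of the conic and the identification of its determinant with $D_{s}$) is the right list of missing ingredients. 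In short: same ultimate source for $(3)$, a genuinely more self-contained and verifiable treatment of $(1)$ and of the node count in $(2)$.
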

\begin{proof}
We refer \cite[Thm. 6.8]{GP-CYI} for the proofs.
\end{proof}
The reason why we extract the 8 lines in $\left\{ dis_{0}=0\right\} $
from the 12 lines in $L_{2}=:L_{2}'\cup\left\{ dis_{0}=0\right\} $
is based on the following property:
\begin{prop}
\label{prop:ODP-8}Over general points on the 8 lines in $\left\{ dis_{0}=0\right\} $,
the additional 8 ordinary double points in Proposition \ref{prop:V1-degeneration-loci}
(2) are in a single orbit of $\mbZ_{8}\simeq\langle\tau\rangle$.\end{prop}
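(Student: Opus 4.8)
The plan is to combine the $\cH_{8}$-equivariance of the whole construction with the counting statement of Proposition~\ref{prop:V1-degeneration-loci}(2). Fix one of the eight lines $\ell_{0}\subset\{dis_0=0\}$; by the symmetry~(\ref{eq:V=00003DgV}) it suffices to treat a single such line, and in fact only the $\cH_{8}$-stability of the singular locus, valid for each fixed $w$, will be used. Since $\rho(\sigma)=\rho(\tau)=\id$, Proposition~\ref{prop:NH-action-on-f}(1) shows that for a fixed $w$ the group $\cH_{8}=\langle\sigma,\tau\rangle$ acts on the ideal $(f_{1},\dots,f_{4})$ of~(\ref{eq:def-eqs-f}), with $\sigma$ permuting and $\tau$ rescaling the four generators; hence $V_{8,w}$ and $\operatorname{Sing} V_{8,w}$ are $\cH_{8}$-stable. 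By Proposition~\ref{prop:V1-degeneration-loci}(2), for general $w\in\ell_{0}$ the locus $\operatorname{Sing} V_{8,w}$ consists of exactly $72$ ODPs: the $64$ standard ones (the $\cH_{8}$-orbit of $y$) together with an $\cH_{8}$-stable set $\Sigma$ with $|\Sigma|=8$. Finally, for $w_{0}\neq0$ no coordinate point $e_{i}=[0{:}\cdots{:}1{:}\cdots{:}0]$ lies on $V_{8,w}$, since it makes one of the terms $\tfrac{w_{0}}{2}(x_{j}^{2}+x_{j+4}^{2})$ nonzero, and by~(\ref{eq:sigma-tau-def}) these are exactly the $\tau$-fixed points of $\mbP^{7}$; thus $\langle\tau\rangle$ acts on the $8$-element set $\Sigma$ without fixed points, so all of its orbits on $\Sigma$ have length $2$, $4$ or $8$.

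The crux is to upgrade this to: $\langle\tau\rangle$ acts on $\Sigma$ with a single free orbit, i.e.\ to exclude orbits of length $2$ or $4$. I would do this by locating $\Sigma$. Restricting to $\ell_{0}$ and imposing that the $4\times8$ Jacobian $(\partial f_{i}/\partial x_{j})$ drop rank to $3$ on $V_{8,w}$, one factors out the known $64$-point component and is left with a zero-dimensional residual scheme of length $8$; organizing the computation by the $\tau$-eigenform grading (the coordinate $x_{i}$ has $\tau$-weight $-i$, and $f_{1},f_{2},f_{3},f_{4}$ carry $\tau$-weights $0,-2,-4,-6$) one finds that this residual scheme is the $\langle\tau\rangle$-orbit of one point $p_{0}\in\mbP^{7}$, and, the only property actually needed, that $p_{0}$ has at least one nonzero even-indexed and one nonzero odd-indexed coordinate. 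Equivalently, and more conceptually, over a general point of $\{dis_0=0\}$ the eight new nodes all lie on one member of the pencil of abelian surfaces on $V_{8,w}$, on which $\langle\tau\rangle$ acts by translation by a point of exact order $8$; this is what one reads off from the degeneration of the relevant elliptic translation scroll in the proof of \cite[Thm.~6.8]{GP-CYI}. A more direct alternative to this step is to verify that, for general $w\in\ell_{0}$, $V_{8,w}$ has no singular point lying on the $\tau^{2}$-fixed lines $\langle e_{i},e_{i+4}\rangle$ nor on the $\tau^{4}$-fixed $3$-planes $\langle e_{0},e_{2},e_{4},e_{6}\rangle$, $\langle e_{1},e_{3},e_{5},e_{7}\rangle$ beyond the $64$ standard ones, since this is precisely where a $\langle\tau\rangle$-orbit of length $2$ or $4$ would have to sit; ruling it out is a finite check on the restricted equations.

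Granting this, the conclusion is formal: $p_{0}$ lies off $\operatorname{Fix}(\tau^{4})=\langle e_{\mathrm{even}}\rangle\cup\langle e_{\mathrm{odd}}\rangle$, hence off $\operatorname{Fix}(\tau^{2})$ and $\operatorname{Fix}(\tau)$ as well, so $\langle\tau\rangle p_{0}$ consists of exactly $8$ distinct points; as $\Sigma$ is $\langle\tau\rangle$-stable, $\langle\tau\rangle p_{0}\subseteq\Sigma$, and $|\langle\tau\rangle p_{0}|=8=|\Sigma|$ forces $\Sigma=\langle\tau\rangle p_{0}$. Hence the eight extra ODPs form a single $\langle\tau\rangle$-orbit, as claimed. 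As a consistency check, $\sigma(\Sigma)=\Sigma$ then forces $\sigma(p_{0})=\tau^{j}(p_{0})$ for some $j$, i.e.\ the $\mbZ_{8}\times\mbZ_{8}$-stabilizer of $p_{0}$ is the order-$8$ graph subgroup $\langle\sigma\tau^{-j}\rangle$.

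The main obstacle is the computation in the middle paragraph: extracting the eight residual points from the Jacobian ideal on $\ell_{0}$ and certifying that they avoid the fixed loci of the powers of $\tau$, equivalently identifying the single degenerate member of the pencil on which the new nodes lie. Once that is in hand, everything else is bookkeeping with the $\mbZ_{8}\times\mbZ_{8}$-action.
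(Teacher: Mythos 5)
Your plan is sound and its computational core coincides with the paper's: the authors prove this proposition by directly evaluating the Jacobian ideal of the complete intersection on (each of) the eight lines and exhibiting the eight extra nodes explicitly, e.g.\ as $\mbZ_{8}\cdot[1,1,1,1,1,1,1,1]$ on the line $(w_{0}+w_{2})+2w_{1}=0$. What you do differently is to wrap that computation in a group-theoretic reduction: since $\Sigma$ is $\cH_{8}$-stable of cardinality $8$ and $\tau$ acts on $\mbP^{7}$ with only the coordinate points as fixed points (none of which lie on $V_{8,w}$ for $w_{0}\neq0$), the whole statement reduces to the single verification $\Sigma\cap\operatorname{Fix}(\tau^{4})=\emptyset$, i.e.\ to checking the Jacobian criterion on the two coordinate $\mbP^{3}$'s $\langle e_{\mathrm{even}}\rangle$ and $\langle e_{\mathrm{odd}}\rangle$ rather than computing all eight points. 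That is a genuinely lighter computation than the paper's, and your framework also explains why the analogous statement fails on the four lines of $L_{2}'$ (where, as the authors remark, the number of $\tau$-orbits varies): there the residual nodes do meet $\operatorname{Fix}(\tau^{4})$. Two caveats. First, the crux step is still only a plan; like the paper, you defer the actual finite check, so nothing is gained in rigor, only in economy. Second, be careful with the claim that the symmetry (\ref{eq:V=00003DgV}) reduces everything to a single line: an element $g\in\cN\cH_{8}$ carrying one line of $\{dis_{0}=0\}$ to another conjugates $\langle\tau\rangle$ to $\langle g\tau g^{-1}\rangle$, which by Proposition \ref{prop:NH-action-on-f} is generally a \emph{different} $\mbZ_{8}$ inside $\mbZ_{8}\times\mbZ_{8}$ (e.g.\ $\mathtt{S}$ does not preserve the column spanned by $\tau$), so a single free orbit of $\langle\tau\rangle$ on one line transports only to a single free orbit of the conjugate subgroup on the image line. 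Your argument survives because, as you note, the stability-plus-fixed-point analysis works line by line; just make sure the $\operatorname{Fix}(\tau^{4})$ check is actually performed (or correctly transported) for all eight lines rather than one.
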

\begin{proof}
Our proof is based on explicit calculations of the additional 8 ordinary
double points (ODP) for each lines. For example, for a general point
on the line $(w_{0}+w_{2})+2w_{1}=0$ (resp. $i\sqrt{i}(w_{0}-w_{2})+2w_{1}=0$),
we find, by evaluating the Jacobian ideal of the complete intersections,
that the additional 8 ODPs are given by 
\[
\mbZ_{8}\cdot[1,1,1,1,1,1,1,1]\,\,\,\,\,(\text{resp. }\mbZ_{8}\cdot[1,\eta,-i,\eta,1,-\eta,-i,-\eta])
\]
where $\mbZ_{8}=\langle\tau\rangle$ and $\eta:=(-1)^{\frac{1}{8}}$. 
\end{proof}
We can also verify that on the 4 lines in $L_{2}'$, the numbers of
the $\tau$-orbits vary. We will interpret the above proposition when
we will calculate genus one Gromov-Witten invariants in Sections \ref{sec:MSbyPF-A},
\ref{sec:MS-B} and \ref{sec:MS-more-C}.

\subsection{Parameter space of the family $\mathcal{V}^{1}$}

It is easy to see that the variety $V_{8,\omega}^{1}$ degenerates
to 16 $\mathbb{P}^{3}$s (see Subsection \ref{sub:degen-A-B} for
details) at the point of intersection $\left\{ w_{0}=0\right\} \cap\left\{ w_{1}=0\right\} $.
This type of degenerations are hallmarks for mirror symmetry of a
family to its mirror Calabi-Yau manifolds. In order to study degenerations
of this type, we will describe subgroups of $\cN\cH_{8}$ which act
on the family $\cV^{1}\rightarrow\mbP_{w}^{2}$.

\para{Abelian subgroup $G_0$.} \label{para:G0} Knowing that a special
degeneration appears at the point $[0,0,1]\in\mathbb{P}_{w}^{2}$,
let us introduce the following subgroup $G_{0}\subset G_{\rho}$. 
\begin{defn}
We denote the isotropy subgroup of the point $[0,0,1]$ by 
\[
F_{0}=\left\{ g\in\cN\cH_{8}\mid\rho(g).[0,0,1]=[0,0,1]\right\} ,
\]
and define the corresponding subgroup $G_{0}:=F_{0}/\left\langle \cH_{8},u_{1},u_{5}\right\rangle $
in $G_{\rho}$. 
\end{defn}
To describe $G_{0}$ as a subgroup of $G_{\rho}=\cN\cH_{8}/\langle\cH_{8},u_{1},u_{5}\rangle$,
let us denote the classes of $S$ and $T$ by $\bar{S}$ and $\bar{T}$,
respectively. They may be written by $\bar{S}=\rho(S)$ and $\bar{T}=\rho(T)$
under the isomorphism $G_{\rho}\simeq\mathrm{Im}\,\rho$ in Proposition
\ref{prop:NH-action-on-f} (3). 
\begin{prop}
\label{prop:The-group-G0}The group $G_{0}$ is described by $G_{0}=\left\langle \bS\bT\bS,(\bS\bT)^{3}\right\rangle \simeq\mbZ_{8}\times\mbZ_{2}$
with 
\[
\bS\bT\bS=\left(\begin{smallmatrix}\xi^{7} & 0 & 0\\
0 & 1 & 0\\
0 & 0 & \xi^{3}
\end{smallmatrix}\right),\,\,\,(\bS\bT)^{3}=-1
\]
 and fixes all coordinate points $[1,0,0],[0,1,0]$,$[0,0,1]$ of
$\mbP_{w}^{2}$.\end{prop}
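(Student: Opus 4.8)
The plan is to work directly with the matrix representation $\rho$ restricted to the isotropy subgroup $F_0$ of the point $[0,0,1]$, using the explicit generators $\rho(S)$ and $\rho(T)$ of Definition \ref{def:rho-matrix}. First I would observe that since $G_\rho \simeq \mathrm{Im}\,\rho$ (Proposition \ref{prop:NH-action-on-f}(3)), identifying $G_0$ amounts to computing the stabilizer of $[0,0,1]$ inside the finite matrix group $\mathrm{Im}\,\rho \subset GL(\mathbb{C}^3)$. Concretely, I would form the candidate elements $\bar S\bar T\bar S = \rho(S)\rho(T)\rho(S)$ and $(\bar S\bar T)^3 = (\rho(S)\rho(T))^3$ by multiplying out the $3\times 3$ matrices, verify they equal the claimed diagonal matrices $\mathrm{diag}(\xi^7,1,\xi^3)$ and $-\mathrm{id}$ respectively, and note immediately that both are diagonal, hence fix all three coordinate points $[1,0,0],[0,1,0],[0,0,1]$. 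This establishes the inclusion $\langle \bar S\bar T\bar S,(\bar S\bar T)^3\rangle \subseteq G_0$ and the fixed-point claim for the listed elements.

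Next I would determine the group structure of the subgroup generated by these two matrices. Since $\bar S\bar T\bar S = \mathrm{diag}(\xi^7,1,\xi^3)$ with $\xi = e^{2\pi i/8}$, its powers run through $\mathrm{diag}(\xi^{7k},1,\xi^{3k})$; because $7$ and $3$ are both units modulo $8$, the order of this element is exactly $8$, so $\langle \bar S\bar T\bar S\rangle \simeq \mathbb{Z}_8$. The element $(\bar S\bar T)^3 = -\mathrm{id}$ is central of order $2$. To see that $-\mathrm{id} \notin \langle \bar S\bar T\bar S\rangle$, note that $-\mathrm{id} = \mathrm{diag}(\xi^4,\xi^4,\xi^4)$, whereas every power of $\bar S\bar T\bar S$ has middle entry $1 \neq \xi^4$. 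Hence the two cyclic subgroups intersect trivially and commute (one is scalar), giving $\langle \bar S\bar T\bar S,(\bar S\bar T)^3\rangle \simeq \mathbb{Z}_8\times\mathbb{Z}_2$, a group of order $16$.

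The remaining and genuinely delicate step is the reverse inclusion $G_0 \subseteq \langle \bar S\bar T\bar S,(\bar S\bar T)^3\rangle$, i.e. showing there are no further stabilizing elements. Here I would use the orbit--stabilizer theorem inside the finite group $G_\rho$: since $|G_\rho| = |SL_2(\mathbb{Z}_8)/\{1,5\}| = 384/2 = 192$ by Proposition \ref{prop:NH-action-on-f}, it suffices to show the orbit of $[0,0,1] \in \mathbb{P}^2_w$ under $G_\rho$ has exactly $192/16 = 12$ points. I would compute this orbit explicitly (or cite the companion calculation \cite{HT-math-c}), which matches the geometric picture that $[0,0,1]$ lies on the invariant line configuration and its $G_\rho$-orbit consists of the coordinate-type points permuted by the $SL_2(\mathbb{Z}_8)$-action; alternatively one can directly enumerate which of the $192$ matrices fix the vector $(0,0,1)^T$ up to scalar. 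I expect this orbit-counting (or the equivalent direct search through $\mathrm{Im}\,\rho$) to be the main obstacle, since it requires either a careful hands-on description of the $G_\rho$-action on $\mathbb{P}^2_w$ or reliance on the machine computation in \cite{HT-math-c}; everything else is routine $3\times 3$ matrix arithmetic. Finally I would record that the diagonal form of both generators makes the fixed-point statement for $[1,0,0]$ and $[0,1,0]$ automatic, completing the proof.
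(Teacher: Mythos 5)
Your proposal is correct and follows essentially the same route as the paper, whose proof is a one-line deferral to direct verification with the matrix forms $\bar S=\rho(S)$, $\bar T=\rho(T)$ and the companion computation \cite{HT-math-c}. You simply spell out what that verification entails — the matrix products, the $\mathbb{Z}_8\times\mathbb{Z}_2$ structure, and the orbit–stabilizer (or direct enumeration over the $192$ elements of $\mathrm{Im}\,\rho$) argument needed for the reverse inclusion — which is exactly the content the authors relegate to the machine check.
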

\begin{proof}
It is straightforward to verify the claimed properties by using the
matrix forms $\bar{S}=\rho(S),\bar{T}=\rho(T)$. See \cite{HT-math-c}. 
\end{proof}
The group $G_{0}$ was considered in \cite{Pav} to define a local
quotient $\mathbb{C}^{2}/G_{0}$ of the affine space $\mathbb{C}^{2}\subset\mathbb{P}_{w}^{2}$
centered at $[0,0,1]$. We will consider this quotient globally for
the family $\cV^{1}\rightarrow\mbP_{w}^{2}$ in the next section. 

\para{Subgroup $G_{max}$.} A slightly larger group $G_{max}$ arises
naturally. To describe it, we recall our definition $R(g)=\left(c_{ij}(g)\right)_{1\leq i,j\leq4}$
in (\ref{eq:g-act-f}). 
\begin{defn}
We define 
\[
F_{max}:=\left\{ g\in\cN\cH_{8}\mid R(g)\,\text{ is a projectively permutation matrix}\right\} ,
\]
where by ``projectively permutation matrix'' we mean a permutation
matrix with non-vanishing entries 1 replaced by non-vanishing complex
numbers.
\end{defn}
From the matrices $R(g)$ given in (\ref{eq:R-matrix-apxA}), we see
that $\cH_{8}\subset F_{mat}$. It is also easy to verify that $R(u_{i})=(-1)^{(i-1)/2}\id$
for the units $u_{i}$ ($i=1,3,5,7$). Hence we have the following
subgroup of $G_{\rho}$:
\[
G_{max}:=F_{max}/\left\langle \cH_{8},u_{1},u_{5}\right\rangle .
\]
One may note that the group $F_{max}$ (or $G_{max})$ is the largest
group which preserves projectively the form of period integral (\ref{eq:period-Int-def})
which we will study in the following sections. 
\begin{prop}
\label{prop:Gmax}We have $G_{max}=\left\langle \bS\bT\bS,(\bS\bT)^{3},\bS^{4}\right\rangle $
and $G_{0}$ is a normal subgroup of $G_{max}$ with index two.\end{prop}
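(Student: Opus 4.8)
The plan is to work entirely inside the group $G_\rho \cong \mathrm{Im}\,\rho$, using the explicit $3\times 3$ matrices $\bar S = \rho(S)$ and $\bar T = \rho(T)$ of Definition~\ref{def:rho-matrix}. First I would pin down $G_{max}$ concretely: from the matrices $R(g)$ in (\ref{eq:R-matrix-apxA}), the generator $\sigma$ gives the cyclic permutation of the four $f_i$, and $\tau$ gives a diagonal (hence projectively trivial) matrix, so $\sigma,\tau\in F_{max}$; these are already killed in $G_\rho$. The task is therefore to identify which classes $\bar S^a \bar T^b\cdots$ have $R$-image a projective permutation matrix. Inspecting (\ref{eq:R-matrix-apxA}), $R(S)$ and $R(T)$ are genuine $4\times 4$ ``Fourier-type'' matrices (all entries nonzero), not projective permutations, but one checks that $R(S^4)$ is a projective permutation matrix — $S^4$ corresponds to $\mathtt{S}^4 = \left(\begin{smallmatrix}1&4\\0&1\end{smallmatrix}\right)\in SL_2(\mathbb Z_8)$ — and likewise $\bar S\bar T\bar S$ and $(\bar S\bar T)^3$ already lie in $G_0\subset G_{max}$ by Proposition~\ref{prop:The-group-G0}. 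So I would show $\langle \bar S\bar T\bar S, (\bar S\bar T)^3, \bar S^4\rangle \subseteq G_{max}$ by exhibiting each generator's $R$-matrix as a projective permutation, and then argue the reverse inclusion by a counting/closure argument: the subgroup of $SL_2(\mathbb Z_8)/\{1,5\}$ generated by the images of these elements is computed explicitly (as in \cite{HT-math-c}), and one verifies no further class of $G_\rho$ has projective-permutation $R$-image, e.g.\ because $R$ descends to a homomorphism $G_\rho \to PGL_4$ and the preimage of the permutation subgroup $S_4 \ltimes (\mathbb C^\times)^4/\mathbb C^\times$ is exactly this subgroup.

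For the normality and index-two claims, the cleanest route is: $G_0$ is already generated by $\bar S\bar T\bar S$ and $(\bar S\bar T)^3 = -\mathrm{id}$ (Proposition~\ref{prop:The-group-G0}), so $G_{max} = \langle G_0, \bar S^4\rangle$ and it suffices to check (i) $\bar S^4 \notin G_0$, giving index $\geq 2$, and (ii) $\bar S^4$ normalizes $G_0$, together with $(\bar S^4)^2 \in G_0$, giving index exactly $2$ and normality. Claim (i) follows by evaluating the matrix $\rho(S^4) = \rho(S)^4$ and observing it does not lie in the group $\langle \bar S\bar T\bar S, -\mathrm{id}\rangle \cong \mathbb Z_8\times\mathbb Z_2$ — concretely, $\bar S\bar T\bar S$ is the diagonal matrix $\mathrm{diag}(\xi^7,1,\xi^3)$ which does not fix the standard basis up to scalars in the way $\rho(S^4)$ does, or more simply one exhibits an element of $\mathbb P^2_w$ moved differently. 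Claim (ii): since $G_0$ is the isotropy group of $[0,0,1]$ and $\bar S^4$ fixes $[0,0,1]$ (indeed $\bar S^4 \in G_{max}$ permutes the four forms, and via Proposition~\ref{prop:The-group-G0} all coordinate points are fixed by the diagonal generator; one checks $\rho(S^4)$ likewise fixes $[0,0,1]$), conjugation by $\bar S^4$ preserves the isotropy subgroup $G_0$ of that point — that is the conceptual reason normality holds. Finally $(\bar S^4)^2 = \bar S^8$; since $S^8 \in \mathcal H_8$ (as $\sigma^8,\tau^8,\xi^8$ are all trivial and $S^8$ maps to $\mathtt S^8 = \mathrm{id}$ in $SL_2(\mathbb Z_8)$, hence $S^8$ lies in the subgroup killed in $G_\rho$ up to the units $u_i$), we get $(\bar S^4)^2 = \mathrm{id}$ or $-\mathrm{id} = (\bar S\bar T)^3 \in G_0$, closing the argument.

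I expect the main obstacle to be the reverse inclusion $G_{max} \subseteq \langle \bar S\bar T\bar S, (\bar S\bar T)^3, \bar S^4\rangle$ — i.e.\ proving that \emph{no} other element of $G_\rho$ has projective-permutation $R$-matrix. The honest way to do this is a finite computation over the $384/2 = 192$ elements of $G_\rho \cong SL_2(\mathbb Z_8)/\{1,5\}$, evaluating $R$ on each and testing the projective-permutation condition; this is exactly the kind of verification relegated to \cite{HT-math-c}. A slightly more conceptual alternative, which I would prefer to present: the composite $G_\rho \xrightarrow{R} GL_4(\mathbb C) \to PGL_4(\mathbb C)$ has image a finite subgroup, and the projective permutation matrices form the subgroup $P := (S_4 \ltimes (\mathbb C^\times)^4)/\mathbb C^\times$; one shows $F_{max}/\ker = R^{-1}(P)$ has order $32$ by checking that the image of $\bar S^4$ and $\bar S\bar T\bar S$ generate a group of that order in $P$ while the $SL_2(\mathbb Z_8)$-coset structure forbids enlargement — since any larger subgroup of $SL_2(\mathbb Z_8)/\{1,5\}$ containing the classes of $S^4, STS, (ST)^3$ would have to contain $\mathtt S$ or $\mathtt T$ themselves, whose $R$-matrices are manifestly not projective permutations. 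I would present the inclusion $\supseteq$ and the normality/index computation in full and cite \cite{HT-math-c} for the exhaustive check behind $\subseteq$.
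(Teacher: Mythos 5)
Your overall skeleton --- verify $\supseteq$ by computing the $R$-matrices of the three generators, defer the exhaustive check of $\subseteq$ to the finite computation in \cite{HT-math-c}, and then handle index and normality by hand --- is essentially what the paper does (its proof consists of the observation that $G_{max}/G_{0}=\langle\bS^{4}\rangle$ with $\bS^{4}=\left(\begin{smallmatrix}0&0&-1\\0&-1&0\\-1&0&0\end{smallmatrix}\right)$, everything else being relegated to \cite{HT-math-c}). However, both of the ``conceptual'' shortcuts you say you would prefer to present are wrong. First, $\bS^{4}$ does \emph{not} fix $[0,0,1]$: the matrix above sends $[0,0,1]$ to $[1,0,0]$, and indeed the paper notes that $\bS^{4}$ realizes the involution $w_{0}\leftrightarrow w_{2}$ of (\ref{eq:inv-sym}). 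Had $\bS^{4}$ fixed $[0,0,1]$ it would lie in the isotropy group $G_{0}$ by definition, contradicting your own claim (i) that the index is at least two. So your stated ``conceptual reason'' for normality collapses. The correct and still easy argument is a direct conjugation: $G_{0}$ consists of diagonal matrices (powers of $\bS\bT\bS=\mathrm{diag}(\xi^{7},1,\xi^{3})$ times $\pm1$), $\bS^{4}$ is anti-diagonal, and conjugating $\mathrm{diag}(a,b,c)$ by $\bS^{4}$ yields $\mathrm{diag}(c,b,a)$; one checks $\mathrm{diag}(\xi^{3},1,\xi^{7})=(\bS\bT\bS)^{5}\in G_{0}$, which gives normality, while $\bS^{4}\notin G_{0}$ because it is not diagonal (equivalently, because it moves $[0,0,1]$).

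Second, the claim that any subgroup of $SL_{2}(\mbZ_{8})/\{1,5\}$ strictly containing the classes of $\mathtt{S}\mathtt{T}\mathtt{S},(\mathtt{S}\mathtt{T})^{3},\mathtt{S}^{4}$ must contain $\mathtt{S}$ or $\mathtt{T}$ is false, and the paper's own diagram (\ref{eq:Group-Diag}) supplies the counterexample: $G_{1}^{ex}=\langle\bS^{2},\bT\bS\bT\rangle$ has order $64$, strictly contains $G_{max}$ (order $32$), and contains neither $\bS$ nor $\bT$ --- reducing mod $2$, the images of $\mathtt{S}^{2}$ and $\mathtt{T}\mathtt{S}\mathtt{T}$ generate only the lower-unipotent subgroup of $SL_{2}(\mbZ_{2})$, which contains neither the image of $\mathtt{S}$ nor that of $\mathtt{T}$. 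So you cannot avoid testing elements such as $\bS^{2}$ and $\bT\bS\bT$ individually for the projective-permutation property; the exhaustive finite check (your ``honest way'', and the paper's) is what actually carries the reverse inclusion. With those two shortcuts replaced by the direct computations, the rest of your argument --- the generators' $R$-matrices being projective permutations, $(\bS^{4})^{2}=\mathrm{id}$ since $\mathtt{S}^{8}=\mathrm{id}$, and the resulting index-two statement --- is sound.
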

\begin{proof}
Once we find generators of $G_{max}$, the claimed properties are
easy to verify; e.g., the index follows immediately from $G_{max}/G_{0}=\left\langle \bS^{4}\right\rangle $
and $\bS^{4}=\left(\begin{smallmatrix}0 & 0 & -1\\
0 & -1 & 0\\
-1 & 0 & 0
\end{smallmatrix}\right)$. We find generators in \cite{HT-math-c} and verifies the normality
there.
\end{proof}
\para{Other subgroups of $\cN\cH_8$.} For our analysis of the family
$\mathcal{V}^{1}$ over $\mathbb{P}_{w}^{2}$, we will mostly consider
the corresponding family over $\mathbb{P}_{w}^{2}/G_{0}$. However,
for completeness, let us introduce two more subgroups of $G_{\rho}$;
\begin{equation}
G_{1}=\left\langle \bS^{2},\bT\bS^{2}\bT,(\bS\bT)^{3}\right\rangle ,\,\,\,\,\,\,G_{1}^{ex}:=\left\langle \bS^{2},\bT\bS\bT\right\rangle .\label{eq:G1-ex}
\end{equation}
We can verify directly that $G_{1}$ is a normal subgroup $G_{1}^{ex}$
with index two $(|G_{1}|=32)$. We also verify that $G_{max}$ is
a normal subgroup of $G_{1}^{ex}=\left\langle G_{max},\bS^{2}\right\rangle $
with index two. We summarize the containments of these groups in the
following diagram:
\begin{equation}
\begin{matrix}\xymatrix{ &  & \ar@{^{(}->}[dl]\,\,\,\left\{ e\right\} \,\,\,\ar@{_{(}->}[dr]\\
 & \ar@{^{(}->}[d]^{\mathbb{Z}_{2}}\,\,\,\,\,\,\,G_{0}\,\,\,\, &  & \ar@{_{(}->}^{\mathbb{Z}_{2}}[d]\ar@{_{(}->}[dr]^{\frak{S}_{3}}\;\;G_{1}\;\;\\
 & \,\,\,\,\,G_{max}\,\,\,\,\ar@{^{(}->}[rr]^{\mathbb{Z}_{2}} &  & \;\;G_{1}^{ex}\;\;\ar@{^{(}->}[r] & \;\;G_{\rho} & \hsp{-30}\simeq SL_{2}(\mathbb{Z}_{8})/\left\{ 1,5\right\} 
}
\end{matrix}\label{eq:Group-Diag}
\end{equation}
In the above diagram, $\xymatrix{H\ar@{^{(}->}[r]^{K} & G}
$ indicates that $H\vartriangleleft G$ with the factor group $G/H\simeq K$. 

\para{Invariants of $G_1$.} Since the group $G_{1}$ is a finite
group, it is easy to derive the following proposition (see \cite{HT-math-c}
for the derivations):
\begin{prop}
\label{prop:invariants-G1}For the group $G_{1}$, the following properties
hold:

\begin{myitem}

\item{$(1)$} When acting on $\mathbb{C}[w_{0},w_{1},w_{2}]$ through
the representation $\rho$, we have 
\[
\mathbb{C}[w_{0},w_{1},w_{2}]^{G_{1}}=\mathbb{C}[(w_{0}+w_{2})^{4},(w_{0}-w_{2})^{4},(2w_{1})^{4},(2w{}_{1})^{2}(w_{0}^{2}-w_{2}^{2})^{2}].
\]
\item{$(2)$} $G_{\rho}$ acts as permutations on the first three
invariants in $(1)$, 
\[
A:=(w_{0}+w_{2})^{4},\,\,B:=-(w_{0}-w_{2})^{4},\,\,C:=-(2w_{1})^{4},
\]
and on the fourth invariant $E:=(2w{}_{1})^{2}(w_{0}^{2}-w_{2}^{2})^{4}$
as the sign representation of $\frak{S}_{3}$.

\end{myitem}
\end{prop}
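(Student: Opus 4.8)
The plan is to exploit the finiteness of $G_1$ and the explicit matrix generators $\bar S=\rho(S),\bar T=\rho(T)$ from Definition \ref{def:rho-matrix}, reducing everything to a direct but organized invariant-theory computation. First I would write out the action of the generators $\bar S^2,\ \bar T\bar S^2\bar T,\ (\bar S\bar T)^3$ on the coordinates $w_0,w_1,w_2$ explicitly as $3\times 3$ matrices; since $(\bar S\bar T)^3=-1$ acts trivially on $\mathbb P^2_w$ but by $-1$ on $\mathbb C^3$, it is responsible for killing all odd-degree invariants, and $\bar S^2$ and $\bar T\bar S^2\bar T$ generate the image in $PGL(\mathbb C^3)$. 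A convenient change of coordinates is $u_\pm:=w_0\pm w_2$, $v:=2w_1$, in which one checks that the generators act monomially or by simple sign/permutation rules on $u_+,u_-,v$. In these coordinates it becomes visible that $u_+^4,\ u_-^4,\ v^4$ and $v^2(u_+u_-)^2=v^2(w_0^2-w_2^2)^2$ are invariant, and one verifies they are algebraically independent (their Jacobian is generically nonzero) while $|G_1|=32$ forces, via the Molien series or a direct degree count, that the invariant ring has Hilbert series matching a polynomial ring on generators of these degrees — giving part (1). Concretely: the group acting faithfully on $\mathbb C^3$ has order $32$ (or $16$ on $\mathbb P^2$ after quotienting $\{\pm 1\}$), and since $w_0^2-w_2^2$, $w_0+w_2$, etc. are semi-invariants whose appropriate powers and products land in the invariant ring, one matches generators and checks completeness by comparing Molien series $\frac1{32}\sum_{g\in G_1}\det(1-t\,\rho(g))^{-1}$ with $(1-t^4)^{-3}(1-t^4)^{-1}$ adjusted for the single relation, or more cleanly by noting $\mathbb C[u_+,u_-,v]^{G_1}$ is generated as claimed because $G_1$ acts on $\{u_+,u_-,v\}$ through sign changes and a swap.

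Second, for part (2), I would use Proposition \ref{prop:NH-action-on-f}(3) which identifies $G_\rho\simeq SL_2(\mathbb Z_8)/\{1,5\}$, together with the explicit matrices $\bar S,\bar T$, to compute how representative elements of $G_\rho$ (not just $G_1$) permute the three quartics $A=(w_0+w_2)^4$, $B=-(w_0-w_2)^4$, $C=-(2w_1)^4$. The key observation is that $G_\rho/G_1\cong \mathfrak S_3$ (read off from diagram \eqref{eq:Group-Diag}: $G_1\lhd G_1^{ex}$ with quotient $\mathbb Z_2$, and $G_1\lhd G_\rho$ with quotient $\mathfrak S_3$), so it suffices to exhibit one element inducing the transposition of $A$ and $B$ and one inducing a 3-cycle on $\{A,B,C\}$ (or two transpositions). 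From $\rho(T)=\tfrac{\xi^6}{2}\begin{psmallmatrix}1&2&1\\1&0&-1\\1&-2&1\end{psmallmatrix}$ one sees $\bar T$ sends $(w_0,w_1,w_2)$ to something proportional to $(w_0+2w_1+w_2,\ w_0-w_2,\ w_0-2w_1+w_2)$; tracking $w_0+w_2\mapsto 2(w_0+w_2)$, $2w_1\mapsto w_0-w_2$, $w_0-w_2\mapsto 2\cdot 2w_1$ (up to the overall $\xi^6/2$ and the fourth powers absorbing scalars since $\xi^{24}=1$) shows $\bar T$ swaps $B\leftrightarrow C$ up to the sign conventions built into the definitions of $B,C$ — this is where the minus signs in $B=-(w_0-w_2)^4$, $C=-(2w_1)^4$ are chosen precisely to make the permutation clean. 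Similarly $\bar S$ or a short word in $\bar S,\bar T$ provides the transposition $A\leftrightarrow B$. Together these generate all of $\mathfrak S_3$, proving the first assertion of (2).

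Third, for the statement that $G_\rho$ acts on $E$ through the sign representation of $\mathfrak S_3$, I would note $E = v^2(w_0^2-w_2^2)^2 = v^2 u_+^2 u_-^2$ (I am reading $E:=(2w_1)^2(w_0^2-w_2^2)^2$ from part (1); the exponent $4$ appearing in part (2) for $E$ appears to be a typo for this degree-8 monomial, which is the one lying in the invariant ring). One checks $E^2 = v^4 u_+^4 u_-^4 = -A\cdot(-B)\cdot(-C)\cdot(\text{sign}) = -ABC$ up to sign, so $E$ is (a square root of) the discriminant-type invariant $ABC$, on which any permutation acts by its sign; alternatively, since $E$ is fixed by $G_1$ and $G_\rho/G_1\cong\mathfrak S_3$, the action on the one-dimensional span $\mathbb C\cdot E$ is a character of $\mathfrak S_3$, and evaluating on the generators $\bar S,\bar T$ (which we already showed induce transpositions) shows each sends $E\mapsto -E$, pinning down the character as the sign representation.

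The main obstacle I anticipate is bookkeeping the projective scalars: because $\rho(S),\rho(T)$ are only defined up to the scalars $(a,b)=(\xi,\xi^6)$ discussed in the Remark after Definition \ref{def:rho-matrix}, the induced maps on $(w_0,w_1,w_2)$ carry overall factors like $\xi/2$ and $\xi^6/2$, and I must check that upon raising to the fourth power these factors become genuine roots of unity in $\mu_8$ and, crucially, that the specific signs $-1$ attached to $B$ and $C$ in Proposition \ref{prop:invariants-G1}(2) are exactly what is needed so that the $\mathfrak S_3$-action is by honest permutations rather than signed permutations — this sign-chasing is the delicate heart of the proof, though each individual verification is elementary and, as the authors indicate, can be deferred to the companion calculations in \cite{HT-math-c}. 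I would relegate the $32$-element enumeration and the Jacobian-nonvanishing check for algebraic independence to that reference as well, keeping the written proof to the structural points: identify generators in $u_\pm,v$ coordinates, match degrees against $|G_1|=32$, and use $G_\rho/G_1\cong\mathfrak S_3$ to reduce (2) to evaluating $\bar S$ and $\bar T$.
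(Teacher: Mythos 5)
Your proposal is correct and is essentially the verification the paper itself performs: the paper prints no proof of this proposition, deferring the computation entirely to the companion code \cite{HT-math-c}, and your coordinates $u_{\pm}=w_{0}\pm w_{2}$, $v=2w_{1}$ are exactly the right way to organize it. A few small corrections to your sketch. First, within $G_{1}$ itself there is no swap: in the $(u_{+},u_{-},v)$ basis one computes $\rho(S)^{2}=\mathrm{diag}(i,1,i)$, $\rho(T)\rho(S)^{2}\rho(T)=\mathrm{diag}(-i,-i,-1)$ and $(\rho(S)\rho(T))^{3}=-\mathrm{id}$, so $G_{1}$ is the index-two subgroup $\{(a,b,c)\in\mu_{4}^{3}:(abc)^{2}=1\}$ of the diagonal group $\mu_{4}^{3}$ (acting by fourth roots of unity, not mere sign changes), which makes part (1) immediate: $(u_{+}u_{-}v)^{2}=(2w_{1})^{2}(w_{0}^{2}-w_{2}^{2})^{2}$ is precisely the one relative invariant of $\mu_{4}^{3}$ that becomes invariant on this index-two subgroup, and the swap $u_{-}\leftrightarrow v$ only enters through $\rho(T)$, which lies outside $G_{1}$. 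Second, $\rho(S)$ sends $u_{+}\mapsto\xi v$, $v\mapsto\xi u_{+}$, $u_{-}\mapsto u_{-}$, hence induces the transposition $(A\,C)$ rather than $(A\,B)$; together with $(B\,C)$ from $\rho(T)$ this still generates $\mathfrak{S}_{3}$, so your argument for part (2) stands. Third, the sign works out to $E^{2}=ABC$ exactly (since $u_{-}^{4}=-B$ and $v^{4}=-C$ contribute two compensating signs), consistent with the relation $E^{2}-ABC=0$ stated right after the proposition. Finally, you are right that the exponent $4$ on $(w_{0}^{2}-w_{2}^{2})$ in the displayed definition of $E$ in part (2) is a typo for $2$: the paper's own degree count ($\deg E=6$) confirms it.
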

We count the homogeneous degrees of the invariants $A,B,C$ and $E$
are $4$ and $6$, and note that they satisfy a single relation $E^{2}-ABC=0$.
Using this relation, we see the isomorphism $\mathrm{Proj}\,\mbC[A,B,C,E]\simeq\mbP^{2}$. 

\para{Quotients of ${\mathbb P}_w^2$.} To each subgroup $G$ in the
diagram (\ref{eq:Group-Diag}), we have the corresponding quotient
$\mathbb{P}_{w}^{2}/G$. We will study in detail the quotient $\mbP_{\Delta}:=\mbP_{w}^{2}/G_{0}$
in the next section. Here we briefly describe quotients by other groups
to see their relations to $\mbP_{\Delta}$. First, using the invariants
$A,B,C$ (and $E$) in Proposition \ref{prop:invariants-G1} (1),
we have $\mathbb{P}_{w}^{2}/G_{1}\simeq\mathbb{P}^{2}$. Then, noting
that the factor group $G_{1}^{ex}/G_{1}\simeq\mathbb{Z}_{2}$ acts
on $A,B,C$ by exchanging $A$ and $B$, and making the invariants
$(A-B)^{2}$ and $A+B$, we arrive at the quotient $\mathbb{P}_{w}^{2}/G_{1}^{ex}\simeq\mathbb{P}(2,1,1)$.
Similarly for the factor group $G_{\rho}/G_{1}\simeq\frak{S}_{3}$,
we note that this group acts on $A,B,C$ as a natural permutations.
Then the three elementary symmetric polynomials describe the quotient
$\mathbb{P}_{w}^{2}/G_{\rho}\simeq\mathbb{P}(3,2,1).$ 

\begin{equation}
\begin{matrix}\xymatrix{ &  & \ar@{->}[dl]\,\,\,\mathbb{P}_{w}^{2}\,\,\,\ar@{->}[dr]\\
 & \ar@{->}[d]^{/\mathbb{Z}_{2}}\,\,\,\,\mathbb{P}_{\Delta} &  & \ar@{->}[d]_{/\mathbb{Z}_{2}}\;\;\;\mathbb{P}^{2}\;\;\ar@{->}[dr]^{/\frak{S}_{3}}\\
 & \,\,\,\,\,\mathbb{P}_{\Delta}/\mathbb{Z}_{2}\ar@{->}[rr]^{/\mathbb{Z}_{2}} &  & \mathbb{P}(2,1,1)\ar@{->}[r] & \mathbb{P}(3,2,1)\;\;
}
\end{matrix}\label{eq:moduli-Diag}
\end{equation}

Although we will not use in the following sections, we observe that
the discriminant loci in (\ref{eq:disDsL1L2}) take simple forms in
terms of the invariants $A,B,C$:
\[
\begin{aligned}D_{s}=\left\{ A+B+C=0\right\} ,\;\;L_{1}=\left\{ ABC=0\right\} ,\,\\
L_{2}=\left\{ (A+B)(B+C)(C+A)=0\right\} .\;\;\;\;\;
\end{aligned}
\]

\para{Families.} Let us note that the Heisenberg group $\cH_{8}$
acts on each fiber of the family $\mathcal{V}^{1}\rightarrow\mathbb{P}_{w}^{2}$
by sending a point $([x],[w])\in\cV^{1}$ to $([g.x],[w])\in\cV^{1}.$
Since this is a projective action in $\mbP^{7}$, the group $\cH_{8}$
reduces to $\mbZ_{8}\times\mbZ_{8}$. We will take a subgroup $\mbZ_{8}=\langle\tau\rangle\subset\mbZ_{8}\times\mbZ_{8}$,
and by considering fiberwise quotients by $\mbZ_{8}$ and $\mbZ_{8}\times\mbZ_{8}$,
respectively, we define families of Calabi-Yau manifolds, 
\begin{equation}
\cV_{\mbZ_{8}}^{1}\rightarrow\mbP_{w}^{2},\;\;\;\cV_{\mbZ_{8}\times\mbZ_{8}}^{1}\rightarrow\mbP_{w}^{2}\label{eq:Z8-Z8Z8-families-Pw}
\end{equation}
over general points of $\mathbb{P}_{w}^{2}$ (cf. the lead paragraph
of this section). 

Let us consider the following subgroups $\widetilde{G}_{0},\widetilde{G}_{max}$
which are generated by the specified elements in $\cN\cH_{8}$;
\[
\widetilde{G}_{0}:=\left\langle STS,(ST)^{3}\right\rangle ,\,\,\,\,\,\,\,\widetilde{G}_{max}:=\left\langle STS,(ST)^{3},S^{4}\right\rangle .
\]
Note that these define natural lifts of the groups $G_{0}$ and $G_{max}$
to $\cN\cH_{8}$. 
\begin{prop}
\label{prop:G0-Gmax-sequences} $(1)$ The order of $\widetilde{G}_{0}$
is $64$, and we have 
\[
\begin{matrix}1\rightarrow\left\langle -1,\tau^{4}\right\rangle \rightarrow\widetilde{G}_{0}\rightarrow G_{0}\rightarrow1.\end{matrix}
\]
$(2)$ The order of $\widetilde{G}_{max}$ is 512, and we have 
\[
1\rightarrow\left\langle -1,\sigma^{4},\tau^{4},{{}-i\,u_{5}}\right\rangle \rightarrow\widetilde{G}_{max}\rightarrow G_{max}\rightarrow1,
\]
where $u_{5}=(SSST)^{3}$ corresponds to the unit of $SL_{2}(\mbZ_{8})$. \end{prop}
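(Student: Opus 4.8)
The plan is to compute each group explicitly from its generators and then identify the relevant kernel and quotient. Since $\widetilde{G}_0$ and $\widetilde{G}_{max}$ are subgroups of $\cN\cH_8 \subset GL(\mathbb{C}^8)$ generated by a handful of explicit $8\times 8$ matrices (built from $S$, $T$, $\sigma$, $\tau$ via the formulas in Section 3), their orders can be determined by a finite enumeration: starting from the generators, repeatedly multiply until the set of matrices closes up. This is precisely the kind of direct verification deferred to the companion computational reference \cite{HT-math-c}, so in the paper we would state the orders $64$ and $512$ as the outcomes of that enumeration. The conceptual content is the identification of the kernel of the surjection $\widetilde{G}_\bullet \to G_\bullet$ induced by the quotient map $q:\cN\cH_8 \to G_\rho$ (followed by restriction); here $G_0 = F_0/\langle \cH_8, u_1, u_5\rangle$ and $G_{max} = F_{max}/\langle \cH_8, u_1, u_5\rangle$ by definition, and $\widetilde{G}_0 = \langle STS,(ST)^3\rangle$, $\widetilde{G}_{max} = \langle STS,(ST)^3,S^4\rangle$ are the chosen lifts of generating sets.

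First I would record that $\rho(STS) = \bS\bT\bS$ and $\rho((ST)^3) = (\bS\bT)^3$ generate $G_0$ (Proposition \ref{prop:The-group-G0}) and that $\bS^4$ is the extra generator for $G_{max}$ (Proposition \ref{prop:Gmax}), so the image of $\widetilde{G}_0$ under $q$ is exactly $G_0$ and the image of $\widetilde{G}_{max}$ is exactly $G_{max}$. Thus in each case we get a short exact sequence $1 \to (\widetilde{G}_\bullet \cap \langle\cH_8,u_1,u_5\rangle) \to \widetilde{G}_\bullet \to G_\bullet \to 1$, and it remains to identify the kernel $\widetilde{G}_\bullet \cap \langle\cH_8,u_1,u_5\rangle$ as a subgroup of $\cN\cH_8$. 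For part (1): since $|G_0| = 16$ (it is $\mbZ_8\times\mbZ_2$ by Proposition \ref{prop:The-group-G0}) and $|\widetilde{G}_0| = 64$, the kernel has order $4$; one checks by direct matrix computation that $-1$ and $\tau^4$ lie in $\widetilde{G}_0$ — for instance $(STS)^8$ or a suitable word in the generators equals one of them, and these two elements generate a group of order $4$ inside $\langle \cH_8, u_1, u_5\rangle$ (note $-1 \in \cN\cH_8$ is the scalar $-\id_8$, which is central, and $\tau^4$ has order $2$). This forces $\ker q|_{\widetilde{G}_0} = \langle -1, \tau^4\rangle$. For part (2): $|G_{max}| = 32$ (index two over $G_0$, by Proposition \ref{prop:Gmax}) and $|\widetilde{G}_{max}| = 512$, so the kernel has order $16$; one exhibits $-1, \sigma^4, \tau^4, -i\,u_5$ as elements of $\widetilde{G}_{max}$ (again via explicit words in $STS$, $(ST)^3$, $S^4$, using e.g. that $(S^4)^2$ or $S^4 \cdot STS \cdot \dots$ produces $\sigma^4$, that $u_5 = (SSST)^3$ is a word we can reach since $S^4$ is now available, and that $R(u_i) = (-1)^{(i-1)/2}\id$ shows $-i\,u_5$ has the right scalar normalization to lie in the kernel of $\rho$), and verifies these four elements generate a subgroup of order $16$ inside $\langle \cH_8, u_1, u_5\rangle$.

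The main obstacle is the bookkeeping in part (2): one must produce explicit words in $\{STS, (ST)^3, S^4\}$ realizing each of $-1$, $\sigma^4$, $\tau^4$, and $-i\,u_5$, and then confirm both that the order of $\widetilde{G}_{max}$ is exactly $512$ (not larger) and that these four elements are independent enough to span the full $16$-element kernel rather than a proper subgroup — equivalently, that $512/16 = 32 = |G_{max}|$ is consistent. Because all the matrices are explicit and the groups are finite, this is a mechanical (if lengthy) verification best relegated to \cite{HT-math-c}; in the write-up I would present the short exact sequences, cite Propositions \ref{prop:The-group-G0} and \ref{prop:Gmax} for the orders of $G_0$ and $G_{max}$, list the kernel generators with a sentence indicating that each is a computable word in the lifted generators, and invoke the order count to conclude that the listed elements exhaust the kernel.
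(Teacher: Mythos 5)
Your proposal is correct and follows essentially the same route as the paper: the paper's proof consists of a single sentence deferring the explicit matrix calculations to the companion computational reference \cite{HT-math-c}, which is exactly the finite enumeration and kernel/order bookkeeping you describe. Your write-up usefully makes explicit the logical skeleton (surjectivity onto $G_{0}$ and $G_{max}$ via Propositions \ref{prop:The-group-G0} and \ref{prop:Gmax}, kernel orders $4$ and $16$ by division, and exhaustion of the kernel by the listed order-two elements), but the substance is the same mechanical verification.
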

\begin{proof}
Our proofs are based on explicit matrix calculations. Since they are
straightforward, we refer to \cite{HT-math-c} for the calculations
to verify the claimed properties. \end{proof}
\begin{prop}
By taking quotients of the families (\ref{eq:Z8-Z8Z8-families-Pw})
by $\widetilde{G}_{0}$, we have the corresponding families 
\[
\cV_{\mbZ_{8}}^{1}/\widetilde{G}_{0}\rightarrow\mbP_{\Delta},\;\;\;\cV_{\mbZ_{8}\times\mbZ_{8}}^{1}/\widetilde{G}_{0}\rightarrow\mbP_{\Delta},
\]
 over $($general points of$)$ $\mbP_{\Delta}:=\mbP_{w}^{2}/G_{0}$. \end{prop}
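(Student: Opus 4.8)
The plan is to exploit the compatibility between the $\mathcal{H}_8$-action (reduced to $\mathbb{Z}_8\times\mathbb{Z}_8$, resp.\ $\mathbb{Z}_8=\langle\tau\rangle$) on the fibers of $\mathcal V^1$ and the $\widetilde G_0$-action on the total space via the representation $\rho$. The point is that $\widetilde G_0$ normalizes the relevant Heisenberg subgroup: since $\widetilde G_0\subset\mathcal{N}\mathcal{H}_8$, conjugation by any $g\in\widetilde G_0$ sends $\langle\tau\rangle$ (resp.\ $\mathbb{Z}_8\times\mathbb{Z}_8$) into itself inside $\mathcal{H}_8/[\mathcal{H}_8,\mathcal{H}_8]$, via the image of $g$ in $SL_2(\mathbb{Z}_8)$. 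First I would check that the specific generators $STS$ and $(ST)^3$ of $\widetilde G_0$ act on $\mathbb{Z}_8\times\mathbb{Z}_8=\langle\sigma,\tau\rangle/[\cdot,\cdot]$ through the matrices $\mathtt S\mathtt T\mathtt S$ and $(\mathtt S\mathtt T)^3=-\mathrm{id}$; in particular $\mathtt S\mathtt T\mathtt S=\left(\begin{smallmatrix}0&-1\\-1&0\end{smallmatrix}\right)$ swaps $\sigma\leftrightarrow\tau^{-1}$ and $-\mathrm{id}$ inverts both generators. Neither of these preserves the subgroup $\langle\tau\rangle$ setwise as a fixed subgroup pointwise, but both preserve it as a \emph{subgroup} only in the $-\mathrm{id}$ case; for $\mathtt S\mathtt T\mathtt S$ one actually needs that the construction of the fiberwise quotient is by the \emph{full} $\mathbb{Z}_8\times\mathbb{Z}_8$ (so the swap is harmless) or, for the $\langle\tau\rangle$-family, that one restricts to the subgroup of $\widetilde G_0$ that does preserve $\langle\tau\rangle$. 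I would clarify which of the two families this affects and state the action of $\widetilde G_0$ on the lattice accordingly.

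Next, granting the compatibility of group actions, the construction of the quotient families proceeds in two stages exactly parallel to Proposition~\ref{prop:G0-Gmax-sequences}. Stage one: on $\mathcal V^1\to\mathbb{P}_w^2$ the group $\widetilde G_0$ acts, covering the action of $\widetilde G_0/\langle-1,\tau^4\rangle$ on $\mathbb{P}_w^2$; but by Proposition~\ref{prop:G0-Gmax-sequences}(1) this latter quotient group is exactly $G_0$, and by Proposition~\ref{prop:The-group-G0} $G_0$ acts on $\mathbb{P}_w^2$ with quotient $\mathbb{P}_\Delta$. Stage two: the fiberwise quotient by $\mathbb{Z}_8\times\mathbb{Z}_8$ (resp.\ $\langle\tau\rangle$) and the quotient by $\widetilde G_0$ commute up to the kernel $\langle-1,\tau^4\rangle\subset\widetilde G_0$, whose $\tau^4$ part already lies in the fiberwise group and whose $-1$ part acts trivially on $\mathbb{P}^7$. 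Hence quotienting $\mathcal V_{\mathbb{Z}_8\times\mathbb{Z}_8}^1$ (resp.\ $\mathcal V_{\mathbb{Z}_8}^1$) by $\widetilde G_0$ is the same as first quotienting $\mathcal V^1$ by the subgroup of $\mathcal{N}\mathcal{H}_8$ generated by $\widetilde G_0$ together with the fiberwise Heisenberg group, and then only the residual $G_0$-action on the base remains. This yields the desired families over $\mathbb{P}_\Delta$, defined over the general points as in the lead paragraph of the section.

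Concretely I would organize the proof as: (i) identify the image of $\widetilde G_0$ in $SL_2(\mathbb{Z}_8)$ and verify it normalizes the fiberwise group used to form $\mathcal V_{\mathbb{Z}_8\times\mathbb{Z}_8}^1$ and $\mathcal V_{\mathbb{Z}_8}^1$ — for the $\mathbb{Z}_8\times\mathbb{Z}_8$-family this is automatic since the whole abelianized Heisenberg group is preserved, while for the $\langle\tau\rangle$-family one uses that $STS$ and $(ST)^3$ act on $\langle\tau\rangle$ appropriately, or one restricts attention to the relevant quotient; (ii) invoke Proposition~\ref{prop:G0-Gmax-sequences}(1) to get the exact sequence $1\to\langle-1,\tau^4\rangle\to\widetilde G_0\to G_0\to1$, noting that $\langle-1,\tau^4\rangle$ acts on the total space of $\mathcal V_{\mathbb{Z}_8\times\mathbb{Z}_8}^1$ (resp.\ $\mathcal V_{\mathbb{Z}_8}^1$) trivially on the base and already-quotiented-out on the fibers, so it acts trivially; (iii) conclude that $\widetilde G_0$ acts on $\mathcal V_{\mathbb{Z}_8\times\mathbb{Z}_8}^1$ (resp.\ $\mathcal V_{\mathbb{Z}_8}^1$) through $G_0$, with induced action on the base $\mathbb{P}_w^2$ having quotient $\mathbb{P}_\Delta$ by Proposition~\ref{prop:The-group-G0}; (iv) form the geometric quotient over the locus of general points, using that $G_0$ acts freely on a dense open subset of $\mathbb{P}_w^2$ (or simply restricting to where the quotient is a family of Calabi-Yau manifolds), and check that the generic fiber is $V_{8,w}^1/(\mathbb{Z}_8\times\mathbb{Z}_8)$ (resp.\ $V_{8,w}^1/\mathbb{Z}_8$).

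The main obstacle I anticipate is step (i) for the $\langle\tau\rangle$-family: the generator $STS$ maps to $\left(\begin{smallmatrix}0&-1\\-1&0\end{smallmatrix}\right)\in SL_2(\mathbb{Z}_8)$, which sends $\sigma\mapsto\tau^{-1},\tau\mapsto\sigma^{-1}$, so it does \emph{not} preserve the subgroup $\langle\tau\rangle$ at all. This means that quotienting $\mathcal V_{\mathbb{Z}_8}^1$ by all of $\widetilde G_0$ is subtler than the $\mathbb{Z}_8\times\mathbb{Z}_8$ case — one must either check that the fiberwise $\langle\tau\rangle$-quotient followed by a $\langle\sigma\rangle$-involution-type identification coming from $STS$ still yields a well-defined Calabi-Yau family (which happens because $STS$ simultaneously moves the base point $[w]$, and at the special loci the abelian surfaces $A$ and $\hat A$ get exchanged), or one replaces $\widetilde G_0$ by its subgroup stabilizing $\langle\tau\rangle$ and checks the resulting quotient of $\mathbb{P}_w^2$ is still $\mathbb{P}_\Delta$ (this would follow since $(ST)^3=-1$ already gives the needed $\mathbb{Z}_2$ and the residual identifications are absorbed by $\langle-1,\tau^4\rangle$). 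I would expect the paper to handle this either by working with the $\mathbb{Z}_8\times\mathbb{Z}_8$-family as the primary object and deducing the $\langle\tau\rangle$-case, or by an explicit fiberwise Fourier–Mukai/theta-group argument showing $STS$ intertwines the $\langle\tau\rangle$-quotient at $[w]$ with the $\langle\sigma\rangle$-quotient at $\rho(STS)[w]$, which are abstractly isomorphic Calabi–Yau's; in either case the base-level statement $\mathbb{P}_w^2/G_0=\mathbb{P}_\Delta$ is the clean output of Propositions~\ref{prop:The-group-G0} and~\ref{prop:G0-Gmax-sequences}.
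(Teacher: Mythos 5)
Your steps (ii)--(iv) are exactly the paper's argument, which is essentially a one\nobreakdash-line proof: by Proposition \ref{prop:G0-Gmax-sequences}(1) the kernel of $\widetilde{G}_{0}\rightarrow G_{0}$ is $\langle -1,\tau^{4}\rangle$, and this kernel acts trivially on the fibers of $\cV_{\mbZ_{8}}^{1}$ and $\cV_{\mbZ_{8}\times\mbZ_{8}}^{1}$ (the scalar $-1$ because the fiber action is projective, and $\tau^{4}$ because it lies in the fiberwise Heisenberg subgroup already quotiented out); hence $\widetilde{G}_{0}$ acts through $G_{0}$ and the families descend over general points of $\mbP_{w}^{2}/G_{0}=\mbP_{\Delta}$.

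The problem is the ``main obstacle'' around which you organize step (i): it rests on a computational error. With the paper's conventions $\mathtt{S}=\left(\begin{smallmatrix}1 & 1\\ 0 & 1\end{smallmatrix}\right)$ and $\mathtt{T}=\left(\begin{smallmatrix}0 & -1\\ 1 & 0\end{smallmatrix}\right)$ one finds $\mathtt{S}\mathtt{T}\mathtt{S}=\left(\begin{smallmatrix}1 & 0\\ 1 & 1\end{smallmatrix}\right)$, a unipotent element; the matrix $\left(\begin{smallmatrix}0 & -1\\ -1 & 0\end{smallmatrix}\right)$ you assign to it has determinant $-1$ and is not even an element of $SL_{2}(\mbZ_{8})$. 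The correct matrix fixes the class of $\tau$ and sends $\sigma\mapsto\sigma\tau$ modulo the center, so it \emph{does} stabilize the subgroup $\langle\tau\rangle$ (had the opposite convention been in force it would stabilize $\langle\sigma\rangle$ instead, and the distinguished subgroup would have been chosen accordingly --- this is precisely the sense in which $\mbZ_{8}=\langle\tau\rangle$ is ``a suitable choice'' in the paper). Together with $(\mathtt{S}\mathtt{T})^{3}=-\mathrm{id}$, which normalizes every subgroup, all of $\widetilde{G}_{0}$ normalizes $\langle\tau\rangle$, the alleged failure never occurs, and the two workarounds you sketch are not only unnecessary but harmful: replacing $\widetilde{G}_{0}$ by a proper subgroup, or invoking a Fourier--Mukai identification of the $\langle\tau\rangle$- and $\langle\sigma\rangle$-quotients at different base points, would prove a statement other than the one asserted (the quotient by all of $\widetilde{G}_{0}$). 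Once $\mathtt{S}\mathtt{T}\mathtt{S}$ is corrected, your step (i) closes immediately and the rest of your argument coincides with the paper's.
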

\begin{proof}
The group $\left\langle -1,\tau^{4}\right\rangle $ acts trivially
on the fibers of $\cV_{\mbZ_{8}}^{1}$ and $\cV_{\mbZ_{8}\times\mbZ_{8}}^{1}$.
Hence we have the claimed families over general points of $\mbP_{w}^{2}/G_{0}$.\end{proof}
\begin{rem}
In a similar way to the above proposition, one might expect to have
a family over $\mbP_{w}^{2}/G_{max}$. However, since the unit $u_{5}$
in $\left\langle -1,\sigma^{4},\tau^{4},{{}-i\,u_{5}}\right\rangle $
does not belong to $\cH_{8}$, the family $\cV_{\mbZ_{8}\times\mbZ_{8}}^{1}\rightarrow\mbP_{w}^{2}$
does not reduce to a family over $\mbP_{w}^{2}/G_{max}$. Here, for
convenience to readers, we present the explicit form of $u_{5}$;
\[
\hsp{110}u_{5}=\frac{1}{i}\left(\begin{smallmatrix}0 & 0 & 0 & 0 & 1 & 0 & 0 & 0\\
0 & -1 & 0 & 0 & 0 & 0 & 0 & 0\\
0 & 0 & 0 & 0 & 0 & 0 & 1 & 0\\
0 & 0 & 0 & -1 & 0 & 0 & 0 & 0\\
1 & 0 & 0 & 0 & 0 & 0 & 0 & 0\\
0 & 0 & 0 & 0 & 0 & -1 & 0 & 0\\
0 & 0 & 1 & 0 & 0 & 0 & 0 & 0\\
0 & 0 & 0 & 0 & 0 & 0 & 0 & -1
\end{smallmatrix}\right).\hsp{100}\square
\]
\end{rem}
\begin{defn}
For the sake of notational simplicity, we will write the quotient
families $\cV_{\mbZ_{8}}^{1}/\widetilde{G}_{0}$ and $\cV_{\mbZ_{8}\times\mbZ_{8}}^{1}/\widetilde{G}_{0}$
by 
\begin{equation}
\cV_{\mbZ_{8}}^{1}\rightarrow\mbP_{\Delta}\,\,\,\text{and }\,\,\,\cV_{\mbZ_{8}\times\mbZ_{8}}^{1}\rightarrow\mbP_{\Delta},\label{eq:Z8-Z8Z8-families-PD}
\end{equation}
respectively, unless otherwise mentioned. 
\end{defn}
No confusion with (\ref{eq:Z8-Z8Z8-families-Pw}) should arise in
the above definition since we will mostly confine ourselves to these
families over $\mbP_{\Delta}$ in the following sections. 

~

\vskip3cm

\section{\label{sec:P-Delta-global}\textbf{Parameter space $\mathbb{P}_{\Delta}$}}

In the last section, we have obtained two families of Calabi-Yau manifolds
$\cV_{\mbZ_{8}}^{1}$ and $\cV_{\mbZ_{8\times\mbZ_{8}}}^{1}$ over
the same parameter space $\mbP_{\Delta}=\mbP_{w}^{2}/G_{0}$. Here
we describe the quotient $\mathbb{P}_{\Delta}$ and its resolution
to study mirror symmetry from the families.

\subsection{Toric variety $\protect\mbP_{\Delta}$ }

As we see in Proposition \ref{prop:The-group-G0}, the group $G_{0}$
acts on $\mbP_{w}^{2}$ diagonally giving a toric variety $\mbP_{\Delta}$
associated to a lattice polytope $\Delta$. To describe $\Delta$,
we start with following invariant monomials 
\[
w_{1}^{8},\,\,w_{0}^{3}w_{1}^{4}w_{2},\,\,w_{0}w_{1}^{4}w_{2}^{3},\,\,w_{0}^{8},\,\,w_{0}^{6}w_{2}^{2},\,\,w_{0}^{4}w_{2}^{4},\,\,w_{0}^{2}w_{2}^{6},\,\,w_{2}^{8}.
\]
We read integral vectors $v_{1},v_{2},\cdots,{{}v_{8}}\in\mbZ^{3}$
for the above monomials in order, and set $\cA:=\left\{ v_{1},v_{2},\cdots,{{}v_{8}}\right\} $.
We introduce a lattice $M=\mbZ(\cA-{{}v_{3}})$. The lattice
polytope $\Delta$ is an integral polytope $\mathrm{Conv(\cA})$ in
$M\otimes\mbR$. More concretely, taking an integral basis $\left(\begin{smallmatrix}-1\\
4\\
{{}-3}
\end{smallmatrix}\right),\left(\begin{smallmatrix}-2\\
0\\
2
\end{smallmatrix}\right)$ for the lattice $M\simeq\mbZ^{2}$, we have 
\[
\Delta=\mathrm{Conv.\left(\left(\begin{matrix}-1\\
-3
\end{matrix}\right),\left(\begin{matrix}1\\
0
\end{matrix}\right),\left(\begin{matrix}-1\\
1
\end{matrix}\right)\right)},
\]
where three lattice points corresponds to $w_{0}^{8},w_{1}^{8}$ and
$w_{2}^{8}$, respectively. 

From the normal fan $\Sigma:=\cN_{\Delta}$, we read two $A_{1}$
singularity at the origin corresponding to the vertices $w_{0}^{8},w_{2}^{8}$,
and $A_{3}$ singularity corresponding to the vertex $w_{1}^{8}$.
For example, the affine chart which corresponds to the vertex $w_{2}^{8}$
is given by 
\begin{equation}
\mathrm{Spec}\,\mbC[\frac{w_{1}^{8}}{w_{2}^{8}},\frac{w_{0}^{2}}{w_{2}^{2}},\frac{w_{0}w_{1}^{4}}{w_{2}^{5}}].\label{eq:affine-A1}
\end{equation}
 In Fig.\ref{fig:Fig1}, we describe a toric resolution $\widetilde{\mbP}_{\Delta}\rightarrow\mbP_{\Delta}$
with introducing an affine coordinate
\begin{equation}
x=\frac{1}{4}\frac{w_{0}^{2}}{w_{2}^{2}},\,\,\,y=-2\frac{w_{1}^{4}}{w_{0}w_{2}^{3}}\label{eq:xy-definition}
\end{equation}
for the resolution of the $A_{1}$-singularity (\ref{eq:affine-A1}).
Here we have determined the numerical factors, $\frac{1}{4}$ and
$-2$, in favor of mirror symmetry which we will describe in the next
section. The exceptional divisor of the blowing-up is written by $E$. 

\begin{figure}
\includegraphics[scale=0.33]{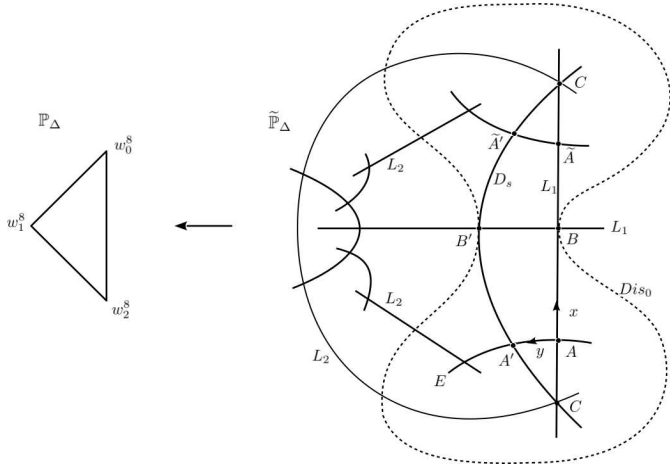}

\caption{Figure 1. \label{fig:Fig1}$\protect\mbP_{\Delta}$ and a resolution
$\widetilde{\protect\mbP}_{\Delta}$. The components $L_{1},L_{2},D_{s}$
and $Dis_{0}$ of the discriminant are also depicted in the right. }
\end{figure}

\subsection{Discriminant loci\label{sub:Disc-L1L2D}}

Our family over $\mbP_{\Delta}$ is either $\cV_{\mbZ_{8}}^{1}$ or
$\cV_{\mbZ_{8}\times\mbZ_{8}}^{1}$, whose general fibers are given
by $V_{8,w}^{1}/\left\langle \tau\right\rangle $ or $V_{8,w}^{1}/\left\langle \sigma,\tau\right\rangle $,
respectively. In either case, since the quotients are taken by free
group actions, the degenerations occur at the same loci as the $(2,2,2,2)$
complete intersection $V_{8,w}$ summarized in Proposition \ref{prop:V1-degeneration-loci}.
We have depicted in Fig.\ref{fig:Fig1} schematically the components
of the proper transforms of the discriminant loci. For simplicity
we use the same letters $L_{1},L_{2},D_{s}$ for the proper transforms,
but it should be clear in the context. Using the affine coordinate
(\ref{eq:xy-definition}), they are given by
\[
\begin{aligned} & \qquad\qquad\qquad\qquad\quad D_{s}=\left\{ 1+4x+y=0\right\} \\
 & L_{1}=\left\{ w_{1}=0\right\} \cup\left\{ 4x-1=0\right\} ,\;\;L_{2}=\left\{ w_{0}w_{2}=0\right\} \cup\left\{ 4x+1\right\} \cup Dis_{0},
\end{aligned}
\]
where the component $Dis_{0}$ is defined by $dis_{0}=0$ with 
\[
dis_{0}=(1-4x)^{4}-256xy(1+4x+y).
\]
The component $Dis_{0}$ will play an important role when describing
mirror symmetry at genus one.

\subsection{Degeneration points }

In the next section, we will study the degenerations of the family
over the resolution $\tPd$ in terms of period integrals of the family.
In the context of mirror symmetry, we are mostly interested in special
boundary points called large complex structure limits (LCSLs) which
are characterized by certain distinguished monodromy properties of
period integrals (see \cite{MoLCSL} for a precise definition and
also Appendix \ref{sec:App-Canonical-Form}). It turns out that there
are many LCSLs in $\tPd$ where mirror symmetry emerges in nice forms
(see Proposition \ref{prop:MS-Result-Main} for our final interpretations).
In Fig.\ref{fig:Fig1}, we have named them $A,A';B,B';C$ and also
$\tilde{A},\tilde{A}'$. 

\para{$A$ and $A'$.} Here and in what follows, we will use the affine
coordinate $(x,y)$ introduced in (\ref{eq:xy-definition}). This
affine coordinate arises as one of the affine coordinate of the blow-up
of the $A_{1}$ singularity at the vertex $w_{2}^{8}$ of $\mbP_{\Delta}$,
where three components of the discriminant $L_{1},L_{2}$ and $D_{s}$
intersect. After the blow-up, the intersection splits into three points
on the exceptional divisor $E$. Two of them, $A$ and $A'$, are
LCSLs which were studied locally in \cite{Pav}. 

\para{Symmetry $w_0\leftrightarrow w_2$.} The points $A,A',\tilde{A},\tilde{A}',B,B'$
and $C$ in Fig.\ref{fig:Fig1} all give rise to LCSLs, which we will
study in detail in Sections \ref{sec:MSbyPF-A}, \ref{sec:MS-B} and
\ref{sec:MS-more-C}. In terms of the affine coordinate $x,y$ in
(\ref{eq:xy-definition}), they are given by
\[
\begin{aligned}A=(0,0),A'=(0,-1);\,\,B=(\frac{1}{4},0),B'=(\frac{1}{4},-2);\,\,C=(-\frac{1}{4},0).\end{aligned}
\]
The coordinates of $\tilde{A}$ and $\tilde{A}'$ are given by $(\frac{1}{16}\frac{1}{x},\frac{1}{4}\frac{y}{x})=(0,0)$
and $(0,-1)$, respectively. In fact, all the boundary divisors are
invariant under the following involution: 
\begin{equation}
(x,y)\mapsto(\frac{1}{16}\frac{1}{x},\frac{1}{4}\frac{y}{x}),\label{eq:inv-sym}
\end{equation}
which comes from the symmetry of $\tPd$ under the exchange $w_{0}\leftrightarrow w_{2}$.
Actually, this symmetry is represented by the action of $\bS^{4}$
in $G_{max}$ described in Proposition \ref{prop:Gmax}. The points
$B,B'$ and $C$ are fixed under this involution, while $A$ and $A'$
are transformed to $\tilde{A}$ and $\tilde{A}'$, respectively. One
might consider the quotient $\mbP_{\Delta}/\left\langle \bS^{4}\right\rangle =\mbP_{w}^{2}/G_{max}$
as a parameter space of the families, but as we saw in Proposition
\ref{prop:G0-Gmax-sequences}, the quotient does not admit the corresponding
family over $\mbP_{\Delta}/\left\langle \bS^{4}\right\rangle $. 

\vskip2cm

\section{\label{sec:MSbyPF-A}\textbf{Mirror symmetry from the boundary points
$A$ and $A'$}}

Associated to the family $\cV_{\mbZ_{8}}^{1}\rightarrow\mbP_{\Delta}$
(resp. $\cV_{\mbZ_{8}\times\mbZ_{8}}^{1}\rightarrow\mbP_{\Delta}$)
we have the local system $R^{3}\pi_{*}\mbC_{\cV_{\mbZ_{8}}^{1}}$(resp.
$R^{3}\pi_{*}\mbC_{\cV_{\mbZ_{8}\times\mbZ_{8}}^{1}}$) over $\mbP_{\Delta}$.
These local systems result in a system of differential equations (Picard-Fuchs
equations) of the same forms, which were studied locally in \cite{Pav}.
We will study the resulting Picard-Fuchs equations globally, and find
in Section \ref{sec:Degens-CY} that a difference between the two
local systems appears in the integral structures for the solutions
of the Picard-Fuchs equations (i.e., in the integral variation of
Hodge structures). Also, we will recognize the difference between
the two families when we calculate genus one Gromov-Witten potentials
(see Remark \ref{rem:Conifold-A} and Remark \ref{rem:Conifold-Factor-C}).

\subsection{Picard-Fuchs equations}

As discovered first in \cite{Candelas}, we can find mirror symmetry
in calculating genus zero Gromov-Witten invariants from the period
integrals which we determine by solving Ficard-Fuchs equations. 

\para{Period integrals.} Since the both families $\cV_{\mbZ_{8}}^{1}$
and $\cV_{\mbZ_{8}\times\mbZ_{8}}^{1}$ come from the same family
$\cV^{1}\rightarrow\mbP_{w}^{2}$ of the Heisenberg-invariant $(2,2,2,2)$
complete intersections in $\mbP^{7}$, we can express the period integrals
following \cite{Griffiths}:
\begin{equation}
\Pi_{\gamma}(w):=\int_{\gamma}Res\left(\frac{w_{2}^{4}}{f_{1}(w,x)f_{2}(w,x)f_{3}(w,x)f_{4}(w,x)}d\mu\right),\label{eq:period-Int-def}
\end{equation}
where $d\mu:={\displaystyle \sum_{i=0}^{7}(-1)^{i}dx_{0}\wedge\cdots\widehat{dx_{i}}\cdots\wedge dx_{7}}$
and $\gamma$ is an integral cycle of a fixed fiber of the family.
Period integral in this form often appears when describing mirror
symmetry; there, we combine the integral over a cycle with the residue
to an integral over a tubular cycle $T(\gamma)$ of the zero locus
$f_{1}=\cdots=f_{4}=0$. It is straightforward to evaluate the period
integral over a tubular cycle
\[
T(\gamma_{0}):=\left\{ [x]\in\mbP^{7}\mid\left|x_{i}/x_{0}\right|=\ve\,\,(i=1,\cdots,7)\right\} .
\]
The following results are obtained in \cite{Pav}:
\begin{prop}
$(1)$ The integral $\Pi_{\gamma_{0}}(w)$ can be evaluated in a closed
formula. 

\noindent$(2)$ In the affine coordinate $x,y$ of (\ref{eq:xy-definition}),
the integral $\Pi_{\gamma_{0}}(w)=:w_{0}(x,y)$ has the following
power series expansion,
\begin{equation}
\omega_{0}(x,y)=1+8x^{2}+16xy+160x^{2}y+16xy^{2}+88x^{4}+1536x^{3}y+\cdots.\label{eq:sol-w0}
\end{equation}
\end{prop}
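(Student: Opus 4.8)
The plan is to compute the period integral $\Pi_{\gamma_0}(w)$ as an iterated residue / coefficient extraction, in the spirit of the standard Batyrev–type toric mirror computation. Starting from (\ref{eq:period-Int-def}), I would first recall that combining the residue with integration over the cycle $\gamma_0$ turns $\Pi_{\gamma_0}$ into an integral over the tubular cycle $T(\gamma_0)=\{[x]\in\mbP^7 : |x_i/x_0|=\varepsilon\}$, so that
\[
\Pi_{\gamma_0}(w)=\Bigl(\tfrac{1}{2\pi i}\Bigr)^{7}\oint_{T(\gamma_0)}\frac{w_2^{4}}{f_1 f_2 f_3 f_4}\,\frac{dx_1}{x_0}\wedge\cdots\wedge\frac{dx_7}{x_0},
\]
after setting $x_0=1$ by the scaling. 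Concretely this is the constant term (in all seven ratios $x_i/x_0$) of the Laurent expansion of $w_2^4/(f_1f_2f_3f_4)$. I would expand each factor $1/f_j$ as a geometric series in the off-diagonal monomials relative to the "diagonal" term, being careful with which monomial of $f_j$ is dominant on $T(\gamma_0)$; from (\ref{eq:def-eqs-f}) the natural choice makes $\tfrac{w_0}{2}x_j^2$–type terms, or rather the cross terms, carry the expansion, and after extracting the constant term one is left with a multi-sum over $\mbZ_{\ge0}$ of products of multinomial coefficients times monomials in $w_0,w_1,w_2$. This proves part (1): the result is literally a (generalized hypergeometric–type) closed series.

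For part (2) I would then substitute the affine coordinates (\ref{eq:xy-definition}), $x=\tfrac14 w_0^2/w_2^2$, $y=-2\,w_1^4/(w_0w_2^3)$, into that closed series. One checks first that, after dividing by the overall factor implicit in the normalization, every monomial $w_0^{a}w_1^{b}w_2^{c}$ appearing is $G_0$-invariant (degree $8$ in the grading, $b\equiv 0\ (\mathrm{mod}\ 4)$, etc.), hence expressible as a monomial in $x$ and $y$; this is exactly why the numerical factors $\tfrac14$ and $-2$ were chosen. Collecting terms by total $(x,y)$-degree and summing the multinomial coefficients then gives the low-order expansion
\[
\omega_0(x,y)=1+8x^2+16xy+160x^2y+16xy^2+88x^4+1536x^3y+\cdots,
\]
matching (\ref{eq:sol-w0}). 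In practice I would organize the bookkeeping by noting that the series is supported on monomials $x^m y^n$ with $n\le m$ (forced by the weights), so each coefficient is a finite sum; one can also cross-check several coefficients against the Picard–Fuchs operator once it is available, or against Pavanelli \cite{Pav}.

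The main obstacle is purely combinatorial: identifying the correct dominant monomial in each quadric $f_j$ on the tubular torus $T(\gamma_0)$, so that the geometric-series expansions converge there and the constant-term extraction is legitimate, and then not making an error in the resulting seven-fold multinomial sum. There is also a small subtlety in verifying that the formal residue/constant-term manipulation is valid — i.e.\ that $T(\gamma_0)$ is homologous to a genuine tube over an integral cycle $\gamma_0$ of a smooth fiber — but this is standard Griffiths–Dwork theory for complete intersections in projective space and can be cited. Everything else (the change of variables, the weight computation showing $G_0$-invariance, the low-order expansion) is routine once the series is written down, so I would present the residue computation carefully and then simply state the first several coefficients, deferring the full closed form and the derivation of the Picard–Fuchs system to the following subsection.
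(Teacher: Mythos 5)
Your proposal follows essentially the same route as the paper: the paper also rewrites each quadric as $\frac{1}{w_{2}x_{i+1}x_{i+5}}f_{i}=1-P_{i}$ (the $w_{2}$ cross term is the dominant monomial, which is exactly what the $w_{2}^{4}$ numerator in (\ref{eq:period-Int-def}) is designed for), expands $\prod_i(1-P_i)^{-1}$ as a geometric series over the tubular cycle, extracts the constant term, and defers the careful closed formula to \cite[III.9]{Pav}. One small correction: your claimed support condition $n\le m$ for the monomials $x^{m}y^{n}$ is wrong — the term $16xy^{2}$ in (\ref{eq:sol-w0}) already violates it; the weight count ($x\sim(w_0/w_2)^2/4$, $y\sim -2(w_1/w_2)^4(w_2/w_0)$) forces only $n\le 2m$ — though this does not affect the finiteness of each coefficient or the rest of the argument.
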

\begin{proof}
Since all calculations are now standard in literatures (see e.g. \cite{BatCox}),
we only sketch them. We first write quadric equations as 
\[
\frac{1}{w_{2}}\frac{1}{x_{i+1}x_{i+5}}f_{i}(w,x)=1-P_{i}(w,x)\,\,\,(i=1,...,4)
\]
in terms of Laurent polynomials $P_{i}$, and evaluate the residues
in the coordinate $x_{0}=1$ by making geometric series, 
\[
\Pi_{\gamma_{0}}(w)=\int_{T(\gamma_{0})}\sum_{n_{1},...,n_{4}\geq0}P_{1}(w,x)^{n_{1}}\cdots P_{4}(w,x)^{n_{4}}\frac{dx_{1}\cdots dx_{7}}{x_{1}\cdots x_{7}}.
\]
We need to have careful analysis to formulate a closed formula which
we refer to \cite[III.9]{Pav}. However it is straightforward to have
the series expansion up to considerably higher order in $x,y$ (say
total degree 50) which is sufficient for our purpose. 
\end{proof}
From the series expansion (\ref{eq:sol-w0}) up to sufficiently high
degrees, we can determine Picard-Fuchs differential operators which
annihilate the period integral. The following $\cD_{2}$ and $\cD_{3}$
were first determined in \cite{Pav}.
\begin{prop}
\label{prop:PicardFuchs-D2-D3}The period integral (\ref{eq:sol-w0})
satisfy the following set of differential equations of Fuchs type:
\begin{equation}
\cD_{2}w(x,y)=0,\,\,\,\,\,\,\cD_{3}w(x,y)=0,\label{eq:PFeqs-D2-D3}
\end{equation}
where $\cD_{2}$ and $\cD_{3}$ are the second and third order differential
operators, see Appendix \ref{sec:AppendixA-PF} for their explicit
forms. 
\end{prop}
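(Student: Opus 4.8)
The plan is to verify the two differential equations purely computationally, by combining the explicit power series expansion of $\omega_0(x,y)$ with a linear-algebra search for annihilating operators, and then checking that the operators found indeed have the stated order and Fuchsian nature. First I would extend the series (\ref{eq:sol-w0}): using the geometric-series representation of $\Pi_{\gamma_0}(w)$ from the previous proposition, each monomial coefficient is a finite sum of multinomial coefficients coming from the residue extraction in the torus coordinates $x_1,\dots,x_7$, so one can compute $\omega_0(x,y)$ to total degree $\sim 50$ symbolically. This requires no new idea beyond what is already sketched; it only needs bookkeeping of the four Laurent polynomials $P_i(w,x)$ and extraction of the constant term in each $x_j$.

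Next I would make an ansatz for $\cD_2$ as a second-order operator with polynomial (in $x,y$) coefficients of bounded degree, say $\cD_2 = \sum_{|\alpha|\le 2} p_\alpha(x,y)\,\theta_x^{\alpha_1}\theta_y^{\alpha_2}$ with $\theta_x = x\partial_x$, $\theta_y = y\partial_y$, and similarly for $\cD_3$ at third order. Applying such an operator to the truncated series $\omega_0$ and demanding that the result vanish up to the truncation degree gives a (large, overdetermined) homogeneous linear system in the unknown coefficients of the $p_\alpha$. Solving it produces candidate operators; because the period integral genuinely satisfies a Picard–Fuchs system, the solution space will be nonempty, and one expects (as found in \cite{Pav}) that the minimal such $\cD_2,\cD_3$ have orders $2$ and $3$ respectively. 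The explicit operators are recorded in Appendix \ref{sec:AppendixA-PF}, so the task here is really just to confirm that the appendix operators annihilate the computed series: substitute and check that all coefficients up to the truncation degree vanish. Since the operators are determined by finitely many data, agreement to degree $50$ against operators of small order and coefficient degree is a rigorous proof that $\cD_2\omega_0 = \cD_3\omega_0 = 0$ as formal power series, hence as the convergent period integral.

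Finally I would record that $\cD_2,\cD_3$ are of Fuchs type: this amounts to checking that, in each affine chart of $\tPd$ and along each component of the discriminant locus $D_s\cup L_1\cup L_2$ (and the exceptional divisor $E$), the indicial equation has rational exponents and no apparent-singularity pathology obstructs regular singularity — equivalently, that the leading symbol of each operator factors appropriately along these loci. Concretely one inspects the highest-order coefficient $p_{(2,0)},p_{(0,2)}$ of $\cD_2$ (and the analogous terms of $\cD_3$) and verifies its zero locus is contained in the discriminant described in Subsection \ref{sub:Disc-L1L2D}; this is a direct calculation with the explicit polynomials in Appendix \ref{sec:AppendixA-PF}.

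The main obstacle is not conceptual but organizational: one must be confident that the truncation degree is large enough that an operator annihilating the truncated series genuinely annihilates $\omega_0$ and is not an artifact of truncation. This is handled by a dimension count — the number of equations (coefficients forced to vanish) must exceed the number of unknowns (coefficients of the $p_\alpha$) by a comfortable margin, and one then cross-checks the resulting operator against a few more coefficients beyond the degree used to derive it. Given that \cite{Pav} already isolated $\cD_2$ and $\cD_3$, the cleanest route is simply to adopt those operators and perform the verification; the proposition then follows immediately.
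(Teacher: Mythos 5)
Your proposal follows essentially the same route as the paper: the authors likewise generate the power series expansion of $\omega_{0}(x,y)$ to high order via the residue/geometric-series computation and then determine the annihilating operators $\cD_{2},\cD_{3}$ (first found in \cite{Pav}) by matching an ansatz with polynomial coefficients in $\theta_{x},\theta_{y}$ against the truncated series. Your added remarks on overdetermination of the linear system and on verifying the Fuchsian character along the discriminant loci are consistent with, and slightly more explicit than, what the paper records.
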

\para{Characteristic variety.} It is straightforward to determine
the singular loci (characteristic variety) of the above differential
operators; we obtain $\left\{ dis_{k}=0\right\} $ with 
\[
\begin{alignedat}{1}dis_{0}:=(1-4x)^{4}-256xy(1+4x+y),\quad\quad\quad\quad\quad\\
dis_{1}:=1+4x+y,\,\,\,\,dis_{2}:=1+4x\;\;\;and\;\;\;dis_{3}:=1-4x,
\end{alignedat}
\]
in addition to the coordinate lines $\left\{ x=0\right\} ,\left\{ y=0\right\} $.
We note that each component of the characteristic variety corresponds
exactly to one of the degenerations summarized in Subsection \ref{sub:Disc-L1L2D}. 

\para{Picard-Fuchs equations at $A'$.} The differential operators
$\cD_{2}$ and $\cD_{3}$ are easily transformed to the affine coordinate
$(x_{1},y_{1})$ centered at the boundary point $A'$ by the relation
\begin{equation}
(x_{1},y_{1})=(x,-4x-y-1).\label{eq:xy-x1y1-inv}
\end{equation}
The local geometry around $A$ and $A'$ is summarized in Fig.\ref{fig:AandA'}.
We denote by $\cD_{2}'$ and $\cD_{3}'$, respectively, the resulting
differential operators from $\cD_{2}$ and $\cD_{3}$.
\begin{prop}
\label{prop:PFeqs-A-A'}The local solutions of $\cD_{2}'\omega=\cD_{3}'\omega=0$
around $(x_{1},y_{1})=(0,0)$ have exactly the same forms as those
of $\cD_{2}\omega=\cD_{3}\omega=0$ around $(x,y)=(0,0)$. \end{prop}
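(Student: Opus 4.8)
The plan is to exploit the fact that the coordinate transformation \eqref{eq:xy-x1y1-inv}, i.e. the map $\iota\colon(x,y)\mapsto(x_1,y_1)=(x,-4x-y-1)$, is an \emph{involution} of the affine chart of $\tPd$ containing $A$ and $A'$, and that it interchanges $A=(0,0)$ and $A'=(0,-1)$. If $\iota$ is moreover a symmetry of the Picard--Fuchs system $\langle\cD_2,\cD_3\rangle$, then the local solutions at $A'$ are precisely the $\iota$-pullbacks of the local solutions at $A$; since $\iota$ is a local biholomorphism, this forces the two local solution spaces to have the same shape. So the whole task reduces to checking the $\iota$-invariance of the ideal generated by $\cD_2$ and $\cD_3$.

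First I would record two elementary facts. (a) $\iota\circ\iota=\mathrm{id}$: applying the substitution twice returns $(x,y)$. (b) $\iota$ preserves the characteristic variety of $\langle\cD_2,\cD_3\rangle$: under $\iota$ one has $1+4x+y\mapsto -y$, $1\pm4x\mapsto 1\pm4x$, $x\mapsto x$, and $256xy(1+4x+y)$ is invariant, hence $dis_0=(1-4x)^4-256xy(1+4x+y)$ is invariant; consequently $\iota$ fixes each of $\{x=0\}$, $\{1-4x=0\}$, $\{1+4x=0\}$, $\{dis_0=0\}$ and interchanges the remaining two components $\{y=0\}$ and $D_s=\{1+4x+y=0\}$. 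In particular $\iota$ matches the pair of LCSL boundary divisors $(\{x=0\},\{y=0\})$ through $A$ with the pair $(\{x=0\},D_s)$ through $A'$, exactly the pair visible at $A'$ in Fig.~\ref{fig:Fig1}.

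Next I would substitute \eqref{eq:xy-x1y1-inv} into the explicit operators $\cD_2,\cD_3$ of Appendix \ref{sec:AppendixA-PF}, transform the derivatives accordingly to obtain $\cD_2'$ and $\cD_3'$, and verify that $\langle\cD_2',\cD_3'\rangle=\langle\cD_2,\cD_3\rangle$ as left ideals in the Weyl algebra. (I expect that, precisely because of the normalising factors $\tfrac14$ and $-2$ chosen in \eqref{eq:xy-definition}, the operators are in fact form-invariant, $\cD_i'=\cD_i$.) Granting this, every solution $v(x,y)$ of $\cD_2\omega=\cD_3\omega=0$ near $(x,y)=(0,0)$ produces the solution $v\circ\iota$ of $\cD_2'\omega=\cD_3'\omega=0$ near $(x_1,y_1)=(0,0)$, and conversely; and since $\iota$ sends $\{x=0\}$ to $\{x_1=0\}$ and $\{y=0\}$ to $D_s$, which is the coordinate line $\{y_1=0\}$ at $A'$, the holomorphic solution is carried to a holomorphic solution, the two single-logarithmic solutions to single-logarithmic ones, and the double-logarithmic solution to a double-logarithmic one, with matching leading terms. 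Hence the local solutions around $A'$ have exactly the same form as those around $A$. As a geometric sanity check, $A$ and $A'$ are mirror to the two small resolutions $V_{8,w}^1$ and $V_{8,w}^2$, which are related by the flop of the $64$ exceptional curves; the flop leaves the complex structure unchanged, so it is expected that the two families obey the same Picard--Fuchs equations and that the corresponding LCSLs carry matching period behaviour.

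The main obstacle is the invariance verification in the third step. The operators $\cD_2$ and $\cD_3$ are a genuine second- and third-order operator in two variables, so performing the substitution \eqref{eq:xy-x1y1-inv} and re-expressing everything in $(x_1,y_1,\partial_{x_1},\partial_{y_1})$ is a long, purely mechanical computation best carried out with computer algebra; and one must take care to confirm that $\cD_2',\cD_3'$ generate the \emph{full} Picard--Fuchs ideal rather than merely a sub-ideal, so that the two solution spaces coincide exactly rather than one being contained in the other. In practice I would corroborate the ideal equality by checking that both systems have rank $4$ and that their power-series solutions agree to high order.
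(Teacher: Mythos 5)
Your proposal follows essentially the same route as the paper: transform $\cD_{2},\cD_{3}$ to the coordinates $(x_{1},y_{1})$ and verify by direct computation that the resulting operators $\cD_{2}',\cD_{3}'$ cut out the same solution space as the original-form operators in the new variables. One caveat: your parenthetical expectation of literal form-invariance $\cD_{i}'=\cD_{i}$ is not what happens — the actual relations are $\cD_{2}(x_{1},y_{1})=y_{1}^{2}\cD_{2}'$ and $\cD_{3}(x_{1},y_{1})=y_{1}\cD_{3}'-5(1+4x_{1}+2y_{1})\theta_{y_{1}}\cD_{2}'-5y_{1}\cD_{2}'$, so the two systems agree only up to factors of $y_{1}$ and modulo the ideal of $\cD_{2}'$ — but this is precisely the ideal/solution-sheaf equality you fall back on, so your argument goes through.
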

\begin{proof}
Let us denote the simple replacements of variables by $\cD_{k}(x_{1},y_{1}):=\cD_{k}\vert_{{x\rightarrow x_{1}\atop y\rightarrow y_{1}}}$
for $k=2,3$. Calculating the coordinate transformations $\cD_{2}',\cD_{3}'$,
we find the following relations
\[
\cD_{2}(x_{1},y_{1})=y_{1}^{2}\cD_{2}',\,\,\,\,\cD_{3}(x_{1},y_{1})=y_{1}\cD_{3}'-5(1+4x_{1}+2y_{1})\theta_{y_{1}}\cD_{2}'-5y_{1}\cD_{2}'.
\]
These relations implies the claimed property.
\end{proof}
We remark, for later use, that there is the following invariant relation
\[
y(1+4x+y)=y_{1}(1+4x_{1}+y_{1})
\]
under the coordinate change $(x_{1},y_{1})=(x,-4x-y-1)$. 

\begin{figure}
\includegraphics[scale=0.5]{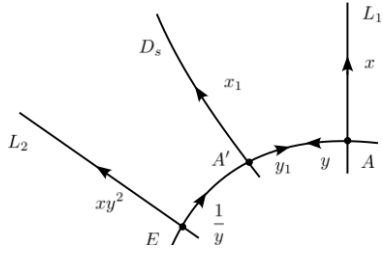}

\caption{\label{fig:AandA'}Figure 2. Boundary points $A$ and $A'$}
\end{figure}

\subsection{Griffiths-Yukawa couplings \label{sub:Griffiths-Yukawa-couplings}}

Let us denote by $\Omega(w)$ the holomorphic three form on a general
fiber $V_{8,w}^{1}/\mbZ_{8}$ of the family $\cV_{\mbZ_{8}}^{1}$
over $\mbP_{\Delta}$, by which we express the period integral (\ref{eq:period-Int-def})
as $\Pi_{\gamma}(w)=\int_{\gamma}\Omega(w)$. 

\para{Near the point $A$.} We define the so-called Griffiths-Yukawa
couplings by 
\begin{equation}
C_{ijk}^{A}:=-\int_{V_{8,w}^{1}/\mbZ_{8}}\Omega(w)\frac{\partial^{3\;}}{\partial z_{A}^{i}\partial z_{A}^{j}\partial z_{A}^{k}}\Omega(w)\label{eq:Cijk-A}
\end{equation}
where $(z_{A}^{1},z_{A}^{2})=(x,y)$ is the affine coordinate centered
at the degeneration point $A$. Using the Picard-Fuchs equations (see
e.g. \cite{HKTY1}), we have 
\begin{prop}
\label{prop:GY-Cijk-A}Up to a common constant d, the Griffiths-Yukawa
couplings $C_{111}^{A},C_{112}^{A},C_{122}^{A}$ and $C_{222}^{A}$
are determined in order as follows:
\[
\frac{d\,P_{111}}{x^{3}dis_{0}\,dis_{1}^{2}\,dis_{2}\,dis_{3}},\,\,\,\,\frac{d\,P_{112}}{x^{2}y\,dis_{0}\,dis_{1}^{2}\,dis_{3}},\,\,\,\,\frac{d\,P_{122}}{xy\,dis_{0}\,dis_{1}^{2}\,dis_{3}},\,\,\,\,\frac{d\,P_{222}}{y^{2}dis_{0}\,dis_{1}^{2}},
\]
where $P_{ijk}=P_{ijk}(x,y)$ are polynomials given in Appendix \ref{sec:Appendix-Griffiths-Yukawa}
{}{$($see also \cite{HT-math-c}}$)$.
\end{prop}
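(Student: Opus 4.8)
The plan is to derive the Griffiths--Yukawa couplings directly from the Picard--Fuchs system, following the standard method (as in \cite{HKTY1}). The starting point is that $\Omega(w)$ varies in a rank-four local system (since $h^{2,1}=2$, so $H^3$ has dimension $2+2\cdot 2 = \dots$ — in any case the variation of Hodge structure is governed by $\cD_2,\cD_3$), and the couplings $C^A_{ijk}$ are the only nonzero pieces of the third-order part of the Gauss--Manin connection. Concretely, I would introduce the abbreviations $\theta_x = x\partial_x$, $\theta_y = y\partial_y$ and write $C_{ijk}^A = -\langle \Omega, \partial_i\partial_j\partial_k \Omega\rangle$, then use the two relations
\[
\langle \Omega, \Omega\rangle = 0, \qquad \langle \Omega, \partial_i \Omega\rangle = 0, \qquad \langle \Omega, \partial_i\partial_j \Omega\rangle = 0,
\]
which hold by type reasons (the pairing $\langle\cdot,\cdot\rangle$ on $H^3$ is skew and $\Omega$ is of pure type $(3,0)$). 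Differentiating these produces the Griffiths transversality identities
\[
\langle \partial_i\Omega, \partial_j\Omega\rangle = 0, \qquad \partial_k \langle \Omega,\partial_i\partial_j\Omega\rangle = 0 \ \Rightarrow\ \langle \partial_k\Omega, \partial_i\partial_j\Omega\rangle = -C^A_{ijk},
\]
which give the symmetry of $C^A_{ijk}$ and, crucially, a set of first-order PDEs for the $C^A_{ijk}$ once one feeds in the Picard--Fuchs operators.

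The mechanism is as follows. The operator $\cD_2$ is second order, so it expresses some second derivative of $\Omega$ (say $\partial_y^2\Omega$, reading off the leading symbol) as a $\cO_{\mbP_\Delta}$-linear combination of $\Omega, \partial_x\Omega, \partial_y\Omega$ and (possibly) $\partial_x\partial_y\Omega$; similarly $\cD_3$ being third order expresses some third derivative in terms of lower ones. Pairing these reduction relations against $\partial_i\partial_j\Omega$ (and using the vanishings above to kill the lower-order terms) yields linear relations among the $C^A_{ijk}$ with coefficients that are the rational-function coefficients appearing in $\cD_2,\cD_3$. This lets me eliminate all but one of the couplings: I can solve for $C_{112}^A, C_{122}^A, C_{222}^A$ algebraically in terms of $C_{111}^A$ (or vice versa), which is exactly the phrasing ``determined in order, up to a common constant $d$.'' Then, applying the first-order consistency equations $\partial_k C^A_{ijl} = \partial_l C^A_{ijk} + (\text{connection terms})$ — equivalently, differentiating $\langle \partial_i\Omega,\partial_j\Omega\rangle = 0$ twice — produces a first-order linear ODE for the remaining coupling $C^A_{111}$ whose solution is a rational function with the stated denominator $x^3\,dis_0\,dis_1^2\,dis_2\,dis_3$ times an undetermined overall constant $d$. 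The numerator polynomials $P_{ijk}$ then come out of the algebraic elimination step and are what is recorded in Appendix \ref{sec:Appendix-Griffiths-Yukawa}.

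Practically, though, I would not do this by hand: I would compute $\omega_0(x,y)$ to high order (degree $\sim 50$, as already mentioned in the proof of the period proposition), compute also a basis of the other three solutions of $\cD_2\omega = \cD_3\omega = 0$ as power series, form the Wronskian-type determinant $\det\big(\partial^{I}\omega_a\big)$ that computes $C^A_{ijk}$ up to the period normalization, and recognize the resulting rational function by matching its Taylor expansion against an ansatz $\frac{d\,P_{ijk}(x,y)}{(\text{monomial})\cdot dis_0\,dis_1^2\,dis_2\,dis_3}$ with undetermined polynomial numerator of bounded degree (the degree bound being dictated by the orders of the poles of the Gauss--Manin connection along the characteristic variety, which we already identified in the ``Characteristic variety'' paragraph). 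Solving the resulting linear system for the coefficients of $P_{ijk}$ fixes everything except the global constant $d$, which is genuinely undetermined at this stage because it depends on the normalization of $\Omega$ (it gets pinned down later by comparison with the classical cubic intersection form \eqref{eq:yukawaV1} in the large-complex-structure limit).

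The main obstacle is the bookkeeping of pole orders: I need to be sure that the denominator is exactly $x^3\,dis_0\,dis_1^2\,dis_2\,dis_3$ for $C_{111}^A$ and the reduced denominators claimed for the others, i.e. that no spurious factors appear and that the listed factors appear to exactly the stated multiplicities. This requires a careful local analysis of the Picard--Fuchs system near each component of the characteristic variety $\{dis_k = 0\}\cup\{x=0\}\cup\{y=0\}$ — reading off the local exponents (indicial roots) at each divisor to bound the order of the pole of $C^A_{ijk}$ there — and the asymmetry between the four couplings (e.g. $dis_2 = 1+4x$ divides the denominator of $C_{111}^A$ but not of $C_{112}^A$) reflects that $\partial_y$-differentiation lowers the pole order along $\{dis_2=0\}$, which must be checked from the explicit form of $\cD_2,\cD_3$ in Appendix \ref{sec:AppendixA-PF}. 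Once the pole structure is correctly pinned down, the remaining numerator determination is a finite linear-algebra problem that the power-series data settles unambiguously.
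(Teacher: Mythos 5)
Your proposal is correct and follows essentially the same route as the paper, which gives no explicit proof but simply invokes the standard method of \cite{HKTY1}: extract linear relations among the $C^{A}_{ijk}$ by pairing the Picard--Fuchs operators $\cD_{2},\cD_{3}$ against derivatives of $\Omega$, derive first-order ODEs fixing the remaining coupling up to an overall constant, and (in practice) pin down the numerators by matching high-order series data against a rational ansatz whose pole structure is read off from the characteristic variety --- exactly as you describe. (Minor slip: the local system has rank $2h^{2,1}+2=6$, not four, but this does not affect your counting of relations.)
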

In Proposition $\ref{prop:qYukawa-A}$, we determine the overall constant
$d$ in $C_{ijk}^{A}$ by finding mirror symmetry of the family $\cV_{\mbZ_{8}}^{1}$
to a Calabi-Yau manifold $V_{8,w}^{1}$. 

\para{Near the point $A'$.} \label{para:iso-A-A'}In the same way
as above, we define the Griffiths-Yukawa couplings $C_{ijk}^{A'}$
in terms of the affine coordinate $(z_{A'}^{1},z_{A'}^{2})=(x_{1},y_{1})$
centered at $A'$. As above, they are determined by the Picard-Fuchs
equations $\cD_{2}'\omega=\cD_{3}'\omega=0$ up to a normalization.
However, due to the properties in Proposition \ref{prop:PFeqs-A-A'},
we see that the isomorphism $C_{ijk}^{A'}=C_{ijk}^{A}$, i.e., $C_{ijk}^{A'}(x_{1},y_{1})=C_{ijk}^{A}(x_{1},y_{1}).$

\subsection{Mirror symmetry }

For a boundary point $P=D_{1}\cap D_{2}$ given as a normal crossing
divisors, by solving the Picard-Fuchs equations around $P$, we can
study the local monodromies around boundary divisors $D_{1},D_{2}$.
Mirror symmetry of the family $\cV_{\mbZ_{8}}^{1}$ to a Calabi-Yau
manifold $M$ arises at a special boundary point called the large
complex structure limit (LCSL), which is characterized{}{{}
by }unipotent monodromy properties around the boundary divisors; these
special forms of monodromies define the so-called monodromy weight
filtration on the third cohomology space $H^{3}(V_{8,w}^{1}/\mbZ_{8},\mathbb{R})$.
As one of the consequences of mirror symmetry, we observe an isomorphism
between the weight monodromy filtration and the corresponding filtration
in the hard Lefschetz theorem applied for the mirror Calabi-Yau manifold
$M$ (see e.g. \cite[Sect.2]{HT-birat}). 
\begin{prop}
\label{prop:MS-A-A'}The boundary points $A$ and $A'$, respectively,
of the Picard-Fuchs equations over $\mbP_{\Delta}$ are mirror symmetric
to Calabi-Yau manifolds{}{{} $X:=V_{8w}^{1}$ and $X':=V_{8,w}^{2}$
i}n the above sense. 
\end{prop}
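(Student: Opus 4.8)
The plan is to identify the large complex structure limit (LCSL) points $A$ and $A'$ explicitly and match the local solution structure of the Picard-Fuchs system with the expected period structure of $X=V_{8,w}^1$ and $X'=V_{8,w}^2$. First I would solve the Picard-Fuchs equations $\cD_2\omega=\cD_3\omega=0$ in a Frobenius-type expansion around $(x,y)=(0,0)$. Since the characteristic variety includes the coordinate lines $\{x=0\}$ and $\{y=0\}$ and the monodromy around these divisors at $A$ is maximally unipotent, the local solution space is spanned by $\omega_0$ (the holomorphic period \eqref{eq:sol-w0}), two single-logarithmic solutions $\omega_0\log x + (\text{power series})$ and $\omega_0\log y+(\text{power series})$, and one double-logarithmic solution. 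This confirms $A$ is indeed an LCSL and produces the mirror map $t_1,t_2$ by exponentiating the ratios of the log-solutions to $\omega_0$.

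Next I would extract the classical part of the prepotential from the leading behavior of the double-log solution and compare the resulting cubic intersection form and linear form (the $c_2$ pairing) with the topological invariants of $X=V_{8,w}^1$ recorded in \eqref{eq:yukawaV1}, namely $H_1^3=H_1^2A_1=16$, $H_1A_1^2=A_1^3=0$, $c_2H_1=64$, $c_2A_1=0$, together with the ample cone generated by $H_1,A_1$. Concretely, one reads off these numbers from the degeneration of the Griffiths-Yukawa couplings $C_{ijk}^A$ of Proposition \ref{prop:GY-Cijk-A} in the LCSL coordinates after dividing by $\omega_0^2$ and substituting the mirror map; the pole structure along $dis_0,dis_1,dis_2,dis_3$ and the coordinate lines pins down the constant $d$ and gives the classical Yukawa couplings $16,16,0,0$, matching $X$. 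The analogous computation at $A'$, using the coordinate change $(x_1,y_1)=(x,-4x-y-1)$ of \eqref{eq:xy-x1y1-inv} and the isomorphism $C_{ijk}^{A'}=C_{ijk}^A$ established in \S\ref{para:iso-A-A'}, together with Proposition \ref{prop:PFeqs-A-A'}, shows that the solution structure at $A'$ is formally identical but the relevant triple-intersection data—now read in the $(x_1,y_1)$ coordinates after accounting for the relation $y(1+4x+y)=y_1(1+4x_1+y_1)$—correspond instead to the invariants of $V_{8,w}^2$ in \eqref{eq:yulawaV2}, i.e. $A_2^3=-64$, $c_2A_2=128$, with ample cone generated by $H_2$ and $\tilde A_2=2H_2-A_2$.

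The monodromy-weight-filtration statement then follows: the unipotent monodromies around the two boundary divisors through $A$ (resp. $A'$) generate a weight filtration on $H^3$ whose graded pieces have dimensions $1,2,2,1$, and the Yukawa coupling data identify this filtration with the hard-Lefschetz filtration on $H^{\mathrm{even}}$ of $X$ (resp. $X'$), exactly as in the general framework of \cite[Sect.2]{HT-birat}; this is the precise sense in which $A\leftrightarrow X$ and $A'\leftrightarrow X'$ are mirror. The main obstacle I expect is not the abstract matching but the explicit determination of the overall normalization constant $d$ in the Griffiths-Yukawa couplings and the verification that, after inserting the inverse mirror map, the instanton-corrected couplings have integral $q$-expansions with the correct leading classical terms; this requires carrying the series \eqref{eq:sol-w0} and the log-solutions to sufficiently high order (total degree $\sim 50$ as noted after Proposition \ref{prop:PicardFuchs-D2-D3}) and checking integrality, which is the computational heart of identifying the mirror partner rather than merely a candidate. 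A secondary subtlety is ensuring that the two a priori different integral local systems $R^3\pi_*\mbC$ for $\cV^1_{\mbZ_8}$ and $\cV^1_{\mbZ_8\times\mbZ_8}$ do not spoil the identification at the level of periods at $A,A'$; as flagged in the text this distinction surfaces only in the integral structure (deferred to Section \ref{sec:Degens-CY}) and in the genus-one potentials, so it does not affect the present Hodge-theoretic statement, but one must be careful to phrase the conclusion at the level of the Picard-Fuchs system (which is common to both families) rather than of a specific integral lattice.
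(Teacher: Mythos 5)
Your treatment of the point $A$ follows the paper's argument in all essentials: construct the canonical form of the local solutions of $\cD_{2}\omega=\cD_{3}\omega=0$ at $(x,y)=(0,0)$, read off the weight monodromy filtration with graded pieces of dimensions $1,2,2,1$, match the leading logarithmic coefficients with the cubic form and $c_{2}$-pairing of $V_{8,w}^{1}$ in (\ref{eq:yukawaV1}), and pin down the normalization $d$ through the integrality of the instanton expansion as in Proposition \ref{prop:qYukawa-A}. That part is sound.

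The gap is in the step that assigns $V_{8,w}^{2}$ to $A'$. You claim that the triple-intersection data read from the periods at $A'$ ``correspond instead'' to $A_{2}^{3}=-64$ and $c_{2}A_{2}=128$ from (\ref{eq:yulawaV2}). This cannot be the output of the computation you describe: by Proposition \ref{prop:PFeqs-A-A'} the local solutions at $A'$ have \emph{exactly} the same form as those at $A$ (and $C_{ijk}^{A'}=C_{ijk}^{A}$), and the invariants one extracts at an LCSL are taken with respect to the nef generators, which for $V_{8,w}^{2}$ are $H_{2}$ and $\tilde{A}_{2}=2H_{2}-A_{2}$ and give the \emph{same} numbers $16,16,0,0$ and $64,0$ as $H_{1},A_{1}$ do for $V_{8,w}^{1}$. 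The values $A_{2}^{3}=-64$ and $c_{2}A_{2}=128$ belong to a non-nef basis and only become visible when one analytically continues from $A$ to $A'$ and compares bases across the flop; they do not appear in the local canonical form at $A'$. Hence no purely local computation can decide which birational model sits at which point, and your proposed mechanism for distinguishing them would not produce the claimed result. The paper handles this honestly: it notes that $A$ and $A'$ are locally indistinguishable, that the hard Lefschetz filtrations of $V_{8,w}^{1}$ and $V_{8,w}^{2}$ are likewise indistinguishable, fixes one assignment as a convention, and justifies the pairing a posteriori by global data --- the $64$ flopping curves appearing as $n_{0}(0,1)=64$ at both points and the sum-up relation (\ref{eq:sumup-rel-AA'}) to the $(2,2,2,2)$ BPS numbers. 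Your proof needs either to adopt that convention explicitly or to supply such a global argument.
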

We can verify the above proposition from the weight monodromy filtrations
which we introduce by using the canonical forms (see Appendix \ref{sec:App-Canonical-Form})
of local solutions around $A$ and $A'$. Here we remark that there
is no distinction between $A$ and $A'$ as far as local properties
are concerned as we observed in Proposition \ref{prop:PFeqs-A-A'}.
Similarly, there is no distinction for the filtrations coming from
the hard Lefschetz theorem for $V_{8,w}^{1}$ and its birational model
$V_{8,w}^{2}$. In the above proposition, we have fixed one way of
the mirror identification and we will retain this in what follows.

\subsection{Mirror symmetry and Gromov-Witten invariants}

Mirror symmetry observed in Proposition \ref{prop:MS-A-A'} can be
confirmed by extracting their quantum cohomology from the Griffiths-Yukawa
couplings (\ref{eq:Cijk-A}) expanded near the boundary points. This
was first achieved in \cite{Pav}, however we reproduce it here in
order to fit the results into our global study of the family over
$\mbP_{\Delta}$. 

\para{Mirror maps.} Near the boundary point $A$, it is straightforward
to solve the Picard-Fuchs equations $\cD_{2}\omega=\cD_{3}\omega=0$
in the forms of power series with logarithmic singularities around
the boundary divisors $\left\{ x=0\right\} $ and $\left\{ y=0\right\} $.
The solutions consist of six independent power series, which have
the following leading logarithmic singularities: 
\[
\begin{alignedat}{4} & \omega_{0}=1, & \;\; & \omega_{1}=\log x, & \;\; & \omega_{2,1}=(\log x)^{2}+2(\log x)(\log y),\\
 &  & \;\; & \omega_{2}=\log y, & \;\; & \omega_{2,2}=(\log x)^{2}, & \hsp{-50} & \omega_{3}=(\log x)^{3}+3(\log x)^{2}(\log y).
\end{alignedat}
\]
This structure of the logarithmic singularities in the solutions actually
defines the weight monodromy filtration $W_{0}\subset W_{2}\subset W_{4}\subset W_{6}$,
see Appendix \ref{sec:App-Canonical-Form}. For our purpose here,
we only need to determine the explicit forms of the solutions $\omega_{0},\omega_{1},\omega_{2}$.
Note that the solution $\omega_{0}$ is unique by the leading behavior
$\omega_{0}=1$, which is given by (\ref{eq:sol-w0}). On the other
hand the solutions $\omega_{1},\omega_{2}$ are determined up to adding
constant multiples of $\omega_{0}$; we set these solutions as 
\[
\omega_{1}=\omega_{0}\log x+\omega_{1}^{reg}+c_{1}\omega_{0},\,\,\,\omega_{2}=\omega_{0}\log y+\omega_{2}^{reg}+c_{2}\omega_{0},
\]
where $\omega_{k}^{reg}$ represent power series with no constant
terms. 
\begin{defn}
\label{def:MirrorMap-A}We define mirror map by the inverse relations
$x=x(q_{1},q_{2}),$ $y=y(q_{1},q_{2})$ of 
\[
q_{1}=e^{\frac{\omega_{1}}{\omega_{0}}}=C_{1}x\,\exp\big(\frac{\omega_{1}^{reg}}{\omega_{0}}\big),\,\,\,\,q_{2}=e^{\frac{\omega_{2}}{\omega_{0}}}=C_{2}y\,\exp\big(\frac{\omega_{2}^{reg}}{\omega_{0}}\big),
\]
where $C_{k}:=e^{c_{k}}$. Also we often write $q_{k}=e^{t_{k}}(k=1,2)$
with introducing $t_{k}$. 
\end{defn}
\para{Griffiths-Yukawa couplings.} Mirror symmetry arises from the
family $\cV_{\mbZ_{8}}^{1}$ if we combine Griffiths-Yukawa couplings
in Proposition \ref{prop:GY-Cijk-A} with the mirror maps near the
boundary point \cite{Candelas}. We recall that the so-called quantum
corrected Yukawa couplings are defined by 
\begin{equation}
Y_{abc}^{A}=\big(\frac{1}{\omega_{0}}\big)^{2}\sum_{i,j,k}C_{ijk}^{A}\frac{dz_{A}^{i}}{dt_{a}}\frac{dz_{A}^{j}}{dt_{b}}\frac{dz_{A}^{k}}{dt_{c}},\label{eq:Yijk-A}
\end{equation}
where $(z_{A}^{1},z_{A}^{2}$) represents the mirror map $(x,y)=(x(q_{1},q_{2})$,
$y(q_{1},q_{2}))$ at the boundary point $A$, and the substitution
of the mirror map is assumed in the r.h.s. Similarly we have the quantum
corrected Yukawa couplings $Y_{abc}^{A'}$ with $C_{ijk}^{A'}$ and
the mirror map $z_{A'}^{i}$ defined around the other boundary point
$A'$.
\begin{prop}
\label{prop:qYukawa-A}When we set $d=1$ in $C_{ijk}^{A}$ and $C_{1}=C_{2}=1$
in the definition of mirror map, we have 
\[
\begin{alignedat}{2} & Y_{111}^{A}=16+512\,q_{1}q_{2}+22528\,q_{1}^{2}q_{2}\cdots, & \,\, & Y_{112}^{A}=16+512\,q_{1}q_{2}+11264\,q_{1}^{2}q_{2}\cdots,\\
 & Y_{122}^{A}=0+512\,q_{1}q_{2}+5632\,q_{1}^{2}q_{2}\cdots, &  & Y_{222}^{A}=0+64\,q_{2}+512\,q_{1}q_{2}+64\,q_{2}^{2}\cdots.
\end{alignedat}
\]
Also, we have exactly same form for the corresponding expansions of
$Y_{abc}^{A'}$.
\end{prop}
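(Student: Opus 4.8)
The plan is to compute the quantum-corrected Yukawa couplings $Y_{abc}^{A}$ directly by combining the three ingredients already assembled: the Griffiths-Yukawa couplings $C_{ijk}^{A}$ from Proposition \ref{prop:GY-Cijk-A}, the mirror map from Definition \ref{def:MirrorMap-A}, and the power series $\omega_{0}(x,y)$ from (\ref{eq:sol-w0}). First I would take the explicit rational expressions for $C_{111}^{A},C_{112}^{A},C_{122}^{A},C_{222}^{A}$ (Appendix \ref{sec:Appendix-Griffiths-Yukawa}), set $d=1$, and expand them as power series in $x,y$ around $(0,0)$ — this is routine since each $C_{ijk}^{A}$ is a rational function whose denominator is a product of $x$, $y$, and the $dis_k$, all of which are units (after suitable factoring) or coordinate functions near $A$. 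Next, with $C_{1}=C_{2}=1$, I would solve the Picard-Fuchs system $\cD_{2}\omega=\cD_{3}\omega=0$ for the logarithmic solutions $\omega_{1},\omega_{2}$, extract $\omega_{k}^{reg}$, form $q_{k}=C_{k}\,z_{A}^{k}\exp(\omega_{k}^{reg}/\omega_{0})$, and invert these series to obtain $x=x(q_{1},q_{2})$, $y=y(q_{1},q_{2})$ to the order required. Substituting the mirror map and $\omega_{0}$ into (\ref{eq:Yijk-A}) and collecting terms in $q_{1},q_{2}$ then yields the stated $q$-expansions; the leading constants $16,16,0,0$ match $H_{X}^{3}=H_{X}^{2}A_{X}=16$ and $H_{X}A_{X}^{2}=A_{X}^{3}=0$ from (\ref{eq:yukawaV1}), which is precisely what fixes $d=1$ and justifies calling this mirror symmetry to $X=V_{8,w}^{1}$.

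For the second claim, that $Y_{abc}^{A'}$ has exactly the same form, I would invoke Proposition \ref{prop:PFeqs-A-A'} and the discussion in \ref{para:iso-A-A'}. Since the local solutions of $\cD_{2}'\omega=\cD_{3}'\omega=0$ around $(x_{1},y_{1})=(0,0)$ coincide with those of $\cD_{2}\omega=\cD_{3}\omega=0$ around $(x,y)=(0,0)$, the series $\omega_{0}$ and $\omega_{k}^{reg}$ are identical as functions of the local coordinates, hence the mirror maps $z_{A'}^{k}(q_{1},q_{2})$ agree formally with $z_{A}^{k}(q_{1},q_{2})$; combined with $C_{ijk}^{A'}(x_{1},y_{1})=C_{ijk}^{A}(x_{1},y_{1})$ from \ref{para:iso-A-A'}, the formula (\ref{eq:Yijk-A}) gives $Y_{abc}^{A'}=Y_{abc}^{A}$ verbatim. (That $A$ and $A'$ nonetheless correspond to the distinct birational models $V_{8,w}^{1}$ and $V_{8,w}^{2}$ is visible only through the global integral structure, not through these local $q$-expansions — consistent with Proposition \ref{prop:MS-A-A'}.)

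The main obstacle is bookkeeping rather than conceptual: one must carry the $x,y$-expansions of $\omega_{0},\omega_{1},\omega_{2}$ and of the $C_{ijk}^{A}$ to high enough total degree that the inversion of the mirror map and the resulting $q_{1},q_{2}$-series are correct through the orders displayed ($q_{1}^{2}q_{2}$, $q_{2}^{2}$), and one must be careful that the choice of $\omega_{1},\omega_{2}$ (fixed by $C_{1}=C_{2}=1$, i.e.\ no constant term in $\omega_{k}^{reg}$) is the one producing integral Yukawa couplings with the correct classical limit. In practice this is done symbolically (the computations are recorded in \cite{HT-math-c}), and the only genuine check is that the constant terms reproduce (\ref{eq:yukawaV1}) and that all coefficients come out as the indicated integers; once $\omega_{0}(x,y)$ is pinned down by (\ref{eq:sol-w0}) and $C_{ijk}^{A}$ by Proposition \ref{prop:GY-Cijk-A}, the rest is a finite, deterministic calculation, so no essential difficulty remains beyond organizing it.
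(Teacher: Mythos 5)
Your proposal is correct and follows essentially the same route the paper takes: expand the rational Griffiths–Yukawa couplings of Proposition \ref{prop:GY-Cijk-A} with $d=1$, invert the mirror map built from the logarithmic solutions with $C_{1}=C_{2}=1$, substitute into (\ref{eq:Yijk-A}), and fix the normalizations by matching the constant terms to the cubic forms (\ref{eq:yukawaV1}); the claim for $A'$ follows from the local isomorphism of solutions in Proposition \ref{prop:PFeqs-A-A'} and $C_{ijk}^{A'}=C_{ijk}^{A}$. The paper gives no further detail beyond this computation (recorded in \cite{HT-math-c}), so nothing is missing.
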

Quantum corrected Yukawa couplings are related to the so-called genus
zero Gromov-Witten potential $F_{0}^{A}$ , by $Y_{abc}^{A}=\frac{\partial^{3}F_{0}^{A}}{\partial t^{a}\partial t^{b}\partial t^{c}}.$
From this potential function, we read Gromov-Witten invariants and
also classical cubic forms (\ref{eq:yukawaV1}) in the following form:
\[
F_{0}^{A}=\frac{16}{3!}\,t_{1}^{3}+\frac{16}{2!}t_{1}^{2}t_{2}+\sum_{d_{1},d_{2}}N_{0}^{A}(d_{1},d_{2})q_{1}^{d_{1}}q_{2}^{d_{2}},
\]
where the summation over the bi-degrees $\beta:=(d_{1},d_{2})$ is
restricted to $d_{1},d_{2}\geq0$ and $(d_{1},d_{2})\not=(0,0)$ (see
\ref{para:potF-kappa-gen} of Section \ref{sec:MS-more-C} for a more
invariant form of $F(t)$). 
\begin{prop}
Assuming mirror symmetry, the numbers $N_{0}^{A}(\beta)$ are Gromov-Witten
invariants of stable maps to $V_{8,w}^{1}$ with $\beta\not=0\in H_{2}(V_{8,w}^{1},\mbZ)$.
The bi-degree $(d_{1},d_{2})$ represents the degrees $(\beta.H_{X},\beta.A_{X})$
with respect to the divisor classes $H_{X}=H_{1}$ and $A_{X}=A_{1}$
introduced in $($\ref{eq:yukawaV1}$)$ and \ref{para: V1-summary}. 
\end{prop}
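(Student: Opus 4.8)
The plan is to deduce the statement by combining the mirror identification of Proposition~\ref{prop:MS-A-A'} with the explicit quantum corrected Yukawa couplings of Proposition~\ref{prop:qYukawa-A}, reading off the Gromov-Witten invariants from the shape of the genus zero prepotential. Recall that for a Calabi-Yau threefold $M$ with nef generators $D_{1},\dots,D_{r}$ of $Amp(M)$ and flat coordinates $t_{1},\dots,t_{r}$ dual to them, mirror symmetry predicts that the genus zero Gromov-Witten prepotential has the form
\[
F_{0}^{M}(t)=\frac{1}{3!}\sum_{a,b,c}(D_{a}D_{b}D_{c})\,t_{a}t_{b}t_{c}+\sum_{0\neq\beta}N_{0}^{M}(\beta)\,e^{\sum_{a}(\beta.D_{a})\,t_{a}},
\]
that $\partial_{t_{a}}\partial_{t_{b}}\partial_{t_{c}}F_{0}^{M}$ are the A-model Yukawa couplings, and that these agree with the quantum corrected couplings $Y_{abc}$ computed on the mirror side near a large complex structure limit once the holomorphic three form is normalized by $\omega_{0}$ and the mirror map of Definition~\ref{def:MirrorMap-A} is used as the change of coordinates.

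First I would check that the cubic part of $F_{0}^{A}$ displayed just above the statement reproduces the classical triple intersection numbers $H_{X}^{3}=H_{X}^{2}A_{X}=16$ and $H_{X}A_{X}^{2}=A_{X}^{3}=0$ of (\ref{eq:yukawaV1}); equivalently, that the constant terms $Y_{111}^{A}=Y_{112}^{A}=16$ and $Y_{122}^{A}=Y_{222}^{A}=0$ in Proposition~\ref{prop:qYukawa-A} match. This is what fixes the normalizations $d=1$ and $C_{1}=C_{2}=1$, and, more importantly, pins down the dictionary $t_{1}\leftrightarrow H_{X}$, $t_{2}\leftrightarrow A_{X}$ between the flat coordinates at $A$ and the generators of $Amp(V_{8,w}^{1})$ from \ref{para: V1-summary}. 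Then, integrating the series $Y_{abc}^{A}$ once defines the coefficients $N_{0}^{A}(d_{1},d_{2})$ through the displayed form of $F_{0}^{A}$. Invoking the mirror symmetry hypothesis together with Proposition~\ref{prop:MS-A-A'}, which identifies the Calabi-Yau manifold mirror to the LCSL $A$ as $X=V_{8,w}^{1}$, this B-model prepotential coincides with the A-model Gromov-Witten prepotential of $V_{8,w}^{1}$; hence $N_{0}^{A}(\beta)$ is the genus zero Gromov-Witten invariant of stable maps to $V_{8,w}^{1}$ in the class $\beta$. The degree assertion is then immediate: under $t_{1}\leftrightarrow H_{X}$, $t_{2}\leftrightarrow A_{X}$ a curve class $\beta$ contributes with weight $q_{1}^{\beta.H_{X}}q_{2}^{\beta.A_{X}}$, and since $H_{X},A_{X}$ are nef the exponents $d_{i}=\beta.D_{i}$ are nonnegative, with $\beta=0$ corresponding to $(d_{1},d_{2})=(0,0)$.

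The main obstacle is not this bookkeeping but the inputs it rests on: namely that the point $A$ is genuinely a large complex structure limit whose integral variation of Hodge structure is that of $V_{8,w}^{1}$ rather than of a birational model (the neighboring point $A'$ plays that role, being mirror to $X'=V_{8,w}^{2}$ by Proposition~\ref{prop:MS-A-A'}) or of a finite quotient. Since the Picard-Fuchs system is the same for $\cV_{\mbZ_{8}}^{1}$ and $\cV_{\mbZ_{8}\times\mbZ_{8}}^{1}$ and distinguishes $A$ from $A'$ only through the integral structure, establishing this requires the analysis of the canonical forms of the local solutions around $A$ and of the resulting weight monodromy filtration (Appendix~\ref{sec:App-Canonical-Form}); this is exactly the content of Proposition~\ref{prop:MS-A-A'}. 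A useful independent consistency check, which I would also carry out, is that the numbers $N_{0}^{A}(\beta)$ produced by the above procedure turn out to be integral up to the expected multiple cover contributions, as genuine Gromov-Witten invariants must be.
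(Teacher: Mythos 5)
Your proposal is correct and follows essentially the same route as the paper, which states this proposition without a separate proof precisely because it rests on the reasoning you reconstruct: the constant terms of $Y_{abc}^{A}$ in Proposition \ref{prop:qYukawa-A} match the cubic form (\ref{eq:yukawaV1}) in the nef basis $H_{X},A_{X}$, fixing the dictionary $t_{1}\leftrightarrow H_{X}$, $t_{2}\leftrightarrow A_{X}$ and hence the bi-degree $(\beta.H_{X},\beta.A_{X})$, while the mirror identification of $A$ with $V_{8,w}^{1}$ comes from Proposition \ref{prop:MS-A-A'}. Your closing consistency checks (distinguishing $A$ from $A'$ via the $64$ flopping curves and verifying integrality of BPS numbers) are exactly the ones the paper invokes immediately after the statement.
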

Similar results hold for the numbers $N_{0}^{A'}(d_{1},d_{2})$ with
$V_{8,w}^{1}$ being replaced by the birational model $V_{2,w}^{2}$,
and divisors $H_{1},A_{1}$ replaced by $H_{2},\tilde{A}_{2}$ in
\ref{para:divisors- H1A1-and-H2A2}. 

It is convenient to define BPS numbers $n_{0}(\beta):=n_{0}(d_{1},d_{2})$
by the relations 
\[
N_{0}^{A}(\beta)=\sum_{k|\beta}\frac{1}{k^{3}}n_{0}^{A}(\beta/k),
\]
which remove the contributions from the so-called multiple covers
\cite{AM2,Vo} in $N_{0}^{A}(\beta)=N_{0}^{A}(d_{1},d_{2})$. In Table
\ref{tab:Table-A} (0), we list the resulting BPS numbers from $F_{0}^{A}$. 

Results in this subsection were first obtained in \cite{Pav} verifying
that some of BPS numbers coincides with rational curves on $V_{8,w}^{1}$.
The identification of the two boundary points $A$ and $A'$, respectively,
with the birational models $V_{8,w}^{1}$ and $V_{8,w}^{2}$ is justified
by observing the number of flopping curves in $n_{0}^{A}(0,1)=n_{0}^{A'}(0,1)=64$,
and also a ``sum-up relation'' 
\begin{equation}
n_{0}^{(2,2,2,2)}(d)=\sum_{d_{2}}n_{0}^{A}(d,d_{2})=\sum_{d_{2}}n_{0}^{A'}(d,d_{2}),\label{eq:sumup-rel-AA'}
\end{equation}
which reproduce the BPS numbers in Tables E1 (1) of Appendix \ref{sec:Appendix-2222}
for a smoothing of the singular Calabi-Yau variety $X_{2,2,2,2}^{sing}$
arising from the contractions of 64 curves;\def\contRi{\begin{xy}
(-15,0)*++{V^1_{8,w}}="Vi",
( 15,0)*++{V^2_{8,w}}="Vii",
(  0,-13)*++{\;\;\;\;X_{2,2,2,2}^{sing}\;.}="Zi",
(-11,-7)*++{\,^{\,_{64\,\mathbb{P}^1}}},
( 11,-7)*++{\,^{\,_{64\,\mathbb{P}^1}}},
\ar@{<-->}^{} "Vi";"Vii"
\ar "Vi";"Zi"
\ar "Vii";"Zi"
\end{xy} }
\begin{equation}
\begin{matrix}\contRi\end{matrix}\label{eq:Birat-i}
\end{equation}

\subsection{Counting functions by quasi-modular forms}

Actually, mirror symmetry of Calabi-Yau manifolds which have abelian
surface fibration was first studied in the case of fiber product of
two rational elliptic surfaces, i.e., Schoen's Calabi-Yau threefolds
\cite{HSS,HST1}; there it was found that some part of Gromov-Witten
potential are expressed by quasi-modular forms coming from elliptic
curves. It is interesting to observe that the Gromov-Witten potential
of $V_{8,w}^{1}$ has similar properties. 

Let us define $q$-series $Z_{0,n}(q)\,(n=1,2,...)$ by 
\begin{equation}
F_{0}^{A}(q,p)=\frac{16}{3!}\,t_{1}^{3}+\frac{16}{2!}t_{1}^{2}t_{2}+\sum_{n\geq1}Z_{0,n}(q)p^{n},\label{eq:F0-A}
\end{equation}
where $F_{0}^{A}(q,p)=F_{0}^{A}(q_{1},q_{2})$. By definition of the
bi-degree $(d_{1},d_{2})$, the $q$-series $Z_{0,n}^{A}(q)=\sum_{d\geq0}N_{0}^{A}(d,n)q^{d}$
counts the Gromov-Witten invariants related to curves which intersects
with the fiber class $n$-times, i.e., $n$-sections of the fibration
$V_{8,w}^{1}\rightarrow\mbP^{1}$. In particular, since it holds that
\[
Z_{0,1}^{A}(q)=\sum_{d\geq0}N_{0}^{A}(d,1)q^{d}=\sum_{d\geq0}n_{0}^{A}(d,1)\,q^{d},
\]
the $q$-series $Z_{0,1}(q)$ counts BPS numbers of the sections.
We can observe the following property from the table of BPS numbers:

\begin{Xobs}{  \label{obs:ZA_01}The $q$-series $Z_{0,1}(q)$ has
a closed form given by 
\[
Z_{0,1}^{A}(q)=\frac{64}{\bar{\eta}(q)^{8}},
\]
where $\bar{\eta}(q)=\Pi_{n\geq1}(1-q^{n})$. Moreover, we have
\[
Z_{0,n}^{A}(q)=P_{0,n}^{A}(E_{2},E_{4},E_{6})\left(\frac{64}{\bar{\eta}(q)^{8}}\right)^{n},
\]
where $P_{0,n}^{A}(E_{2},E_{4},E_{6})$ are quasi-modular forms of
weight $4(n-1)$ in terms of Eisenstein series $E_{2},E_{4}$ and
$E_{6}$. }\end{Xobs}

We have verified the above properties by calculating $Z_{0,n}(q)$
up to sufficiently large degree $d\le70$ and for $n\leq9$. Below
are explicit forms of the resulting polynomials $P_{0,n}^{A}$ for
lower $n$;
\[
\begin{alignedat}{2}P_{0,2}^{A}= &  & \,\, & \frac{1}{4608}(8\,E_{2}^{2}+E_{4}),\\
P_{0,3}^{A}= &  &  & \frac{1}{2654208}(14E{}_{2}^{4}+7E{}_{2}^{2}E_{4}+E{}_{4}^{2}+2E_{2}E_{6}),\\
P_{0,4}^{A}= &  &  & \frac{1}{2^{26}3^{7}}(3008E{}_{2}^{6}+2808E{}_{2}^{4}E_{4}+1128E{}_{2}^{2}E{}_{4}^{2}+125E{}_{4}^{3}\\
\quad\;\, &  &  & \hsp{90}+1120E{}_{2}^{3}E_{6}+528E_{2}E_{4}E_{6}+31E_{6}^{2}).
\end{alignedat}
\]
{}{The forms of $P_{0,n}^{A}\,(5\leq n\leq9)$ can be
found in \cite{HT-math-c}. }

\subsection{Counting sections by geometry of singular fibers}

It is easy to identify the number $64$ with the number of the sections
of $X=V_{8,w}^{1}\rightarrow\mbP^{1}$. The appearance of the $\eta$-function
in the denominator reminds us of similar counting formulas for a rational
elliptic surface in Schoen's Calabi-Yau manifolds \cite{HSS,HST1},
which came from 12 singular fibers of Kodaira's $I_{1}$ type. In
the present case, the singular fibers consist of 8 elliptic translation
scrolls,
\[
S_{k}=\bigcup_{p\in E_{k}}\langle p,p+e_{k}\rangle\,\,\,\,\,(k=1,...,8)
\]
which is described in the part \ref{para: V1-summary} (2), where
$\langle p,p+e_{k}\rangle$ represents a line passing two points $p$
and $p+e_{k}$$(e_{k}\in E_{k})$. Clearly, the power 8 in the denominator
of $Z_{0,1}^{A}(a)=\frac{64}{\bar{\eta}(q)^{8}}$ should be explained
by the number of singular fibers. Since $N_{0}^{A}(\beta)=n_{0}^{A}(\beta)$
holds for the classes $\beta=(\beta\cdot H_{X},\beta\cdot A_{X})=(d,1)$,
the function $\frac{64}{\bar{\eta}(q)^{8}}$ should count the numbers
of rational curves coming from the 8 translation scrolls. Let us fix
a section $\sigma$ and denote chains of lines contained in {}{{} translation
scrolls $S_{k}\,(k=1,...,8)$ }by
\[
L_{1}^{(1)},....,L_{n_{1}}^{(1)};L_{1}^{(2)},....,L_{n_{2}}^{(2)};\cdots;L_{1}^{(8)},....,L_{n_{8}}^{(8)},
\]
where $\sigma$ intersect at one point with some line $L_{n_{i}}^{(k)}$
{}{in the chain $L_{1}^{(k)},...,L_{n_{k}}^{(k)}$}.
These chains of rational curves could explain the counting function
$\frac{1}{\bar{\eta}(q)^{8}}$, if a configuration 
\[
\sigma_{X}\cup L_{1}^{(k)}\cup L_{2}^{(k)}\cup\cdots\cup L_{n_{k}}^{(k)}\,\text{with a fixed }k
\]
had a contribution $p(n_{k})(:=$the number of partitions of $n_{k}$)
to $n_{0}(d,1)=n_{0}(n_{1}+n_{2}+\cdots+n_{8},1)$. However, as one
easily recognize, a naive counting from this configuration is $n_{k}$
instead of $p(n_{k})$, while $p(n_{k})=n_{k}$ holds for $n_{k}\leq3$.
In fact, in \cite{Pav}, $n_{0}(d,1)$ for $d=n_{1}+\cdots+n_{8}\leq3$,
i.e., the numbers $n_{0}(1,1)=8\times64,\,n_{0}(2,1)=44\times64,\,n_{0}(3,1)=192\times64$
are explained by studying Gromov-Witten theory for the above configurations.
However, for $d>3$, there are missing configurations or contributions
to explain $n_{0}(\beta)=n_{0}(d,1)$. We hope that we will come to
this problem in a future work. 

\begin{table}
{\scriptsize{}
\[
\begin{array}{c}
\hsp{-5}\begin{array}{|c|ccccccccc}
\hline \m j\diagdown i\m & 0 & 1 & 2 & 3 & 4 & 5 & 6 & 7 & 8\\
\hline 0 & 0 & 0 & 0 & 0 & 0 & 0 & 0 & 0 & 0\\
1 & 64 & 512 & 2816 & 12288 & 46464 & 157696 & 493056 & 1441792 & 3989568\\
2 & 0 & 0 & 4096 & 98304 & 1220608 & 10813440 & 76775424 & 464322560 & 2480783360\\
3 & 0 & 0 & 2816 & 195072 & 6301056 & 124829696 & 1772620032 & 19764707328 & 183168532288\\
4 & 0 & 0 & 0 & 98304 & 10567680 & 478740480 & 13238665216 & 261369036800 & 4018366742528\\
5 & 0 & 0 & 0 & 12288 & 6301056 & 728901120 & 40797528064 & 1437499588608 & 36413468765248\\
6 & 0 & 0 & 0 & 0 & 1220608 & 478740480 & 58763759616 & 3812602150912 & 160955539341312\\
: & : & : & : & : & : & : & : & : & :
\end{array}\end{array}
\]
}{\scriptsize \par}

(0) Genus zero BPS numbers $n_{0}^{A}(i,j)$ 

{\scriptsize{}
\[
\begin{array}{c}
\hsp{-5}\begin{array}{|c|cccccccccc}
\hline \m j\diagdown i\m & 0 & 1 & 2 & 3 & 4 & 5 & 6 & 7 & 8 & 9\\
\hline 0 & 0 & 0 & 0 & 0 & 0 & 0 & 0 & 0 & 8 & 0\\
1 & 0 & 0 & 0 & 0 & 896 & 8192 & 50688 & 245760 & 1024768 & 3807232\\
2 & 0 & 0 & 0 & 0 & 2240 & 163840 & 3262464 & 39583744 & 358221888 & 2646376448\\
3 & 0 & 0 & 0 & 0 & 1920 & 966656 & 47699968 & 1190572032 & 19998058112 & 255958884352\\
4 & 0 & 0 & 0 & 0 & 4640 & 2097152 & 248373248 & 12249595904 & 363641449120 & 7661339901952\\
5 & 0 & 0 & 0 & 0 & 1920 & 2311168 & 619652096 & 57765986304 & 2933367424384 & 98849134485504\\
6 & 0 & 0 & 0 & 0 & 2240 & 2097152 & 833927680 & 141594460160 & 12170949409472 & 649231850569728\\
7 & 0 & 0 & 0 & 0 & 896 & 966656 & 619652096 & 190239614976 & 27968653028864 & 2375812698800128\\
: & : & : & : & : & : & : & : & : & : & :
\end{array}\end{array}
\]
}{\scriptsize \par}

(1) Genus one BPS numbers $n_{1}^{A}(i,j)$

{\scriptsize{}
\[
\begin{array}{|c|ccccccccccc}
\hline \m j\diagdown i\m & 0 & 1 & 2 & 3 & 4 & 5 & 6 & 7 & 8 & 9 & 10\\
\hline 0 & 0 & 0 & 0 & 0 & 0 & 0 & 0 & 0 & 0 & 0 & 0\\
1 & 0 & 0 & 0 & 0 & 0 & 0 & 256 & 4096 & 37568 & 247808 & 1332480\\
2 & 0 & 0 & 0 & 0 & 0 & 0 & 12288 & 524288 & 10758016 & 145555456 & 1490690048\\
3 & 0 & 0 & 0 & 0 & 0 & 0 & 69120 & 10266624 & 465354496 & 11849469952 & 209498467584\\
4 & 0 & 0 & 0 & 0 & 0 & 0 & 147456 & 75792384 & 6951182592 & 311243702272 & 8956370026496\\
5 & 0 & 0 & 0 & 0 & 0 & 0 & 302592 & 264691712 & 47403040000 & 3616972675072 & 164270290355456\\
6 & 0 & 0 & 0 & 0 & 0 & 0 & 364544 & 531759104 & 171942100864 & 21836284264448 & 1539526624321536\\
7 & 0 & 0 & 0 & 0 & 0 & 0 & 302592 & 674427904 & 359333519296 & 74782408261632 & 8139227500850176\\
: & : & : & : & : & : & : & : & : & : & : & :
\end{array}
\]
}{\scriptsize \par}

(2) Genus two BPS numbers $n_{2}^{A}(i,j)$

~

\caption{Table 1. \label{tab:Table-A} BPS numbers $n_{g}^{A}(i,j)$ of $X=V_{8,w}^{1}$
for $g=0,1,2$. These are equal to $n_{g}^{A'}(i,j)$ of $V_{8,w}^{2}$.}
\end{table}

\subsection{\label{sub:F1-A-A'}Genus one Gromov-Witten potentials $F_{1}^{A}$
and $F_{1}^{A'}$}

Using mirror symmetry, we can extend Observation \ref{obs:ZA_01}
to genus one Gromov-Witten potentials $F_{1}^{A}$ and $F_{1}^{A'}$.
Here we start with recalling the general form of the genus one Gromov-Witten
potential of a Calabi-Yau threefold $M$ proposed in \cite{BCOV1}. 

\para{Genus one potential $F_1$.} Suppose $M$ is a Calabi-Yau threefold
with rank$(Pic(M))=2$, and that we have a family of mirror Calabi-Yau
manifolds over some parameter space with a boundary point (LCSL) at
the origin of an affine coordinate $(x_{1},x_{2})$. Then using mirror
map defined near the origin, we have the genus one potential function
$F_{1}^{M}$ of $M$ in the following form: 
\begin{equation}
F_{1}^{M}=\frac{1}{2}\log\Big\{\big(\frac{1}{\omega_{0}}\big)^{3+h^{1,1}-\frac{\chi(M)}{12}}\frac{\partial(x_{1},x_{2})}{\partial(t_{1},t_{2})}dis_{0}^{r_{0}}\prod_{k\geq1}dis_{k}^{r_{k}}\times\prod_{i}x_{i}^{-1-\frac{c_{2}.H_{i}}{12}}\Big\},\label{eq:F1-X-formula}
\end{equation}
where $\chi(M)$ is the Euler number of $M$ and $h^{1,1}=h^{1,1}(M)$,
and also $c_{2}.H_{i}$ are the linear forms described in \ref{para:divisors- H1A1-and-H2A2}.
The notation $dis_{k}$ represents the components of the discriminant
of the family over the parameter space. Among them, $dis_{0}$ is
reserved to represent the component where the most general degenerations
of the fibers appear. The powers $r_{k}$ are unknown parameters which
we need to determine from some additional data. 

In the present case of $V_{8,w}^{1}$, we use the topological invariants
$\chi(V_{8,w}^{1})=0,c_{2}.H_{1}=64$ and $c_{2}.A_{1}=0$ (and the
same numbers of the corresponding invariants for $V_{8,w}^{2})$.
The unknown parameters $r_{k}$ can be fixed by knowing some of vanishing
results on Gromov-Witten invariants. 
\begin{prop}
Near the boundary point $A$, assuming mirror symmetry, we have the
potential function
\[
F_{1}^{A}(q,p)=\frac{1}{2}\log\Big\{\big(\frac{1}{\omega_{0}}\big)^{3+2}\frac{\partial(x,y)}{\partial(t_{1},t_{2})}dis_{0}^{-\frac{1}{6}}\,dis_{1}^{-1}\,dis_{2}^{-\frac{2}{3}}\,dis_{3}^{-1}\times x^{-1-\frac{64}{12}}y^{-1}\Big\}
\]
which gives genus one Gromov-Witten potential of $V_{8,w}^{1}$. For
the other boundary point $A'$, we have isomorphic form $F_{1}^{A'}$
with the same parameters $r_{0}$ and $r_{k}$. \end{prop}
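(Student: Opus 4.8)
The plan is to substitute the known invariants of $V_{8,w}^{1}$ into the general BCOV ansatz (\ref{eq:F1-X-formula}) and then to fix the four powers $r_{0},r_{1},r_{2},r_{3}$ by matching a handful of low-degree Gromov--Witten coefficients that are known (most of them forced to vanish) on geometric grounds. Since $\chi(V_{8,w}^{1})=0$ and $h^{1,1}=2$, the exponent in (\ref{eq:F1-X-formula}) equals $3+h^{1,1}-\chi/12=5$; and since the mirror coordinates $(x,y)$ near $A$ are dual to $H_{X}=H_{1}$ and $A_{X}=A_{1}$, for which $c_{2}.H_{1}=64$ and $c_{2}.A_{1}=0$ by (\ref{eq:yukawaV1}), the last product in (\ref{eq:F1-X-formula}) is $x^{-1-\frac{64}{12}}y^{-1}$. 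The components of the discriminant that meet a neighbourhood of $A$ but avoid $A$ are exactly $dis_{0},dis_{1},dis_{2},dis_{3}$ from the characteristic variety of $\cD_{2},\cD_{3}$, while the two toric boundary divisors through $A$ are $\{x=0\}$ and $\{y=0\}$; thus the ansatz reads
\[
2F_{1}^{A}=\log\Big\{\omega_{0}^{-5}\,\tfrac{\partial(x,y)}{\partial(t_{1},t_{2})}\,dis_{0}^{r_{0}}dis_{1}^{r_{1}}dis_{2}^{r_{2}}dis_{3}^{r_{3}}\,x^{-1-\frac{64}{12}}y^{-1}\Big\},
\]
with the $r_{k}$ still undetermined.

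Next I would expand the right-hand side as a power series in $q_{1},q_{2}$. Using (\ref{eq:sol-w0}) for $\omega_{0}$ and the mirror map of Definition \ref{def:MirrorMap-A} (with $C_{1}=C_{2}=1$, so $x=q_{1}(1+\cdots)$, $y=q_{2}(1+\cdots)$), the Jacobian is $\partial(x,y)/\partial(t_{1},t_{2})=q_{1}q_{2}\,v(q_{1},q_{2})$ with $v(0,0)=1$, hence $\tfrac{\partial(x,y)}{\partial(t_{1},t_{2})}\,x^{-1-\frac{64}{12}}y^{-1}=q_{1}^{-\frac{64}{12}}\,w(q_{1},q_{2})$ with $w(0,0)=1$. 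Since $\omega_{0}$, $w$, and each $dis_{k}$ are units at $A$, this gives $2F_{1}^{A}=-\tfrac{64}{12}t_{1}+\mathrm{const}+(\text{power series in }q_{1},q_{2}\text{ with no constant term})$, whose coefficients are affine-linear in $(r_{0},r_{1},r_{2},r_{3})$; in particular the linear part already reproduces the classical term $-\tfrac{1}{24}(c_{2}.H_{1})t_{1}=-\tfrac{8}{3}t_{1}$ of the genus-one potential, independently of the $r_{k}$.

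The $r_{k}$ are then pinned down by comparing instanton coefficients with the genus-one Gopakumar--Vafa expression $N_{1}^{A}(\beta)=\sum_{k\mid\beta}\tfrac1k\big(\tfrac1{12}n_{0}^{A}(\beta/k)+n_{1}^{A}(\beta/k)\big)$, in which the $n_{0}^{A}$ are the genus-zero BPS numbers already obtained by mirror symmetry (Table \ref{tab:Table-A}(0)). Four independent inputs suffice. On the slice $q_{1}=0$ one has $x\equiv0$, so $dis_{0}=dis_{2}=dis_{3}\equiv1$ while $dis_{1}\equiv1+y$, and comparing with $N_{1}^{A}(0,1)=\tfrac1{12}n_{0}^{A}(0,1)=\tfrac{64}{12}$ (using $n_{1}^{A}(0,1)=0$) gives $r_{1}=-1$. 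On the slice $q_{2}=0$ one has $y\equiv0$, so $dis_{0}\equiv(1-4x)^{4}$, $dis_{1}\equiv dis_{2}\equiv1+4x$, $dis_{3}\equiv1-4x$, and the product depends on the $r_{k}$ only through $4r_{0}+r_{3}$ and $r_{1}+r_{2}$; the vanishings $N_{1}^{A}(d,0)=0$ for $1\le d\le7$ --- which hold because a generic abelian-surface fibre has Picard number one, so every class with $\beta.A_{X}=0$ below degree $8$ comes from the eight translation scrolls of \ref{para: V1-summary}, whose contributions vanish by $\chi(E_{k})=0$ --- fix these combinations, hence $r_{2}=-\tfrac23$. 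Finally the mixed coefficient of $q_{1}q_{2}$, which picks up $r_{0}$ through the term $-256\,xy$ of $dis_{0}$, matched to $N_{1}^{A}(1,1)=\tfrac1{12}n_{0}^{A}(1,1)=\tfrac{512}{12}$, fixes $r_{0}=-\tfrac16$ and therefore $r_{3}=-1$. Substituting these values and re-expanding, one reads off all the $n_{1}^{A}(\beta)$ and checks that they are non-negative integers; the over-determined agreement displayed in Table \ref{tab:Table-A}(1) confirms the choice.

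For $A'$, Proposition \ref{prop:PFeqs-A-A'} shows that the Picard--Fuchs system and its local solutions at $A'$ are carried to those at $A$ by the coordinate change $(x,y)\mapsto(x_{1},y_{1})=(x,-4x-y-1)$ of (\ref{eq:xy-x1y1-inv}); this change fixes $\{x=0\}$, $dis_{0}$, $dis_{2}$, $dis_{3}$ and merely exchanges the roles of $D_{s}=dis_{1}$ and the axis $\{y=0\}$ (via the invariant $y(1+4x+y)=y_{1}(1+4x_{1}+y_{1})$). Because the relevant intersection data of the birational model $X'=V_{8,w}^{2}$, now for the ample generators $H_{2},\tilde{A}_{2}$, coincide with those of $V_{8,w}^{1}$ --- $\chi=0$, $c_{2}.H_{2}=64$, $c_{2}.\tilde{A}_{2}=0$, see \ref{para:divisors- H1A1-and-H2A2} --- the BCOV ansatz at $A'$ is the pullback of the one at $A$ with the same powers $r_{0},r_{1},r_{2},r_{3}$, so $F_{1}^{A'}$ has the stated isomorphic form and $n_{g}^{A'}(i,j)=n_{g}^{A}(i,j)$. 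The main obstacle is isolating a minimal set of Gromov--Witten inputs and justifying the vanishings they rest on --- in particular that no genus-zero or genus-one curve with $\beta.A_{X}=0$ contributes below degree $8$; a more mechanical but essentially equivalent alternative is to demand that all $n_{1}^{A}(\beta)$ extracted from the ansatz be integers, which again singles out $(r_{0},r_{1},r_{2},r_{3})=(-\tfrac16,-1,-\tfrac23,-1)$, with Table \ref{tab:Table-A}(1) serving as the verification.
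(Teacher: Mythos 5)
Your proposal is correct and follows essentially the same route as the paper: substitute $\chi(V_{8,w}^{1})=0$, $h^{1,1}=2$, $c_{2}.H_{1}=64$, $c_{2}.A_{1}=0$ into the general BCOV ansatz (\ref{eq:F1-X-formula}) and fix the exponents $r_{k}$ by low-degree vanishing conditions on genus-one invariants (the paper imposes $n_{1}^{A}(i,j)=0$ for $i\leq3$, of which your minimal set of inputs is a subset), with the statement for $A'$ following from the local isomorphism of Proposition \ref{prop:PFeqs-A-A'} and the matching invariants of $V_{8,w}^{2}$. Your version is a more explicit implementation — isolating which combinations of the $r_{k}$ each slice $q_{1}=0$, $q_{2}=0$ and the mixed $q_{1}q_{2}$ coefficient can see — but the underlying argument and the verification against Table \ref{tab:Table-A}(1) are the same.
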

\begin{rem}
\label{rem:Conifold-A}From the above expression of $F_{1}^{A}$,
we can convince ourselves that we are working on the family $\cV_{\mbZ_{8}}^{1}\rightarrow\mbP_{\Delta}$
to describe mirror symmetry. This comes from the well-known observation
(see e.g. \cite{AM1}) that the power of $dis_{0}^{-\frac{1}{6}}$
is determined by 
\begin{equation}
-\frac{1}{6}\times(\text{the number of ODPs}),\label{eq:Conifold-Factor}
\end{equation}
where we count the number of OPDs which appear generically on fiber
Calabi-Yau manifolds over the principal component $\left\{ dis_{0}=0\right\} $
of the discriminant. The number (\ref{eq:Conifold-Factor}) is often
called \textit{a conifold factor.} Recall that we showed in Proposition
\ref{prop:ODP-8} that over $\left\{ dis_{0}=0\right\} $ there appear
8 ODPs in $V_{8,w}^{1}$ which lies on a single $\tau$-orbit. Namely,
there appears $1$ ODP for the free quotient $V_{8,w}^{1}/\mbZ_{8}$
with $\mbZ_{8}=\langle\tau\rangle$. $\hfill\square$
\end{rem}
We read genus one Gromov-Witten invariants from the following expansion
of $F_{1}^{A}$ with $d_{1},d_{2}\geq0$ and $(d_{1},d_{2})\not=(0,0)$:
\begin{equation}
F_{1}^{A}(q,p)=-\frac{c_{2}.H_{X}}{24}\log q-\frac{c_{2}.A_{X}}{24}\log p+\sum_{d_{1},d_{2}}N_{1}^{A}(d_{1},d_{2})q^{d_{1}}p^{d_{2}}.\label{eq:F1-A}
\end{equation}
The BPS numbers $n_{1}^{A}(\beta):=n_{1}^{A}(i,j)$ may be introduced
through the following relations to genus one Gromov-Witten invariants
$N_{1}^{A}(\beta):=N_{1}^{A}(d_{1},d_{2})$: 
\[
N_{1}^{A}(\beta)=\sum_{k|\beta}\frac{1}{k}\left\{ n_{1}^{A}(\beta/k)+\frac{1}{12}n_{0}^{A}(\beta/k)\right\} .
\]
BPS numbers are conjectured to be integer invariants coming from certain
counting problems of curves in $X=V_{8,w}^{1}$ with homology class
$\beta$ \cite{GV,HST2}. For example, $n_{1}^{A}(8,0)$ counts the
numbers of elliptic curves in the eight singular fibers of $V_{8,w}^{1}$.
To determine the parameters in the above proposition, we have required
vanishings $n_{1}^{A}(i,j)=0$ for $i=0,..,3.$ In Table \ref{tab:Table-A}
(1), we list the resulting BPS numbers form $F_{1}^{A}(q,p)$. 

\para{$q$-series $Z_{1,1}(q)$ via quasi-modular forms.} We define
genus one $q$-series $Z_{1,n}(q)$ by arranging the expansion (\ref{eq:F1-A})
into 
\[
F_{1}^{A}=-\frac{c_{2}.H_{X}}{24}\log q+Z_{1,0}(q)+\sum_{n\geq1}Z_{1,n}^{A}(q)\,p^{n}.
\]
The $q$-series $Z_{1,n}^{A}(q)$ is a generating series of genus
one Gromov-Witten invariants $N_{1}^{A}(d,n)$. For $n=0$, we observe
that non-vanishing invariants appear only at 
\[
N_{1}^{A}(8d,0)=\sum_{k|d}\frac{1}{k}n_{1}^{A}(8d/k,0)
\]
 $(d=1,2,...)$ which comes from elliptic curves in singular fibers
of abelian fibration $V_{8,w}^{1}\to\mbP^{1}$ (see \ref{para: V1-summary}).
Including the first term $-\frac{c_{2}.H_{X}}{24}\log q=-\frac{8}{3}\log q$
in the definition $\widehat{Z}_{1,0}(q)=-\frac{c_{2}.H_{X}}{24}\log q+$$Z_{1,0}^{A}(q)$,
we have the $q$-series 
\[
\widehat{Z}_{1,0}(q)=-\frac{c_{2}.H_{X}}{24}\log q-8\,\log\,\bar{\eta}(q^{8})=-8\log\,\eta(q^{8})
\]
in terms of Dedekind's $\eta$-function $\eta(q)=q^{\frac{1}{24}}\Pi_{n\geq1}(1-q^{n})$.
Corresponding to Observation \ref{obs:ZA_01}, we verify the following
property up to sufficiently high degrees of $q$.

\begin{Xobs}{\label{obs:Z11-A}The q-series $Z_{1,1}^{A}(q)$ is
expressed by quasi-modular forms as 
\[
Z_{1,1}^{A}(q)=P_{1,1}^{A}(q)\,\frac{64}{\bar{\eta}(q)^{8}}
\]
where $P_{1,1}^{A}(q)$ is a polynomial 
\[
P_{1,1}^{A}=\frac{1}{288}\left\{ E_{2}^{2}+E_{2}(S+2T+4U)+4(S+2T-5U)^{2}\right\} ,
\]
of Eisenstein series $E_{2}=E_{2}(q)$ and elliptic theta functions
\[
S=\theta_{3}(q)^{4},\quad T=\theta_{3}(q^{2})^{4},\quad U=\theta_{3}(q)^{2}\theta_{3}(q^{2})^{2}
\]
with $\theta_{3}(q):=\Sigma_{n\in\mbZ}q^{n^{2}}$.}\end{Xobs}

We have verified similar polynomial expressions of $P_{1,n}^{A}$
for $n<4$ (see Appendix \ref{sec:Appendix-Pgn-AB} {}{and
\cite{HT-math-c}}). We conjecture the following form of $q$-series
$Z_{g,n}^{A}(q)$ in terms of quasi-modular forms in general. 
\begin{conjecture}
\label{conj:Zgn-A}The $q$-series $Z_{g,n}^{A}(q)\,(n\geq1)$ is
expressed by 
\[
Z_{g,n}^{A}(q)=P_{g,n}^{A}(E_{2},S,T,U)\left(\frac{64}{\bar{\eta}(q)^{8}}\right)^{n},
\]
where $P_{g,n}^{A}$ is polynomial of degree $2(g+n-1)$ of $E_{2},S,T,U$. 
\end{conjecture}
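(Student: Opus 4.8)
\noindent\emph{Approach.} I would prove Conjecture \ref{conj:Zgn-A} by induction on the genus $g$, using the BCOV holomorphic anomaly equation \cite{BCOV1,BCOV2} to propagate the quasi-modular structure from $F_{<g}^{A}$ to $F_{g}^{A}$, following the template developed for the fiber product of rational elliptic surfaces \cite{HSS,HST1,HST2}. The base case $g=0$ is Observation \ref{obs:ZA_01}: it already gives $Z_{0,1}^{A}(q)=64/\bar{\eta}(q)^{8}$ and $Z_{0,n}^{A}=P_{0,n}^{A}(E_{2},E_{4},E_{6})(64/\bar{\eta}(q)^{8})^{n}$ for all $n$, and since $E_{4}$ and $E_{6}$ may be written as polynomials in the weight-two generators $S,T,U$ by standard theta identities (e.g.\ $\theta_{3}(q)^{4}=S$, $\theta_{4}(q)^{4}=S-4U+4T$, $\theta_{2}(q)^{4}=4(U-T)$, together with $U^{2}=ST$), each $P_{0,n}^{A}$ lies in $\mbC[E_{2},S,T,U]$ with the asserted degree $2(n-1)$; the case $(g,n)=(1,1)$ is Observation \ref{obs:Z11-A}. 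Throughout, $q=q_{1}$ is the fiber-direction variable, and the modularity is expected to originate from the elliptic structure underlying the $(1,8)$-polarized abelian surface fibration of \ref{para: V1-summary}, for the congruence subgroup of level two cut out by the monodromy of $\cD_{2},\cD_{3}$ around $\{x=0\}$.

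\smallskip

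\noindent For the inductive step I would write the holomorphic anomaly equation in the large-volume (holomorphic) limit, converting the non-holomorphic $\bar\tau$-derivative into a derivative with respect to $E_{2}$ by the usual completion of the second Eisenstein series. This expresses $\partial_{E_{2}}F_{g}^{A}$ as a universal polynomial in the Griffiths--Yukawa couplings of Proposition \ref{prop:GY-Cijk-A}, the propagators, and the lower potentials $F_{g'}^{A}$ ($g'<g$) together with their covariant derivatives. As in the elliptic-surface case the fiber-direction propagator is expected to collapse to a constant, so the recursion closes inside the ring $\mbC[E_{2},S,T,U]$: by the inductive hypothesis every term on the right-hand side lies there, hence so does $\partial_{E_{2}}F_{g}^{A}$, and integrating in $E_{2}$ recovers $F_{g}^{A}$ up to a term free of $E_{2}$. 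Tracking modular weights through the recursion (each of $E_{2},S,T,U$ of weight $2$, the factor $\bar{\eta}(q)^{-8n}$ of weight $-4n$, and $Z_{g,n}^{A}$ of weight $4g-4$ independent of $n$, as is consistent with the genus zero and one cases and preserved by the recursion) then forces $P_{g,n}^{A}$ to have weight $4(g+n-1)$, i.e.\ degree $2(g+n-1)$ in $E_{2},S,T,U$.

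\smallskip

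\noindent It then remains to control the integration constant, the holomorphic ambiguity $f_{g}$, and to show that its $p^{n}$-coefficient is a \emph{holomorphic} quasi-modular form of weight $4(g+n-1)$ rather than a meromorphic one with poles along the discriminant components $\{dis_{0}=0\},\dots,\{dis_{3}=0\}$ or at the cusps. For this I would invoke: the conifold behaviour of $F_{g}^{A}$ near $\{dis_{0}=0\}$, where by Proposition \ref{prop:ODP-8} exactly one ordinary double point appears on $V_{8,w}^{1}/\mbZ_{8}$ (the conifold factor of Remark \ref{rem:Conifold-A}), which bounds the pole order there; the mild degenerations over $\{dis_{1}=0\},\{dis_{2}=0\},\{dis_{3}=0\}$ described in Proposition \ref{prop:V1-degeneration-loci}; the regularity of $Z_{g,n}^{A}(q)$ as a power series in $q$ at the relevant cusp; and the vanishing of low-degree Gromov--Witten invariants $n_{g}^{A}(i,j)$ for small $i$, of the kind already used to fix $F_{1}^{A}$. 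Together these would confine $f_{g}$, order by order in $p$, to a finite-dimensional space, leaving finitely many undetermined coefficients that are pinned down by matching against directly computed invariants --- precisely the finite verification carried out in the paper for $g=0$, $n\le 9$; $g=1$, $n\le 3$; and, in Subsection \ref{sub:Fg2-AB}, $g=2$, $n\le 2$.

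\smallskip

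\noindent\textbf{Main obstacle.} The genuinely difficult step --- and the reason the statement is only conjectural --- is the \emph{uniform} control of the holomorphic ambiguity: proving a priori that $f_{g}$ has no poles along $\{dis_{1}=0\},\{dis_{2}=0\},\{dis_{3}=0\}$ and at most the expected pole order along $\{dis_{0}=0\}$ and at the cusps, so that its $p^{n}$-part is forced into $\mbC[E_{2},S,T,U]$ of degree $2(g+n-1)$. In \cite{HST1,HST2} this is supplied by a bona fide \emph{modular} anomaly equation in the fiber variable together with a closed formula for $Z_{g,1}(q)$; in the present situation the abelian-surface fibration is more intricate --- three discriminant components meet the toric boundary of $\mbP_{\Delta}$ and there are three inequivalent large complex structure limits $A,B,C$ --- so a complete proof would require either such a modular anomaly equation here, or a sheaf-theoretic interpretation of $Z_{g,n}^{A}(q)$ via the fiberwise Fourier--Mukai dual $V_{8,w}^{1}/\mbZ_{8}\times\mbZ_{8}$, counting rank-$n$ stable sheaves, whose generating functions should be quasi-modular on general grounds. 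Establishing that structural input is the essential missing ingredient.
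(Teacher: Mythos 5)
The statement you are addressing is stated in the paper as a \emph{conjecture}, and the paper offers no proof of it: the authors only verify it by explicit computation for $g=0,\,n\le 9$, for $g=1,\,n\le 3$, and (via the BCOV recursion of Subsection \ref{sub:Fg2-AB}, with the ansatz (\ref{eq:f2A-ansatz}) for $f_{2}$ fixed by vanishing conditions on low-degree BPS numbers) for $g=2,\,n\le 2$. Your proposal is therefore correctly calibrated: the inductive BCOV scheme you outline is exactly the mechanism the authors use for their finite verifications, your theta identities reducing $E_{4},E_{6}$ to $\mbC[S,T,U]$ and your weight bookkeeping (each $Z_{g,n}^{A}$ of weight $4g-4$, hence $P_{g,n}^{A}$ of weight $4(g+n-1)$) are sound, and the obstacle you single out --- a priori, uniform control of the holomorphic ambiguity $f_{g}$, or equivalently a genuine modular anomaly equation in the fiber variable --- is precisely what the paper itself says is missing here, in contrast to the rational elliptic surface case of \cite{HST1,HST2}.

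Two caveats worth making explicit. First, the base case of your induction, Observation \ref{obs:ZA_01}, is itself only an empirical observation in the paper (checked to degree $d\le 70$ and $n\le 9$), so the induction as proposed has no proven foundation even at $g=0$; a proof would have to begin by establishing the genus-zero quasi-modularity, presumably from the geometry of the eight translation-scroll fibers, which the paper attempts only for $d\le 3$ and leaves open. Second, your assumption that the fiber-direction propagator collapses so that the recursion closes inside $\mbC[E_{2},S,T,U]$ is an expectation imported from the elliptic-surface setting and is not verified here; the paper's explicit propagators $S^{ab}_{A}$ depend on both moduli and on the choices of $f_{ij}^{k}$, so this closure is an additional unproven structural claim, not a formality. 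With those two points flagged, your write-up accurately represents the status of the statement: a strategy consistent with all available evidence, with the essential missing ingredient correctly identified.
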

In Subsection \ref{sub:Fg2-AB}, by using mirror symmetry, we will
verify the above conjecture for $g=2$ and lower $n$. Here, we remark
that we have the same results for $A'$ because of the isomorphisms
in Proposition \ref{prop:PFeqs-A-A'} between $A$ and $A'$. 

\para{Contraction to $(2,2,2,2)\subset \mbP^7$.} We observed in (\ref{eq:sumup-rel-AA'})
that BPS numbers of a smoothing of the singular Calabi-Yau variety
$X_{2,2,2,2}^{sing}$ arises from a ``sum-up relation''. This relation
holds also at genus one;
\begin{equation}
n_{1}^{(2,2,2,2)}(d)=\sum_{d_{2}}n_{1}^{A}(d,d_{2})=\sum_{d_{2}}n_{1}^{A'}(d,d_{2}),\label{eq:sumup-rel-g1-AA'}
\end{equation}
where $n_{1}^{(2,2,2,2)}(d)$ can be found in \cite{HKTY2} (see also
Appendix \ref{sec:Appendix-2222}). 

\vskip3cm

\section{\label{sec:MS-B}\textbf{Exploring the parameter space $\protect\mbP_{\Delta}$
globally}}

In this section and the subsequent section, we shall study the boundary
points $B,B'$ and also $C$ (and $C'$) described in Section \ref{sec:P-Delta-global}.
We will find that all aspect of mirror symmetry of $V_{8,w}^{k}\,(k=1,2)$
and their free quotients are encoded in a single system of Picard-Fuchs
equations (\ref{eq:PFeqs-D2-D3}). The entire picture of mirror symmetry
will be summarized in Proposition \ref{prop:MS-Result-Main}. As in
the preceding section, for brevity, we will retain the notation $\mbP_{\Delta}$
even if we will make suitable resolutions.

\subsection{Blowing-up at the boundary points $B$, $B'$ }

The points $B$ and $B'$ are symmetric under the involution (\ref{eq:xy-x1y1-inv}),
and there is no difference in the local analysis around $B$ and $B'$.
Because of this we will restrict our attentions mostly to $B$. 

As is clear in the form of the discriminant $dis_{0}$, the two divisors
$\left\{ dis_{0}=0\right\} $ and $\left\{ y=0\right\} $ intersect
at $B$ with 4th-order tangency. To have power series solutions of
the Picard-Fuchs equation, we blow-up at this point successively four
times; and find that the point $\tilde{B}$ shown in Figure 3 has
properties of a LCSL. In a similar way, we verify that $\tilde{B}'$
is also a LCSL. In what follows, we will use $B$ and $B'$ instead
of $\tilde{B}$ and $\tilde{B}'$ for brevity of notation. 

\begin{figure}
\includegraphics[scale=0.5]{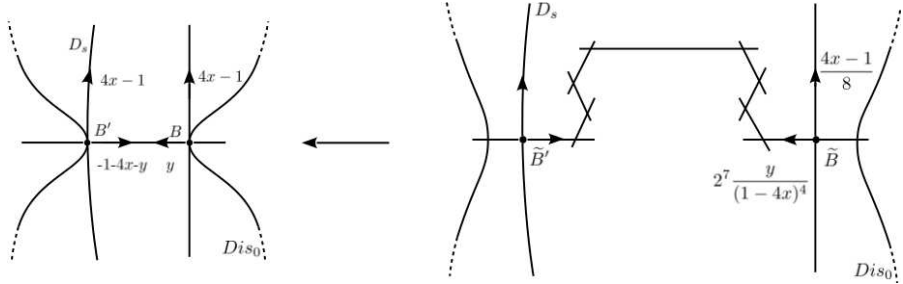}\caption{Fig.3. Blowing-up at $B$ and $B'$. For brevity of notation, $\widetilde{B}$
and $\widetilde{B}'$ are called $B$ and $B'$ again in the text.}

\end{figure}

\para{Picard-Fuchs equations and mirror symmetry.} Let us introduce
$(z_{B}^{1},z_{B}^{2})$ for the blow-up coordinate centered at $B$.
It is related to the affine coordinate $(x^{1},x^{2})=(x,y)$ centered
at $A$ by 
\begin{equation}
(z_{B}^{1},z_{B}^{2})=\big(\frac{1}{8}(1-4x),\,\,2^{7}\frac{y}{(1-4x)^{4}}\big).\label{eq:affine-relation-x-zB}
\end{equation}
It is straightforward to transform the Picard-Fuchs equations $\cD_{2}\omega=\cD_{3}\omega=0$
to this coordinate. The normalization factors in this coordinate are
chosen so that we have a natural coordinate to describe mirror symmetry,
e.g. the integral power series $\omega_{0}^{B}$ in (\ref{eq:w0-w1-for-B})
below. 
\begin{prop}
The boundary points $B$ and $B'$ of the Picard-Fuchs equations over
$\mbP_{\Delta}$ are mirror symmetric to Calabi-Yau manifolds{}{{}
$Y:=V_{8w}^{1}/\mbZ_{8}\times\mbZ_{8}$ and $Y':=V_{8w}^{2}/\mbZ_{8}\times\mbZ_{8}$,}
respectively. 
\end{prop}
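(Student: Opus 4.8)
The verification runs parallel to that of Proposition~\ref{prop:MS-A-A'}; the extra point is to pin down the mirror as the \emph{free quotient} $Y=V_{8,w}^1/\mbZ_8\times\mbZ_8$ and its birational model $Y'$, rather than $V_{8,w}^1$ or $V_{8,w}^2$. First I would transform the Picard-Fuchs operators $\cD_2,\cD_3$ to the blow-up coordinates $(z_B^1,z_B^2)$ of (\ref{eq:affine-relation-x-zB}) --- which encodes the fourfold blow-up described above together with the rescalings by $\tfrac18$ and $2^7$ --- and solve the system locally at $(z_B^1,z_B^2)=(0,0)$. One obtains a unique holomorphic solution $\omega_0^B$ with $\omega_0^B(0,0)=1$, two solutions carrying a single logarithm, two with double logarithms and one with a triple logarithm, whose leading logarithmic parts reproduce the same pattern as at $A$. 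By the recipe of Appendix~\ref{sec:App-Canonical-Form}, this fixes the monodromy weight filtration $W_0\subset W_2\subset W_4\subset W_6$ on the six-dimensional solution space, with $\dim\mathrm{gr}^W_{2k}=1,2,2,1$; this is exactly the assertion that $B$ (and, by the same computation, $B'$) is an LCSL.

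Then I would invoke the LCSL$\,\leftrightarrow\,$hard-Lefschetz dictionary (see \cite[Sect.~2]{HT-birat}): $\mathrm{gr}^W_\bullet$ is matched with the Lefschetz filtration on $H^{\mathrm{even}}$ of a mirror threefold $M$, and the classical cup-product data of $M$ are extracted from the leading behaviour of the Yukawa couplings $Y^B_{abc}$ in the flat coordinates at $B$ and from the normalisation of the canonical form (equivalently, from the $\log q_i$ terms of $F_1^B$). Because $Y=V_{8,w}^1/\mbZ_8\times\mbZ_8$ is a free quotient it has $h^{1,1}(Y)=h^{2,1}(Y)=2$ and $\chi(Y)=0$, so the Hodge-numerical shape matches automatically; the remaining task is to check that the triple intersection form, the linear forms $c_2\cdot H_i$, and in particular the conifold factor obtained from the expansion at $B$ are those of $Y$, and not those of the Calabi-Yau manifolds attached to $A$ or $C$. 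Here the decisive invariant is the power of (the proper transform of) $dis_0$ in the genus-one potential: by (\ref{eq:Conifold-Factor}) it is $-\tfrac16$ times the number of ODPs acquired by the generic fibre of the family over the principal discriminant component, which --- via Proposition~\ref{prop:ODP-8} and the counting in Remark~\ref{rem:Conifold-A}, now carried out for the full Heisenberg quotient --- is the number appropriate to $V_{8,w}^1/\mbZ_8\times\mbZ_8$. Finally, $B$ and $B'$ are exchanged by the involution (\ref{eq:inv-sym}), i.e.\ the action of $\bS^4\in G_{max}$ (Proposition~\ref{prop:Gmax}), which preserves $dis_0$ and all boundary divisors and realises geometrically the flop $V_{8,w}^1\dashrightarrow V_{8,w}^2$; hence the statement at $B'$ follows from that at $B$ with $V_{8,w}^2$ in place of $V_{8,w}^1$, its ample cone being generated by $H_2,\tilde{A}_2$ as in \ref{para:divisors- H1A1-and-H2A2}.

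I expect the main difficulty to be purely computational: propagating $\cD_2,\cD_3$ correctly through the fourfold blow-up and the rescaling (\ref{eq:affine-relation-x-zB}), and then checking that the local monodromy around the two boundary divisors at $B$ is unipotent of maximal index --- so that it is precisely $\tilde B$ (renamed $B$), and not some other point on one of the four exceptional divisors, that is the LCSL; this is also where one must confirm that the normalisation constants $\tfrac18$ and $2^7$ are exactly those yielding an integral $\omega_0^B$ and a well-defined mirror map. A secondary subtlety, conceptual this time, is the conifold-factor bookkeeping: one must follow how the eight extra ODPs of $V_{8,w}^1$ over $\{dis_0=0\}$ from Proposition~\ref{prop:ODP-8} behave under the quotient by all of $\mbZ_8\times\mbZ_8$, which is a little more delicate than the single $\langle\tau\rangle$-orbit count used at $A$.
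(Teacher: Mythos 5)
Your overall strategy is the one the paper uses: the paper's proof is a one-line sketch (``making local solutions of the Picard-Fuchs equations around $B$ and $B'$'' and confirming via Gromov-Witten invariants), and your elaboration --- transform $\cD_2,\cD_3$ to the coordinates (\ref{eq:affine-relation-x-zB}), verify the LCSL/weight-filtration structure, and match the classical data against $Y$ --- is exactly that computation spelled out. Two points in your write-up are wrong as stated, though neither is fatal to the argument.

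First, $B$ and $B'$ are \emph{not} exchanged by the involution (\ref{eq:inv-sym}): the paper states explicitly (and you can check directly from $B=(\tfrac14,0)$, $B'=(\tfrac14,-2)$) that both points are \emph{fixed} by $(x,y)\mapsto(\tfrac{1}{16}\tfrac1x,\tfrac14\tfrac yx)$, i.e.\ by $\bar S^4$. The symmetry that exchanges $B$ and $B'$ is the involution (\ref{eq:xy-x1y1-inv}), $(x,y)\mapsto(x,-4x-y-1)$ --- the same coordinate change that relates $A$ and $A'$ --- and it is this that reduces the statement at $B'$ to the one at $B$ with $V^2_{8,w}$ in place of $V^1_{8,w}$. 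Second, your conifold-factor bookkeeping is inverted: the power of $disB_0$ in $F_1^B$ is $-\tfrac16$, the \emph{same} single-ODP count as at $A$, because the conifold factor detects which mirror \emph{family} is being used (here $\cV^1_{\mbZ_8}$, the quotient by $\langle\tau\rangle$ only), not the quotient structure of the mirror manifold $Y$. It is at $C$ that the factor becomes $-\tfrac86$ and signals the family $\cV^1_{\mbZ_8\times\mbZ_8}$. What actually pins down $Y=V^1_{8,w}/\mbZ_8\times\mbZ_8$ at $B$ are the classical constants read off from the canonical form and the Yukawa couplings --- $H_Y^3=128$, $H_Y^2A_Y=16$, $c_2.H_Y=8$ --- together with the degree distribution of the BPS numbers (e.g.\ $n_0^B(0,1)=1$ counting the single $G$-orbit of sections), which you do mention; so the identification goes through once the role of the conifold factor is corrected.
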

We verify the above proposition by making local solutions of the Picard-Fuchs
equations around $B$ and $B'$. We can also confirm this by calculating
Gromov-Witten invariants from $B$ and $B'$. 

\para{Griffiths-Yukawa couplings.} In the same way as Proposition
\ref{prop:GY-Cijk-A}, we can determine the Griffiths-Yukawa couplings
$C_{ijk}^{B}$ up to a normalization constant. Our global description
of the family, however, enables us to determine them uniquely by 
\begin{equation}
C_{ijk}^{B}:=\sum_{l.m.n}C_{lmn}^{A}\frac{\partial x_{l}}{\partial z_{i}^{B}}\frac{\partial x_{m}}{\partial z_{j}^{B}}\frac{\partial x_{n}}{\partial z_{k}^{B}}.\label{eq:Cijk-B}
\end{equation}

We arrange the local solutions around the point $B$ into the canonical
form $\Pi_{B}(z)$ in (\ref{eq:App-Canonial-Pi}). Among the solutions,
the first half of $\Pi_{B}(z)$ is sufficient to define the mirror
map. These solutions have the following explicit forms:
\begin{equation}
\begin{alignedat}{1} & \omega_{0}^{B}(z)=1+8z_{1}+56z_{1}^{2}+384z_{1}^{3}+\cdots-8z_{1}^{4}z_{2}-96z_{1}^{5}z_{2}-\cdots,\\
 & \omega_{1}^{B}(z)=(\log z_{1})\omega_{0}^{B}(z)+(4z_{1}+44z_{1}^{2}+\frac{1120}{3}z_{1}^{3}+\cdots-2z_{1}^{4}z_{2}-\cdots),\\
 & \omega_{2}^{B}(z)=(\log z_{2})\omega_{0}^{B}(z)+(-12z_{1}-136z_{1}^{2}-\cdots+96z_{1}^{5}z_{2}+\cdots).
\end{alignedat}
\label{eq:w0-w1-for-B}
\end{equation}
Here and hereafter, we omit the superscript $B$ in $z_{i}$ for brevity,
unless confusions arise. Mirror map is defined in the same way as
Definition \ref{def:MirrorMap-A} by inverting the relations 

\[
q_{1}=e^{\frac{\omega_{1}}{\omega_{0}}}=C_{1}z_{1}\,\exp\big(\frac{\omega_{1}^{reg}}{\omega_{0}}\big),\,\,\,\,q_{2}=e^{\frac{\omega_{2}}{\omega_{0}}}=C_{2}z_{2}\,\exp\big(\frac{\omega_{2}^{reg}}{\omega_{0}}\big),
\]
where $\omega_{0}=\omega_{0}^{B}$ and $\omega_{i}=\omega_{i}^{B}$
with some constant $C_{k}$. We write the mirror map by $z_{a}=z_{a}(q_{1},q_{2})$
for $a=1,2$. Then the quantum corrected Yukawa couplings are given
by 
\begin{equation}
Y_{ijk}^{B}=\left(\frac{1}{N_{B}\omega_{0}^{B}}\right)^{2}\sum_{a,b,c}C_{abc}^{B}\frac{dz_{a}}{dt_{i}}\frac{dz_{b}}{dt_{j}}\frac{dz_{c}}{dt_{k}},\label{eq:Yijk-B}
\end{equation}
where $N_{B}$ is a constant which we will identify in Proposition
\ref{prop:Conection-Matrices} with the normalization constant of
the local solutions in $\Pi_{B}(z)$. 
\begin{prop}
\label{prop:q-Yijk-B}When we set $N_{B}=\frac{1}{2}$, and $C_{1}=C_{2}=1$
in the definition of the mirror map, we have {\small{}
\[
\begin{alignedat}{2} & Y_{111}^{B}=128+4096q_{1}^{8}+180224q_{1}^{16}q_{2}+\cdots, &  & Y_{112}^{B}=16+512q_{1}^{8}q_{2}+11264q_{1}^{16}q_{2}+\cdots,\\
 & Y_{122}^{B}=64q_{1}^{8}q_{2}+704q_{1}^{16}q_{2}+4160q_{1}^{16}q_{2}^{2}+\cdots, & \, & Y_{222}^{B}=q_{2}+q_{2}^{2}+q_{3}^{3}+\cdots+8q_{1}^{8}q_{2}+.
\end{alignedat}
\]
}We have exactly same form for the corresponding expansions of $Y_{ijk}^{B'}$.
\end{prop}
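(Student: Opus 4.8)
The plan is to compute the $q$-expansions of $Y_{ijk}^{B}$ directly from the defining formulas, exactly as was done for the point $A$ in Proposition~\ref{prop:qYukawa-A}, and to verify that the stated normalization choices $N_{B}=\tfrac12$ and $C_{1}=C_{2}=1$ are the ones that make the expansions come out with the displayed integer coefficients. First I would take the explicit forms of $\cD_{2},\cD_{3}$ from Appendix~\ref{sec:AppendixA-PF}, substitute the coordinate change $(z_{B}^{1},z_{B}^{2})=\bigl(\tfrac18(1-4x),\,2^{7}y(1-4x)^{-4}\bigr)$ from~(\ref{eq:affine-relation-x-zB}), and obtain the transformed Picard-Fuchs operators in the $(z_{1},z_{2})$-coordinate; equivalently, one transports the Griffiths-Yukawa couplings $C^{B}_{ijk}$ via the chain rule~(\ref{eq:Cijk-B}) using the couplings $C^{A}_{lmn}$ from Proposition~\ref{prop:GY-Cijk-A} with $d=1$ (already fixed by mirror symmetry at $A$ in Proposition~\ref{prop:qYukawa-A}). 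The Jacobian factors $\partial x_{l}/\partial z^{B}_{i}$ are rational functions of $(z_{1},z_{2})$ obtained by inverting~(\ref{eq:affine-relation-x-zB}), so this step is purely mechanical.

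Next I would expand the solutions $\omega_{0}^{B},\omega_{1}^{B},\omega_{2}^{B}$ of the transformed equations to high order in $(z_{1},z_{2})$ — the leading terms are recorded in~(\ref{eq:w0-w1-for-B}) — invert the mirror map $q_{a}=C_{a}z_{a}\exp(\omega_{a}^{\mathrm{reg}}/\omega_{0})$ with $C_{1}=C_{2}=1$ to get $z_{a}=z_{a}(q_{1},q_{2})$ as power series, and substitute everything into~(\ref{eq:Yijk-B}) with the prefactor $(N_{B}\omega_{0}^{B})^{-2}=4(\omega_{0}^{B})^{-2}$. Collecting terms in $q_{1},q_{2}$ then yields the four expansions. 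The classical (constant) terms $Y_{111}^{B}=128$, $Y_{112}^{B}=16$, $Y_{122}^{B}=0$, $Y_{222}^{B}=0$ should be checked against the intersection numbers of $Y=V_{8,w}^{1}/\mbZ_{8}\times\mbZ_{8}$ under the appropriate basis of $H^{2}$ — indeed $128 = 8\times 16$ reflects the degree-$8$ quotient rescaling of $H_{X}^{3}=16$ along one generator — which is how one pins down $N_{B}=\tfrac12$ rather than leaving it as a free constant. Finally, the statement about $B'$ follows with no extra work from the fact that $B$ and $B'$ are interchanged by the involution~(\ref{eq:inv-sym})/(\ref{eq:xy-x1y1-inv}), under which the Picard-Fuchs system, the discriminant components, and the holomorphic three-form are all preserved; this is the same mechanism that gave $Y_{abc}^{A'}=Y_{abc}^{A}$ in Proposition~\ref{prop:qYukawa-A} via Proposition~\ref{prop:PFeqs-A-A'}.

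The step I expect to be the real obstacle is not the symbolic algebra itself — that is routine series manipulation, done on a computer as in \cite{HT-math-c} — but rather the bookkeeping around the \emph{blow-up}. The point $B$ is a LCSL only after blowing up four times at the $4$th-order tangency of $\{dis_{0}=0\}$ with $\{y=0\}$, so one must be careful that the coordinate~(\ref{eq:affine-relation-x-zB}) is genuinely a chart on the correct exceptional divisor of this iterated blow-up and that the transformed operators $\cD_{2},\cD_{3}$ acquire the expected form (with the logarithmic solution structure $\omega_{0}^{B},\log z_{1}\cdot\omega_{0}^{B},\log z_{2}\cdot\omega_{0}^{B},\dots$ defining a weight filtration of LCSL type). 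Getting the numerical prefactors $\tfrac18$ and $2^{7}$ in~(\ref{eq:affine-relation-x-zB}) exactly right is what forces the integrality of the $q$-coefficients and the value $N_{B}=\tfrac12$; once the chart is correctly set up, verifying the four displayed expansions is a finite computation.
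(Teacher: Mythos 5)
Your proposal is correct and follows essentially the same route as the paper: transport the Griffiths--Yukawa couplings to the blow-up chart via (\ref{eq:Cijk-B}), expand the solutions (\ref{eq:w0-w1-for-B}), invert the mirror map with $C_{1}=C_{2}=1$, substitute into (\ref{eq:Yijk-B}), and fix $N_{B}=\tfrac12$ by matching the classical terms to $H_{Y}^{3}=128$, $H_{Y}^{2}A_{Y}=16$ (the paper also anchors $N_{B}$ via the connection matrix in Proposition~\ref{prop:Conection-Matrices}, but performs the same intersection-number check in \ref{para:PicG-Z8Z8}). One small correction: the involution exchanging $B$ and $B'$ is (\ref{eq:xy-x1y1-inv}), not (\ref{eq:inv-sym}) — the latter fixes both $B$ and $B'$.
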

\begin{table}
{\scriptsize{}
\[
\begin{array}{|c|ccccccccccccccc}
\hline \m j\diagdown i\m & 0 & .. & 8 & .. & 16 & .. & 24 & .. & 32 & .. & 40 & .. & 48 & .. & 56\\
\hline 0 & 0 & .. & 0 & .. & 0 & .. & 0 & .. & 0 & .. & 0 & .. & 0 & .. & 0\\
1 & 1 & .. & 8 & .. & 44 & .. & 192 & .. & 726 & .. & 2464 & .. & 7704 & .. & 22528\\
2 & 0 & .. & 0 & .. & 64 & .. & 1536 & .. & 19072 & .. & 168960 & .. & 1199616 & .. & 7255040\\
3 & 0 & .. & 0 & .. & 44 & .. & 3048 & .. & 98454 & .. & 1950464 & .. & 27697188 & .. & 308823552\\
4 & 0 & .. & 0 & .. & 0 & .. & 1536 & .. & 165120 & .. & 7480320 & .. & 206854144 & .. & 4083891200\\
5 & 0 & .. & 0 & .. & 0 & .. & 192 & .. & 98454 & .. & 11389080 & .. & 637461376 & .. & 22460931072\\
6 & 0 & .. & 0 & .. & 0 & .. & 0 & .. & 19072 & .. & 7480320 & .. & 918183744 & .. & 59571908608\\
7 & 0 & .. & 0 & .. & 0 & .. & 0 & .. & 726 & .. & 1950464 & .. & 637461376 & .. & 81827379400\\
: & : & .. & : & .. & : & .. & : & .. & : & .. & : & .. & : & .. & :
\end{array}
\]
}{\scriptsize \par}

(0) Genus zero BPS numbers $n_{0}^{B}(i,j)$.

{\scriptsize{}
\[
\hsp{0}\begin{array}{|c|cccccccccccccccccccc}
\hline \m j\diagdown i\m & 0 & \m1 & \m2 & \m3 & \m4 & \m5 & \m6 & \m7 & \m8 & \m9 & \m10 & \m11 & \m12 & \m13 & \m14 & \m15 & \m16 & \m17 & \m18 & \m19\\
\hline 0 & 0 & \m8 & \m8 & \m8 & \m8 & \m8 & \m8 & \m8 & \m8 & \m8 & \m8 & \m8 & \m8 & \m8 & \m8 & \m8 & \m8 & \m8 & \m8 & \m8\\
1 & 0 & \m0 & \m0 & \m0 & \m14 & \m16 & \m48 & \m64 & \m152 & \m128 & \m288 & \m256 & \m728 & \m592 & \m1248 & \m1216 & \m2748 & \m2112 & \m4224 & \m3584\\
2 & 0 & \m0 & \m0 & \m0 & \m0 & \m0 & \m0 & \m0 & \m8 & \m24 & \m112 & \m224 & \m704 & \m848 & \m2368 & \m2752 & \m7872 & \m7944 & \m19520 & \m21472\\
3 & 0 & \m0 & \m0 & \m0 & \m0 & \m0 & \m0 & \m0 & \m0 & \m0 & \m0 & \m0 & \m14 & \m48 & \m288 & \m656 & \m2748 & \m3648 & \m12528 & \m17408\\
4 & 0 & \m0 & \m0 & \m0 & \m0 & \m0 & \m0 & \m0 & \m0 & \m0 & \m0 & \m0 & \m0 & \m0 & \m0 & \m0 & \m8 & \m72 & \m600 & \m1728\\
5 & 0 & \m0 & \m0 & \m0 & \m0 & \m0 & \m0 & \m0 & \m0 & \m0 & \m0 & \m0 & \m0 & \m0 & \m0 & \m0 & \m0 & \m0 & \m0 & \m0\\
6 & 0 & \m0 & \m0 & \m0 & \m0 & \m0 & \m0 & \m0 & \m0 & \m0 & \m0 & \m0 & \m0 & \m0 & \m0 & \m0 & \m0 & \m0 & \m0 & \m0
\end{array}
\]
}{\scriptsize \par}

(1) Genus one BPS numbers $n_{1}^{B}(i,j)$.

{\scriptsize{}
\[
\begin{array}{|c|ccccccccccccccccc}
\hline \m j\diagdown i\m & 0 & \m1 & \m.. & \m5 & \m6 & \m7 & \m8 & \m9 & \m10 & \m11 & \m12 & \m13 & \m14 & \m15 & \m16 & \m17 & \m18\\
\hline 0 & 0 & \m0 & \m.. & \m0 & \m0 & \m0 & \m0 & \m0 & \m0 & \m0 & \m0 & \m0 & \m0 & \m0 & \m0 & \m0 & \m0\\
1 & 0 & \m0 & \m.. & \m0 & \m4 & \m32 & \m155 & \m456 & \m1304 & \m2784 & \m6336 & \m11120 & \m22440 & \m35360 & \m66056 & \m97752 & \m172768\\
2 & 0 & \m0 & \m.. & \m0 & \m0 & \m0 & \m0 & \m128 & \m696 & \m2304 & \m7416 & \m18704 & \m51168 & \m110400 & \m265376 & \m504976 & \m1115408\\
3 & 0 & \m0 & \m.. & \m0 & \m0 & \m0 & \m0 & \m0 & \m0 & \m0 & \m0 & \m768 & \m5112 & \m19872 & \m74760 & \m211208 & \m645364\\
4 & 0 & \m0 & \m.. & \m0 & \m0 & \m0 & \m0 & \m0 & \m0 & \m0 & \m0 & \m0 & \m0 & \m0 & \m0 & \m2944 & \m23552\\
5 & 0 & \m0 & \m.. & \m0 & \m0 & \m0 & \m0 & \m0 & \m0 & \m0 & \m0 & \m0 & \m0 & \m0 & \m0 & \m0 & \m0\\
: & : & : & : & : & : & : & : & : & : & : & : & : & : & : & : & : & :
\end{array}
\]
}{\scriptsize \par}

(2) Genus two BPS numbers $n_{2}^{B}(i,j)$.

~

\caption{Table 2. \label{tab:Table-B} BPS numbers $n_{g}^{B}(i,j)$ of $Y=V_{8,w}^{1}/\protect\mbZ_{8}\times\protect\mbZ_{8}$.
The blanks ``$..$'' represent vanishing invariants. These are equal
to $n_{g}^{B'}(i,j)$ of $V_{8,w}^{2}/\protect\mbZ_{8}\times\protect\mbZ_{8}$.}
\end{table}

\subsection{Genus one Gromov-Witten potentials $F_{1}^{B}$ and $F_{1}^{B'}$ }

We determine the genus one Gromov-Witten potentials $F_{1}^{B}$ and
$F_{1}^{B'}$ using the general form $F_{1}^{M}$ given in (\ref{eq:F1-X-formula}).
Since $B$ and $B'$ are isomorphic locally, we only describe $F_{1}^{B}$. 

\para{Calculating $F_1^B$.} To apply the general formula (\ref{eq:F1-X-formula}),
we introduce the following definitions:
\[
\begin{alignedat}{1}disB_{0}=1-(1-4z_{1})(1-8z_{1})z_{2}-16z_{1}^{4}(1-8z_{1})z_{2}^{2}\qquad\\
disB_{1}=1-4z_{1}+16z_{1}^{4}z_{2},\,\,\,disB_{2}=1-4z_{1},\,\,\,disB_{3}=1-8z_{1}.
\end{alignedat}
\]
As in Subsection \ref{sub:F1-A-A'}, knowing that $B$ corresponds
to the free quotient $V_{8,w}^{1}/\mbZ_{8}\times\mbZ_{8}$, we can
determine the parameters $r_{0}$ and $r_{k}$ in the BCOV formula. 
\begin{prop}
Near the boundary point $B$, the BCOV potential function has the
following form
\[
F_{1}^{B}(q)=\frac{1}{2}\log\Big\{\big(\frac{1}{N_{B}\omega_{0}^{B}}\big)^{5}\frac{\partial(z_{1},z_{2})}{\partial(t_{1},t_{2})}disB_{0}^{-\frac{1}{6}}\,disB_{1}^{-1}\,disB_{2}^{-\frac{2}{3}}\,disB_{3}^{-\frac{19}{3}}z_{1}^{-1-\frac{8}{12}}z_{2}^{-1}\Big\}.
\]

\end{prop}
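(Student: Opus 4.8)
The plan is to obtain $F_{1}^{B}$ by specializing the BCOV ansatz (\ref{eq:F1-X-formula}) to the boundary point $B$, whose mirror is $Y=V_{8,w}^{1}/\mbZ_{8}\times\mbZ_{8}$, and then to pin down the holomorphic-ambiguity exponents by combining the type of degeneration occurring along each discriminant component with the intersection theory of $Y$ and a few vanishing results for low-degree genus-one invariants.

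First I would record the universal part of (\ref{eq:F1-X-formula}). Since $Y$ has the same Hodge numbers as $V_{8,w}^{1}$, so that $h^{1,1}(Y)=2$, and $\chi(Y)=0$, the exponent of $1/(N_{B}\omega_{0}^{B})$ is $3+h^{1,1}-\chi/12=5$; the constant $N_{B}=\frac{1}{2}$ is the one already fixed in Proposition \ref{prop:q-Yijk-B} (and re-identified in Proposition \ref{prop:Conection-Matrices}), and $\partial(z_{1},z_{2})/\partial(t_{1},t_{2})$ is the Jacobian of the mirror map built from $\omega_{0}^{B},\omega_{1}^{B},\omega_{2}^{B}$ in (\ref{eq:w0-w1-for-B}). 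In the chart $(z_{1},z_{2})$ at $B$ the characteristic variety of the Picard--Fuchs system has the two coordinate divisors $\{z_{1}=0\}$, $\{z_{2}=0\}$ together with the components $disB_{0},disB_{1},disB_{2},disB_{3}$; one checks that $\{z_{2}=0\}$ is the proper transform of the fibre divisor $\{y=0\}$, that $\{z_{1}=0\}$ is the last exceptional divisor produced by the four blow-ups that resolve the $4$th-order tangency of $\{dis_{0}=0\}$ with $\{y=0\}$, and that $disB_{0},disB_{1},disB_{2},disB_{3}$ are the transforms in this chart of $dis_{0}$, $dis_{1}=1+4x+y$, $dis_{2}=1+4x$, and of the coordinate line $\{x=0\}$, respectively.

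Next I would determine the exponents. The powers attached to $disB_{0},\dots,disB_{3}$ are fixed by the type of degeneration along each component, exactly as at the point $A$: the conifold rule (\ref{eq:Conifold-Factor}) together with Proposition \ref{prop:ODP-8} and Remark \ref{rem:Conifold-A} gives $disB_{0}^{-1/6}$ (a single ordinary double point on the generic fibre over $\{disB_{0}=0\}$), and the remaining components inherit the powers $disB_{1}^{-1}$, $disB_{2}^{-2/3}$, and $disB_{3}^{-1-64/12}=disB_{3}^{-19/3}$ of the corresponding divisors $dis_{1},dis_{2}$ and $\{x=0\}$ at $A$. For the two coordinate divisors the exponents are $-1-\frac{c_{2}.H_{i}}{12}$, where $H_{i}$ are the nef generators $D_{1},D_{2}$ of $\mathrm{Pic}(Y)$ dual, under mirror symmetry, to $\log z_{1}$ and $\log z_{2}$; reading the classical triple intersections of $Y$ off from the constant terms of the Yukawa couplings in Proposition \ref{prop:q-Yijk-B} ($D_{1}^{3}=128$, $D_{1}^{2}D_{2}=16$, $D_{1}D_{2}^{2}=D_{2}^{3}=0$) and using the degree-$64$ \'etale quotient $\pi: V_{8,w}^{1}\to Y$, one finds $\pi^{*}D_{1}=8H_{X}$, $\pi^{*}D_{2}=A_{X}$, whence $c_{2}.D_{1}=8$ and $c_{2}.D_{2}=0$; this produces the factors $z_{1}^{-1-8/12}$ and $z_{2}^{-1}$. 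Finally I would cross-check the whole expression by expanding $F_{1}^{B}$ in the mirror coordinates $q_{1}=e^{t_{1}},q_{2}=e^{t_{2}}$, substituting the mirror map above and the Griffiths--Yukawa couplings $C_{ijk}^{B}$ of (\ref{eq:Cijk-B})/Proposition \ref{prop:q-Yijk-B}, reading off the invariants $N_{1}^{B}(d_{1},d_{2})$ and the BPS numbers $n_{1}^{B}(i,j)$ of Table \ref{tab:Table-B}, and verifying that they are non-negative integers to high degree with the geometrically expected values (e.g.\ $n_{1}^{B}(i,0)=8$ from the elliptic curves in the singular fibres); these integrality constraints are strongly over-determined and, if one prefers not to invoke the inheritance of exponents from $A$, can be used instead to solve for the unknown powers $r_{0},\dots,r_{3}$ directly.

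The main obstacle is the exceptional divisor $\{z_{1}=0\}$ and the combinatorics of the four-fold blow-up at $B$: one has to verify carefully that the four blow-ups, carried out along the tangency of $\{dis_{0}=0\}$ with $\{y=0\}$ and hence away from $dis_{1},dis_{2}$ and $\{x=0\}$, leave the exponents of $disB_{0},\dots,disB_{3}$ intact, while the genuinely new datum is the exponent of $\{z_{1}=0\}$, which involves the linear form $c_{2}.D_{1}=8$ on $Y$ rather than the value $64$ for $V_{8,w}^{1}$. This is precisely where the free $\mbZ_{8}\times\mbZ_{8}$-quotient enters — the genus-one counterpart of Remark \ref{rem:Conifold-A} — and where the statement for $B$ differs from a mere coordinate rewriting of $F_{1}^{A}$.
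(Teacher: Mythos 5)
Your proposal is correct and arrives at exactly the exponents in the statement, but it inverts the logical order of the paper's argument. The paper determines the parameters $r_{0},\dots,r_{3}$ at $B$ independently, by the same recipe as at $A$: insert the topological data of $Y=V_{8,w}^{1}/\mbZ_{8}\times\mbZ_{8}$ (namely $\chi=0$, $h^{1,1}=2$, $c_{2}.H_{Y}=8$, $c_{2}.A_{Y}=0$) into (\ref{eq:F1-X-formula}) and fix the remaining unknowns by requiring the vanishing of low-degree genus-one BPS numbers — your ``fallback'' route. Only afterwards does the paper prove, as a separate proposition, that $F_{1}^{A}$ and $F_{1}^{B}$ are connected under (t1), (t2), using precisely the relations $dis_{0}=z_{1}^{4}\,disB_{0}$, $dis_{3}=8z_{1}$, $x=\tfrac14 disB_{3}$, $y=32z_{1}^{4}z_{2}$ and $\partial(x,y)/\partial(z_{1},z_{2})=z_{1}^{4}$ that you invoke as your primary derivation. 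Your route is cleaner if one accepts that $F_{1}$ is a single global object on $\mbP_{\Delta}$, but note the caveat: that assumption is equivalent to knowing that $A$ and $B$ are boundary points of the \emph{same} family $\cV_{\mbZ_{8}}^{1}$, which is one of the paper's conclusions rather than an a priori given — indeed Proposition \ref{prop:F1-C-A-NOT-rel} shows the analogous transport from $A$ to $C$ fails because $C$ belongs to the other family. You do supply the independent input that resolves this (the conifold factor $-\tfrac16$ from the single $\tau$-orbit of ODPs, identifying the family as $\cV_{\mbZ_{8}}^{1}$, plus the over-determined integrality constraints), so your argument closes; it simply buys the connecting property for free where the paper earns it as a check, at the cost of having to argue separately why the transport is legitimate for the pair $(A,B)$ but not for $(A,C)$.
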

Assuming mirror symmetry, we read the genus one Gromov-Witten invariants
from the potential function. In Table \ref{tab:Table-B} (1), we have
listed the resulting BPS numbers. 

\para{Connecting property for $F_1^A$ and $F_1^B$.}\label{para: connecting-prop-g1}
Two potential functions $F_{1}^{A}$ and $F_{1}^{B}$ are defined
independently near the corresponding boundary points. Here we describe
how these two functions are related on the parameter space. Let us
note that the potential function $F_{1}^{M}$ in general consists
three parts; (1) the Hodge factor $\left(\omega_{0}\right)^{-(3+h^{1,1}+\frac{\chi}{12})}$
(coming from Hodge bundle), (2) the Jacobian part, and (3) rational
functions. Rational functions are easily transformed from $A$ to
$B$. For the Hodge factor (1) and the Jacobian factor (2), we set
the following transformation rules 
\begin{equation}
\begin{alignedat}{1} & ({\rm t1})\;\;\left(N_{A}\omega_{0}^{A}\right)^{-(3+h_{A}^{1,1}+\frac{\chi_{A}}{12})}=\left(N_{B}\omega_{0}^{B}\right)^{-(3+h_{B}^{1,1}+\frac{\chi_{B}}{12})},\\
 & ({\rm t}2)\;\;\frac{\partial(x_{1},x_{2})}{\partial(t_{1}^{A},t_{2}^{A})}=\frac{\partial(z_{1}^{B},z_{2}^{B})}{\partial(t_{1}^{B},t_{2}^{B})}\frac{\partial(x_{1},x_{2})}{\partial(z_{1}^{B},z_{2}^{B})},
\end{alignedat}
\label{eq:connect-g1}
\end{equation}
where we attached superscripts to indicate the two boundaries points.
We say that $F_{1}^{A}$ is connected to $F_{1}^{B}$, or vice versa,
if two are transformed under the above transformation rules.
\begin{prop}
The potential functions $F_{1}^{A}$ and $F_{1}^{B}$ defined near
the boundary point $A$ and $B$, respectively, are connected to each
other. \end{prop}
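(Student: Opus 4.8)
The plan is to compare the two explicit BCOV expressions for $F_{1}^{A}$ and $F_{1}^{B}$ (the displayed formulas in the propositions defining them) factor by factor and check that the second is produced from the first by the formal rules (t1), (t2) of (\ref{eq:connect-g1}) together with the genuine substitution of the coordinate change (\ref{eq:affine-relation-x-zB}). Write each potential as $F_{1}=\frac{1}{2}\log\{(\text{Hodge factor})\cdot(\text{Jacobian})\cdot(\text{rational part})\}$. The Hodge factor $(N\omega_{0})^{-(3+h^{1,1}-\chi/12)}$ and the Jacobian are transported, by definition, under (t1) and (t2); the exponent $3+h^{1,1}-\chi/12=5$ is common to $A$ and $B$ because $V_{8,w}^{1}$ and $Y=V_{8,w}^{1}/\mathbb{Z}_{8}\times\mathbb{Z}_{8}$ share $h^{1,1}=2$ and $\chi=0$, and (t2) is the substitution rule $\partial(x,y)/\partial t^{A}\mapsto\big(\partial(z_{1},z_{2})/\partial(t_{1}^{B},t_{2}^{B})\big)\cdot\big(\partial(x,y)/\partial(z_{1},z_{2})\big)$, whose coordinate-change Jacobian I would record as $\partial(x,y)/\partial(z_{1},z_{2})=-64\,z_{1}^{4}$. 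Thus the whole statement reduces to matching the rational parts under substitution.

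First I would invert (\ref{eq:affine-relation-x-zB}) to $x=\frac{1}{4}(1-8z_{1})$, $y=32\,z_{1}^{4}z_{2}$, and then verify the bridge identities
\[
dis_{3}=1-4x=8z_{1},\qquad x=\tfrac{1}{4}\,disB_{3},\qquad dis_{1}=2\,disB_{1},\qquad dis_{2}=2\,disB_{2},\qquad dis_{0}=(8z_{1})^{4}\,disB_{0},
\]
the last three of which follow from a short direct computation with the explicit polynomials $dis_{0},disB_{0},\dots,disB_{3}$ (using $1-4z_{1}=\frac{1}{2}(1+4x)$ and $1-8z_{1}=4x$). Feeding these, together with $y=32z_{1}^{4}z_{2}$ and the Jacobian $-64z_{1}^{4}$, into the rational part $dis_{0}^{-1/6}\,dis_{1}^{-1}\,dis_{2}^{-2/3}\,dis_{3}^{-1}\,x^{-1-64/12}\,y^{-1}$ of $F_{1}^{A}$, the discriminant factors turn into $disB_{0}^{-1/6}\,disB_{1}^{-1}\,disB_{2}^{-2/3}\,disB_{3}^{-19/3}$, the powers of $z_{1}$ contributed by $dis_{0}$, $dis_{3}$, $y$ and the Jacobian add to $-\frac{2}{3}-1-4+4=-1-\frac{8}{12}$, and the power of $z_{2}$ is $-1$: exactly the monomial $z_{1}^{-1-8/12}z_{2}^{-1}$ appearing in $F_{1}^{B}$. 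All numerical prefactors collapse to a single nonzero constant, which amounts to an additive constant in $F_{1}$ and is absorbed into the normalization fixed by the $\log q$ terms in (\ref{eq:F1-A}). This identifies the transformed $F_{1}^{A}$ with $F_{1}^{B}$, i.e.\ establishes the connecting property; the same argument with $A',B'$ in place of $A,B$ gives the primed version.

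I expect the main obstacle to be the consistency of the exponent choices $r_{k}$ under the four successive blow-ups at $B$, rather than any single hard computation. One must see that the discriminant component $\{1-4x=0\}$ (carrying weight $r_{3}=-1$ at $A$) is precisely the proper transform that becomes the \emph{boundary coordinate line} $\{z_{1}=0\}$ at $B$, while the coordinate line $\{x=0\}$ at $A$ (carrying weight $-1-c_{2}.H_{X}/12$) becomes the \emph{discriminant-type} divisor $\{disB_{3}=0\}$ at $B$ with weight $-\frac{19}{3}=-1-64/12$; only after this reshuffling do the $-1$ and $-1-64/12$ weights land on the correct divisors. The second delicate point is tracking the factor $z_{1}^{4}$ that reflects the fourfold tangency of $\{dis_{0}=0\}$ with $\{y=0\}$ at $B$: it occurs simultaneously inside $dis_{0}=(8z_{1})^{4}disB_{0}$, inside $y=32z_{1}^{4}z_{2}$, and in the Jacobian $-64z_{1}^{4}$, and these three contributions must cancel down to the single residual weight $z_{1}^{-1-8/12}$ that correctly encodes $c_{2}.H_{Y}/12=8/12$ at $B$. (The compatibility of the transcendental factor, namely that the analytic continuation of $\omega_{0}^{A}$ into a neighbourhood of $B$ equals $N_{B}\,\omega_{0}^{B}$ with $N_{B}=\frac{1}{2}$, can be checked from the closed form of $\Pi_{\gamma_{0}}$ and is re-obtained from the connection matrix of Proposition~\ref{prop:Conection-Matrices} in Section~\ref{sec:Degens-CY}.)
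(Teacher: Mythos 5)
Your proposal is correct and follows essentially the same route as the paper: invert the blow-up coordinate change, establish the bridge identities relating $dis_{k}$ to $disB_{k}$ (and $x,y$ to $disB_{3},z_{1}^{4}z_{2}$), compute the Jacobian $\propto z_{1}^{4}$, and check that the exponents of the $z_{1}$-powers recombine into $z_{1}^{-1-8/12}$ while the discriminant weights match. Your bookkeeping of the constants ($dis_{0}=(8z_{1})^{4}disB_{0}$, Jacobian $-64z_{1}^{4}$) is in fact slightly more precise than the paper's, which records these relations only up to constants since they contribute only an additive constant to $F_{1}$.
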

\begin{proof}
Because of our definition, the connection property may be verified
simply by transforming the discriminants in the coordinate $(x,y)=(x_{1},x_{2})$
to the blow-up coordinate $(z_{1},z_{2})=(z_{1}^{B},z_{2}^{B})$,
and by evaluating the Jacobian. For the discriminants, we have the
following relations:
\[
\begin{aligned}dis_{0}(x,y)=z_{1}^{4}\,disB_{0}(z_{1},z_{2}),\,\,\,\,dis_{1}(x,y)=2\,disB_{1}(z_{1},z_{2})\qquad\qquad\\
dis_{2}(x,y)=2\,disB_{2}(z_{1},z_{2}),\,\,dis_{3}(x,y)=8\,z_{1},\,\,x=\frac{1}{4}\,disB_{3}(z_{1},z_{2}),\,\,y=32\,z_{1}^{4}z_{2}.
\end{aligned}
\]
For the Jacobian, we have $\frac{\partial(x,y)}{\partial(z_{1},z_{2})}=z_{1}^{4}$.
Under these relations, we can verify that the exponents $r_{k}$ in
$F_{1}^{A}$ exactly match those in $F_{1}^{B}$. This verifies the
claimed connection property between $F_{1}^{A}$ and $F_{1}^{B}$.\end{proof}
\begin{rem}
We can also argue that $F_{1}^{B}$ is calculated by the same family
$\cV_{\mbZ_{8}}^{1}\rightarrow\mbP_{\Delta}$ as $F_{1}^{A}$ by looking
at the conifold factor in $disB_{0}^{-\frac{1}{6}}$. See Remark \ref{rem:Conifold-A}.
$\hfill\square$ 
\end{rem}
\para{Contraction to $(2,2,2,2)/G$.} \label{para:contR-B-Z8Z8}Calculations
for the boundary point $B$ apply word by word to another boundary
point $B'$; we arrive at the same results as $B$ since the local
properties are isomorphic. Under mirror symmetry, these boundary points
can be identified with the birational models in the upper line of
the diagram\def\contRii{\begin{xy}
(-30,0)*++{V^1_{8,w}/G}="ViG",
( 30,0)*++{V^2_{8,w}/G}="ViiG",
(  0,0)*++{X_{2,2,2,2}^{sing}/G}="ZiG",
(-30,-15)*++{V^1_{8,w}}="Vi",
( 30,-15)*++{V^2_{8,w}}="Vii",
(  0,-15)*++{X_{2,2,2,2}^{sing}}="Zi",
\ar^{{64 \,\mbP^1}\hsp{10}} "Vi";"Zi"
\ar_{\hsp{20}{64 \,\mbP^1}} "Vii";"Zi"
\ar^{{1 \,\mbP^1}\hsp{10}} "ViG";"ZiG"
\ar_{\hsp{20}{1 \,\mbP^1}} "ViiG";"ZiG"
\ar "Vi";"ViG"
\ar "Vii";"ViiG"
\end{xy} }
\[
\begin{matrix}\contRii\end{matrix}
\]
where $G=\mbZ_{8}\times\mbZ_{8}$. We have identified the boundary
points $A$ and $A'$ with the birational models in the lower line.
For the lower line, we remark that there is a smoothing to general
$(2,2,2,2)$ complete intersections in $\mbP^{7}$. This explained
the sum-up properties (\ref{eq:sumup-rel-AA'}) and (\ref{eq:sumup-rel-g1-AA'})
of the BPS numbers.

In contrast to $X_{2,2,2,2}^{sing}$, the singular Calabi-Yau variety
$X_{2,2,2,2}^{sing}/G$ does not have a smoothing \cite{Namikawa}.
This is consistent to the classification result in \cite{Brown,Hua}
which says that there is no free quotient of $(2,2,2,2)$ complete
intersection by the group $\mbZ_{8}\times\mbZ_{8}$ which admits a
smooth Calabi-Yau manifold. Nevertheless we observe the following
sum-up property 

\[
\sum_{d_{2}}n_{0}^{B}(d,d_{2})=\frac{1}{|G|}n_{0}^{(2,2,2,2)}(d),
\]
which indicates that the number $n_{0}^{(2,2,2,2)/G}(d):=$$\frac{1}{|G|}n_{0}^{(2,2,2,2)}(d)$
has some meaning as BPS numbers of the singular variety $X_{2,2,2,2}^{sing}/G$.
Interestingly, we can also verify the sum-up property at genus one
\[
\sum_{d_{2}}n_{1}^{B}(d,d_{2})=n_{1}^{(2,2,2,2)/G}(d),
\]
with the numbers $n_{1}^{(2,2,2,2)/G}(d)$ which we obtain from the
BCOV formula, see Appendix \ref{sub:Appendix-free-quot-Z8Z8}. 

\para{Generators of $H_2(V^1_{8,w}/G,\mbZ)$.} \label{para:PicG-Z8Z8}
{}{Let us write $Y=V_{8,w}^{1}/G$ with $G=\mathbb{Z}_{8}\times\mathbb{Z}_{8}$,
and recall that $Y$ is isomorphic to the dual fibration by $(1,8)$-polarized
abelian surfaces \cite[Lem.5.4]{Sch}. We denote the class of a dual
abelian surface by $A_{Y}$, and set $H_{Y}$ to be a relative ample
divisor on $Y/\mathbb{P}^{1}$ which comes from the relative ample
divisor $H_{X}$ on $V_{8,w}^{1}/\mathbb{P}^{1}$ by the fiberwise
Fourier-Mukai transformation (see \cite[Thm.4.1]{BL2}). Let us denote
by $\sigma_{Y}$ the section of the fibration $Y\rightarrow\mathbb{P}^{1}$
which is a single $G$-orbit of the 64 sections of the fibration $X=V_{8,w}^{1}\rightarrow\mathbb{P}^{1}$
in (4) of \ref{para: V1-summary}. As summarized in (3),(4) of \ref{para: V1-summary},
each singular fiber of $X$ is given by an elliptic translation scroll
of an elliptic curve $E_{X}$, and $G$ acts on each singular fiber
by the natural translations by 8-torsion points of $E_{X}$. We denote
by $E_{Y}$ an elliptic curve in $Y$ coming from a singular fiber
of $X$, i.e., $E_{X}/G$. From Table \ref{tab:Table-B}, we will
read below the BPS number $n_{0}^{B}(0,1)=1$ as counting the section
$\sigma_{Y}$, and also the number $n_{1}^{B}(1,0)=8$ as counting
the elliptic curves $E_{Y}$. We will start studying a basis of the
group $H_{2}(V_{8,w}^{1}/G,\mbZ)$ modulo torsions.}

{}{Calabi-Yau manifold $V_{8,w}^{1}$ is simply connected,
since it is defined by a small resolution of a $(2,2,2,2)$ complete
intersection. Hence the fundamental group of the free quotient $V_{8,w}^{1}/G$
is isomorphic to $G=\mbZ_{8}\times\mbZ_{8}$. Now we have the exact
sequence due to Eilenberg-MacLane \cite{Eilenberg-Mac}, 
\[
1\rightarrow\pi_{2}(V_{8,w}^{1}/G)\rightarrow H_{2}(V_{8,w}^{1}/G,\mbZ)\rightarrow H_{2}(G)\rightarrow0,
\]
where $H_{2}(G)=H_{2}(\pi_{1}(V_{8,w}^{1}/G))$ is the group homology,
and we can calculate this as $H_{2}(G)\simeq\mbZ_{8}$ for $G=\mbZ_{8}\times\mbZ_{8}$.
In \cite{AM1}, by finding an elliptic curve as a generator of $H_{2}$,
it was argued that the corresponding exact sequence does not split
for a free quotient of a quintic Calabi-Yau threefold by $\mathbb{Z}_{5}\times\mathbb{Z}_{5}$.
As we will argue below, properties of $\sigma_{Y},E_{Y}$ indicate
that the above exact sequence does not split in our case, too. Here,
it should be useful to contrasted this to the case $V_{8,w}^{1}$
where we have an isomorphism $\pi_{2}(V_{8,w}^{1})\simeq H_{2}(V_{8,w}^{1},\mbZ)$,
since $H_{2}(\pi_{1}(V_{8,w}^{1}))=0$. In this case, we have a basis
of $H_{2}(V_{8,w}^{1},\mbZ)$ consisting of a section $\sigma_{X}$
and a line $\ell$ in a singular fiber (see \ref{para: V1-summary}),
which are both rational curves. }

{}{Let $\pi^{*}H_{Y},\pi^{*}A_{Y}$ be the pull-backs
of $H_{Y},\,A_{Y}$ by $\pi:V_{8,w}^{1}\rightarrow V_{8,w}^{1}/G$.}
\begin{prop}
\label{prop:Pull-back-B}It holds that $\pi^{*}H_{Y}=8\,H_{X}$ and
$\pi^{*}A_{Y}=A_{X}$.\end{prop}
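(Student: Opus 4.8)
The plan is to pin down both pull-backs by pairing against the two curve classes that generate $H_{2}(V_{8,w}^{1},\mbZ)$ modulo torsion — a section $\sigma_{X}$ and a line $\ell$ in a singular fibre — using the intersection numbers recalled in \ref{para: V1-summary}; equivalently, to restrict to a general fibre and correct by a multiple of the fibre class. Write $G=\mbZ_{8}\times\mbZ_{8}$. The identity $\pi^{*}A_{Y}=A_{X}$ is essentially formal: the $G$-action is fibrewise, so $\pi$ commutes with the two projections $f_{X}\colon X\to\mbP^{1}$ and $f_{Y}\colon Y\to\mbP^{1}$, whence $A_{Y}=f_{Y}^{*}\cO_{\mbP^{1}}(1)$ pulls back to $f_{X}^{*}\cO_{\mbP^{1}}(1)=A_{X}$; since $\pi$ is étale the preimage of a fibre of $Y$ is a reduced fibre of $X$, so no multiplicity enters.

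For $\pi^{*}H_{Y}$ I would first restrict to a general fibre $F_{X}=A$. There $\pi|_{F_{X}}$ is the canonical isogeny $\phi_{\mathcal{L}}\colon A\to\hat{A}=F_{Y}$ with kernel $K(\mathcal{L})\cong G$ (part $(4)$ of \ref{para: V1-summary}), and $H_{Y}|_{F_{Y}}$ is the dual $(1,8)$-polarization $\hat{\mathcal{L}}$ produced by the fibrewise Fourier--Mukai transform of $H_{X}$ (the content of \cite[Thm.~4.1]{BL2} together with \cite[Lem.~5.4]{Sch}). From the relation $\phi_{\hat{\mathcal{L}}}\circ\phi_{\mathcal{L}}=[8]_{A}$, characteristic of a type-$(1,8)$ polarization, one obtains $\phi_{\mathcal{L}}^{*}\hat{\mathcal{L}}\equiv\mathcal{L}^{\otimes8}$ in $\mathrm{NS}(A)$ (since $\phi_{\phi_{\mathcal{L}}^{*}\hat{\mathcal{L}}}=\phi_{\mathcal{L}}\circ\phi_{\hat{\mathcal{L}}}\circ\phi_{\mathcal{L}}=\phi_{\mathcal{L}}\circ[8]_{A}=\phi_{\mathcal{L}^{\otimes8}}$). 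Hence $(\pi^{*}H_{Y})|_{F_{X}}=8\,(H_{X}|_{F_{X}})$; as $A_{X}|_{F_{X}}=0$ while $H_{X},A_{X}$ span $\mathrm{Pic}(X)$ modulo torsion, this forces $\pi^{*}H_{Y}=8H_{X}+b\,A_{X}$ for some $b\in\mbZ$.

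To show $b=0$ I would intersect with a section. By $(1)$ and $(4)$ of \ref{para: V1-summary} the $64$ sections of $X$ form a single free $G$-orbit, so $\pi$ carries $\sigma_{X}$ isomorphically onto the section $\sigma_{Y}$ of $Y$; the projection formula then gives $b=(8H_{X}+bA_{X})\cdot\sigma_{X}=\pi^{*}H_{Y}\cdot\sigma_{X}=H_{Y}\cdot\sigma_{Y}$. But $\sigma_{Y}$ is the exceptional $\mbP^{1}$ of the small resolution $Y=V_{8,w}^{1}/G\to V_{8,w}/G$ — the $G$-quotient of the contraction $V_{8,w}^{1}\to V_{8,w}$ of the $64$ curves $\sigma_{X}$ — and $H_{Y}$ is numerically pulled back from $V_{8,w}/G$, being the fibrewise Fourier--Mukai image of $H_{X}=H_{1}$, which is itself the pull-back of the hyperplane class of $V_{8,w}$. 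Hence $H_{Y}\cdot\sigma_{Y}=0$ and $b=0$. As a consistency check one may instead intersect with $E_{X}$: the $G$-action on the base elliptic curve $E_{X}$ is by translations and, the action on $X$ being free, faithful, so $E_{X}\to E_{Y}=E_{X}/G$ has degree $64$, $\pi_{*}E_{X}=64\,E_{Y}$, and indeed $8\,H_{X}\cdot E_{X}=64=64\,(H_{Y}\cdot E_{Y})$, matching the bidegree $(1,0)$ recorded for $E_{Y}$ in \ref{para:PicG-Z8Z8}.

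The step I expect to be the genuine obstacle is the Fourier--Mukai input behind the last two paragraphs: one must know both that the fibrewise transform sends the relative polarization $H_{X}$ to a relative polarization $H_{Y}$ whose restriction to fibres is the type-$(1,8)$ dual polarization, and that this $H_{Y}$ descends to the singular model $V_{8,w}/G$ — so that $H_{Y}\cdot\sigma_{Y}=0$. Granting \cite[Thm.~4.1]{BL2} and \cite[Lem.~5.4]{Sch}, the remainder is the elementary intersection bookkeeping above. Alternatively, once $\pi^{*}H_{Y}=8H_{X}+bA_{X}$ and $\pi^{*}A_{Y}=A_{X}$ are in hand, the value $b=0$ is already forced by matching any one classical triple intersection number of $Y$ obtained from mirror symmetry, for instance $H_{Y}^{2}A_{Y}=16$ in Proposition~\ref{prop:q-Yijk-B}.
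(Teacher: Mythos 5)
Your argument is correct and follows essentially the same route as the paper: $\pi^{*}A_{Y}=A_{X}$ from the fibrewise group action, the ansatz $\pi^{*}H_{Y}=aH_{X}+bA_{X}$, $b=0$ by pairing with the section orbit (using $H_{Y}\cdot\sigma_{Y}=0$, which the paper likewise takes as given from the Fourier--Mukai setup), and $a=8$ from the dual $(1,8)$-polarization on the fibres. The only cosmetic difference is that the paper fixes $a$ by the degree count $(\deg\pi)\,H_{Y}^{2}A_{Y}=a^{2}H_{X}^{2}A_{X}$ with $H_{Y}^{2}A_{Y}=16$, whereas you use the fibrewise identity $\phi_{\mathcal{L}}^{*}\hat{\mathcal{L}}\equiv\mathcal{L}^{\otimes8}$ (which, incidentally, fixes the sign of $a$ directly); both rest on the same input that the fibres of $Y$ carry the dual $(1,8)$-polarization.
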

\begin{proof}
Since the group action on $V_{8,w}^{1}$ comes from the Heisenberg
group $\cH_{8}$ acting on abelian surfaces, the fiber class $A_{Y}$
pull-backs to the fiber class $A_{X}$. For the first equality, let
us write $\pi^{*}H_{Y}=aH_{X}+bA_{X}$. We recall that $\sigma_{X}$
is a section of $X:=V_{8,w}^{1}\rightarrow\mbP^{1}$ (see \ref{para: V1-summary}).
Then we have $\pi^{*}\sigma_{Y}=64\sigma_{X}$ for their homology
classes. Now $b=0$ follows from $\pi^{*}\sigma_{Y}.\pi^{*}H_{Y}=(\deg\pi)\sigma_{Y}.H_{Y}=0$.
To determine $a$, we note that $V_{8,w}^{1}/G$ is fibered by dual
abelian surfaces which have (1,8) polarization again; hence we have
$H_{Y}^{2}.A_{Y}=H_{X}^{2}.A_{X}=16.$ Using this we have $(\deg\pi)H_{Y}^{2}A_{Y}=(\pi^{*}H_{Y})^{2}.\pi^{*}A_{Y}=a^{2}H_{X}^{2}.A_{X}$
with $\deg\pi=64$. This determines $a=8$. 
\end{proof}
{}{As described above, we have $E_{Y}=E_{X}/G$, hence
$\pi^{*}E_{Y}=E_{X}$. Then, calculating $(\deg\pi)\,H_{Y}.E_{Y}$
by $\pi^{*}H_{Y}.\pi^{*}E_{Y}=8\,H_{X}.E_{X}=64$ for example, we
obtain 
\begin{equation}
H_{Y}.\sigma_{Y}=0,\,\,H_{Y}.E_{Y}=1;\quad A_{Y}.\sigma_{Y}=1,\,\,A_{Y}.E_{Y}=0,\label{eq:HY-AY}
\end{equation}
which show that $\sigma_{Y},E_{Y}$ generate $H_{2}(V_{8,w}^{1}/G,\mbZ)$
modulo torsion, and also $H_{Y},A_{Y}$ generate $Pic(V_{8,w}^{1}/G)$
modulo torsions. In terms of these basis, we can read the BPS numbers
$n_{g}^{B}(i,j)$ in Table \ref{tab:Table-B} by 
\[
n_{g}^{B}(i,j)=n_{g}^{B}(\beta.H_{Y},\beta.A_{Y})\,\,(\beta=i\,E_{Y}+j\,\sigma_{Y}).
\]
We justify this by reproducing the constant terms of $Y_{ijk}^{B}$
in Proposition \ref{prop:q-Yijk-B} as
\[
H_{Y}^{3}=128,\,\,H_{Y}^{2}A_{Y}=16,\,\,H_{Y}A_{Y}^{2}=A_{Y}^{3}=0,
\]
where we use $\pi^{*}D_{i}\,\pi^{*}D_{j}\,\pi^{*}D_{k}=(\deg\pi)\,D_{i}D_{j}D_{k}$
for divisors $D_{i}$ on $Y$. }

{}{Now, let us note that both classes $\sigma_{Y}$ and
$8E_{Y}$ represent homology classes of rational curves modulo torsions
as follows: We have $H_{Y}.(\pi_{*}\ell)=(\pi^{*}H_{Y}).\ell=8$ and
$A_{Y}.(\pi_{*}\ell)=0$ for the class of a line $\ell$ contained
in a singular fiber of $V_{8,w}^{1}$, namely $\pi_{*}\ell=8\,E_{Y}$
is a class of rational curve. Since the lines $\pi_{*}\ell$ in (the
quotient of) each singular fiber are parametrized by an intersection
point with $E_{Y}$, the BPS number of curves of class $\beta=\pi_{*}\ell$
is counted by $(-1)^{1}\chi(E_{Y})=0$ according to the counting rule
of BPS numbers \cite{GV}. We identify this counting number with $n_{0}^{B}(8,0)=0$
in Table \ref{tab:Table-B}. We also read the number $n_{0}^{B}(8,1)=8$
as counting reducible curves of class $\beta=8E_{Y}+\sigma_{Y}$ which
come from eight singular fibers. }

{}{We can now argue that the exact sequence above does
not split as follows (as in \cite{AM1}): If the exact sequence will
split, then $\pi_{2}(V_{8,w}^{1}/G)$ modulo torsions is isomorphic
to $H_{2}(V_{8,w}^{1}/G,\mbZ)$ modulo torsions, hence the class of
$E_{Y}$ must belong to the image of $\pi_{2}(V_{8,w}^{1}/G)$ modulo
torsions. The last property seems unlikely, although we need a proof
to complete the argument. }

\para{Relating $g=0$ BPS numbers in Tables \ref{tab:Table-A} and \ref{tab:Table-B}.}\label{para:B-g0}
As summarized in (4) of \ref{para: V1-summary}, the group $G=\mbZ_{8}\times\mbZ_{8}$
acts on the 64 sections in $V_{8,w}^{1}$ making them into a single
orbit. Clearly, the number $n_{0}(0,1)=1$ counts this section. This
is also the case for any smooth rational curves in $V_{8,w}^{1}$,
i.e., the free action of $G$ must identify 64 of them as a single
rational curve in $V_{8,w}^{1}/G$ since any smooth rational curve
in $V_{8,w}^{1}$ cannot be stable under the free action. Let $C_{Y}$
be a rational curve in $V_{8,w}^{1}/G$ and write $\pi^{*}C_{Y}$$=64C$.
Then we have
\[
C_{Y}.H_{Y}=(\pi_{*}C).H_{Y}=C.(\pi^{*}H_{Y})=8\,C.H_{X}.
\]
In a similar way, we have $C_{Y}.A_{Y}=C.A_{X}$. These explain the
degree distribution of non-vanishing BPS numbers in Table \ref{tab:Table-B}
(0) and also the numbers which are exactly $\frac{1}{64}$ of Table
\ref{tab:Table-A} (0). 

\para{$g\geq 1$ BPS numbers in Tables \ref{tab:Table-A} and \ref{tab:Table-B}.}
Corresponding to (\ref{eq:F0-A}),(\ref{eq:F1-A}), let us introduce
the $q$-series $Z_{g,n}^{B}$ by 
\[
\begin{aligned} & F_{0}^{B}=\frac{128}{3!}\,t_{1}^{3}+\frac{16}{2!}t_{1}^{2}t_{2}+\sum_{n\geq1}Z_{0,n}^{B}(q_{1})q_{2}^{n},\\
 & F_{1}^{B}=-\frac{c_{2}.H_{Y}}{24}t_{1}+Z_{1,n}^{B}(q_{1})+\sum_{n\geq0}Z_{1,n}^{B}(q_{1})\,q_{2}^{n},
\end{aligned}
\]
where $t_{k}=\log q_{k}$. The observation in \ref{para:B-g0} implies
the following relation at $g=0$: 
\begin{equation}
Z_{0,n}^{B}(q)=\frac{1}{64}Z_{0,n}^{A}(q^{8})\,\,\,\,\,(n\geq1).\label{eq:Zg0B-A-relation}
\end{equation}
The relations between the BPS numbers in Table \ref{tab:Table-A}
(1) and those in Table \ref{tab:Table-B} (1) seem more complicated,
but it is easy to see a relation when $n=0$, 
\[
\widehat{Z}_{1,0}^{B}(q_{1})=-\frac{c_{2}.H_{Y}}{24}t_{1}-8\,\log\,\bar{\eta}(q_{1})=-8\,\log\,\eta(q_{1}).
\]
{}{By making $q$-series expansions up to sufficiently
high degrees, }we observe the following property (see \cite{HT-math-c}):

\begin{Xobs}{\label{obs:Z11-B}The $q$-series $Z_{1,1}^{B}(q)$
is expressed by 
\[
Z_{1,1}^{B}(q)=P_{1,1}^{B}(q)\,\frac{1}{\bar{\eta}(q^{8})^{8}}
\]
using exactly the same polynomial $P_{1,1}^{B}(q)=P_{1,1}^{A}(q)$
in terms of the quasi-modular forms in Observation \ref{obs:Z11-A}.
}\end{Xobs}

Corresponding to Conjecture \ref{conj:Zgn-A}, we naturally come to
the following 
\begin{conjecture}
\label{conj:Zgn-B}The $q$-series $Z_{g,n}^{B}(q)\,\,(n\geq1)$ are
expressed by 
\[
Z_{g,n}^{B}(q)=P_{g,n}^{B}(E_{2},S,T,U)\left(\frac{1}{\bar{\eta}(q^{8})^{8}}\right)^{n},
\]
where $P_{g,n}^{B}$ are polynomials of degree $2(g+n-1)$ of $E_{2},S,T,U$. 
\end{conjecture}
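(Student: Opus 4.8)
The plan is to deduce Conjecture \ref{conj:Zgn-B} from the corresponding $A$-side statement, Conjecture \ref{conj:Zgn-A} (together with Observations \ref{obs:ZA_01} and \ref{obs:Z11-A}), by transporting the quasi-modular structure through the explicit coordinate dictionary between the boundary points $A$ and $B$, treating the genus-zero terms and the positive-genus terms separately. The dictionary is the one already recorded in the proof of the connection property of $F_1^A,F_1^B$: the blow-up coordinate at $B$ is $(z_1,z_2)=\big(\tfrac18(1-4x),\,2^7y(1-4x)^{-4}\big)$, with $dis_0(x,y)=z_1^4\,disB_0$, $dis_{1}=2\,disB_{1}$, $dis_{2}=2\,disB_{2}$, $dis_3=8z_1$, $x=\tfrac14\,disB_3$, $y=32\,z_1^4z_2$, Jacobian $\partial(x,y)/\partial(z_1,z_2)=z_1^4$, and with the $n$-section grading preserved, since $A_Y\cdot\sigma_Y=1=A_X\cdot\sigma_X$ by Proposition \ref{prop:Pull-back-B}, so $q_2$ matches on the two sides.

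\emph{Genus zero.} Here the conjecture should come out as a genuine theorem. By the degree-rescaling argument of \ref{para:B-g0} one has the relation (\ref{eq:Zg0B-A-relation}), $Z_{0,n}^B(q)=\tfrac1{64}Z_{0,n}^A(q^8)$, which by Observation \ref{obs:ZA_01} equals $64^{\,n-1}\,P_{0,n}^A\big(E_2(q^8),E_4(q^8),E_6(q^8)\big)\,\bar\eta(q^8)^{-8n}$. It therefore remains only to prove the modular-forms lemma that $E_2(q^8),E_4(q^8),E_6(q^8)\in\mbC[E_2(q),S(q),T(q),U(q)]$, with $E_{2k}(q^8)$ of weighted degree $k$ in the generators. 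This holds because $\{E_2,S,T,U\}$ is a generating set for the ring of quasi-modular forms of $\Gamma_0(8)$: the theta combinations $S,T,U$ are a basis of $M_2(\Gamma_0(8))$ and $E_2$ is the single extra depth-one generator, while $E_4(8\tau),E_6(8\tau)\in M_{4,6}(\Gamma_0(8))$ and $E_2(\tau)-8E_2(8\tau)\in M_2(\Gamma_0(8))$ shows $E_2(8\tau)$ is even a linear combination of $E_2(q),S(q),T(q),U(q)$. Substituting these expansions into $P_{0,n}^A$ and collecting the common factor $\bar\eta(q^8)^{8n}$ produces $P_{0,n}^B\in\mbC[E_2,S,T,U]$ of weighted degree $4(n-1)=2(0+n-1)$, as asserted.

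\emph{Positive genus.} For $g\ge1$ the relation (\ref{eq:Zg0B-A-relation}) no longer holds verbatim --- Observation \ref{obs:Z11-B} already shows $P_{1,1}^B$ is $P_{1,1}^A$ in the \emph{un}rescaled variable $q$, not in $q^8$ --- so I would run the BCOV holomorphic anomaly recursion \cite{BCOV1,BCOV2} globally over $\mbP_\Delta$, exactly as in Subsection \ref{sub:Fg2-AB}. The special-geometry input (the Griffiths--Yukawa couplings, the Hodge metric $\omega_0$, the propagators) is globally defined and transforms tensorially, so the BCOV recursion at $B$ is the coordinate push-forward, under the dictionary above, of the recursion at $A$; hence $F_g^B$ equals the coordinate transform of $F_g^A$ plus a holomorphic ambiguity, a rational function on $\tPd$ with poles supported on the $disB_k$ and the boundary divisors. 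The finitely many ambiguity constants are pinned down by the same boundary data used at $A$: the low-degree BPS vanishings read off from Table \ref{tab:Table-B}, the conifold (gap) condition at $\{disB_0=0\}$ with conifold factor $-\tfrac16$ coming from the single ODP of $V_{8,w}^1/\mbZ_8\times\mbZ_8$ (Remark \ref{rem:Conifold-A}, Proposition \ref{prop:ODP-8}), the local isomorphism with $B'$, and the contraction sum-up relation $\sum_{d_2}n_g^B(d,d_2)=n_g^{(2,2,2,2)/G}(d)$ of \ref{para:contR-B-Z8Z8}. Granting Conjecture \ref{conj:Zgn-A}, the universal (non-ambiguity) part of $Z_{g,n}^B$ inherits quasi-modularity for $\Gamma_0(8)$ from the $A$-side through the Step-2 ring membership, and one checks that each genus-$g$ ambiguity contribution also assembles into an element of $\mbC[E_2,S,T,U]$ of weighted degree at most $2(g+n-1)$ --- precisely the verification carried out for $g=1,n\le3$ and $g=2,n\le2$.

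\textbf{The main obstacle} is making this last point uniform in $(g,n)$: showing that the holomorphic ambiguities never push the weighted degree past $2(g+n-1)$ and never introduce a building block outside $\mbC[E_2,S,T,U]$. A clean proof would, I expect, require upgrading the full two-variable series $\sum_{n\ge1}Z_{g,n}^B(q)\,p^n$ to a (meromorphic) Jacobi form of weight $-g$ and index governed by the fibre lattice, in the spirit of the rational-elliptic-surface analysis of \cite{HST1,HST2}, so that both the polynomiality in $E_2,S,T,U$ and the degree bound become structural consequences of the theory of Jacobi forms; the remaining work would then be to exhibit the modular (holomorphic) anomaly equation under which the BCOV recursion closes within that ring. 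The statement for $B'$ needs no separate argument, since $B$ and $B'$ are locally isomorphic, whence $Z_{g,n}^{B'}=Z_{g,n}^B$.
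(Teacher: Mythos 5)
Your proposal follows essentially the same route as the paper: your genus-zero step is exactly the paper's argument in the remark following the conjecture, namely combining the relation (\ref{eq:Zg0B-A-relation}) with the identity $8E_{2}(q^{8})-E_{2}(q)=S+2T+4U$ and the expressions of $E_{4}(q^{8}),E_{6}(q^{8})$ in $S,T,U$ to obtain (\ref{eq:PB-PA-rel}), while for $g\geq1$ the paper likewise only verifies the statement through the BCOV recursion for $g=1,n\leq3$ and $g=2,n\leq2$ as in Subsection \ref{sub:Fg2-AB}. The obstacle you flag --- controlling the holomorphic ambiguities uniformly in $(g,n)$, on top of the reliance on the unproven Conjecture \ref{conj:Zgn-A} --- is precisely why the statement remains a conjecture in the paper, so your account is consistent with, and no weaker than, what the paper actually establishes.
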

We verify the above conjecture determining polynomials $P_{1,n}^{B}$
for $n\leq3$ (see Appendix \ref{sec:Appendix-Pgn-AB} for some of
them). 
\begin{rem}
(1){}{{} In \cite{HT-math-c},} we verify $P_{1,n}^{B}\not=P_{1,n}^{A}$
for $2\leq n\leq3$, and observe that the equality $P_{1,1}^{B}=P_{1,1}^{A}$
in Observation \ref{obs:Z11-B} holds only for $n=1$. In the next
subsection, we will find that $P_{g,1}^{B}=P_{g,1}^{A}$ holds also
for $g=2$.

\noindent(2) We can verify Conjecture \ref{conj:Zgn-B} at $g=0$
up to $n=9$ {}{(see \cite{HT-math-c})}. For example,
assuming the general form of $Z_{0,n}^{B}(q)$, we obtain the following
polynomials 
\[
\begin{alignedat}{2}P_{0,2}^{B} &  &  & =\frac{E{}_{2}^{2}}{576}+\frac{E_{2}}{288}(S+2T+4U)+\frac{1}{384}(S^{2}+10ST+3T^{2}+4SU+20TU)\\
P_{0,3}^{B} &  &  & =\frac{7E{}_{2}^{4}}{2^{14}3^{4}}+\frac{7E{}_{2}^{3}}{2^{12}3^{4}}(S+2T+4U)+\m\frac{7E{}_{2}^{2}}{2^{13}3^{4}}(4S^{2}\m\,+\m\,50ST\m\,+\m\,13T^{2}\m\,+\,\m20SU\,\m+\,\m76TU)\\
 &  &  & \,\,\,\,+\frac{E_{2}}{2^{12}3^{4}}(16S^{3}\m\,+\m\,300S^{2}T\m\,+\m\,1605ST^{2}\m\,+\m\,68T^{3}\m\,+\m\,72S^{2}U\m\,+\m\,364STU\m\,+\m\,888T^{2}U)\\
 &  &  & \,\,\,\,+\frac{1}{2^{14}3^{4}}(37S^{4}\m\,+\m\,572S^{3}T\m\,+\m\,214S^{2}T^{2}\m\,+\m\,18704ST^{3}\m\,+\m\,160T^{4}\m\,+\m\,88S^{3}U\\
 &  &  & \hsp{150}+\m\,2152S^{2}TU\m\,+\m\,7328ST^{2}U\m\,+\m\,4160T^{3}U).
\end{alignedat}
\]
Then the observation (\ref{eq:Zg0B-A-relation}) relating $Z_{0,n}^{B}$
to $Z_{0,n}^{A}$ implies the following equalities
\begin{equation}
P_{0,n}^{B}(q)=(64)^{n-1}P_{0,n}^{A}(q^{8})\,\,\,(n\geq1),\label{eq:PB-PA-rel}
\end{equation}
which we can verify by using the identity 
\[
8E_{2}(q^{8})-E_{2}(q)=S+2T+4U
\]
and expressions of $E_{4}(q^{8})$ and $E_{6}(q^{8})$ in terms of
the theta functions $S,T$ and $U$.$\square$
\end{rem}

\subsection{\label{sub:Fg2-AB}Genus two Gromov-Witten potentials $F_{2}^{A}$
and $F_{2}^{B}$}

The BCOV potential $F_{1}$ in (\ref{eq:F1-X-formula}) has its higher
genus generalizations $F_{g}\,(g\geq2)$, which are determined recursively
with initial data $F_{0}$ and $F_{1}$. The recursion relations arise
as solutions of the so-called BCOV holomorphic anomaly equation \cite{BCOV2},
which describes $F_{g}$ up to unknown holomorphic (rational) function
$f_{g}$. There is no general recipe to determine $f_{g}$; however,
global boundary conditions may restrict its possible form and determine
it completely for lower $g$ in some cases. 

\para{BCOV recursion formula $F_2$.} For Calabi-Yau manifolds with
vanishing Euler numbers, the BCOV recursion relation simplifies. Suppose
that potential functions $F_{0}$ and $F_{1}$ are given near a boundary
point $P$, in the present case, $A$ or $B$. We calculate the Yukawa
couplings (three point functions) and also four point functions by
\[
Y_{abc}=\partial_{a}\partial_{b}\partial_{c}F_{0},\,\,\,Y_{abcd}=\partial_{a}\partial_{b}\partial_{c}\partial_{d}F_{0},
\]
where $\partial_{a}:=\frac{\partial\;}{\partial t_{a}}$. Similarly,
we define one point and two point functions, $\partial_{a}F_{1}$,
$\partial_{a}\partial_{b}F_{1}$ at $g=1$. Using these functions,
the recursion relation for $F_{2}$ is given by 
\begin{equation}
\begin{aligned} &  &  & F_{2}=\frac{1}{2}\sum S^{ab}(\partial_{a}\partial_{b}F_{1}+\partial_{a}F_{1}\partial_{b}F_{1})-\frac{1}{4}\sum S^{ab}S^{cd}(\frac{1}{2}Y_{abcd}+2Y_{abc}\partial_{d}F_{1})\\
 &  &  & +\frac{1}{8}\sum S^{ab}S^{cd}S^{rs}Y_{acd}Y_{brs}+\frac{1}{12}\sum Y_{acd}Y_{brs}S^{ab}S^{cr}S^{ds}+\big(N_{P}\omega_{0}(x)\big)^{2}f_{2}
\end{aligned}
\label{eq:F2-BCOV}
\end{equation}
where $S^{ab}$ is a certain (contra-variant) tensor called propagator
and $f_{2}=f_{2}(x,y)$ is the holomorphic (rational) function which
we need to determine. 

\para{Propagator $S^{ab}$ and $f_2$ at the boundary points.} \label{para:F2-t1-t2}To
apply the above BCOV formula, we have to find the propagator $S^{ab}$
by solving a curvature relation in Weil-Petersson geometry on the
moduli space of Calabi-Yau manifolds. To avoid going into the details,
we present the resulting forms $S_{A}^{ab}$ and $S_{B}^{ab}$ for
each boundary point in Appendix \ref{sec:Appendix-Propagator-f2}.
With the data of propagator $S^{ab}$, the potential functions $F_{0},F_{1}$
and the unique period integral $\omega_{0}(x)$ at a boundary point,
the BCOV formula gives the genus two potential function $F_{2}^{M}$
($M=A$ or $B$) up to unknown function $f_{2}^{M}$. If we find the
function $f_{2}^{M}$ in some way, the potential function $F_{2}^{M}$,
as the generating function of Gromov-Witten invariants, has the following
expansion:
\[
F_{2}^{M}(q_{1},q_{2})=\frac{\chi}{5760}+\sum_{d_{1},d_{2}\geq0}N_{2}^{M}(d_{1},d_{2})\,q_{1}^{d_{1}}q_{2}^{d_{2}}\,\,\big(=:\sum_{n\geq0}Z_{2,n}^{M}(q_{1})q_{2}^{n}\,\,\big)
\]
where $\chi(=0)$ is the Euler number of $M$ and $\frac{1}{5760}$
is the (orbifold) Euler number of the moduli space $\mathcal{M}_{2}$
of genus two stable curves. Then the BPS numbers $n_{2}^{M}(d_{1},d_{2})=:n_{2}^{M}(\beta)$
are read by the relation \cite{GV} 
\[
N_{2}^{M}(\beta)=\sum_{k|\beta}\Big\{ n_{0}^{M}(\beta/k)\frac{k}{240}+n_{2}^{M}(\beta/k)\,k\Big\}.
\]
To find $f_{2}^{M}$ for $M=A,B$, we read from the Tables \ref{tab:Table-A}
(1) and Table \ref{tab:Table-B} (1) that we may expect vanishing
BPS numbers $n_{2}^{M}(d_{1},d_{2})$=0 for $d_{1}\leq3$. Expecting
these vanishing BPS numbers, we set the following ansatz for the possible
form of $f_{2}^{A}$: 
\begin{equation}
f_{2}^{A}(x,y)=\frac{\sum_{0\leq i\leq14,1\leq j\leq3}a_{ij}x^{i}(y(1+4x+y))^{j}}{(dis_{0}dis_{2}dis_{3})^{2}}\label{eq:f2A-ansatz}
\end{equation}
with unknown constants $a_{ij}$, and a similar ansatz for $f_{2}^{B}(z_{1},z_{2})$
in terms of $disB_{k}$ and parameters $b_{ij}$. We may attempt to
impose further vanishing $n_{2}^{M}(d_{1},d_{2})=0$ for $d_{1}\leq5$;
however even if we do so, it turns out that these vanishing conditions
$n_{2}^{M}(d_{1},d_{2})=0\,(d_{1}\leq5)$ imposed for $M=A$ and $B$
independently do not suffice to determine the forms $f_{2}^{A}$ and
$f_{2}^{B}$. However, if we assume that $f_{2}^{A}$ and $f_{2}^{B}$
represent the same rational function $f_{2}$ on $\mbP_{\Delta}$,
then it turns out that the vanishing conditions suffice to determine
the possible form of $f_{2}$.

To describe the requirements more precisely, we set up transformation
rules on the expression (\ref{eq:F2-BCOV}) and $F_{g}$ in general;
for $g\geq2$, we extend (t1), (t2) given in (\ref{eq:connect-g1})
to
\[
\begin{aligned} & ({\rm t1})'\,\,\,(N_{A}\omega_{0}^{A})^{2g-2}\,\,f_{g}^{A}(x,y)=(N_{B}\omega_{0}^{B})^{2g-2}\,\,f_{g}^{B}(z_{B}^{1},z_{B}^{2}),\\
 & ({\rm t2})'\,\,\text{ \ensuremath{S^{ab},Y_{abc},\partial_{a}F_{1},\,}etc. in }F_{g}\,\text{transform covariantly as tensors.}
\end{aligned}
\]
In BCOV theory, the covariance (t2)$'$ is explained based on the
fact that the coordinates $t_{A}^{a}$ and $t_{B}^{b}$ are the so-called
flat coordinate defined near boundary points \cite{BCOV2}. {}{We
say that}\textit{ $F_{g}^{A}$ and $F_{g}^{B}$ are related to each
other on $\mbP_{\Delta}$} if the form of $F_{g}^{B}$ is obtained
from $F_{g}^{A}$ by the above transformation rules and the rationality
requirement, i.e. $f_{2}^{A}(x,y)=f_{2}^{B}(z_{B}^{1},z_{B}^{2})$
for a rational function $f_{2}^{A}$ on $\mathbb{P}_{\Delta}$. 
\begin{prop}
\label{prop:FAB-related-on-PD}If we assume that $F_{2}^{A}$ and
$F_{2}^{B}$ are related to each other on $\mbP_{\Delta}$, then the
rational function $f_{2}^{A}(x,y)$ is determined uniquely from the
vanishing conditions $n_{2}^{M}(d_{1},d_{2})=0\,(d_{1}\leq5)$ for
$M=A,B$.\end{prop}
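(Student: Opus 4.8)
The plan is to convert the assertion into a finite-dimensional linear-algebra problem and to solve it by explicit computation. Under the hypothesis that $F_2^A$ and $F_2^B$ are related on $\mbP_\Delta$ there is a single rational function $f_2$, written as $f_2^A(x,y)$ in the coordinates near $A$ (of the form (\ref{eq:f2A-ansatz}), with undetermined coefficients $a_{ij}$) and as its pull-back $f_2^B(z_1,z_2)=f_2^A(x(z),y(z))$ near $B$ through the coordinate change (\ref{eq:affine-relation-x-zB}). Substituting $f_2=f_2^A$ into the BCOV recursion (\ref{eq:F2-BCOV}) at $A$ --- with the known $F_0^A$, $F_1^A$ (hence $Y_{abc},Y_{abcd},\partial_a F_1,\partial_a\partial_b F_1$), the propagator $S_A^{ab}$ of Appendix \ref{sec:Appendix-Propagator-f2}, and the period $\omega_0$ --- and composing with the mirror map $z_A^a=z_A^a(q_1,q_2)$, one obtains each Gromov--Witten invariant $N_2^A(d_1,d_2)$, and hence through the genus-two Gopakumar--Vafa relation \cite{GV} each BPS number $n_2^A(d_1,d_2)$, as an affine-linear function of the $a_{ij}$ (the inhomogeneous part being the value at $f_2\equiv 0$). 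Carrying out the same computation at $B$ with $F_0^B$, $F_1^B$, $S_B^{ab}$ and $f_2^B(z)$ in place of their $A$-counterparts expresses $n_2^B(d_1,d_2)$ as an affine-linear function of the same $a_{ij}$.

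Imposing the vanishing conditions $n_2^A(d_1,d_2)=0$ and $n_2^B(d_1,d_2)=0$ for all $d_1\le 5$ (over the finitely many $d_2$ carrying a non-vanishing unconstrained value) then yields one inhomogeneous linear system $\Sigma$ in the finitely many unknowns $a_{ij}$. To finish I would (i) exhibit a solution of $\Sigma$ --- provided, assuming mirror symmetry, by the actual genus-two Gromov--Witten potential of $V_{8,w}^1$ (resp.\ $V_{8,w}^1/\mbZ_8\times\mbZ_8$), whose holomorphic ambiguity, in line with the lower-genus cases, has the pole structure posited in (\ref{eq:f2A-ansatz}); and (ii) show that the homogeneous part of $\Sigma$ admits only the trivial solution, so that $\Sigma$ pins down every $a_{ij}$. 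As already remarked in \ref{para:F2-t1-t2}, the sub-system coming from $A$ alone, and that coming from $B$ alone, each fail to have full column rank, so step (ii) genuinely uses the union of the two sets of equations; combining them is precisely where the compatibility of the two LCSLs $A$ and $B$ --- ultimately the fiberwise Fourier--Mukai / derived equivalence of $V_{8,w}^1$ and $V_{8,w}^1/\mbZ_8\times\mbZ_8$ \cite{Sch} --- is used.

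The main obstacle is step (ii): it is a concrete rank computation with no conceptual shortcut, best carried out in a computer algebra system (cf.\ \cite{HT-math-c}). A secondary, purely technical, point is the bookkeeping verifying that $f_2^A(x(z),y(z))$ is a rational function of $z_1,z_2$ with poles only along the discriminant components and the exceptional divisors over $B$ --- so that the transformation rules $({\rm t1})'$, $({\rm t2})'$ of (\ref{eq:connect-g1}) and of \ref{para:F2-t1-t2} apply --- which follows from the factorizations $dis_0=z_1^4\,disB_0$, $dis_2=2\,disB_2$, $dis_3=8z_1$, $x=\frac14 disB_3$, $y=32z_1^4z_2$ and the Jacobian $\partial(x,y)/\partial(z_1,z_2)=z_1^4$ already recorded in the proof of the connecting property. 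Once $f_2^A$ is fixed, $F_2^A$ follows from (\ref{eq:F2-BCOV}) and $F_2^B$ from the transformation rules; that the $F_2^B$ so obtained agrees with the computation set up directly on the $B$-patch is part of the same consistency check, and is what justifies a posteriori the hypothesis that $F_2^A$ and $F_2^B$ are related on $\mbP_\Delta$.
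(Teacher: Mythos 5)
Your proposal is correct and follows essentially the same route as the paper's proof: assume the ansatz (\ref{eq:f2A-ansatz}), transport $f_2^A$ to the $B$-patch via (\ref{eq:affine-relation-x-zB}), run the BCOV recursion (\ref{eq:F2-BCOV}) at both points with the propagators of Appendix \ref{sec:Appendix-Propagator-f2}, and solve the resulting (affine-)linear system in the $a_{ij}$ imposed by the vanishing conditions at $A$ and $B$ jointly. Your extra remarks on the affine-linear dependence of $n_2^M$ on the $a_{ij}$ and on the compatibility of the pole structure under the blow-up are accurate elaborations of what the paper leaves implicit.
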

\begin{proof}
We first assume the form $f_{2}^{A}$ given in (\ref{eq:f2A-ansatz}).
Then $f_{2}^{B}$ follows from $f_{2}^{A}$ by the relation (\ref{eq:affine-relation-x-zB});
and we use the BCOV formula (\ref{eq:F2-BCOV}) with $N_{A}=1$ and
$N_{B}=\frac{1}{2}$. We read the BPS numbers $n_{2}^{M}$, which
contain unknown parameters, from the $q,p$-series expansion of $F_{2}^{M}$
using mirror maps for $M=A$ and $B$. Imposing the vanishing conditions,
we find the unique form as claimed. In Appendix \ref{sec:Appendix-Propagator-f2},
we record the resulting form of $f_{2}^{A}(x,y)$.\end{proof}
\begin{rem}
{}{Determining the unknown functions $f_{g}$ is one
of the main difficulties in BCOV theory for $F_{g}$. For lower $g$,
we can observe that the vanishing conditions as in the above proposition
determine $f_{g}$ completely in some special cases, but they are
not sufficient in general. Regarding to this, there is another type
of vanishing conditions, called gap conditions, which arise from the
singular behavior of $F_{g}$ near the conifold loci of mirror families
(see \cite{Fg2222},\cite{AS} for details). We expect that we can
determine $F_{3}^{A}$ and $F_{3}^{B}$ if we combine these two vanishing
conditions.} $\hfill\square$
\end{rem}
\para{BPS numbers.} In Table \ref{tab:Table-A} (2) and Table \ref{tab:Table-B}
(2), we have listed the BPS numbers determined from $F_{2}^{A}$ and
$F_{2}^{B}$. Introducing $q$-series $Z_{2,n}^{A}(q)$ and $Z_{2,n}^{B}(q)$
as before for $g=0,1$, we verify Conjectures \ref{conj:Zgn-A}, \ref{conj:Zgn-B}
at $g=2$ for lower $n$ and sufficiently large degree in $q$. For
example, we obtain 
\[
\begin{alignedat}{2}P_{2,1}^{A} &  &  & =\frac{1}{2^{13}3^{3}}E{}_{2}^{4}+\frac{1}{2^{12}3^{3}}E{}_{2}^{3}(S+2T+4U)\\
 &  &  & +\frac{1}{2^{11}3^{3}}E{}_{2}^{2}(2S^{2}+37ST+8T^{2}-8SU-16TU)\\
 &  &  & +\frac{1}{2^{12}3^{3}}E_{2}(11S^{3}-66S^{2}T+96ST^{2}-32T^{3}-12S^{2}U+176STU)\\
 &  &  & +\frac{1}{2^{13}3^{2}5}(41S^{4}+756S^{3}T+15168S^{2}T^{2}+3904ST^{3}+256T^{4}-48S^{3}U\\
 &  &  & \hsp{125}-6624S^{2}TU-11648ST^{2}U-1024T^{3}U).
\end{alignedat}
\]
Interestingly, we can also verify the sharing property of $P_{g,1}$,
i.e., $P_{2,1}^{A}=P_{2,1}^{B}$ for $Z_{2,1}^{A}(q)$ and $Z_{2,1}^{B}(q)$
as observed for $g=1$ in Observation \ref{obs:Z11-B} {}{(see
\cite{HT-math-c}).} We conjecture that this sharing property holds
in general as follows: 
\begin{conjecture}
\label{conj:Conj.Zg1-AB}The $q$-series $Z_{g,1}^{A}(q)$ and $Z_{g,1}^{B}(q)$
are given by 
\[
Z_{g,1}^{A}(q)=P_{g,1}\,\frac{64}{\bar{\eta}(q)^{8}}\,\,\,\text{and \,\,}Z_{g,1}^{B}(q)=P_{g,1}\,\frac{1}{\bar{\eta}(q^{8})^{8}}
\]
with common polynomials $P_{g,1}(=P_{g,1}^{A}=P_{g,1}^{B})$ of degree
$2g$ of quasi-modular forms $E_{2},S,T$ and $U$. \end{conjecture}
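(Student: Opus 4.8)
The plan is to reduce the identity to the coincidence of the two polynomials $P_{g,1}^{A}$ and $P_{g,1}^{B}$, and then to extract that coincidence from the way the genus-$g$ potentials are glued over $\mbP_{\Delta}$. Granting Conjectures \ref{conj:Zgn-A} and \ref{conj:Zgn-B}, one already has $Z_{g,1}^{A}(q)=P_{g,1}^{A}(E_{2},S,T,U)\,\tfrac{64}{\bar{\eta}(q)^{8}}$ and $Z_{g,1}^{B}(q)=P_{g,1}^{B}(E_{2},S,T,U)\,\tfrac{1}{\bar{\eta}(q^{8})^{8}}$ with $P_{g,1}^{A},P_{g,1}^{B}$ of degree $2g$, so only $P_{g,1}^{A}=P_{g,1}^{B}$ remains. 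The normalisation $P_{0,1}=1$ is the case $g=0$: since $E_{2}(0)=S(0)=T(0)=U(0)=\bar{\eta}(0)=1$, comparing constant terms forces $P_{0,1}^{A}(1,1,1,1)=P_{0,1}^{B}(1,1,1,1)=1$, which pins the degree-$0$ polynomials, consistently with $n_{0}^{A}(0,1)=64\,n_{0}^{B}(0,1)=64$ (the $64$ flopping curves / sections). The degree $2g$ will be re-derived from a weight count below.

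First I would make the connecting property of $F_{1}^{A},F_{1}^{B}$ established after (\ref{eq:connect-g1}), and its higher-genus extension (t1)$'$,(t2)$'$ used in Proposition \ref{prop:FAB-related-on-PD}, explicit to first order in the fibre coordinate. Both $A=(0,0)$ and $B=(\tfrac14,0)$ lie on the discriminant component $\{w_{1}=0\}=\{y=0\}$, and in either boundary chart the divisor $\{q_{2}=0\}$ is the proper transform of $\{y=0\}$ (because $q_{2}^{A}\sim y$ while $q_{2}^{B}\sim z_{2}\sim y/(1-4x)^{4}$ near $B$, by (\ref{eq:affine-relation-x-zB})). Writing the single function $F_{g}$ on $\mbP_{\Delta}$ as $F_{g}^{A}(q_{1}^{A},q_{2}^{A})=F_{g}^{B}(q_{1}^{B},q_{2}^{B})$ in the two sets of flat coordinates, differentiating once in $q_{2}$ and restricting to $q_{2}=0$ gives
\[
Z_{g,1}^{A}(u)=h(u)\,Z_{g,1}^{B}\bigl(g(u)\bigr)+k(u)\,\bigl(Z_{g,0}^{B}\bigr)'\bigl(g(u)\bigr),\qquad u=q_{1}^{A},
\]
where $g$ is the restriction to $\{y=0\}$ of the coordinate change (\ref{eq:affine-relation-x-zB}) composed with the two mirror maps, and $h,k$ collect the attendant first-order transition data (the Jacobian of (\ref{eq:affine-relation-x-zB}), the ratio $\omega_{0}^{A}/\omega_{0}^{B}$ along $\{y=0\}$ read off from (\ref{eq:w0-w1-for-B}), and the genus-$g$ weight of the Hodge/$(\omega_{0})$-factor). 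All of $g,h,k$ are explicitly computable, and $Z_{g,0}^{B}$ is known in closed form for small $g$ (e.g. $\widehat{Z}_{1,0}^{B}=-8\log\eta(q_{1})$).

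The crucial ingredient is to realise $\{w_{1}=0\}$ — modulo the involution $w_{0}\leftrightarrow w_{2}$, which exchanges $A$ with $\tilde A$ — as a modular curve for the congruence group $\Gamma$ whose ring of quasi-modular forms is generated by $E_{2},S,T,U$ (note $U^{2}=ST$, so $\Gamma$ is of $\Gamma_{0}(4)\simeq\Gamma(2)$ type, with three cusps matching $A,B,C$ on $\{y=0\}$). Over this curve the restricted family of abelian surfaces carries a period $\tau$, and the uniformisers $q_{1}^{A},q_{1}^{B}$ are $e^{2\pi i\tau}$ read at two cusps, with $\tau$-coordinates related by some $\gamma\in SL_{2}(\mbZ)$. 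The modular input is then: $Z_{g,1}$ and $(Z_{g,0})'$ are quasi-modular of weight $4g$ for $\Gamma$; the transition functions $g,h,k$ are precisely the weight-$4g$ automorphy data attached to $\gamma$; and the prefactors $\tfrac{64}{\bar{\eta}(q)^{8}}$, $\tfrac{1}{\bar{\eta}(q^{8})^{8}}$ are the cusp-uniformising factors at the two cusps — the $64$ and the $q\mapsto q^{8}$ being exactly $\deg\pi=64$ and $\pi^{\ast}H_{Y}=8H_{X}$ from Proposition \ref{prop:Pull-back-B}, consistently with the $g=0$ identity $Z_{0,1}^{B}(q)=\tfrac{1}{64}Z_{0,1}^{A}(q^{8})$ and with $8E_{2}(q^{8})-E_{2}(q)=S+2T+4U$. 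Granting this, the displayed relation becomes the statement that $P_{g,1}^{A}$ and $P_{g,1}^{B}$ are the expansions at two cusps of one and the same weight-$4g$ quasi-modular form expressed in the $\Gamma$-invariant generators; hence $P_{g,1}^{A}=P_{g,1}^{B}$, of degree $2g$. The coincidence is equally natural geometrically: for $n=1$ the fibrewise Fourier–Mukai equivalence $V_{8,w}^{1}\leftrightarrow V_{8,w}^{1}/\mbZ_{8}\times\mbZ_{8}$ of \cite{Sch,Bak} identifies the relevant moduli of rank-$1$ fibre sheaves (relative Hilbert/Picard schemes) on the two threefolds up to exactly this re-indexing, so only the ``base-curve'' contribution $P_{g,1}$ survives.

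The main obstacle is the modular input above: proving \emph{unconditionally} that $Z_{g,1}$ is quasi-modular of weight $4g$ for $\Gamma$, i.e. establishing a modular (holomorphic) anomaly equation for the $n=1$ sector in the spirit of \cite{HST1}, together with a closed formula as in \cite{HST2}. Here, as already remarked, the abelian-surface-fibred geometry is more rigid than the rational elliptic surface case: the BCOV functions $f_{g}$ are only partially constrained, and the relations between the BPS numbers of Tables \ref{tab:Table-A} and \ref{tab:Table-B} are not a plain $q\mapsto q^{8}$ substitution for $g\ge1$. Absent such a structural statement, the argument collapses to the order-by-order verification already carried out for $g\le 2$; with it in hand, the bookkeeping with modular forms and the explicit coordinate changes of the first two paragraphs is routine.
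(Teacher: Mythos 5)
The paper does not prove this statement: it is a conjecture, and the only support the authors offer is the explicit verification of $P_{1,1}^{A}=P_{1,1}^{B}$ (Observation \ref{obs:Z11-B}) and $P_{2,1}^{A}=P_{2,1}^{B}$ (Subsection \ref{sub:Fg2-AB}), obtained by computing $F_{1}$ and $F_{2}$ at the two boundary points from the BCOV formulas and matching $q$-expansions, plus the heuristic that the fibrewise Fourier--Mukai transform sends $1$-sections to rank-one sheaves. Your strategy is more ambitious, and you are candid that its key input is unproven; but beyond that conditionality there is a concrete problem with the mechanism itself. If $Z_{g,1}^{A}$ and $Z_{g,1}^{B}$ were the expansions at two cusps of a single weight-$4g$ quasi-modular object, the natural output would be a substitution-type relation between the two polynomials --- the analogue of $P_{0,n}^{B}(q)=64^{n-1}P_{0,n}^{A}(q^{8})$, i.e.\ $P_{g,1}^{B}(q)$ equal to $P_{g,1}^{A}(q^{8})$ re-expanded in the generators. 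What the conjecture asserts is the literal equality $P_{g,1}^{B}=P_{g,1}^{A}$ evaluated at the \emph{same} argument $q$, with only the prefactor changing from $\bar{\eta}(q)^{8}$ to $\bar{\eta}(q^{8})^{8}$; for $g\geq1$ these two statements differ (e.g.\ $P_{1,1}$ is not invariant under $q\mapsto q^{8}$), and the paper itself notes that the naive $q\mapsto q^{8}$ relation between Tables \ref{tab:Table-A} (1) and \ref{tab:Table-B} (1) fails. Hence the entire content of the conjecture would have to be carried by the inhomogeneous term $k(u)\,(Z_{g,0}^{B})'(g(u))$ in your gluing identity, and you give no argument that this term converts $P_{g,1}(q^{8})$ into $P_{g,1}(q)$. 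The same objection shows the cusp mechanism cannot be the whole story: it is insensitive to $n$, whereas the sharing property $P_{g,n}^{A}=P_{g,n}^{B}$ fails for $n\geq2$.

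The remaining inputs are also weaker than you treat them. The relatedness of $F_{g}^{A}$ and $F_{g}^{B}$ on $\mbP_{\Delta}$ is itself only a working hypothesis in the paper (it is \emph{assumed} in Proposition \ref{prop:FAB-related-on-PD} in order to pin down $f_{2}$), Conjectures \ref{conj:Zgn-A} and \ref{conj:Zgn-B} are themselves only verified for small $g,n$, and neither the modular-curve realization of $\{y=0\}$ nor the weight-$4g$ quasi-modularity of the $n=1$ sector is established anywhere. You acknowledge at the end that without the modular input the argument collapses to an order-by-order verification --- and that verification, for $g\leq2$, is precisely all the paper does. So the proposal does not close the gap; it repackages the conjecture as a different (and, in the form stated, apparently inconsistent) unproven modularity statement.
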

\begin{rem}
The above conjecture reminds us the so-called S-duality between counting
sections of elliptic surfaces and counting Euler numbers of the corresponding
moduli spaces of sheaves on the surfaces \cite{VafaWitten}. S-duality
on elliptic surfaces is explained by fiberwise Fourier-Mukai transforms
\cite{VafaWitten,Yos}. In the present case, fiberwise Fourier-Mukai
transforms result in Fourier-Mukai partners, i.e., the geometry of
$V_{8,w}^{1}$ is transformed to $V_{8,w}^{1}/\mbZ_{8}\times\mbZ_{8}$
and vice versa. Note that, under fiberwise Fourier-Mukai transformations,
$n$-sections are transformed to sheaves of rank $n$. Then the simplification
we observe in the above conjecture {}{should be related
to} the fact that the relevant sheaves are of rank one. Corresponding
to the $q$-series $Z_{g,1}^{A}(q)$ (resp. $Z_{g,1}^{B}(q)$), there
should be some nice geometry of the moduli space of rank one sheaves
on $V_{8,w}^{1}/\mbZ_{8}\times\mbZ_{8}$ (resp. $V_{8,w}^{1}$). For
the counting of $n$-sections $Z_{g,n}^{A}$ and $Z_{g,n}^{B}$ (Conjecture
\ref{conj:Zgn-A} and Conjecture \ref{conj:Zgn-B}), the equality
$P_{g,n}^{A}=P_{g,n}^{B}$ does not hold anymore for $n\geq2$. However,
our results motivate us studying geometry of moduli spaces of rank
$n$ stable sheaves in general on both $V_{8,w}^{1}/\mbZ_{8}\times\mbZ_{8}$
and $V_{8,w}^{1}$. {}{We can find a study in this direction
in \cite{Bak}.} $\hfill\square$

\vskip3cm
\end{rem}

\section{\label{sec:MS-more-C}\textbf{Exploring the parameter space $\protect\mbP_{\Delta}$
more}}

We continue our study on the parameter space $\mbP_{\Delta}$ focusing
on the degeneration point $C$ in Fig.\ref{fig:Fig1}. Mirror symmetry
arises from this boundary point as in the preceding section. We will
identify this with the mirror symmetry between $V_{8,w}^{1}/\mbZ_{8}$
and its mirror family $\cV_{\mbZ_{8}\times\mbZ_{8}}^{1}\rightarrow\mbP_{\Delta}$.

\subsection{Blowing-up at the boundary point $C$ }

As shown in Fig.\ref{fig:Fig1}, the point $C$ is located at the
intersection point of three divisors $L_{1},L_{2}$ and $D_{s}$.
In terms of an affine coordinate $s_{1}:=4x+1$ and $y$, the relevant
components of three divisors are given by 
\[
\left\{ y=0\right\} ,\,\,\left\{ s_{1}=0\right\} \text{ and }\left\{ s_{1}+y=0\right\} ,
\]
respectively for $L_{1},L_{2}$ and $D_{s}$. We take $(s_{1},y)$
as an affine coordinate centered at $C$. As shown in Fig.\ref{fig:Fig-C-Cp},
after blowing-up at the origin, three intersection points of divisors
become normal crossing. To save our notation, we call two of the intersection
points $C$ and $C'$ (see Fig.\ref{fig:Fig-C-Cp}). 
\begin{prop}
The boundary points $C$ and $C'$, respectively, of the Picard-Fuchs
equations over $\mbP_{\Delta}$ are mirror symmetric to Calabi-Yau
manifolds{}{{} $Z:=V_{8,w}^{1}/\mbZ_{8}$ and $Z':=V_{8,w}^{2}/\mbZ_{8}$.}\end{prop}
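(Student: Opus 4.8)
The plan is to mirror, essentially verbatim, the strategy already carried out for the boundary points $A, A'$ (Proposition \ref{prop:MS-A-A'}) and $B, B'$: construct the canonical form $\Pi_C(z)$ of local solutions of the Picard-Fuchs system $\cD_2\omega = \cD_3\omega = 0$ near $C$ (and near $C'$), read off the weight monodromy filtration $W_0 \subset W_2 \subset W_4 \subset W_6$ from the logarithmic singularities, and match it against the hard-Lefschetz filtration of a Calabi-Yau threefold with the Hodge data $h^{1,1} = h^{2,1} = 2$, $\chi = 0$. The claim to be proven is that these threefolds are $Z := V_{8,w}^1/\mbZ_8$ and $Z' := V_{8,w}^2/\mbZ_8$ respectively. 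First I would transform $\cD_2, \cD_3$ into the blow-up coordinate $(z_C^1, z_C^2)$ centered at $C$ (obtained from $(s_1, y) = (4x+1, y)$ after the single blow-up described in Fig.~\ref{fig:Fig-C-Cp}), choosing the numerical normalizations of the coordinates so that the unique holomorphic period $\omega_0^C(z)$ has integral power-series coefficients — exactly as was done for $A$ in \eqref{eq:sol-w0} and for $B$ in \eqref{eq:w0-w1-for-B}. This guarantees that $C$ is a genuine LCSL (maximal unipotent monodromy), which is the content that "mirror symmetric to a Calabi-Yau manifold" requires per Appendix~\ref{sec:App-Canonical-Form}.

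Second, having the LCSL structure, I would verify that the candidate mirror is specifically $V_{8,w}^1/\mbZ_8$ rather than some other threefold in the family, by computing the classical intersection data encoded in the leading (cubic and linear) terms of the Griffiths-Yukawa couplings $C_{ijk}^C$ — defined, as in \eqref{eq:Cijk-B}, by pulling back $C_{ijk}^A$ along the coordinate change — and comparing with the known cubic form and $c_2 \cdot H$ of $V_{8,w}^1/\mbZ_8$. Here the key input is Proposition~\ref{prop:Pull-back-B}-style pull-back relations for the covering $\pi: V_{8,w}^1 \to V_{8,w}^1/\mbZ_8$ of degree $8$: writing $Z = V_{8,w}^1/\langle\tau\rangle$, one has $\pi^* H_Z$ and $\pi^* A_Z$ expressed in $H_X, A_X$, from which $H_Z^3, H_Z^2 A_Z, c_2(Z) \cdot H_Z$ follow by dividing the corresponding $V_{8,w}^1$ numbers \eqref{eq:yukawaV1} appropriately by $\deg\pi = 8$ (using $\pi^* D_i\,\pi^* D_j\,\pi^* D_k = (\deg\pi)\,D_i D_j D_k$). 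Matching these to the constant terms of $Y_{ijk}^C$ pins down the identification $C \leftrightarrow V_{8,w}^1/\mbZ_8$. The analogous computation with $V_{8,w}^2$ and its birational divisor $\tilde A_2$ handles $C' \leftrightarrow V_{8,w}^2/\mbZ_8$; the relation between $C$ and $C'$ should be, as for $A/A'$ and $B/B'$, that they differ by the birational flop, reflected on the parameter space by the structure around the blow-up of the $A_1$-singularity.

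The main obstacle I expect is \emph{pinning down which of the three free quotients is attached to $C$} — i.e. showing the group is $\mbZ_8 = \langle\tau\rangle$ and not $\mbZ_8 \times \mbZ_8$ or a different $\mbZ_8$. The Picard-Fuchs system is identical for the families $\cV^1_{\mbZ_8}$ and $\cV^1_{\mbZ_8\times\mbZ_8}$, so the distinction cannot come from the differential equations alone; it must come from the integral structure, equivalently from the conifold factor in $F_1^C$. Concretely, I would compute $F_1^C$ via the BCOV formula \eqref{eq:F1-X-formula} with $\chi = 0$, $c_2 \cdot H_Z, c_2 \cdot A_Z$ as above, and inspect the exponent of the principal discriminant component $disC_0^{-1/6}$: by the argument of Remark~\ref{rem:Conifold-A}, the power $-\tfrac16 \times(\text{number of ODPs on a generic fiber over }\{disC_0 = 0\})$ must be consistent with the quotient group. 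Proposition~\ref{prop:ODP-8} tells us that over $\{dis_0 = 0\}$ the $8$ ODPs of $V_{8,w}^1$ form a single $\langle\tau\rangle$-orbit; hence for the $\langle\tau\rangle$-quotient there is exactly $1$ ODP, forcing the conifold factor $-\tfrac16$ and thereby identifying $Z = V_{8,w}^1/\mbZ_8$ — whereas for $V_{8,w}^1/\mbZ_8\times\mbZ_8$ the same orbit would already have been seen at $B$. Checking that the BCOV-predicted $F_1^C$ yields integer BPS numbers with the expected vanishings (which is what the later genus-one calculation in this section confirms numerically) completes the justification. The remaining steps — writing out $\Pi_C(z)$ explicitly, transforming $\cD_k$, evaluating the Jacobian — are routine and I would relegate them to the supplementary computations, as the paper does elsewhere via \cite{HT-math-c}.
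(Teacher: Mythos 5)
Your steps (1)--(2) reproduce the paper's actual proof, which is stated very tersely: arrange the local solutions at $C$ and $C'$ into the canonical form (\ref{eq:App-Canonial-Pi}) and identify the tensor $d_{ijk}$ there with the cubic forms of $V_{8,w}^{1}/\mbZ_{8}$ and $V_{8,w}^{2}/\mbZ_{8}$, the latter computed from the degree-$8$ covering exactly as you propose ($\pi^{*}H_{Z}=H_{X}$, $\pi^{*}A_{Z}=A_{X}$, hence $H_{Z}^{3}=H_{Z}^{2}A_{Z}=2$, $c_{2}.H_{Z}=8$). One subtlety you gloss over, which the paper flags in Remark \ref{rem:Kappa-C-prob}: the constant terms of $Y_{ijk}^{C}$ come out as $16,8,0,0$, which match \emph{none} of the three candidate quotients on the nose; the identification with $V_{8,w}^{1}/\mbZ_{8}$ only closes after one recognizes $\kappa_{t}=t_{1}(2H_{Z})+t_{2}A_{Z}$, i.e. the divisor attached to the first coordinate is $2H_{Z}$ rather than a generator of the Picard group (equivalently, the $q_{1}$-expansions at $C$ run over even powers). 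Without this observation your "matching the constant terms" step does not determine $Z$.

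Your step (3) contains a concrete error. The conifold factor at $C$ is $disC_{0}^{-\frac{8}{6}}$, not $disC_{0}^{-\frac{1}{6}}$ (see the displayed $F_{1}^{C}$ and Remark \ref{rem:Conifold-Factor-C}). You have inverted the two sides of the mirror: the exponent counts vanishing cycles in the fibers of the \emph{family over} $\mbP_{\Delta}$ near $C$, and that family is $\cV_{\mbZ_{8}\times\mbZ_{8}}^{1}$ --- its fibers acquire a lens space $S^{3}/\langle\sigma\rangle$ over $\{dis_{0}=0\}$ (the single $S^{3}$ of $V_{8,w}^{1}/\langle\tau\rangle$ further quotiented by $\langle\sigma\rangle$), contributing $-\frac{8}{6}$ --- whereas $V_{8,w}^{1}/\mbZ_{8}$ is the A-model manifold mirror to that family, not the degenerating fiber. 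The single-ODP count giving $-\frac{1}{6}$ is what happens at $A$ for the \emph{other} family $\cV_{\mbZ_{8}}^{1}$. Imposing $-\frac{1}{6}$ at $C$ would produce non-integral genus-one BPS numbers. This does not sink the proposition itself, since the genus-zero cubic-form matching of steps (1)--(2) is what the paper uses; but it would wreck the part of your plan that invokes $F_{1}^{C}$ to certify which quotient group is involved, and it gets the identification of the underlying local system exactly backwards.
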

\begin{proof}
We make the local solutions of Picard-Fuchs equations $\cD_{1}\omega=\cD_{2}\omega=0$
in terms of the blow-up coordinates, and arrange them into the canonical
form (\ref{eq:App-Canonial-Pi}). We observe the claimed mirror symmetry
by identifying the parameter $d_{ijk}$ in (\ref{eq:App-Canonial-Pi})
with the cubic forms of Calabi-Yau manifolds $V_{8,w}^{1}/\mbZ_{8}$
and $V_{8,w}^{2}/\mbZ_{8}$ (see \ref{para:potF-kappa-gen} and Remark
\ref{rem:Kappa-C-prob} below).
\end{proof}
\para{Griffiths-Yukawa couplings.} \label{para:Yukawa-C}Since the
local solutions near the boundary points $C$ and $C'$ are isomorphic,
as it is the case for $A$ and $A'$, we restrict our attentions to
$C$ using the blow-up coordinate
\begin{equation}
(s_{1},s_{2})=\big(\frac{1}{8}(4x+1),\frac{y}{4x+1}\big).\label{eq:affine-relation-s-x}
\end{equation}
Recall that the Griffiths-Yukawa couplings $C_{ijk}^{A}$ (\ref{eq:Cijk-A})
are determined by Picard-Fuchs equations, and related to $C_{ijk}^{B}$
by (\ref{eq:Cijk-B}). Since Picard-Fuchs equations are the same as
before, we have Griffiths-Yukawa couplings $C_{ijk}^{C}$ in the same
way by 
\[
C_{ijk}^{C}=\sum_{l,m,n}C_{lmn}^{A}\frac{\partial x_{l}}{\partial s_{i}}\frac{\partial x_{m}}{\partial s_{j}}\frac{\partial x_{n}}{\partial s_{k}}
\]
where $(x_{1},x_{2})=(x,y)$. 

\begin{figure}
~~~~~\includegraphics[scale=0.5]{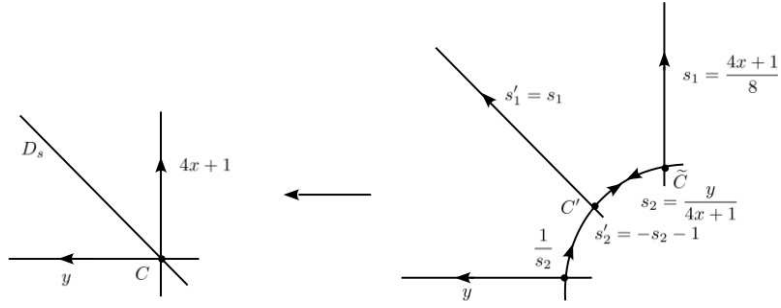}

\caption{\label{fig:Fig-C-Cp}Fig.4 Blowing-up at $C$. For brevity of notation,
$\tilde{C}$ is called $C$ again in the text. }

\end{figure}

\para{Mirror map and Gromov-Witten invariants.} Calculations of mirror
map and Gromov-Witten invariants are parallel to the previous cases.
In the present case the regular local solution $\omega_{0}^{C}(s)$
has the from 
\[
\omega_{0}^{C}(s)=1+8s_{1}+56s_{1}^{2}+384s_{1}^{3}+\cdots+s_{2}(-16s_{1}^{2}-256s_{1}^{3}-\cdots)+\cdots,
\]
and we arrange all solutions into the canonical form $\Pi_{C}(s)$
in (\ref{eq:App-Canonial-Pi}). Mirror map is described by 
\[
q_{1}=e^{\frac{\omega_{1}}{\omega_{0}}}=C_{1}s_{1}\,\exp\big(\frac{\omega_{1}^{reg}}{\omega_{0}}\big),\,\,\,\,q_{2}=e^{\frac{\omega_{2}}{\omega_{0}}}=C_{2}s_{2}\,\exp\big(\frac{\omega_{2}^{reg}}{\omega_{0}}\big),
\]
in terms of $\omega_{0}=\omega_{0}^{C}$ and $\omega_{i}=\omega_{i}^{C}$
and some constants $C_{k}$. Inverting these, we have $s_{a}=s_{a}(q_{1},q_{2})$
for $a=1,2$. Then the quantum corrected Yukawa couplings are given
by 
\begin{equation}
Y_{ijk}^{C}=\left(\frac{1}{N_{C}\omega_{0}^{C}}\right)^{2}\sum_{a,b,c}C_{abc}^{C}\frac{ds_{a}}{dt_{i}}\frac{ds_{b}}{dt_{j}}\frac{ds_{c}}{dt_{k}}.\label{eq:Yijk-C}
\end{equation}

\begin{prop}
\label{prop:q-Yijk-C}When we set $N_{C}=\frac{1}{\sqrt{2}}$, and
$C_{1}=\sqrt{-1},C_{2}=1$ in the definition of the mirror map, we
have {\small{}
\[
\begin{alignedat}{2} & Y_{111}^{C}=16+512q_{1}^{2}q_{2}+22528q_{1}^{4}q_{2}+\cdots, &  & Y_{112}^{C}=8+256q_{1}^{2}q_{2}+5632q_{1}^{4}q_{2}+\cdots,\\
 & Y_{122}^{C}=128q_{1}^{2}q_{2}+1408q_{1}^{4}q_{2}+9216q_{1}^{6}q_{2}^{2}+\cdots, & \, & Y_{222}^{C}=8q_{2}+8q_{2}^{2}+8q_{2}^{3}+\cdots+64q_{1}^{2}q_{2}+.
\end{alignedat}
\]
}We have exactly same form for the corresponding expansions of $Y_{ijk}^{C'}$.
\end{prop}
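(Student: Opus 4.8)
The plan is to repeat, in the blow-up chart $(s_1,s_2)$ of (\ref{eq:affine-relation-s-x}) centred at $C$, exactly the procedure that established Propositions~\ref{prop:qYukawa-A} and~\ref{prop:q-Yijk-B}; all the ingredients are already in place. First I would transport the Griffiths--Yukawa couplings. Writing (\ref{eq:affine-relation-s-x}) explicitly as $x=2s_1-\frac14$, $y=8\,s_1s_2$, so that $\partial_{s_1}x=2$, $\partial_{s_2}x=0$, $\partial_{s_1}y=8s_2$, $\partial_{s_2}y=8s_1$, and substituting into $C_{ijk}^{C}=\sum_{l,m,n}C_{lmn}^{A}\,\partial_{s_i}x_l\,\partial_{s_j}x_m\,\partial_{s_k}x_n$ (with $x_1=x$, $x_2=y$, and $C_{lmn}^{A}$ the explicit rational functions of Proposition~\ref{prop:GY-Cijk-A} and Appendix~\ref{sec:Appendix-Griffiths-Yukawa}) yields $C_{ijk}^{C}$ as explicit rational functions of $(s_1,s_2)$; for instance $C_{222}^{C}=512\,s_1^3\,C_{222}^{A}$ and $C_{111}^{C}=8\,C_{111}^{A}+96\,s_2\,C_{112}^{A}+384\,s_2^2\,C_{122}^{A}+512\,s_2^3\,C_{222}^{A}$.

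Next I would solve the Picard--Fuchs system $\cD_2\omega=\cD_3\omega=0$ of Proposition~\ref{prop:PicardFuchs-D2-D3}, rewritten in the coordinates $(s_1,s_2)$, near the origin. The holomorphic solution $\omega_0^{C}$ is the one displayed in \ref{para:Yukawa-C}, and $\omega_1^{C},\omega_2^{C}$ are the two solutions with leading terms $\log s_1$ and $\log s_2$; I would compute their regular parts $\omega_i^{\mathrm{reg}}$ to high order in $(s_1,s_2)$, form $q_i=C_i\,s_i\,\exp(\omega_i^{\mathrm{reg}}/\omega_0^{C})$, invert to obtain $s_a=s_a(q_1,q_2)$ as power series, and substitute these together with the prefactor $(N_C\,\omega_0^{C})^{-2}$ into (\ref{eq:Yijk-C}). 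The poles of the $C_{ijk}^{C}$ along the boundary divisors $\{s_1=0\}$ and $\{s_2=0\}$ cancel against the zeros of the mirror-map Jacobian, so the $Y_{ijk}^{C}$ emerge as genuine power series in $q_1,q_2$, and expanding them to sufficiently high order reproduces the displayed expansions.

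The only genuinely non-mechanical step is pinning down $(N_C,C_1,C_2)=(\frac1{\sqrt2},\sqrt{-1},1)$. The overall factor $N_C$ and the scaling $C_2$ are fixed by demanding that the constant terms of the $Y_{ijk}^{C}$ equal the classical cubic and linear forms of $Z:=V_{8,w}^1/\mbZ_8$ in the basis dual to the mirror coordinates, namely $16,8,0,0$ for $(ijk)=(111),(112),(122),(222)$; these classical numbers follow from the mirror identification established just before this proposition together with pull-back relations to $V_{8,w}^1$ of the type in Proposition~\ref{prop:Pull-back-B} (the relevant ``$\kappa$'' bookkeeping is carried out in \ref{para:potF-kappa-gen} and Remark~\ref{rem:Kappa-C-prob}). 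The value $C_1=\sqrt{-1}$ --- in contrast to $C_1=C_2=1$ at both $A$ and $B$ --- is then forced by integrality of the higher coefficients: every nonvanishing coefficient in the $s_1$-expansion of the $Y_{ijk}^{C}$ carries an even power of $s_1$, so only a constant with $C_1^2=-1$ turns the substitution of $s_1$ by $C_1^{-1}q_1$ into integral $q$-series (this also explains why only even powers of $q_1$ appear in the statement). I expect this even-power phenomenon, and hence the factor $\sqrt{-1}$, to be the structural trace of $C$ being a fixed point of the involution $\bS^4$ of (\ref{eq:inv-sym}), which acts on the exceptional chart essentially as $s_1\mapsto-s_1$; making this precise, rather than checking integrality by brute force, is the part I expect to require the most care.

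Finally, for $C'$ there is no new computation. The points $C$ and $C'$ are the two points at which the exceptional $\mbP^1$ of the blow-up at $C$ meets the proper transforms of $\{y=0\}$ and $\{1+4x+y=0\}$, and they are interchanged by an involution of the chart of the form $s_2\mapsto-1-s_2$ (the analogue of (\ref{eq:xy-x1y1-inv}) used for the pair $A,A'$), under which $\cD_2$ and $\cD_3$ transform by explicit unit multiples exactly as in Proposition~\ref{prop:PFeqs-A-A'}. Hence the local solutions, the mirror map, and the quantum-corrected couplings $Y_{ijk}^{C'}$ have the same form as at $C$, with the same constants $(N_C,C_1,C_2)$, which gives the last assertion of the proposition.
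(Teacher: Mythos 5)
Your proposal follows essentially the same route as the paper: transform the Griffiths--Yukawa couplings to the blow-up chart at $C$ via (\ref{eq:affine-relation-s-x}), solve the Picard--Fuchs system there, build the mirror map from $\omega_0^C,\omega_1^C,\omega_2^C$, substitute into (\ref{eq:Yijk-C}), and fix $(N_C,C_1,C_2)$ so that the classical terms $16,8,0,0$ of $Z=V^1_{8,w}/\mbZ_8$ and an integral instanton expansion emerge --- which is all the detail the paper itself supplies. The one small imprecision is that the constant terms of $Y^C_{ijk}$ are insensitive to $C_1,C_2$ (they fix only $N_C$), so $C_2=1$ and $C_1=\sqrt{-1}$ are actually pinned down by integrality and positivity of the higher (BPS) coefficients rather than by the classical terms, but this is consistent with how the paper treats these constants.
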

We observe in the above $q$-series expansions of $Y_{ijk}^{C}$ that
non-vanishing coefficients appear only for even-powers of $q_{1}$. 

\begin{table}
{\scriptsize{}
\[
\begin{array}{|c|ccccccccc}
\hline \m j\diagdown i\m & 0 & 1 & 2 & 3 & 4 & 5 & 6 & 7 & 8\\
\hline 0 & 0 & 0 & 0 & 0 & 0 & 0 & 0 & 0 & 0\\
1 & 8 & 64 & 352 & 1536 & 5808 & 19712 & 61632 & 180224 & 498696\\
2 & 0 & 0 & 512 & 12288 & 152576 & 1351680 & 9596928 & 58040320 & 310097920\\
3 & 0 & 0 & 352 & 24384 & 787632 & 15603712 & 221577504 & 2470588416 & 22896066536\\
4 & 0 & 0 & 0 & 12288 & 1320960 & 59842560 & 1654833152 & 32671129600 & 502295842816\\
5 & 0 & 0 & 0 & 1536 & 787632 & 91112640 & 5099691008 & 179687448576 & 4551683595656\\
6 & 0 & 0 & 0 & 0 & 152576 & 59842560 & 7345469952 & 476575268864 & 20119442417664\\
7 & 0 & 0 & 0 & 0 & 5808 & 15603712 & 5099691008 & 654619035200 & 47548652073448\\
: & : & \m: & \m: & \m: & \m: & \m: & \m: & \m: & :
\end{array}
\]
}{\scriptsize \par}

(0) BPS numbers $n_{0}^{C}(i,j)$ 

{\scriptsize{}
\[
\begin{array}{|c|cccccccc}
\hline \m j\diagdown i\m & 0 & 1 & 2 & 3 & 4 & 5 & 6 & 7\\
\hline 0 & 0 & 8 & 8 & 8 & 8 & 8 & 8 & 8\\
1 & 0 & 96 & 1824 & 16384 & 104480 & 533120 & 2332608 & 9074688\\
2 & 0 & 8 & 5072 & 216480 & 4207776 & 53459792 & 513205888 & 4022629824\\
3 & 0 & 0 & 1824 & 462176 & 27062912 & 798811136 & 15453590048 & 222719072256\\
4 & 0 & 0 & 8 & 216480 & 47771216 & 3439792064 & 134032422304 & 3481149295168\\
5 & 0 & 0 & 0 & 16384 & 27062912 & 5471284128 & 449545296384 & 21314156435456\\
6 & 0 & 0 & 0 & 8 & 4207776 & 3439792064 & 664078158608 & 60005323039296\\
7 & 0 & 0 & 0 & 0 & 104480 & 798811136 & 449545296384 & 84043909825632\\
: & : & : & : & : & : & : & : & :
\end{array}
\]
}{\scriptsize \par}

(1) BPS numbers $n_{1}^{C}(i,j)$ 

~

\caption{Table 3. \label{tab:Table-C} BPS numbers $n_{g}^{C}(i,j)=n_{g}^{C}(\beta.H_{Z},\beta.A_{Z})$
of $Z=V_{8,w}^{1}/\protect\mbZ_{8}$. These are equal to the corresponding
$n_{g}^{C'}(i,j)$ of $Z'=V_{8,w}^{2}/\protect\mbZ_{8}$.}
\end{table}

\subsection{Genus one potential $F_{1}^{C}$}

We calculate genus one Gromov-Witten invariants by using BCOV formula
(\ref{eq:F1-X-formula}). 

\para{BCOV potential function $F_1^C$.} Let us introduce the following
definitions:
\[
\begin{alignedat}{1}disC_{0}=(1-4s_{1})^{4}+256s_{1}^{2}(1-8s_{1})s_{2}(1+s_{2}),\quad\quad\;\;\\
disC_{1}=1+s_{2},\,\,\,disC_{2}=1-4s_{1},\,\,\,disC_{3}=1-8s_{1}.
\end{alignedat}
\]
As in Subsection \ref{sub:F1-A-A'}, knowing that the boundary point
$C$ is mirror symmetric to a free quotient $V_{8,w}^{1}/\mbZ_{8}$,
we can determine the parameters $r_{0}$ and $r_{k}$ contained in
the BCOV formula. 
\begin{prop}
Around the boundary point $C$, the BCOV potential function has the
following form
\[
F_{1}^{C}(q_{1},q_{2})=\frac{1}{2}\log\Big\{\big(\frac{1}{\omega_{0}^{C}}\big)^{5}\frac{\partial(s_{1},s_{2})}{\partial(t_{1},t_{2})}disC_{0}^{-\frac{8}{6}}\,disC_{1}^{-1}\,disC_{2}^{-1}\,disC_{3}^{-\frac{5}{3}}s_{1}^{-1-\frac{16}{12}}s_{2}^{-1}\Big\}.
\]

\end{prop}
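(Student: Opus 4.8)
The plan is to establish the formula exactly as in the proofs of the corresponding statements for $F_1^A$ and $F_1^B$: substitute $M=Z:=V_{8,w}^1/\mbZ_8$ into the general BCOV genus-one ansatz \eqref{eq:F1-X-formula} and then determine each free parameter from (a) the topology of $Z$, (b) the conifold factor on the principal discriminant, and (c) a few known vanishings of genus-zero and genus-one Gromov--Witten invariants of $Z$ read off from Table \ref{tab:Table-C}.

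\noindent\textbf{Topological inputs.} Since $\mbZ_8=\langle\tau\rangle$ acts freely on $V_{8,w}^1$, the quotient map $\pi\colon V_{8,w}^1\to Z$ is a free degree-$8$ covering, so $\chi(Z)=\chi(V_{8,w}^1)/8=0$ and $h^{1,1}(Z)=2$; hence the Hodge prefactor $(1/\omega_0^C)^{3+h^{1,1}-\chi/12}$ is $(1/\omega_0^C)^5$. Next I would identify the two classes dual to the LCSL axes $\{s_1=0\},\{s_2=0\}$ at $C$: the class $A_Z$ is the fibre of the abelian-surface fibration $Z\to\mbP^1$ (the $\langle\tau\rangle$-action is fibrewise, cf.\ \ref{para: V1-summary}), while $H_Z$ is the one singled out by the monodromy weight filtration at $C$ built in the preceding proposition identifying $C$ with $Z$. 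Using $\pi^*c_2(Z)=c_2(V_{8,w}^1)$, $\pi^*A_Z=A_X$, the relation $(\deg\pi)\,D_iD_jD_k=\pi^*D_i\,\pi^*D_j\,\pi^*D_k$ and the numbers \eqref{eq:yukawaV1}, one pins down $\pi^*H_Z$ and obtains $c_2(Z).H_Z=16$ and $c_2(Z).A_Z=0$, i.e.\ the tail $s_1^{-1-\frac{16}{12}}s_2^{-1}$ (the value $16$, rather than the naive $8$, being consistent with the even-power structure of the $q_1$-series recorded in Prop \ref{prop:q-Yijk-C}).

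\noindent\textbf{Discriminant exponents.} Transforming the Picard--Fuchs system \eqref{eq:PFeqs-D2-D3} to the blow-up coordinate \eqref{eq:affine-relation-s-x}, the characteristic variety consists of the proper transforms $disC_0,disC_1,disC_2,disC_3$ listed above together with the two axes, so the ansatz reads
\[
F_1^C=\frac{1}{2}\log\Big\{(1/\omega_0^C)^{5}\,\frac{\partial(s_1,s_2)}{\partial(t_1,t_2)}\,disC_0^{r_0}\,disC_1^{r_1}\,disC_2^{r_2}\,disC_3^{r_3}\,s_1^{-1-\frac{16}{12}}\,s_2^{-1}\Big\}.
\]
The power $r_0$ is the conifold factor $-\frac{1}{6}\times(\#\text{ODPs on the generic fibre over }\{disC_0=0\})$, exactly as explained in Remark \ref{rem:Conifold-A}. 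The new feature at $C$ is that this count equals $8$, not $1$: this is where the Calabi--Yau family realized at $C$ differs from the one realized at $A$ and $B$, and it is controlled by Prop \ref{prop:ODP-8}, where the $8$ extra nodes over each line of $Dis_0$ are seen to form a single $\langle\tau\rangle$-orbit. Hence $r_0=-\frac{8}{6}$. The remaining $r_1,r_2,r_3$ I would fix by expanding $F_1^C$ in the mirror coordinates $q_1,q_2$ (built from $\omega_0^C,\omega_1^C,\omega_2^C$ with the normalization $C_1=\sqrt{-1}$, $C_2=1$ of Prop \ref{prop:q-Yijk-C}), reading off the BPS numbers $n_0^C,n_1^C$, and imposing the vanishings visible for small bidegree in Table \ref{tab:Table-C}; the resulting linear constraints force $r_1=r_2=-1$ and $r_3=-\frac{5}{3}$, which is the asserted formula. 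Since the local solutions at $C$ and $C'$ are isomorphic, the same computation gives $F_1^{C'}$.

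\noindent\textbf{Main obstacle.} The BCOV bookkeeping is routine once the inputs are in place; the delicate points are the two geometric inputs. First, one must correctly identify the ample classes $H_Z,A_Z$ and hence $c_2(Z).H_Z=16$, $c_2(Z).A_Z=0$ --- the naive ``descended polarization'' normalization does not immediately yield $16$, so it must be matched against the weight filtration at $C$. Second, one must justify the conifold count $8$, i.e.\ explain why the boundary point $C$ (unlike $A$ and $B$) sees eight nodes over the principal discriminant, for which Prop \ref{prop:ODP-8} is the key ingredient. A lesser worry is that only finitely many Gromov--Witten vanishings are available; one has to check they still suffice to fix $r_1,r_2,r_3$ at genus one --- which they do, in contrast to the genus-two situation for $C$ mentioned in the introduction.
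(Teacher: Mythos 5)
Your overall route is the same as the paper's: substitute the topological data of $Z=V_{8,w}^{1}/\mbZ_{8}$ into the general ansatz (\ref{eq:F1-X-formula}) (giving the prefactor $(1/\omega_{0}^{C})^{5}$ and, via $\kappa_{t}=t_{1}(2H_{Z})+t_{2}A_{Z}$ as in (\ref{eq:kappa-C-2HA}), the tail $s_{1}^{-1-16/12}s_{2}^{-1}$), and then fix the discriminant exponents from vanishing BPS numbers read off Table \ref{tab:Table-C}. That part is fine, and your remark that the relevant class is $2H_{Z}$ rather than the naive descended polarization matches the paper's treatment.

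The one genuinely shaky step is your justification of $r_{0}=-\tfrac{8}{6}$. You state the rule as $-\tfrac{1}{6}\times(\#\,\text{ODPs on the generic fibre over }\{disC_{0}=0\})$ and claim this count is $8$ by Proposition \ref{prop:ODP-8}. But the family realized at $C$ is $\cV_{\mbZ_{8}\times\mbZ_{8}}^{1}$, and the single $\tau$-orbit of $8$ nodes in $V_{8,w}^{1}$ descends to \emph{one} singular point of $V_{8,w}^{1}/\mbZ_{8}\times\mbZ_{8}$ (the residual $\mbZ_{8}=\langle\sigma\rangle$ stabilizes the node $[1,\dots,1]$); a literal node count therefore gives $1$, not $8$, and your rule as stated would produce the wrong exponent $-\tfrac16$. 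The correct statement (Remark \ref{rem:Conifold-Factor-C}) is that the conifold factor generalizes to $-\tfrac{N}{6}$ for a vanishing \emph{lens space} $S^{3}/\mbZ_{N}$: here the vanishing $S^{3}$ is quotiented by $\langle\sigma\rangle$, yielding one $S^{3}/\mbZ_{8}$ and hence $-\tfrac{8}{6}$; the $8$ is the order of the stabilizer, not a node count. This slip does not sink the derivation, because (as in the paper) $r_{0}$ can simply be determined together with $r_{1},r_{2},r_{3}$ from the integrality and vanishing conditions on $n_{1}^{C}(i,j)$, with the lens-space reading serving only as the a posteriori identification of which family ($\cV_{\mbZ_{8}\times\mbZ_{8}}^{1}$ versus $\cV_{\mbZ_{8}}^{1}$) is mirrored at $C$ — but if you intend the conifold argument to carry logical weight in fixing $r_{0}$, it must be phrased via the lens space, not via a count of $8$ ordinary double points.
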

Assuming mirror symmetry, we read the genus one Gromov-Witten invariants
from these potential functions. In Table \ref{tab:Table-C} (1), we
have listed the resulting BPS numbers (see also \ref{para:potF-kappa-gen}
below). Also, from the from of $F_{1}^{C}$, it is easy to verify
the following property:
\begin{prop}
\label{prop:F1-C-A-NOT-rel}The potential functions $F_{1}^{C}$ and
$F_{1}^{A}$ are NOT related to each other on $\mbP_{\Delta}$ by
the transformation rules $($t1$)$ and $($t2$)$ in $($\ref{eq:connect-g1}$)$.
\end{prop}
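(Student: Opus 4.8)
The plan is to run the explicit formula for $F_1^A$ through the transformation rules (t1), (t2) of (\ref{eq:connect-g1}), expressed in the blow-up coordinate $(s_1,s_2)$ of (\ref{eq:affine-relation-s-x}) centered at $C$, and then to compare the resulting rational function with the rational part of $F_1^C$. By (\ref{eq:affine-relation-s-x}) the affine coordinate $(x,y)$ near $A$ and $(s_1,s_2)$ near $C$ are related by $x = 2s_1-\frac14$, $y = 8s_1 s_2$, with Jacobian $\frac{\partial(x,y)}{\partial(s_1,s_2)} = 16\,s_1$. First I would record the pullbacks of the data entering $F_1^A$: substituting gives $1-4x = 2(1-4s_1)$, $1+4x = 8s_1$, $1+4x+y = 8s_1(1+s_2)$, hence
\[
dis_1 = 8 s_1\,disC_1,\quad dis_2 = 8 s_1,\quad dis_3 = 2\,disC_2,\quad x = -\tfrac14\,disC_3,\quad y = 8s_1 s_2,
\]
and, most importantly, $dis_0(x(s),y(s)) = 16\,disC_0(s_1,s_2)$. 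Here the principal discriminant transforms by a \emph{nonzero constant} with no accompanying power of $s_1$, in sharp contrast with the relation $dis_0 = z_1^4\,disB_0$ that made the exponents conspire in the $A\leftrightarrow B$ comparison.

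Second, I would apply the rules. Since $h^{1,1}=2$ and $\chi=0$ for both $X=V_{8,w}^1$ and $Z=V_{8,w}^1/\mbZ_8$, the Hodge exponent $3+h^{1,1}-\chi/12 = 5$ agrees, so (t1) formally replaces $(N_A\omega_0^A)^{-5}$ by $(N_C\omega_0^C)^{-5}$, and (t2) replaces $\frac{\partial(x,y)}{\partial(t_1^A,t_2^A)}$ by $\frac{\partial(s_1,s_2)}{\partial(t_1^C,t_2^C)}\cdot 16 s_1$. Assembling the remaining rational factors of $F_1^A$ after substitution (with $x^{-1-64/12}=x^{-19/3}$), the procedure turns the rational part of $F_1^A$ into
\[
\mathrm{const}\cdot disC_0^{-1/6}\,disC_1^{-1}\,disC_2^{-1}\,disC_3^{-19/3}\,s_1^{-5/3}\,s_2^{-1},
\]
where the exponent of $s_1$ is $-1-\tfrac23-1+1 = -\tfrac53$ after the Jacobian contribution; whereas $F_1^C$ carries the rational part $disC_0^{-4/3}\,disC_1^{-1}\,disC_2^{-1}\,disC_3^{-5/3}\,s_1^{-7/3}\,s_2^{-1}$ (recall $-8/6=-4/3$ and $-1-16/12=-7/3$). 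The powers of $disC_0$, of $disC_3$, and of $s_1$ all disagree.

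To conclude, I would observe that these discrepancies cannot be absorbed within the allowed rules: $disC_0$ divides neither the Jacobian $16s_1$, nor any of $x,y,s_1,s_2$, nor the holomorphic period $\omega_0^C$ (a nonvanishing power series at $C$), so the exponent of $disC_0$ produced from $F_1^A$ is rigidly $-\tfrac16$, while $F_1^C$ has $-\tfrac86$; equivalently, the two conifold factors differ (cf.\ Remark \ref{rem:Conifold-Factor-C}). Hence $F_1^C$ is not related to $F_1^A$ on $\mbP_\Delta$. I do not foresee a serious obstacle: the argument is a mechanical computation once (\ref{eq:affine-relation-s-x}) is in hand, and the only points needing care are that (t1) identifies the Hodge factors merely formally (the local periods $\omega_0^A$ and $\omega_0^C$ are genuinely different functions, being the holomorphic solutions at two distinct LCSLs) and that the factor $16 s_1$ coming from (t2) must be carried through the substitution.
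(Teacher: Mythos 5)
Your computation is correct and is exactly the verification the paper leaves implicit (it declares the claim ``easy to verify'' from the form of $F_{1}^{C}$ and gives the conceptual reason---the mismatch of conifold factors $dis_{0}^{-1/6}$ versus $disC_{0}^{-8/6}$---in Remark \ref{rem:Conifold-Factor-C}); your substitutions $dis_{0}=16\,disC_{0}$, $dis_{1}=8s_{1}disC_{1}$, etc., and the resulting exponent mismatches in $disC_{0}$, $disC_{3}$ and $s_{1}$ all check out. This mirrors the explicit method used in the paper's proof that $F_{1}^{A}$ and $F_{1}^{B}$ \emph{are} connected, so the approach is essentially the same.
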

We will interpret the above proposition in Subsection \ref{sub:MS-ABC}
(Proposition \ref{prop:MS-Result-Main}).
\begin{rem}
\label{rem:Conifold-Factor-C}Recall that the conifold factor of $F_{1}^{M}$
in Remark \ref{rem:Conifold-A} counts the number of ODPs or vanishing
$S^{3}$'s which appear over the principal component of the discriminant
of a family. In many examples, it is observed that the conifold factor
is generalized to vanishing lens space $S^{3}/\mbZ_{N}$ by counting
$-\frac{N}{6}$ for each lens space $S^{3}/\mbZ_{N}$ (eg. \cite{AM1},\cite{HT-JAG}).
Based on this, we read the BCOV formula $F_{1}^{C}$ as the one applied
for the family $\cV_{\mbZ_{8}\times\mbZ_{8}}^{1}\rightarrow\mbP_{\Delta}$.
In fact, we have one vanishing cycle $S^{3}$ in each fiber $V_{8,w}^{1}/\mbZ_{8}$
of the family $\cV_{\mbZ_{8}}^{1}$ over $\left\{ disC_{0}=0\right\} =\left\{ dis_{0}=0\right\} $,
i.e., the $\tau$-orbit in Proposition \ref{prop:ODP-8}. This vanishing
cycle gives rise to a lens space $S^{3}/\langle\sigma\rangle$ in
the full quotient $\cV_{\mbZ_{8}\times\mbZ_{8}}^{1}\rightarrow\mbP_{\Delta}$
by $\mbZ_{8}\times\mbZ_{8}=\langle\sigma,\tau\rangle.$ This explains
the conifold factor in $disC_{0}^{-\frac{8}{6}}$ and also the claimed
property in Proposition \ref{prop:F1-C-A-NOT-rel}, because $F_{1}^{C}$
and $F_{1}^{A}$ are defined for different families $\cV_{\mbZ_{8}\times\mbZ_{8}}^{1}$
and $\cV_{\mbZ_{8}}^{1}$, respectively. $\hfill\square$ 
\end{rem}
\para{Generators of $H_2(V^1_{8,w}/\mbZ_8)$.} \label{para:PicG-Z8}
Recall that we fixed a subgroup $\mbZ_{8}=\langle\tau\rangle$ to
define a quotient $V_{8,w}^{1}/\mathbb{Z}_{8}$. Since $V_{8,w}^{1}$
is simply connected, the fundamental group of the free quotient $V_{8,w}^{1}/\mbZ_{8}$
is isomorphic to $\mbZ_{8}$. In the same way as in \ref{para:PicG-Z8Z8},
{}{we can argue} the generators of $H_{2}(V_{8,w}^{1}/\mbZ_{8},\mbZ)$
from the exact sequence 
\[
1\rightarrow\pi_{2}(V_{8,w}^{1}/\mbZ_{8})\rightarrow H_{2}(V_{8w}^{1}/\mbZ_{8},\mbZ)\rightarrow H_{2}(\mbZ_{8})\rightarrow0,
\]
where{}{{} $H_{2}(\mbZ_{8})=H_{2}(\pi_{1}(V_{8,w}^{1}/\mbZ_{8}))$
is the group homology. Since $H_{2}(\mbZ_{8})=0$ this time, we see
the isomorphism $\pi_{2}(V_{8,w}^{1}/\mbZ_{8})\simeq H_{2}(V_{8w}^{1}/\mbZ_{8},\mbZ)$,
which indicates that $H_{2}$ is generated by rational curves. In
fact, we have generators as follows: Let us note that both $V_{8,w}^{1}$
and $V_{8,w}^{1}/\mbZ_{8}$ have fibrations by abelian surfaces. We
take divisors $H_{X}$ and $A_{X}$ to be the restriction of the hyperplane
class and the fiber class of $V_{8,w}^{1}(=X)$, respectively.} Denote
the free quotient by {}{$\pi:V_{8,w}^{1}\rightarrow V_{8,w}^{1}/\mbZ_{8}=Z$.}
Since the free $\mbZ_{8}$-action acts on each fiber, we have $\pi^{*}A_{Z}=A_{X}$
for the pull-back of the fiber class $A_{Z}$ of $V_{8,w}^{1}/\mbZ_{8}$.
The group $\mbZ_{8}$ acts diagonally on the coordinates $x_{i}$.
Hence, restricting the divisor $\left\{ x_{i}=0\right\} $ to $V_{8,w}^{1}$,
we have a $\mbZ_{8}$-invariant divisor of the class $H_{X}$, which
is the pull-back of a divisor $H_{Z}$ on $Z$. To summarize, we have
\begin{equation}
\pi^{*}H_{Z}=H_{X},\,\,\,\,\,\pi^{*}A_{Z}=A_{X}.\label{eq:pull-back-C}
\end{equation}
Now, for these two divisors $H_{Y}$ and $A_{Y}$ on $V_{8,w}^{1}/\mbZ_{8}$,
we calculate 
\[
\begin{aligned}1=H_{X}.\ell=H_{Z}.(\pi_{*}\ell),\,\,\,\,\,\, & 0=H_{X}\cdot\sigma_{X}=H_{Z}.(\pi_{*}\sigma_{X}),\\
0=A_{X}.\ell=A_{Z}.(\pi_{*}\ell),\,\,\,\,\,\, & 1=A_{X}.\sigma_{X}=A_{Z}.(\pi_{*}\sigma_{X}),
\end{aligned}
\]
where $\sigma_{X}$ is a section of abelian surface fibration $V_{8,w}^{1}\rightarrow\mbP^{1}$
and $\ell$ is a line contained in a singular fiber of $V_{8,w}^{1}$
(see \ref{para: V1-summary}). These relations show that, modulo torsion
elements, the classes of $\pi_{*}\ell$ and $\pi_{*}\sigma_{X}$ generate
$H_{2}(V_{8,w}^{1}/\mbZ_{8},\mbZ)\simeq\pi_{2}(V_{8,w}^{1}/\mbZ_{8})$
and also $H_{Z}$ and $A_{Z}$ generate $Pic(V_{8,w}^{1}/\mbZ_{8})$. 

Note that, from (\ref{eq:pull-back-C}), we can determine the following
invariants of $Z{\color{black}}=V_{8,w}^{1}/\mbZ_{8}$,
\begin{equation}
H_{Z}^{3}=H_{Z}^{2}A_{Z}=2,\,\,\,\,H_{Z}A_{Z}^{2}=A_{Z}^{3}=0;\,\,\,\,\,\,c_{2}.H_{Z}=8,\,\,c_{2}.A_{Z}=0.\label{eq:Kcl-c2j-C}
\end{equation}

\para{Gromow-Witten invariants from potential functions $F^C_0,F^C_1$.}
\label{para:potF-kappa-gen}Recall that Gromov-Witten potential $F_{0}^{C}(t_{1},t_{2})$
at genus $0$ is defined as a function which satisfies 
\[
Y_{ijk}^{C}=\frac{\partial^{3}\;}{\partial t_{i}\partial t_{j}\partial t_{k}}F^{C}(t_{1},t_{2})
\]
for the quantum corrected Yukawa couplings in Proposition \ref{prop:q-Yijk-C}
with $q_{i}=e^{t_{i}}$. As a generating function of Gromov-Witten
invariants, this function takes the following general form:
\[
F_{0}(t_{1},t_{2})=\frac{1}{3!}\int_{Z}\kappa_{t}^{3}+\sum_{{\beta\in H_{2}(Y,\mbZ)\atop \beta\not=0}}N_{0}(\beta)e^{\beta.\kappa_{t}}
\]
with $\kappa_{t}:=t_{1}H_{1}+t_{2}H_{2}$ in terms of some nef-divisors
$H_{i}$. Comparing the invariants in (\ref{eq:Kcl-c2j-C}) with the
constant terms of $Y_{ijk}^{C}$, we see that, for the potential function
$F_{0}^{C}(t_{1},t_{2})$ from $Y_{ijk}^{C}$ in Proposition \ref{prop:q-Yijk-C},
we should have 
\begin{equation}
\kappa_{t}=t_{1}(2H_{Z})+t_{2}A_{Z}.\label{eq:kappa-C-2HA}
\end{equation}
Once we know the above identification of divisors, the genus one Gromov-Witten
invariants $N_{1}(\beta)$ are read by identifying the BCOV formula
for $F_{1}^{C}$, up to a constant term, with the following general
form: 

\[
F_{1}(t)=-\frac{c_{2}.\kappa_{t}}{24}+\sum_{{\beta\in H_{2}(Y,\mbZ)\atop \beta\not=0}}N_{1}(\beta)e^{\beta.\kappa_{t}}.
\]
The BPS numbers $n_{g}(i,j)=n_{g}(\beta.H_{Z},\beta.A_{Z})$ listed
in Table \ref{tab:Table-C} are determined from $N_{g}(\beta)=N_{g}(\beta.H_{Z},\beta.A_{Z})$
which are read from $F_{0}^{C}(t)$ and $F_{1}^{C}(t)$. 
\begin{rem}
\label{rem:Kappa-C-prob}When reading Gromov-Witten invariants from
(\ref{eq:Yijk-A}) and (\ref{eq:Yijk-B}), we implicitly identified
the $\kappa_{t}$ in the potential function with $\kappa_{t}=t_{1}H_{X}+t_{2}A_{X}$
and $\kappa_{t}=t_{1}H_{Y}+t_{2}A_{Y}$, respectively. We note that
$H_{X},A_{X}$ and $H_{Y},A_{Y}$ in these equations are generators
of the Picard groups, while $2H_{Z},A_{Z}$ in (\ref{eq:kappa-C-2HA})
are not. The above example shows that a non-trivial identification
of $\kappa_{t}$ like (\ref{eq:kappa-C-2HA}) introduces a slight
subtlety when reading $N_{g}(\beta)$ from the potential functions
$F_{g}(t)$ which we calculate by using mirror symmetry. $\hfill\square$
\end{rem}
\para{$Z_{g,n}(q)$ by modular forms.} We define the $q$-series $Z_{g,n}^{C}(q)$
from the potential functions $F_{g}^{C}(q,p)$ as in the preceding
sections. From Table \ref{tab:Table-C}, it is easy to observe that
\begin{equation}
Z_{0,1}^{C}(q)=\frac{8}{\bar{\eta}(q^{2})^{8}},\label{eq:Zg0C-A-relation}
\end{equation}
where $q^{2}$ comes from the relation (\ref{eq:kappa-C-2HA}). Also,
corresponding to (\ref{eq:Zg0B-A-relation}), we find the following
relations: 
\[
P_{0,n}^{C}(q)=8^{n-1}P_{0,n}^{A}(q^{2})\,\,\,\,(n\geq1),
\]
for the polynomials defined by $Z_{g,n}^{C}(q)=P_{g,n}^{C}(q)\left(\frac{8}{\bar{\eta(q^{2})^{8}}}\right)^{n}$.
Furthermore, at $g=1$, we can calculate explicitly the following
forms of $P_{1,n}^{C}(q)$ in terms of elliptic quasi-modular forms
$\tilde{E}_{k}:=E_{k}(q^{2})\,(k=2,4,6)$ for lower $n$ {}{(see
also \cite{HT-math-c})}: 
\[
\begin{alignedat}{1}P_{1,1}^{C}= & \frac{1}{36}(\tilde{E}_{2}^{2}+2\tilde{E}_{4}),\\
P_{1,2}^{C}= & \frac{1}{10368}(8\tilde{E}_{2}^{4}+13\tilde{E}_{2}^{2}\tilde{E}_{4}+17\tilde{E}_{2}\tilde{E}_{6}+16\tilde{E}_{4}^{2}),\\
P_{1,3}^{C}= & \frac{1}{2^{14}3^{7}}(1048\tilde{E}_{2}^{6}+1995\tilde{E}_{2}^{4}\tilde{E}_{4}+3642\tilde{E}_{2}^{2}\tilde{E}_{4}^{2}+1711\tilde{E}_{4}^{3}+2444\tilde{E}_{2}^{3}\tilde{E}_{6}\\
 & \hsp{170}+3948\tilde{E}_{2}\tilde{E}_{4}\tilde{E}_{6}+764\tilde{E}_{6}^{2}).
\end{alignedat}
\]

\para{Contraction to $(2,2,2,2)/\mbZ_8$.} Birational geometry of
$V_{8,w}^{1}/\mbZ_{8}$ is quite parallel to the cases of $V_{8,w}^{1}$
and $V_{8,w}^{1}/\mbZ_{8}\times\mbZ_{8}$; the diagram (\ref{eq:Birat-i})
is valid with the free $\mbZ_{8}$ actions;\def\contRiii{\begin{xy}
(-15,0)*++{V^1_{8,w}/\mbZ_8}="Vi",
( 15,0)*++{V^2_{8,w}/\mbZ_8}="Vii",
(  0,-13)*++{X_{2,2,2,2}^{sing}/\mbZ_8}="Zi",
(-11,-7)*++{\,^{\,_{8\,\mathbb{P}^1}}},
( 11,-7)*++{\,^{\,_{8\,\mathbb{P}^1}}},
\ar@{<-->}^{} "Vi";"Vii"
\ar "Vi";"Zi"
\ar "Vii";"Zi"
\end{xy} }
\begin{equation}
\begin{matrix}\contRiii\end{matrix}\label{eq:Birat-iii}
\end{equation}
The birational model $V_{8,w}^{2}/\mbZ_{8}$ corresponds to the other
boundary point $C'$ in Fig.\ref{fig:Fig-C-Cp}. We note that the
singular Calabi-Yau complete intersection $X_{2,2,2,2}^{sing}/\mbZ_{8}$
admits a smoothing to a smooth free quotient $X_{2,2,2,2}/\mbZ_{8}$
in \cite{Brown,Hua}. We can observe the following sum-up properties
for $g=0$ and $g=1$ BPS numbers:
\begin{equation}
\sum_{d_{2}}n_{0}^{C}(d,d_{2})=\frac{1}{|\mbZ_{8}|}n_{0}^{(2,2,2,2)}(d),\,\,\,\,\sum_{d_{2}}n_{1}^{C}(d,d_{2})=n_{1}^{(2,2,2,2)/\mbZ_{8}}(d).\label{eq:sum-up-C}
\end{equation}
The number $\frac{1}{|\mbZ_{8}|}n_{0}^{(2,2,2,2)}(d)$ can be interpreted
as BPS numbers of the smooth free quotient $X_{2,2,2,2}/\mbZ_{8}$.
The number $n_{1}^{(2,2,2,2)/\mbZ_{8}}(d)$ has been determined in
Appendix \ref{sub:Appendix-free-quot-Z8} from BCOV formula assuming
a mirror family of the smooth free quotient $X_{2,2,2,2}/\mbZ_{8}$.

\subsection{Genus two potential $F_{2}^{C}$ and $F_{g}^{C}\,(g\geq2)$. }

It is quite parallel to the cases of $F_{2}^{A}$ and $F_{2}^{B}$
to determine the genus two potential function $F_{2}^{C}$ up to a
rational function $f_{2}^{C}(s_{1},s_{2})$. It is easy to find a
suitable ansatz on $f_{2}^{C}$ and impose the most reasonable vanishing
conditions on some BPS numbers; however, there still remain undetermined
parameters. If $F_{2}^{C}$ were related to $F_{2}^{A}$ or $F_{2}^{B}$
on $\mbP_{\Delta}$ by the transformation rules (t1)$'$ and (t2)$'$,
then $f_{2}^{C}$ would be determined from $f_{2}^{A}$. However,
we find that vanishing conditions for $A$ and $C$ are not compatible,
i.e., we encounter non-integral BPS numbers if we impose them. At
genus one, we have already encountered the corresponding situation
in Proposition \ref{prop:F1-C-A-NOT-rel}. 

Our verifications of the following conjecture are restricted to genus
zero and one; but we expect that it holds in general because BCOV
recursion formulas for $F_{g}(g\geq2)$ start with $F_{0}$ and $F_{1}$
as initial data. 
\begin{conjecture}
\label{conj:Zgn-C}The $q$-series $Z_{g,n}^{C}(q)$ of Gromov-Witten
invariants of $V_{8,w}^{1}/\mbZ_{8}$ are written by quasi-modular
modular forms as
\[
Z_{g,n}^{C}(q)=P_{g,n}^{C}(E_{2}(q^{2}),E_{4}(q^{2}),E_{6}(q^{2}))\left(\frac{8}{\bar{\eta}(q^{2})^{8}}\right)^{n},
\]
where $P_{g,n}^{C}(E_{2}(q^{2}),E_{4}(q^{2}),E_{6}(q^{2}))$ are quasi-modular
forms of weight $4(g+n-1)$. \end{conjecture}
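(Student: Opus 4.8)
The plan is to establish Conjecture \ref{conj:Zgn-C} by reducing it, as much as possible, to the already-verified structures at the boundary points $A$ and $B$, combined with the relations $Z_{0,n}^{C}(q)=\frac{1}{8}Z_{0,n}^{A}(q^{2})$ (equation (\ref{eq:Zg0C-A-relation})) and the genus-one formula for $F_{1}^{C}$. First I would record the structural input: by Observation \ref{obs:ZA_01} the genus-zero series $Z_{0,n}^{A}(q)$ have the stated quasi-modular form in $E_{2},E_{4},E_{6}$, and applying the substitution $q\mapsto q^{2}$ together with the prefactor $\tfrac18$ gives exactly $Z_{0,n}^{C}(q)=P_{0,n}^{C}(E_{2}(q^2),E_{4}(q^2),E_{6}(q^2))\bigl(\tfrac{8}{\bar\eta(q^2)^8}\bigr)^{n}$ with $P_{0,n}^{C}(q)=8^{n-1}P_{0,n}^{A}(q^2)$ of weight $4(n-1)$; this is the $g=0$ case of the conjecture and follows immediately from Observation \ref{obs:ZA_01}. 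The weight bookkeeping is the routine part: $\bar\eta(q^2)^{-8n}$ carries modular weight $-4n$ (up to the anomaly of $\eta$), so $P_{g,n}^{C}$ must have weight $4(g+n-1)$ for $Z_{g,n}^{C}$ to transform with the weight dictated by the BCOV/holomorphic-anomaly grading, exactly as in the rational elliptic surface analogue \cite{HST1,HST2}.

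Next I would treat $g=1$. The explicit BCOV potential $F_{1}^{C}$ in Subsection \ref{sub:MS-more-C} (the proposition giving $F_{1}^{C}(q_1,q_2)$ in terms of $disC_0,\dots,disC_3$ and $\omega_0^C$) can be expanded in $q_2$ to extract $Z_{1,n}^{C}(q)$. Since all the discriminant components $disC_k$, the Hodge factor $(\omega_0^C)^{-5}$, and the Jacobian are the $q\mapsto q^2$ images of the corresponding quantities at $C$'s ``doubled'' uniformizing variable (the factor $q^2$ noted after (\ref{eq:Zg0C-A-relation}) coming from $\kappa_t=t_1(2H_Z)+t_2A_Z$), the computation of $Z_{1,n}^{C}$ mirrors that of $Z_{1,n}^{A}$ but with $q$ replaced by $q^2$ throughout; this is precisely why the explicit $P_{1,n}^{C}$ listed in the text are polynomials in $\tilde E_k=E_k(q^2)$ alone. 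I would verify the claimed closed forms $P_{1,1}^{C},P_{1,2}^{C},P_{1,3}^{C}$ by direct $q$-expansion to high order (say $d\le 70$) and check weight $4(g+n-1)=4n$ against the ring of quasi-modular forms $\mathbb{C}[E_2,E_4,E_6]$ evaluated at $q^2$; that the answer lands in $\mathbb{C}[\tilde E_2,\tilde E_4,\tilde E_6]$ rather than needing the theta functions $S,T,U$ (contrast Conjecture \ref{conj:Zgn-A}) is the genuine ``simplification'' the conjecture asserts, and I would attribute it to the fact that the relevant $\mathbb{Z}_8=\langle\sigma\rangle$-orbit structure over $\{dis_0=0\}$ (Proposition \ref{prop:ODP-8}, Remark \ref{rem:Conifold-Factor-C}) produces a single lens-space conifold factor $disC_0^{-8/6}$ which is already a power of an honest modular object in $q^2$.

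For $g\ge 2$ I would argue from the BCOV recursion (\ref{eq:F2-BCOV}): $F_g^{C}$ is determined from $F_0^{C},F_1^{C}$ (initial data) up to the holomorphic ambiguity $f_g^{C}(s_1,s_2)$, and since the propagators $S_C^{ab}$, the Yukawa couplings $Y_{abc}^{C},Y_{abcd}^{C}$, and the one- and two-point functions $\partial_a F_1^{C}$ are all obtained from their $A$-side analogues by the $q\mapsto q^2$ / $s\mapsto s$ coordinate change, each term in (\ref{eq:F2-BCOV}) is a quasi-modular form in $\tilde E_k$ of the correct weight; the only thing that could destroy the structure is $f_g^{C}$ itself, so I would pin $f_g^{C}$ down by imposing integrality of the resulting BPS numbers together with the vanishing conditions $n_g^{C}(d,d_2)=0$ for small $d$ (as was done for $f_2^{A},f_2^{B}$ in Proposition \ref{prop:FAB-related-on-PD}), and then check that the admissible $f_g^{C}$ is a rational function built from $disC_0,disC_2,disC_3$ whose contribution to $Z_{g,n}^{C}$ is again a polynomial in $\tilde E_2,\tilde E_4,\tilde E_6$. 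The main obstacle is exactly this last point: unlike at $A$ and $B$, where relating $F_g^{A}$ to $F_g^{B}$ on $\mbP_\Delta$ (transformation rules (t1)$'$,(t2)$'$) gives enough constraints to fix $f_g$, the point $C$ corresponds to a different family $\cV_{\mbZ_8\times\mbZ_8}^{1}$ and Proposition \ref{prop:F1-C-A-NOT-rel} shows $F_g^{C}$ is \emph{not} related to $F_g^{A}$ or $F_g^{B}$ — so the extra equations are unavailable and $f_g^{C}$ may remain underdetermined for $g\ge 2$; accordingly, for $g\ge2$ I would state the result as a verified-in-low-$(g,n)$ conjecture rather than a theorem, exactly as the paper does, and flag that a complete proof would require either new geometric input on the moduli of $V_{8,w}^{1}/\mbZ_8$ or gap conditions at the conifold loci of $\cV_{\mbZ_8\times\mbZ_8}^{1}$ as in \cite{Fg2222,AS}.
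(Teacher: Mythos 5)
Your proposal matches the paper's own treatment: the statement is left as a conjecture, supported exactly as you describe by the genus-zero relation $P_{0,n}^{C}(q)=8^{n-1}P_{0,n}^{A}(q^{2})$ descending from Observation \ref{obs:ZA_01}, by the explicit genus-one polynomials $P_{1,n}^{C}$ in $\tilde{E}_{k}=E_{k}(q^{2})$ extracted from $F_{1}^{C}$, and by the expectation that the BCOV recursion propagates the structure to $g\geq2$ modulo the holomorphic ambiguity. You also correctly identify the same obstruction the authors report — that $f_{g}^{C}$ cannot be pinned down by relating $F_{g}^{C}$ to $F_{g}^{A}$ or $F_{g}^{B}$ on $\mbP_{\Delta}$ (Proposition \ref{prop:F1-C-A-NOT-rel}), so the $g\geq2$ cases remain conjectural.
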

\begin{rem}
In physics, the whole set $\left\{ F_{g}^{M}\right\} $ of potential
functions define the so-called topological string theory on a Calabi-Yau
manifold $M$. From the above (conjectural) simple structure on $P_{g,n}^{C}$,
we may naturally expect that the topological string on $Z=V_{8,w}^{1}/\mbZ_{8}$
is completely integrable, i.e., the whole set $\left\{ F_{g}^{M}\right\} $
may be determined completely. Mathematically, the simplification in
the quasi-modular property of $Z_{g,n}^{C}(q)$ may be explained by
the fact that the fiber abelian surfaces are principally polarized
for the fibration $V_{8,w}^{1}/\mbZ_{8}\rightarrow\mbP^{1}$ as we
see in (\ref{eq:Kcl-c2j-C}). $\hfill\square$
\end{rem}

\subsection{Mirror symmetry\label{sub:MS-ABC}}

In Section \ref{sec:MSbyPF-A}, with a subgroup $\mbZ_{8}=\langle\tau\rangle\subset\mbZ_{8}\times\mbZ_{8}$,
we have introduced two families 
\[
\frak{X}=\cV_{\mbZ_{8}}^{1}\rightarrow\mbP_{\Delta}\,\,\text{and }\,\,\frak{X}=\cV_{\mbZ_{8}\times\mbZ_{8}}^{1}\rightarrow\mbP_{\Delta}
\]
over the same parameter space $\mbP_{\Delta}$. The local systems
$R^{3}\pi_{*}\mbC_{\frak{X}}$ associated to these are represented
by the same Picard-Fuchs differential equations $\cD_{2}\omega=\cD_{3}\omega=0$
(\ref{eq:PFeqs-D2-D3}) on $\mbP_{\Delta}$. We can now summarize
our results from each boundary point of $A,B;A',B';C,C'$ as follows. 
\begin{prop}
\label{prop:MS-Result-Main}The following two different pictures of
mirror symmetry are encoded in the same Picard-Fuchs differential
equations $($\ref{eq:PFeqs-D2-D3}$)$ : 

\begin{myitem}

\item{$(1)$} When we read $($\ref{eq:PFeqs-D2-D3}$)$ as representing
the local system of the family $\cV_{\mbZ_{8}}^{1}\rightarrow\mbP_{\Delta}$,
mirror symmetry of the family to Calabi-Yau manifolds $V_{8,w}^{1}$
and $V_{8,w}^{1}/\mbZ_{8}\times\mbZ_{8}$ are identified at the boundary
points $A$ and B, respectively. Mirror symmetry to birational models
$V_{8,w}^{2}$ and $V_{8,w}^{2}/\mbZ_{8}\times\mbZ_{8}$ are also
identified at $A'$ and $B'$. 

\item{$(2)$} When we read $($\ref{eq:PFeqs-D2-D3}$)$ as representing
the local system of the family $\cV_{\mbZ_{8}\times\mbZ_{8}}^{1}\rightarrow\mbP_{\Delta}$,
mirror symmetry of the family to a Calabi-Yau manifold $V_{8,w}^{1}/\mbZ_{8}$
is identified at the boundary point C. Mirror symmetry to the birational
model $V_{8,w}^{2}/\mbZ_{8}$ is identified at $C'$.

\end{myitem}
\end{prop}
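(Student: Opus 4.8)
The statement is a synthesis of the boundary-point analyses of Sections~\ref{sec:MSbyPF-A}, \ref{sec:MS-B} and \ref{sec:MS-more-C}, so the plan is to assemble it from those results rather than to prove anything genuinely new. The starting observation is that $\cV_{\mbZ_{8}}^{1}\rightarrow\mbP_{\Delta}$ and $\cV_{\mbZ_{8}\times\mbZ_{8}}^{1}\rightarrow\mbP_{\Delta}$ are fibrewise free quotients of the single family $\cV^{1}\rightarrow\mbP_{w}^{2}$, so their period integrals satisfy the same Picard--Fuchs system $(\ref{eq:PFeqs-D2-D3})$. First I would, at each LCSL $A,A',B,B',C,C'$, produce the canonical forms of the local solutions (as in Appendix~\ref{sec:App-Canonical-Form}) and read off the weight monodromy filtration $W_{0}\subset W_{2}\subset W_{4}\subset W_{6}$ on $H^{3}$. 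Comparing this with the hard Lefschetz filtration of each candidate mirror---using the classical cubic and linear forms $(\ref{eq:yukawaV1})$, $(\ref{eq:yulawaV2})$ and $(\ref{eq:Kcl-c2j-C})$ to fix the triple intersection numbers $d_{ijk}$ and the $c_{2}\cdot H_{i}$ appearing in the canonical form---gives the identifications $A\leftrightarrow V_{8,w}^{1}$, $A'\leftrightarrow V_{8,w}^{2}$ (Proposition~\ref{prop:MS-A-A'}), $B\leftrightarrow V_{8,w}^{1}/\mbZ_{8}\times\mbZ_{8}$, $B'\leftrightarrow V_{8,w}^{2}/\mbZ_{8}\times\mbZ_{8}$, and $C\leftrightarrow V_{8,w}^{1}/\mbZ_{8}$, $C'\leftrightarrow V_{8,w}^{2}/\mbZ_{8}$ at the level of polarized variations of Hodge structure.

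The subtle point is that $(\ref{eq:PFeqs-D2-D3})$ does not by itself know which of the two families it comes from, so the split in the statement---$A,B,A',B'$ belonging to $\cV_{\mbZ_{8}}^{1}$ and $C,C'$ belonging to $\cV_{\mbZ_{8}\times\mbZ_{8}}^{1}$---must be extracted from a finer invariant. I would use the conifold factor of the genus-one BCOV potential: by $(\ref{eq:Conifold-Factor})$ the exponent of the principal discriminant equals $-\tfrac{1}{6}$ times the number of ODPs, generalized to $-\tfrac{N}{6}$ for a vanishing $S^{3}/\mbZ_{N}$. By Proposition~\ref{prop:ODP-8} the $8$ ODPs of $V_{8,w}^{1}$ over $\{dis_{0}=0\}$ form a single $\langle\tau\rangle$-orbit, hence descend to $1$ ODP in $V_{8,w}^{1}/\mbZ_{8}$ and to a lens space $S^{3}/\langle\sigma\rangle$ in $V_{8,w}^{1}/\mbZ_{8}\times\mbZ_{8}$; this yields the factor $dis_{0}^{-1/6}$ in $F_{1}^{A}$ and $disB_{0}^{-1/6}$ in $F_{1}^{B}$, so both are computed from $\cV_{\mbZ_{8}}^{1}$ (Remark~\ref{rem:Conifold-A}), and the factor $disC_{0}^{-8/6}$ in $F_{1}^{C}$, so $C$ is computed from $\cV_{\mbZ_{8}\times\mbZ_{8}}^{1}$ (Remark~\ref{rem:Conifold-Factor-C}). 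As independent confirmation I would verify integrality of the BPS numbers $n_{g}^{M}(d_{1},d_{2})$ with the normalizations $N_{A}=1$, $N_{B}=\tfrac{1}{2}$, $N_{C}=\tfrac{1}{\sqrt{2}}$, check $n_{0}^{A}(0,1)=64$ against the $64$ flopping curves and $n_{0}^{B}(0,1)=n_{0}^{C}(0,1)=1$ against the sections of the quotient fibrations, and confirm the sum-up relations $(\ref{eq:sumup-rel-AA'})$, $(\ref{eq:sumup-rel-g1-AA'})$, $(\ref{eq:sum-up-C})$ to the (quotient) $(2,2,2,2)$ complete intersections; that $C$ genuinely lies in a different family is then pinned down by Proposition~\ref{prop:F1-C-A-NOT-rel}, which shows $F_{1}^{C}$ is not related to $F_{1}^{A}$ on $\mbP_{\Delta}$ by the rules $(\ref{eq:connect-g1})$.

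The birational partners $A',B',C'$ require only the local isomorphism of Picard--Fuchs data of Proposition~\ref{prop:PFeqs-A-A'} (and its analogues at $B,B'$ and $C,C'$): the weight filtration is unchanged, so the mirror is a Calabi--Yau threefold birational to the one attached to $A$ (resp. $B$, $C$), and matching the cubic forms $(\ref{eq:yulawaV2})$ of $V_{8,w}^{2}$ and its ample cone generated by $H_{2},\tilde{A}_{2}$ identifies the correct model as $V_{8,w}^{2}$ (resp. its $\mbZ_{8}\times\mbZ_{8}$- and $\mbZ_{8}$-quotients), reproduced through the ``sum-up'' relations over the single flop $(\ref{eq:Birat-i})$. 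The hard part will not be any one of these verifications individually but the fact that the distinction between $\cV_{\mbZ_{8}}^{1}$ and $\cV_{\mbZ_{8}\times\mbZ_{8}}^{1}$ is invisible to the differential equations and must come from the integral structure of the variation of Hodge structure; concretely, the argument rests simultaneously on (i) the counting of vanishing cycles over $\{dis_{0}=0\}$ in Proposition~\ref{prop:ODP-8}, (ii) the interpretation of the conifold exponent for lens spaces $S^{3}/\mbZ_{N}$, and (iii) the normalization constants $N_{A},N_{B},N_{C}$, all three of which are needed together for the BPS numbers to come out integral and for the mirror assignments in $(1)$ and $(2)$ to be forced.
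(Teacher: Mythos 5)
Your proposal follows essentially the same route as the paper: Proposition \ref{prop:MS-Result-Main} is stated there as a summary with no separate proof, the identifications at each boundary point resting on the weight monodromy filtrations and Gromov--Witten computations of Sections \ref{sec:MSbyPF-A}--\ref{sec:MS-more-C}, and the attribution of $A,B$ versus $C$ to the two families resting on the conifold factors of $F_{1}$ (Remarks \ref{rem:Conifold-A} and \ref{rem:Conifold-Factor-C}) together with Proposition \ref{prop:F1-C-A-NOT-rel}, exactly as you describe. One small slip in your sanity checks: $n_{0}^{C}(0,1)=8$, not $1$ (the $64$ sections fall into eight $\langle\tau\rangle$-orbits, so $Z_{0,1}^{C}(q)=8/\bar{\eta}(q^{2})^{8}$); only $n_{0}^{B}(0,1)=1$.
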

At this moment, our identifications of the mirror families $\cV_{\mbZ_{8}}^{1}$
and $\cV_{\mbZ_{8}\times\mbZ_{8}}^{1}$ as above are based on the
genus one potential functions $F_{1}^{A},F_{1}^{B}$ and $F_{1}^{C}$
(Remarks \ref{rem:Conifold-A} and \ref{rem:Conifold-Factor-C}).
In the next section (Proposition \ref{prop:Int-str-AB-C}), the difference
between the two families will be explained further by integral structures
from the solutions of (\ref{eq:PFeqs-D2-D3}). 

The above proposition is our affirmative answer to Conjecture \ref{conj:GrossPav}. 
\begin{rem}
Calabi-Yau manifolds $V_{8,w}^{1}$ and $V_{8,w}^{1}/\mbZ_{8}\times\mbZ_{8}$
are Fourier-Mukai partners to each other. The above proposition shows
that they are mirror symmetric to the family $\cV_{\mbZ_{8}}^{1}\rightarrow\mbP_{\Delta}$.
Conversely, Calabi-Yau manifold $V_{8,w}^{1}/\mbZ_{8}$ should be
mirror symmetric to both the family $\cV_{\mbZ_{8}\times\mbZ_{8}}^{1}\rightarrow\mbP_{\Delta}$
and a family of $V_{8,w}^{1}$ over some parameter space, although
we haven't constructed the latter.$\hfill\square$
\end{rem}
\newpage

\section{\label{sec:Degens-CY}\textbf{Degenerations and analytic continuations }}

We further study our results in Proposition \ref{prop:MS-Result-Main}
by calculating the connection matrices for the local solutions at
each boundary point $A,B$ and $C$. We also describe the degenerations
of Calabi-Yau manifolds over these points. We expect that categorical
and geometric aspects of mirror symmetry, including Fourier-Mukai
partners, will appear in explicit and concrete forms from these degenerations,
but we leave the details for future investigations.

\subsection{Analytic continuations of period integrals}

We consider the connection problem of local solutions around the boundary
points $A,B$ and $C$ of the Picard-Fuchs differential equations
(\ref{eq:PFeqs-D2-D3}). To set up the connection problem, we arrange
local solutions into the canonical form (\ref{eq:App-Canonial-Pi})
in Appendix \ref{sec:App-Canonical-Form}; and write them by 
\[
\Pi_{A}(x,y),\,\,\,\Pi_{B}(z_{1},z_{2}),\,\,\,\Pi_{C}(s_{1},s_{2}),
\]
where we use the affine coordinates centered at each boundary point
which are related to each other by (\ref{eq:affine-relation-x-zB})
and (\ref{eq:affine-relation-s-x}). Here we set the parameters $a_{ij}$
to zero in the canonical forms $\Pi_{P}$ for all $P=A,B,C$ (and
$\tilde{A}$). To simplify our calculations, in this paper, we will
restrict our calculations for the above three points (and $\tilde{A}$
additionally). 

\para{Making local solutions.} To do analytic continuations, we need
to generate local solutions in the forms of power series up to sufficiently
high degrees. In the present case, the following property of the Picard-Fuchs
system enables us to do it efficiently. Below we sketch our calculations
for the case $A$, but calculations for other cases $B,C$ and also
$\tilde{A}$ are quite parallel. 

Let us recall the forms of local solutions $\omega_{0}(x),\omega_{i}(x),\omega_{2,i}(x),\omega_{3}(x)$
in (\ref{eq:Appendix-omega-k}). Arrange the regular solution $\omega_{0}(x)$
by powers of $y$,
\[
\omega_{0}(x,y)=f_{0}(x)+f_{1}(x)y+f_{2}(x)y^{2}+f_{3}(x)y^{3}\cdots,
\]
and substitute this into $\cD_{2}\omega_{0}=0$, then we obtain polynomial
relations 
\[
f_{n}+G_{n}(x,f_{n-1},f_{n-1}',f_{n-1}'')=0\,\,\,\,(n=1,2,\cdots),
\]
which we can solve recursively with an initial data $f_{0}(x)$. From
$\cD_{2}\omega_{0}=\cD_{3}\omega_{0}=0,$ it is easy to find the initial
data $f_{0}(x)=\sum_{n\geq0}\frac{(2n)!(2n)!}{(n!)^{4}}x^{2n}$. Basically,
this method works for other local solutions assuming their forms in
(\ref{eq:Appendix-omega-k}). For the case of $\omega_{1}$, for example,
having $\omega_{0}(x)$ up to desired orders in $x$ and $y$, arrange
$\omega_{1}(x)$ as 
\[
\omega_{1}(x,y)=\omega_{0}(x,y)\log x+g_{0}(x)+g_{1}(x)y+g_{2}(x)y^{2}+\cdots,
\]
and substitute this into $\cD_{2}\omega_{1}=0$. Then we obtain polynomial
relations 
\[
g_{n}+K_{n}(x,g_{n-1},g_{n-1}',g_{n-1}'')=0\,\,\,\,(n=1,2,\cdots),
\]
which we can solve recursively once we determine $g_{0}(x)$. To determine
$g_{0}(x)$, we use the equations $\cD_{2}\omega_{1}=\cD_{3}\omega_{1}=0$
to find a linear differential equation of $g_{0}(x)$ described by
the data $f_{0}(x)$ and $f_{1}(x)$. 

Though the solutions $\omega_{2,i}(x),\omega_{3}(x)$ contain polynomials
of higher powers of $\log x$ and $\log y$, we can continue the same
process after having solutions $\omega_{0}(x)$ and $\omega_{i}(x)$. 

\para{Connection matrices.} \label{para:AnalyticC}Analytic continuation
of local solutions is tedious in general for differential equations
of multi-variables. However, note that the boundary points $A,B,C$
and $A'$ are aligned on the real line of a single rational boundary
divisor in $\mbP_{\Delta}$ (see Fig.\ref{fig:Analytic-Path}). This
reduces our connection problem essentially to that of one variable.
In fact, we can obtain the following results by making local solutions
up to the first order in $y$ but sufficiently higher order in $x$. 

To describe the results, let us write by $\Pi_{A}(x),\Pi_{B}(z),\Pi_{\tilde{A}}(\tilde{x})$
and $\Pi_{C}(s)$ the local solutions in the canonical form (\ref{eq:App-Canonial-Pi})
with $a_{ij}=0$ for $A,B,\tilde{A}$ and $C$, respectively. Four
points $A,B,\tilde{A}$ and $C$ are aligned on the real coordinate
line of a boundary divisor $\left\{ y=0\right\} $ with their coordinates
$(x,0)=(0,0),(\frac{1}{4},0),(\infty,0)$ and $(-\frac{1}{4},0)$
in order. We define connection matrices along the real coordinate
line (see Fig.\ref{fig:Analytic-Path}) by 
\[
\Pi_{A}=U_{AB}\Pi_{B},\,\,\,\,\Pi_{B}=U_{B\tilde{A}}\Pi_{\tilde{A}},\,\,\,\,\,\Pi_{\tilde{A}}=U_{\tilde{A}C}\Pi_{C},\,\,\,\,\,\Pi_{C}=U_{CA}\Pi_{A},
\]
where $U_{PQ}$ represents the connection matrix of the analytic continuation
of $\Pi_{P}$ to the point $Q$ along a path $P\rightarrow Q$ shown
in Fig.\ref{fig:Analytic-Path}. Note that the canonical forms $\Pi_{P}$
of local solutions contain normalization constants $N_{P}$. 
\begin{prop}
\label{prop:Conection-Matrices}When we normalize the local solutions
by 
\[
N_{A}=1,\,\,\,\,N_{B}=\frac{1}{2},\,\,\,\,N_{C}=\frac{1}{\sqrt{2}},\,\,\,\,N_{\tilde{A}}=2^{4},
\]
then the connection matrices are represented by symplectic matrices
with respect to $\Sigma$ in (\ref{eq:App-Sigma}); explicitly, they
are given by 
\[
\begin{aligned} & U_{AB}=\left(\begin{matrix}\,\,0 & 0 & 0 & \m\m-1 & 0 & 0\\
\,\,0 & 1 & 0 & 0 & 0 & 0\\
\,\,0 & 0 & 0 & 0 & 0 & 1\\
\m\m\,-1 & 0 & 0 & 0 & 0 & 0\\
\,\,0 & 0 & 0 & 0 & 1 & 0\\
\,\,0 & 0 & 1 & 0 & 0 & 0
\end{matrix}\right),\,\,\,\,U_{\tilde{A}C}=\frac{1}{\sqrt{2}}\left(\begin{matrix}-2 & 8 & 0 & \m\m-2 & 0 & 0\\
\,\,\frac{1}{2} & \m\m-3 & 0 & 1 & 0 & 0\\
\,\,\frac{3}{2} & \m\m-3 & \m\m-2 & 0 & 1 & 2\\
-1 & 8 & 0 & \m\m-4 & 0 & 0\\
-3 & 0 & 8 & 0 & \m\m-6 & \m\m-16\\
-\frac{5}{2} & 10 & 1 & \m\m-4 & \m\m-1 & \m\m-4
\end{matrix}\right),\\
 & U_{B\tilde{A}}=\left(\begin{matrix}\,\,0 & 0 & 0 & \m\m-1 & 0 & 0\\
\,\,0 & 1 & 0 & \frac{1}{2} & 0 & 0\\
\,\,0 & 0 & 0 & 2 & 0 & 1\\
\m\m\,-1 & \m\m-8 & 0 & \m\m-2 & 0 & 0\\
\,\,0 & \m\m-96 & 0 & \m\m-32 & 1 & \m\m-8\\
\m\m\,-2 & \m\m-32 & 1 & \m\m-7 & \frac{1}{2} & \m\m-2
\end{matrix}\right),U_{CA}=\frac{1}{\sqrt{2}}\left(\begin{matrix}\m\m\,-4 & 16 & 0 & \m\m-2 & 0 & 0\\
\m\m\,-1 & 2 & 0 & 0 & 0 & 0\\
\,\,4 & 0 & \m\m-4 & 0 & 1 & 2\\
\m\m\,-1 & 0 & 0 & 0 & 0 & 0\\
\,\,6 & \m\m-16 & \m\m-8 & 1 & 1 & 0\\
\,\,\frac{1}{2} & \m\m-1 & 1 & 0 & 0 & 0
\end{matrix}\right).
\end{aligned}
\]
\end{prop}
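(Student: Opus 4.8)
The plan is to reduce a genuinely two–variable connection problem to a one–variable one, exploiting the fact (already noted in \ref{para:AnalyticC}) that $A,B,\tilde A,C$ all lie on the real locus of the single rational boundary divisor $\{y=0\}$ in $\mbP_\Delta$. Concretely, I would first generate, at each of the four points, the full set of six canonical local solutions $\Pi_P$ as power series. Using the recursive scheme described in the preceding paragraph (\para{Making local solutions}) — substituting $\omega = f_0(x)+f_1(x)y+\cdots$ into $\cD_2\omega=0$ and solving the resulting ODEs for the coefficient functions $f_n(x)$, then similarly for the logarithmic solutions $\omega_i,\omega_{2,i},\omega_3$ — one obtains each $\Pi_P$ to high order in $x$ and to first order in $y$. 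Since the continuation path $A\to B\to\tilde A\to C\to A$ stays on $\{y=0\}$ except for an infinitesimal transverse excursion, the $y^0$ part of the solutions governs the continuation along the divisor while the $y^1$ part captures the transverse monodromy/Stokes data; keeping terms only to order $y^1$ therefore suffices to pin down all six columns of each $U_{PQ}$.

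Next I would carry out the analytic continuation of the one–variable ($y=0$) restricted solutions along each segment. The $\cD_2,\cD_3$ system restricted to $\{y=0\}$ is an ODE in $x$ with regular singular points exactly at $x=0,\pm\tfrac14,\infty$ (from the characteristic variety $dis_2=dis_3=x=0$ and the factor $(1-4x)^4$ in $dis_0$), so this is a classical connection problem for a Fuchsian ODE on $\mbP^1$. For each pair of adjacent points I would match the analytically continued series for $\Pi_P$ against the basis $\Pi_Q$ near $Q$: continue numerically (or via known hypergeometric-type evaluations of the constituent series such as $f_0(x)=\sum \tfrac{(2n)!^2}{n!^4}x^{2n}$, which is a ${}_2F_1$-type series) from a common overlap point, and read off the transition matrix. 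The normalization constants $N_P$ are then fixed precisely so that the resulting matrices are symplectic with respect to the pairing $\Sigma$ of \eqref{eq:App-Sigma}; equivalently, $N_A=1$ is a choice and $N_B,N_C,N_{\tilde A}$ are determined by requiring $U_{PQ}^{\mathsf T}\Sigma\,U_{PQ}=\Sigma$, which is consistent with the independently-fixed values $N_B=\tfrac12$, $N_C=\tfrac1{\sqrt2}$ used in Propositions \ref{prop:q-Yijk-B} and \ref{prop:q-Yijk-C}. Finally I would record the four explicit integer (up to the overall $\tfrac1{\sqrt2}$ factors) matrices and verify directly that $U_{AB}U_{B\tilde A}U_{\tilde A C}U_{CA}=\mathrm{id}$, which provides a strong internal consistency check.

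I expect the main obstacle to be the bookkeeping of the logarithmic (nilpotent-monodromy) solutions through the continuation, rather than anything conceptually hard. The canonical form $\Pi_P$ mixes solutions with leading behaviour $1,\log,\log^2,\log^3$ in two variables, and when one continues past a singular point whose local exponents differ, the clean triangular/logarithmic structure at $P$ gets scrambled; recovering the precise linear combination that reproduces the canonical basis at $Q$ (including correctly identifying which of $\omega_{2,1},\omega_{2,2}$ appears) requires care, especially since the branches of $\log(1-4x)$ and $\log(1+4x)$ must be tracked consistently along the chosen path in Fig.\ref{fig:Analytic-Path}. A secondary subtlety is that the transverse ($y^1$) data at $B$ involves the blow-up coordinates of \eqref{eq:affine-relation-x-zB}, where $\{dis_0=0\}$ meets $\{y=0\}$ with fourth-order tangency, so the change of variables is highly non-linear and one must confirm that truncating at order $y^1$ still captures the full off-diagonal block of $U_{AB}$ and $U_{B\tilde A}$. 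Both issues are handled by pushing the series expansions to sufficiently high order in $x$ and cross-checking the symplecticity and the composition-to-identity relation; the details of these expansions are routine and are deferred to \cite{HT-math-c}.
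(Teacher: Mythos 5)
Your strategy is exactly the paper's: the authors likewise generate the canonical local solutions $\Pi_P$ recursively in powers of $y$ (to first order in $y$, high order in $x$), exploit the alignment of $A,B,\tilde A,C$ on the real locus of $\{y=0\}$ to reduce to a one-variable connection problem, and fix the $N_P$ compatibly with the Yukawa-coupling normalizations. One concrete error, though: your proposed consistency check $U_{AB}U_{B\tilde A}U_{\tilde A C}U_{CA}=\mathrm{id}$ is false. As the paper's Remark following the proposition records, the ordered product equals a nontrivial unipotent matrix, namely the fourth power of the local monodromy around $\{y=0\}$ (the twist $\log y\mapsto\log y-4\cdot 2\pi\sqrt{-1}$), which arises because the point $B$ sits at a resolved fourth-order tangency of $\{dis_0=0\}$ with $\{y=0\}$ and the chosen loop is not trivial in the complement of the discriminant. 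Had you used closure-to-identity as your acceptance criterion, you would have rejected the correct matrices; the right check is that the product lies in the unipotent subgroup generated by the $\{y=0\}$-monodromy (together with symplecticity of each factor).
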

\begin{rem}
(1) The values of the normalization constants $N_{P}$ are exactly
the same as those used in (\ref{eq:Yijk-A}),(\ref{eq:Yijk-B}) and
(\ref{eq:Yijk-C}) to have right quantum corrected Yukawa couplings
with right normalizations. (2) An ordered product of the above connection
matrices represents a trivial loop on the divisor $\left\{ y=0\right\} \simeq\mbP^{1}$.
We observe that our choice of path for this loop is twisted by the
monodromy $\log y\mapsto\log y-4\times2\pi\sqrt{-1}$;
\[
U_{AB}U_{B\tilde{A}}U_{\tilde{A}C}U_{CA}=\left(\begin{smallmatrix}1 & 0 & 0 & 0 & 0 & 0\\
0 & 1 & 0 & 0 & 0 & 0\\
\m-1 & 0 & 1 & 0 & 0 & 0\\
0 & 0 & 0 & 1 & 0 & 0\\
0 & \m-16 & 0 & 0 & 1 & 0\\
0 & 0 & 0 & 1 & 0 & 1
\end{smallmatrix}\right)^{4}.
\]
{}{This twist may be explained} by the fact that we have
resolved 4-th order tangency at the intersection point $B$ of $\left\{ y=0\right\} $
and $\left\{ dis_{0}=0\right\} $. $\hfill\square$
\end{rem}
\begin{figure}
\includegraphics[scale=0.5]{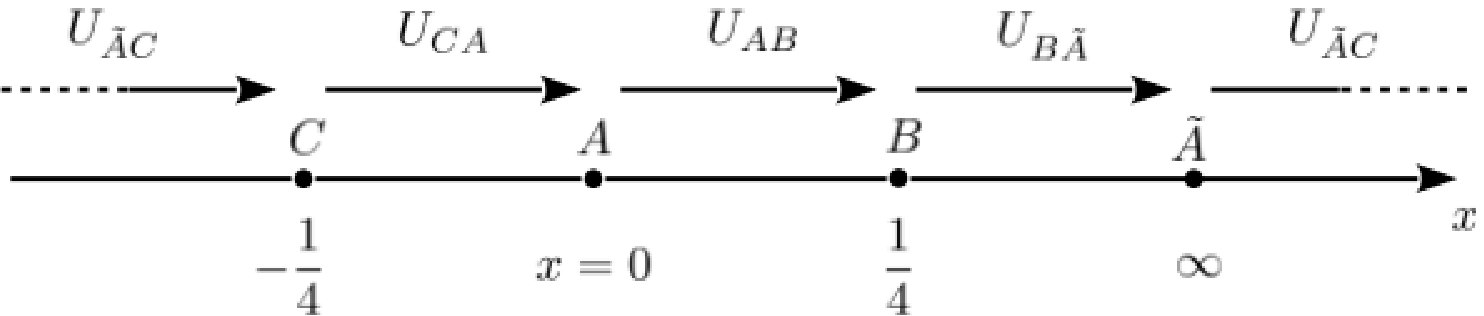}

\caption{\label{fig:Analytic-Path}Fig.5 }
\end{figure}

\para{Integral and symplectic structures.} As summarized in Appendix
\ref{sec:App-Canonical-Form}, mirror symmetry arises from Picard-Fuchs
differential equations of a family in the canonical forms $\Pi_{P}$
of local solutions at special boundary points $P$. It is conjectured
in general \cite{HosIIA,CCarge} that the canonical form $\Pi_{P}$
introduces an integral and symplectic structure on the space of solutions
which are compatible with mirror symmetry (see \cite{Galk-InIritani}
for a generalization to Fano varieties). Here, to go into further
implications of mirror symmetry, let us assume this conjecture and
denote by $\cH_{\mbZ,P}\simeq\mbZ^{6}$ the integral structure generated
by the local solutions $\Pi_{P}$. As described in Appendix \ref{sec:App-Canonical-Form},
the solutions in $\Pi_{P}$ introduce a symplectic basis of $H_{3}$,
\begin{equation}
\left\{ \gamma_{k}^{P}\right\} =\left\{ \alpha_{0}^{P},\alpha_{1}^{P},\alpha_{2}^{P},\beta_{2}^{P},\beta_{1}^{P},\beta_{0}^{P}\right\} \label{eq:symp-basis-gamma}
\end{equation}
with $\langle\alpha_{i}^{P},\beta_{j}^{P}\rangle=-\langle\beta_{j}^{P},\alpha_{i}^{P}\rangle=\delta_{ij}$.
We note that the connection matrix $U_{AB}$ in Proposition \ref{prop:Conection-Matrices}
is symplectic and integral, while $U_{CA}$ is symplectic but not
integral. From this, we observe the following 
\begin{prop}
\label{prop:Int-str-AB-C}The integral structures $\cH_{\mbZ,A}$
and $\cH_{\mbZ,B}$ are isomorphic to each other, while they are not
isomorphic to $\cH_{\mbZ,C}$. 
\end{prop}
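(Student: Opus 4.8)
The plan is to deduce the statement from the explicit connection matrices of Proposition~\ref{prop:Conection-Matrices}, together with a single structural input about the underlying local system. Since the two families $\cV_{\mbZ_{8}}^{1}\to\mbP_{\Delta}$ and $\cV_{\mbZ_{8}\times\mbZ_{8}}^{1}\to\mbP_{\Delta}$ carry the \emph{same} Picard--Fuchs system $(\ref{eq:PFeqs-D2-D3})$, their solution spaces are canonically one fixed six-dimensional space $\cH_{\mbC}$; after fixing a trivialization along the path of Fig.~\ref{fig:Analytic-Path}, each integral structure $\cH_{\mbZ,A},\cH_{\mbZ,B},\cH_{\mbZ,C}$ is realized as a full-rank sublattice of $\cH_{\mbC}$ (the $\mbZ$-span of the components of the corresponding canonical solution vector $\Pi_{P}$), and the relations $\Pi_{A}=U_{AB}\Pi_{B}$, $\Pi_{C}=U_{CA}\Pi_{A}$ become, at the level of these sublattices, $\cH_{\mbZ,A}=U_{AB}\cdot\cH_{\mbZ,B}$ and $\cH_{\mbZ,C}=U_{CA}\cdot\cH_{\mbZ,A}$.

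For the first half I would just read off from Proposition~\ref{prop:Conection-Matrices} that $U_{AB}$ is a signed permutation matrix (a single $\pm1$ in each row and column), hence $U_{AB}\in GL_{6}(\mbZ)$; therefore $\cH_{\mbZ,A}$ and $\cH_{\mbZ,B}$ are literally the same sublattice of $\cH_{\mbC}$, in particular isomorphic, and $U_{AB}$ is moreover symplectic with respect to $\Sigma$ in $(\ref{eq:App-Sigma})$, consistent with being a morphism of polarized integral structures.

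For the second half, suppose toward a contradiction that $\varphi\colon\cH_{\mbZ,A}\to\cH_{\mbZ,C}$ is an isomorphism of integral local systems over $\mbP_{\Delta}$ minus its discriminant (the discriminant is the same for both families, the quotients being free, by Proposition~\ref{prop:V1-degeneration-loci}). Complexifying, $\varphi_{\mbC}\in\mathrm{Aut}(\cH_{\mbC})$ commutes with the monodromy group, which acts on $\cH_{\mbC}$ as one and the same complex representation for both families. Here I would invoke that this representation is irreducible, equivalently $\mathrm{End}(\cH_{\mbC})=\mbC$, so that $\varphi_{\mbC}=\lambda\cdot\id$ for some $\lambda\in\mbC^{\times}$; then $\cH_{\mbZ,C}=\lambda\,\cH_{\mbZ,A}$, i.e. $\lambda^{-1}U_{CA}\in GL_{6}(\mbZ)$. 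But $U_{CA}$ has nonzero real entries (e.g.\ the top-left entry $-2\sqrt{2}$), so $\lambda^{-1}$ must be real; then $\det(\lambda^{-1}U_{CA})=\lambda^{-6}>0$ together with symplecticity $\det U_{CA}=1$ forces $\lambda^{-1}=\pm1$, and $\pm U_{CA}$ would be an integer matrix --- which it is not. This contradiction yields $\cH_{\mbZ,A}\not\simeq\cH_{\mbZ,C}$, and $\cH_{\mbZ,B}\not\simeq\cH_{\mbZ,C}$ then follows at once from $\cH_{\mbZ,B}=\cH_{\mbZ,A}$ established above.

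The main obstacle --- the only non-formal ingredient --- is the irreducibility of the monodromy representation. I would prove it from the explicit monodromies, assembled from the local monodromies at the boundary points in the canonical form of Appendix~\ref{sec:App-Canonical-Form} together with the connection matrices of Proposition~\ref{prop:Conection-Matrices}: the maximally unipotent monodromy at $A$ has a single Jordan block of length four, so the only candidate proper invariant subspaces are the steps of its monodromy weight filtration, and one checks that the Picard--Lefschetz transvection around $\{dis_{0}=0\}$ preserves none of them, its vanishing cycle pairing nontrivially with $\alpha_{0}^{A}$ (a genuine conifold degeneration, cf.\ Remark~\ref{rem:Conifold-A}). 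One could alternatively demand that $\varphi$ be a morphism of variations of Hodge structure, which is even more rigid; in either formulation the substance is ruling out endomorphisms of $\cH_{\mbC}$ beyond scalars, after which Proposition~\ref{prop:Conection-Matrices} supplies the rest.
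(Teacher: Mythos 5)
Your first half coincides exactly with the paper's own argument: the paper derives the proposition directly from Proposition~\ref{prop:Conection-Matrices}, noting that $U_{AB}$ is integral and symplectic (indeed a signed permutation matrix) while $U_{CA}$ is symplectic but not integral, and simply ``observes'' the conclusion. Where you genuinely diverge is in the second half: the paper treats the non-integrality of the single matrix $U_{CA}$ as sufficient for non-isomorphism, whereas you correctly point out that this only shows the two lattices are \emph{distinct} sublattices of the common solution space, not that they are non-isomorphic as integral (monodromy-equivariant) structures --- any two full sublattices of $\mbC^{6}$ are abstractly isomorphic as $\mbZ$-modules. Your repair via Schur's lemma (irreducibility of the monodromy representation forces any equivariant isomorphism to be a scalar $\lambda$, then $\det=1$ and reality pin $\lambda=\pm1$, and $\pm U_{CA}$ is visibly non-integral since its top-left entry is $-2\sqrt{2}$) is the right way to make the statement rigorous, and it buys an actual proof where the paper offers only an observation.

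The one genuine gap is the irreducibility input itself, which you flag but whose sketch does not work as written. At a two-parameter LCSL the maximally unipotent monodromy on the six-dimensional space has Jordan type $(4,2)$ (graded pieces of the weight filtration have dimensions $1,2,2,1$), not a single block of length four; so its invariant subspaces are far from being only the steps $W_{0}\subset W_{2}\subset W_{4}$, and you cannot reduce to checking the transvection on those alone. A correct argument must use the pair of commuting logarithms $N_{1},N_{2}$ at $A$ (whose common invariant subspaces are much more constrained) together with the conifold transvections around $\{dis_{0}=0\}$, and verify the needed non-degeneracy of pairings against the explicit monodromy matrices, which the paper's canonical forms (Appendix~\ref{sec:App-Canonical-Form}) and connection matrices do make available in principle. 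Until that computation is done, your proof of the ``not isomorphic'' half is conditional on an unverified (though very plausible) irreducibility claim; note that the analogous one-parameter statement in Proposition~\ref{prop:AppendixE-Monod} faces the same issue, and the paper sidesteps it there too by phrasing the conclusion as an indication rather than a theorem.
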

In Proposition \ref{prop:MS-Result-Main}, three boundary points $A,B,C$
are grouped into $A,B$ and $C$ from the transformation properties
(connection properties) of the higher genus potential functions at
$g=1$ and $2$. Combining this with the above result, we deduce again
that the integral structure $\cH_{\mbZ,A}\simeq\cH_{\mbZ,B}$ comes
from the local system $R^{3}\pi_{*}\mbZ_{\frak{X}}$ associated with
the family $\frak{X}=\cV_{\mbZ_{8}}^{1}\rightarrow\mbP_{\Delta}$,
while $\cH_{\mbZ,C}$ comes from that of the other family $\frak{X}=\cV_{\mbZ_{8}\times\mbZ_{8}}^{1}\rightarrow\mbP_{\Delta}$.

Our result shows that BCOV theory of higher genus potential functions
$F_{g}$ (topological string for physicists) depends on the integral
structure $R^{3}\pi_{*}\mbZ_{\frak{X}}$ although it is encoded in
the theory in an implicit way. It is argued in physics that a topological
string $\left\{ F_{g}^{M}\right\} $ is equivalent to a wave function
of a quantum mechanical system on $H^{3}(M,\mbC)$ with the natural
symplectic structure on $H^{3}(M,\mbZ)$ (see e.g. \cite{W.BI,Ag}).
We expect that our examples motivate investigations towards a global
mathematical theory of these interesting ideas as well as BCOV recursion
formulas. 
\begin{rem}
In the above proposition, we have excluded the boundary point $\tilde{A}$
from our consideration. This is because $A$ and $\tilde{A}$ are
related by an involutive symmetry (\ref{eq:inv-sym}), which actually
is a symmetry of the Picard-Fuchs differential equations. One may
start all our calculations with $\tilde{A}$ and obtain the same results,
e.g. Proposition \ref{prop:Conection-Matrices}, with $A$ replaced
by $\tilde{A}$.$\hfill\square$
\end{rem}
\para{Derived equivalence and mirror symmetry.} The mirror Calabi-Yau
manifolds for the boundary points $A$ and $B$ are identified, respectively,
with $V_{A}:=V_{8,w}^{1}$ and $V_{B}:=V_{8,w}^{1}/\mbZ_{8}\times\mbZ_{8}$,
which are derived equivalent. The equivalence is described by the
fiberwise Fourier-Mukai transformation \cite{Bak,Sch}. In the mirror
side, i.e. in the family of $\cV_{\mbZ_{8}}^{1}\rightarrow\mbP_{\Delta}$,
we can read this Fourier-Mukai transformation in the form of the connection
matrix $U_{AB}$. 

Let us recall that, under homological mirror symmetry \cite{Ko},
the integral symplectic structure $(\cH_{\mbZ,A},\left\{ \gamma_{k}^{A}\right\} )\simeq(\cH_{\mbZ,B},\left\{ \gamma_{k}^{B}\right\} )$
from the boundary points is transformed to the corresponding integral
symplectic structures on the Grothendieck group $K(V_{A})\simeq K(V_{B})$
of {}{coherent sheaves }(see e.g. \cite{CCarge}). Note
that the isomorphism follows from the derived equivalence $D^{b}(V_{A})\simeq D^{b}(V_{B})$,
and there the symplectic structure is naturally introduced by $\chi(\cE,\cF)=\sum_{i}(-1)^{i}\dim Ext^{i}(\cE,\cF)$.
Mirror symmetry predicts that there are integral symplectic bases
$\left\{ \cE_{\gamma_{k}}^{A}\right\} ,\left\{ \cE_{\gamma_{k}}^{B}\right\} ,$
which are mirror duals to the bases $\left\{ \gamma_{k}^{A}\right\} $
and $\left\{ \gamma_{k}^{B}\right\} $, respectively. Not so much
is known about the integral symplectic basis $\left\{ \cE_{\gamma_{k}}\right\} $;
however, for the symplectic basis (\ref{eq:symp-basis-gamma}) coming
from the canonical form of period integrals $\Pi_{P}$, it is expected
that the following naive correspondences hold:
\begin{equation}
\left\{ \gamma_{k}\right\} =\left\{ \alpha_{0},\alpha_{1},\alpha_{2},\beta_{2},\beta_{1},\beta_{0}\right\} \underset{\text{\text{MD}}}{\longleftrightarrow}\left\{ \cO_{p},\cO_{C_{1}},\cO_{C_{2}},\cO_{D_{2}},\cO_{D_{1}},\cO_{V}\right\} =\left\{ \cE_{\gamma_{k}}\right\} ,\label{eq:HMS}
\end{equation}
where $\cO_{p},\cO_{V}$ are the skyscraper sheaf supported on $p\in V$
and the structure sheaf of a mirror Calabi-Yau manifold $V$, respectively.
$\cO_{C_{i}},\cO_{D_{i}}$ are torsion sheaves supported on curves
$C_{i}$ and dual divisors $D_{i}$. More precisely, we need suitable
twists on these sheaves, but we omit these details for simplicity. 

Assuming the above mirror duality (MD), we read the connection matrix
$U_{AB}$ as in the following diagram:\def\XXX{
\begin{xy}
(-50,0)*++{\left\{\gamma_k^A\right\}=}="gA",
(-43,0)*+{\{}="x1",
(-6,0)*++{\}}="x2",
(-40,0)*+{\alpha_0,}="a0",
(-34,0)*+{\alpha_1,}="a1",
(-28,0)*{\alpha_2,}="a2",
(-21,0)*{\beta_2,}="b2",
(-15,0)*{\beta_1,}="b1",
(-9,0)*{\beta_0}="b0",
(6.5,0)*+{\{}="y1",
(53.5,0)*++{\}}="y2",
(10,0)*+{\cO_p,}="p",
(18,0)*+{\cO_C,}="C1",
(26,0)*{\cO_\sigma,}="C2",
(34,0)*{\cO_{A},}="D2",
(42,0)*{\cO_D,}="D1",
(50,0)*{\cO_{V} }="V",
(-43,-15)*+{\{}="xx1",
(-5,-15)*++{\}}="xx2",
(-50,-15)*++{\left\{\gamma_k^B\right\}=}="gB",
(-39,-15)*+{-\beta_2,}="Ba0",
(-32,-15)*+{\alpha_1,}="Ba1",
(-27,-15)*+{\beta_0,}="Ba2",
(-21,-15)*+{-\alpha_0,}="Bb2",
(-14,-15)*+{\beta_1,}="Bb1",
(-8.5,-15)*+{\alpha_2}="Bb0",
(6.5,-15)*+{\{}="yy1",
(54.5,-15)*++{\}},
(11,-15)*+{ \cO_{p'},}="Bp",
(19,-15)*+{\cO_{C'},}="BC1",
(27,-15)*+{\cO_{\sigma'},}="BC2",
(35,-15)*+{\cO_{A'},}="BD2",
(43,-15)*+{\cO_{D'},}="BD1",
(51,-15)*+{\cO_{V'}}="BV",
(38,-10)*{\text{\tiny $\times$(-1)}},
(16,-10)*{\text{\tiny $\times$(-1)}},
\ar^{U_{AB}} "gA";"gB"
\ar @(rd,u) "a0";"Bb2"
\ar @(ld,u) "b2";"Ba0"
\ar @(ld,u) "b0";"Ba2"
\ar @(rd,u) "a2";"Bb0"
\ar @(rd,u) "p";"BD2"
\ar @(ld,u) "D2";"Bp"
\ar @(ld,u) "V";"BC2"
\ar @(rd,u) "C2";"BV"
\ar@{<->}_{\text{MD}} "x2";"y1"
\ar@{<->}_{\text{MD}} "xx2";"yy1"
\end{xy} }

\begin{equation}
\begin{matrix}\XXX\end{matrix}\label{eq:Mab-HomologicalMS}
\end{equation}
where $\alpha_{i},\beta_{j}$ represent the symplectic bases in $(\cH_{\mbZ,A},\left\{ \gamma_{k}^{A}\right\} )$.
On the mirror side, $\sigma,A$ and $\sigma',A'$ represent a section
and a fiber of the abelian surface fibrations $V:=V_{A}\rightarrow\mbP^{1}$
and its dual fibration $V':=V_{B}\rightarrow\mbP^{1}$, respectively.
The relations between the sheaves indicated in the above diagram exactly
match with the actions of the fiberwise Fourier-Mukai transformation
(cf. \cite{Bak}).

\subsection{\label{sub:degen-A-B}Degenerations of the family $\protect\cV_{\protect\mbZ_{8}}^{1}$
over $A$ and $B$}

Since the degeneration point $A$ is the origin of the affine coordinate
$(x,y)$ which is related to $[w_{0},w_{1},w_{2}]\in\mbP_{w}^{2}$
by (\ref{eq:xy-definition}), we may represent it by a limit of $[t,t,1]$
($t\rightarrow0$). Similarly, it is easy to find a suitable limit
which represents the point $B$ by writing the blow-up coordinate
$(z_{B}^{1},z_{B}^{2})$ in (\ref{eq:affine-relation-x-zB}) in terms
of $[w_{0},w_{1},w_{2}]$ as 
\[
(z_{B}^{1},z_{B}^{2})=\big(\frac{w_{2}^{2}-w_{0}^{2}}{8w_{2}^{2}},-\frac{256w_{1}^{4}w_{2}^{5}}{w_{0}(w_{0}^{2}-w_{2}^{2})^{4}}\big).
\]
Let us denote by $p_{AB}(t)\,(0<t<1)$ a path in $\mbP_{w}^{2}$ for
the analytic continuation $U_{AB}$, satisfying $(x(p_{AB}(t)),y(p_{AB}(t))\rightarrow A$
and $(z_{B}^{1}(p_{AB}(t)),z_{B}^{2}(p_{AB}(t)))\rightarrow B$ for
$t\rightarrow0$ and $1$, respectively, on the blow-up of $\mbP_{\Delta}$. 
\begin{prop}
For a large integer $n$, the following $path$ in $\mbP_{w}^{2}$
\[
p_{AB}(t)=[t,t^{n}(t-1)^{n},1]\,\,\,(0<t<1)
\]
describes a path used for the analytic continuation from the degeneration
point $A$ to $B$. \end{prop}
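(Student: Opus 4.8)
The plan is to verify directly that the curve $p_{AB}(t)=[t,t^{n}(t-1)^{n},1]$, for a suitably large $n$, has the two required endpoint behaviors under the affine coordinates attached to $A$ and to $B$, and that it stays (for $0<t<1$) within the region where the analytic continuation $U_{AB}$ was performed — namely along (a blow-up of) the real line $\{y=0\}\subset\mbP_\Delta$ shown in Fig.~\ref{fig:Analytic-Path}. First I would substitute $[w_0,w_1,w_2]=[t,t^n(t-1)^n,1]$ into the definitions $x=\tfrac14\,w_0^2/w_2^2$ and $y=-2\,w_1^4/(w_0w_2^3)$ from (\ref{eq:xy-definition}). This gives $x=\tfrac14 t^2$ and $y=-2\,t^{4n}(t-1)^{4n}/t=-2\,t^{4n-1}(t-1)^{4n}$. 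As $t\to0$ both $x\to0$ and $y\to0$, so $(x,y)\to(0,0)=A$; moreover $y/x^{?}\to 0$ fast enough (because $y$ vanishes to order $4n-1$ while $x$ only to order $2$) that the path approaches $A$ inside the chart where $\omega_0^A$ and the mirror map at $A$ are defined, i.e.\ transverse enough to avoid the discriminant components $\{dis_k=0\}$ near $A$.

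Next I would check the endpoint at $B$. Using the expression
\[
(z_B^1,z_B^2)=\Bigl(\frac{w_2^2-w_0^2}{8w_2^2},\,-\frac{256\,w_1^4w_2^5}{w_0(w_0^2-w_2^2)^4}\Bigr)
\]
recorded just above the Proposition, substitute the same path: $z_B^1=\tfrac18(1-t^2)$, and $z_B^2=-256\,t^{4n}(t-1)^{4n}/\bigl(t\,(t^2-1)^4\bigr)=-256\,t^{4n-1}(t-1)^{4n}/\bigl((t-1)^4(t+1)^4\bigr)=-256\,t^{4n-1}(t-1)^{4n-4}/(t+1)^4$. As $t\to1$ we get $z_B^1\to0$ and, since $4n-4\ge1$ for $n\ge2$, also $z_B^2\to0$; hence $(z_B^1,z_B^2)\to(0,0)=\widetilde B=B$. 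The exponent $n$ must be taken large enough that $z_B^2$ vanishes at $t=1$ even after the four successive blow-ups performed at $B$ (each blow-up replaces $z_2$ by roughly $z_2/z_1^{(\cdot)}$, consuming some of the vanishing order), which is exactly why the statement says ``for a large integer $n$''; I would make this quantitative by tracking the blow-up coordinate changes in Fig.~3 and checking that the order $4n-4$ of vanishing of $z_B^2$ at $t=1$ (relative to the order of $z_B^1=\tfrac18(1-t^2)$, which is $1$) survives all four blow-ups, giving a concrete lower bound on $n$.

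Finally I would confirm that $p_{AB}$ is an admissible path for $U_{AB}$: for $0<t<1$ we have $x=\tfrac14t^2\in(0,\tfrac14)$ real and $y=-2t^{4n-1}(t-1)^{4n}$, which is negative and real; so the path lies on the real slice and, as $n\to\infty$, is pressed arbitrarily close to the segment of the real axis $\{y=0\}$ joining $A=(0,0)$ to $B=(\tfrac14,0)$ in Fig.~\ref{fig:Analytic-Path}, while staying off the other discriminant loci $\{dis_0=0\}$, $\{dis_2=0\}$, $\{dis_3=0\}$ for $t$ in the open interval (one checks $1+4x+y$, $1-4x$, $1+4x$ do not vanish along the path for $0<t<1$ and $n$ large). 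Since $U_{AB}$ in Proposition~\ref{prop:Conection-Matrices} is computed by analytic continuation along precisely this segment, $p_{AB}$ represents that continuation. The main obstacle is the bookkeeping at $B$: one must be careful that the four iterated blow-ups do not destroy the limit $z_B^2\to0$, and must pin down how large $n$ has to be; everything else is a direct substitution.
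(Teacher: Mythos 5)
Your proposal is correct and takes essentially the same route as the paper: substitute the path into the affine coordinates $x=\tfrac14 w_0^2/w_2^2$, $y=-2w_1^4/(w_0w_2^3)$ to get $(x,y)=(t^2/4,\,-2t^{4n-1}(t-1)^{4n})$ and observe that $y$ is pressed to zero for large $n$, so the path hugs the real segment of $\{y=0\}$ used for $U_{AB}$. Your extra check of the endpoint at $B$ is sound but the worry about the four iterated blow-ups is already resolved by your own computation, since the coordinate $(z_B^1,z_B^2)=\bigl(\tfrac18(1-4x),\,2^7y/(1-4x)^4\bigr)$ is the final blow-up coordinate and your exponent $4n-4$ shows $z_B^2\to0$ directly; note also that your exponents $4n-1$ and $4n$ are the correct ones where the paper's proof writes $t^{n-1}(t-1)^{n}$.
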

\begin{proof}
In terms of the affine coordinate $(x,y)$ in (\ref{eq:xy-definition}),
$A$ and $B$ are represented by $(0,0)$ and $(\frac{1}{4},0)$,
respectively. From the definition (\ref{eq:xy-definition}), we have
\[
(x(p_{AB}(t)),y(p_{AB}(t))=\big(\frac{t^{2}}{4},-2\,t^{n-1}(t-1)^{n}\big).
\]
Since $y(p_{AB}(t))$ is sufficiently close to zero when $n$ is large,
we obtain a path which we use for the analytic continuation.
\end{proof}
From $p_{AB}(0)=[0,0,1]$ and $p_{AB}(1)=[1,0,1]$, we read the ideals
for the degenerations over $A$ and $B$ as follows:
\[
\begin{aligned}I_{A} & =\langle x_{2}x_{6},\,\,x_{3}x_{7},\,\,x_{4}x_{0},\,\,x_{5}x_{1}\rangle,\\
I_{B} & =\langle\,\,x_{0}^{2}+x_{4}^{2}+2x_{2}x_{6},\,\,x_{1}^{2}+x_{5}^{2}+2x_{3}x_{7},\,\,\\
 & \hsp{20}x_{2}^{2}+x_{6}^{2}+2x_{0}x_{4},\,\,x_{3}^{2}+x_{7}^{2}+2x_{1}x_{5}\rangle.
\end{aligned}
\]
It is easy to see from $I_{A}$ that the family $\cV_{\mbZ_{8}}^{1}$
degenerates to 16 $\mbP^{3}$s, consisting coordinate subspaces, whose
configuration is displayed in the dual intersection diagram in Fig.\ref{fig:dual-Int-Diag}
(where 16 $\mbP^{3}$ are represented by vertices). This degeneration
was first appeared in \cite{Pav} and was studied in detail there.
As the proposition below shows, the degeneration over $B$ has exactly
the same type as $A$, but none of 16 $\mbP^{3}$s is given by coordinate
subspace.

\begin{figure}
\includegraphics[scale=0.4]{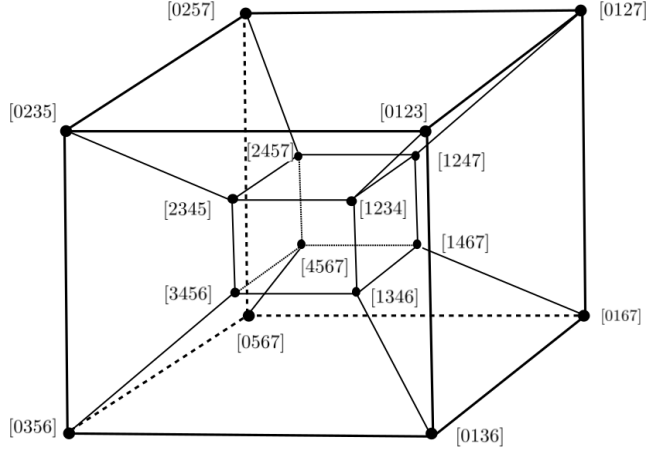}

\caption{Fig.6 \label{fig:dual-Int-Diag}Dual intersection diagram of $V(I_{A})$.
The bracket $[ijkl]$ assigned to each vertex represents a linear
subspace ($\simeq\protect\mbP^{3}$) in $\protect\mbP^{7}$ defined
by the ideal $\langle x_{i},x_{j},x_{k},x_{l}\rangle$. This diagram
should be identified with the three dimensional faces of a four dimensional
cube, which gives a polyhedral decomposition of three sphere $S^{3}$.
The other degenerations $V(I_{B})$ and $V(I_{C})$ are described
by the same diagram under the correspondences $x_{k}\leftrightarrow z_{k}\leftrightarrow u_{k}$. }
\end{figure}

\begin{prop}
\label{prop:Degen-B}By the following linear change of variables with
$i=\sqrt{-1}$,
\begin{equation}
\begin{aligned}z_{0} & =x_{0}\,+\,\,x_{2}-x_{4}\,-\,\,x_{6}, &  &  & z_{1} & =x_{1}\,+\,\,x_{3}-x_{5}\,-\,\,x_{7},\\
z_{2} & =x_{0}-i\,x_{2}+x_{4}-i\,x_{6}, &  &  & z_{3} & =x_{1}-i\,x_{3}+x_{5}-i\,x_{7},\\
z_{4} & =x_{0}\,-\,\,x_{2}-x_{4}\,+\,\,x_{6}, &  &  & z_{5} & =x_{1}\,-\,\,x_{3}-x_{5}\,+\,\,x_{7},\\
z_{6} & =x_{0}+i\,x_{2}+x_{4}+i\,x_{6}, &  &  & z_{7} & =x_{1}+i\,x_{3}+x_{5}+i\,x_{7},
\end{aligned}
\label{eq:phaseB-linear-zx}
\end{equation}
the four generators of $I_{B}$ are expressed by 
\begin{equation}
\begin{alignedat}{2}x_{0}^{2}+x_{4}^{2}+2\,x_{2}x_{6}=\frac{1}{2}(z_{2}z_{6}+z_{0}z_{4}), & \,\,\, & x_{1}^{2}+x_{5}^{2}+2\,x_{3}x_{7}=\frac{1}{2}(z_{3}z_{7}+z_{1}z_{5}),\\
x_{2}^{2}+x_{6}^{2}+2\,x_{0}x_{4}=\frac{1}{2}(z_{2}z_{6}-z_{0}z_{4}), & \,\,\, & x_{3}^{2}+x_{7}^{2}+2\,x_{1}x_{5}=\frac{1}{2}(z_{3}z_{7}-z_{1}z_{5}).
\end{alignedat}
\label{eq:phaseB-equation}
\end{equation}

\end{prop}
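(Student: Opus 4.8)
The statement is a direct verification, so the plan is essentially to substitute the linear change of variables (\ref{eq:phaseB-linear-zx}) into the right-hand sides of (\ref{eq:phaseB-equation}) and expand, organizing the algebra so that it stays short. First I would record the provenance of $I_{B}$: the endpoint $p_{AB}(1)=[1,0,1]\in\mbP_{w}^{2}$ specializes the defining quadrics (\ref{eq:def-eqs-f}) to $f_{1}=\frac{1}{2}(x_{0}^{2}+x_{4}^{2}+2x_{2}x_{6})$, $f_{2}=\frac{1}{2}(x_{1}^{2}+x_{5}^{2}+2x_{3}x_{7})$, $f_{3}=\frac{1}{2}(x_{2}^{2}+x_{6}^{2}+2x_{0}x_{4})$, $f_{4}=\frac{1}{2}(x_{3}^{2}+x_{7}^{2}+2x_{1}x_{5})$, so that $I_{B}$ is indeed the ideal written just before the proposition. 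The observation that makes the computation transparent is that (\ref{eq:phaseB-linear-zx}) preserves the splitting $\mbC^{8}=\langle x_{0},x_{2},x_{4},x_{6}\rangle\oplus\langle x_{1},x_{3},x_{5},x_{7}\rangle$: every $z_{j}$ with $j$ even (resp. odd) is a linear combination of only the even-indexed (resp. odd-indexed) coordinates. Hence it suffices to treat one four-dimensional block, the other following by the index shift $x_{k}\mapsto x_{k+1}$, $z_{k}\mapsto z_{k+1}$.

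For the even block I would introduce auxiliary coordinates $u=x_{0}+x_{4}$, $v=x_{0}-x_{4}$, $p=x_{2}+x_{6}$, $r=x_{2}-x_{6}$. Then directly $z_{0}=v+r$, $z_{4}=v-r$, $z_{2}=u-ip$, $z_{6}=u+ip$, so $z_{0}z_{4}=v^{2}-r^{2}$ and $z_{2}z_{6}=u^{2}+p^{2}$, while $x_{0}^{2}+x_{4}^{2}+2x_{2}x_{6}=\frac{1}{2}(u^{2}+v^{2})+\frac{1}{2}(p^{2}-r^{2})$ and $x_{2}^{2}+x_{6}^{2}+2x_{0}x_{4}=\frac{1}{2}(p^{2}+r^{2})+\frac{1}{2}(u^{2}-v^{2})$. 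Comparing gives $z_{2}z_{6}+z_{0}z_{4}=2(x_{0}^{2}+x_{4}^{2}+2x_{2}x_{6})$ and $z_{2}z_{6}-z_{0}z_{4}=2(x_{2}^{2}+x_{6}^{2}+2x_{0}x_{4})$, i.e. the first and third identities in (\ref{eq:phaseB-equation}); the remaining two come from the odd block by the identical computation. As a byproduct, since $I_{B}$ then contains $z_{2}z_{6}\pm z_{0}z_{4}$ and $z_{3}z_{7}\pm z_{1}z_{5}$, one obtains $I_{B}=\langle z_{0}z_{4},\,z_{1}z_{5},\,z_{2}z_{6},\,z_{3}z_{7}\rangle$ in the new coordinates, which is precisely the monomial pattern of $I_{A}=\langle x_{0}x_{4},\,x_{1}x_{5},\,x_{2}x_{6},\,x_{3}x_{7}\rangle$; this is what justifies the claim in the text that $V(I_{B})$ degenerates to $16$ copies of $\mbP^{3}$ with the same dual intersection diagram (Fig.\ref{fig:dual-Int-Diag}) under $x_{k}\leftrightarrow z_{k}$, none of which is a coordinate subspace in the original $x$-coordinates.

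There is no genuine obstacle here: the only things to watch are the signs and the factors of $i$ in the definitions of $z_{2},z_{3},z_{6},z_{7}$, and the choice of intermediate variables $u,v,p,r$ (rather than expanding blindly) to keep the algebra a line or two. One minor point worth checking en route is that (\ref{eq:phaseB-linear-zx}) is invertible — each of its two $4\times 4$ blocks is, up to rescaling, a discrete-Fourier-type matrix with nonzero determinant — so that it is a legitimate projective coordinate change on $\mbP^{7}$ and the conclusion really describes the same projective scheme $V_{8,w}^{1}$ in the limit.
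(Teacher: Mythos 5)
Your verification is correct and is exactly the direct computation the paper leaves implicit (the paper states the proposition without writing out a proof); your grouping into the even/odd blocks via $u=x_{0}+x_{4},\,v=x_{0}-x_{4},\,p=x_{2}+x_{6},\,r=x_{2}-x_{6}$ checks out line by line and yields the four identities as claimed. The byproduct $I_{B}=\langle z_{0}z_{4},z_{1}z_{5},z_{2}z_{6},z_{3}z_{7}\rangle$ and the invertibility remark are also correct and consistent with the paper's subsequent discussion of the dual intersection diagram.
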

From this proposition, we see that the degenerations of the family
over $A$ and $B$ are isomorphic under correspondences of linear
subspace $\left\{ x_{k}=0\right\} $ and $\left\{ z_{k}=0\right\} $,
while the transformation properties under the Heisenberg group $\cH_{8}$
differ from each other (see Proposition \ref{prop:actions-on-ABC}
below).

\subsection{Degeneration of the family $\protect\cV_{\protect\mbZ_{8}\times\protect\mbZ_{8}}^{1}$
over $C$. }

The degeneration point $C$ is located at the origin of $(s_{1},s_{2})=(\frac{1}{8}(4x+1),\frac{y}{4x+1})$
as described in \ref{para:Yukawa-C}. As in the preceding subsection,
we have
\[
(s_{1},s_{2})=\big(\frac{w_{0}^{2}+w_{2}^{2}}{8w_{2}^{2}},-\frac{2w_{1}^{4}}{w_{0}w_{2}(w_{0}^{2}+w_{2}^{2})}\big),
\]
and realize the origin by a suitable limit in $\mbP_{w}^{2}$. We
denote by $p_{AC}(t)\,(0<t<1)$ a path in $\mbP_{w}^{2}$ which represent
the path adapted for the analytic continuation $U_{AC}=(U_{CA})^{-1}$
in \ref{para:AnalyticC}; it satisfies $(s_{1}(p_{AC}(t)),s_{2}(p_{AC}(t)))\rightarrow A$
and $C$ when $t\rightarrow0$ and $1$, respectively. 
\begin{prop}
For a large integer $n$, the following $path$ in $\mbP_{w}^{2}$
\[
p_{AC}(t)=[i\,t,(i)^{\frac{1}{4}}\,t^{n}(t-1)^{n},1]\,\,\,(0<t<1)
\]
describes a path for the analytic continuation from $A$ to $C$. 
\end{prop}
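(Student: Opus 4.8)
The plan is to argue exactly as in the proof of the corresponding statement for $p_{AB}$: substitute the explicit path into the coordinate formulas, read off the two endpoints, and check that for $n$ large the path stays in a small neighborhood of the real locus $\{y=0\}$ along which the one–variable analytic continuation of \ref{para:AnalyticC} was set up. Recall from \ref{eq:xy-definition} that $x=\tfrac14\,w_0^2/w_2^2$ and $y=-2\,w_1^4/(w_0w_2^3)$, and that the blow–up coordinate at $C$ is $(s_1,s_2)=\big(\tfrac18(4x+1),\,y/(4x+1)\big)=\big(\tfrac{w_0^2+w_2^2}{8w_2^2},\,-\tfrac{2w_1^4}{w_0w_2(w_0^2+w_2^2)}\big)$ as recorded just before the statement. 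So the whole matter is a substitution–and–limit bookkeeping plus one branch issue.

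First I would plug in $w_0=i\,t$, $w_1=(i)^{1/4}t^n(t-1)^n$, $w_2=1$. Since $w_0^2=-t^2$ and $w_1^4=i\,t^{4n}(t-1)^{4n}$, one gets
\[
x\big(p_{AC}(t)\big)=-\tfrac{t^2}{4},\qquad y\big(p_{AC}(t)\big)=-2\,t^{4n-1}(t-1)^{4n},
\]
hence
\[
s_1\big(p_{AC}(t)\big)=\tfrac{1-t^2}{8},\qquad s_2\big(p_{AC}(t)\big)=-\,\tfrac{2\,t^{4n-1}(t-1)^{4n}}{1-t^2}.
\]
Then I would read off the endpoints: as $t\to0$ one has $(x,y)\to(0,0)$, i.e. the point $A$; as $t\to1$ one has $4x+1=1-t^2\to0$ and $s_2\to0$, because $(t-1)^{4n}$ vanishes faster than $1-t^2$ once $4n>1$, so $(s_1,s_2)\to(0,0)$, i.e. the point $C$ on the blow–up. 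Finally, for $n$ large $\sup_{0<t<1}\lvert y(p_{AC}(t))\rvert$ is arbitrarily small, so $p_{AC}$ is a small deformation of the segment $-\tfrac14<x<0$ of the line $\{y=0\}$; along it $4x-1=-(1+t^2)$, $4x+1=1-t^2$, $1+4x+y$ and $dis_0=(1-4x)^4-256xy(1+4x+y)$ are all non-zero for $0<t<1$ (the last two because $y$ is tiny while $1-t^2$ and $(1+t^2)^4$ are bounded away from $0$), and $w_0w_1=0$ only at the endpoints, so $p_{AC}$ avoids every component of the discriminant and realizes the continuation $U_{AC}=(U_{CA})^{-1}$ of \ref{para:AnalyticC}.

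The step I expect to require the most care is the choice of branches for the fractional expressions $i=\sqrt{-1}$ and $(i)^{1/4}$ defining the path. Unlike the $A$–$B$ case, $C$ lies over $x=-\tfrac14<0$, so $w_0$ must be taken purely imaginary, and the mirror map at $C$ carries the constant $C_1=\sqrt{-1}$ together with the normalization $N_C=\tfrac1{\sqrt2}$ appearing in Proposition \ref{prop:Conection-Matrices}; one must verify that taking $w_0=+i\,t$ and the principal fourth root in $w_1$ places the analytic continuation on precisely the sheet for which $\Pi_A=U_{CA}^{-1}\Pi_C$ holds with those normalizations (the opposite sign of $w_0$ would land on a different sheet). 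Concretely this means tracking the phase of $\omega_0^A$ and of the logarithmic solutions $\omega_1,\omega_2$ along $p_{AC}(t)$ and matching with the $\sqrt2$– and $i$–factors in $U_{CA}$; it is routine once the path is fixed, but it is the only genuinely delicate point, the rest being identical to the $p_{AB}$ argument.
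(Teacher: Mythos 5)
Your proposal is correct and follows essentially the same route as the paper: substitute the path into the coordinate formulas (\ref{eq:xy-definition}), observe that $x=-t^{2}/4$ interpolates between $A$ and $C$ while $y$ is uniformly small for large $n$, so the continuation reduces to the one along the boundary divisor $\{y=0\}$ set up in \ref{para:AnalyticC}. Your exponents $y=-2\,t^{4n-1}(t-1)^{4n}$ are in fact the correct ones (the paper's displayed $-2\,t^{n-1}(t-1)^{n}$ appears to be a typo, also present in the $p_{AB}$ case), and your additional checks of the blow-up endpoints and discriminant avoidance only add detail beyond the paper's two-line verification.
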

To show the above claim, we simply need to write the $(x,y)$ coordinate
of $p_{AC}(t)$, 
\[
(x(p_{AC}(t)),y(p_{AC}(t)))=\big(-\frac{t^{2}}{4},-2\,t^{n-1}(t-1)^{n}\big).
\]
using the definition (\ref{eq:xy-definition}). When $n$ is large,
we obtain a path for the analytic continuation. Now, corresponding
to Proposition \ref{prop:Degen-B}, we have 
\begin{prop}
\label{prop:Degen-C}If we change the variables by 
\begin{equation}
\begin{aligned}u_{0} & =\,\,\,x_{0}+\xi\,x_{2}\,-\,\,x_{4}+\xi\,x_{6}, &  &  & u_{1} & =\,\,\,x_{1}+\xi\,x_{3}\,-\,\,x_{5}-\xi\,x_{7},\\
u_{2} & =\xi\,x_{0}\,+\,\,x_{2}+\xi\,x_{4}\,-\,\,x_{6}, &  &  & u_{3} & =\xi\,x_{1}\,+\,\,x_{3}+\xi\,x_{5}\,-\,\,x_{7},\\
u_{4} & =-x_{0}+\xi\,x_{2}\,+\,\,x_{4}+\xi\,x_{6}, &  &  & u_{5} & =-x_{1}+\xi\,x_{3}\,+\,\,x_{5}+\xi\,x_{7},\\
u_{6} & =\xi\,x_{0}\,-\,\,x_{2}+\xi\,x_{4}\,+\,\,x_{6}, &  &  & u_{7} & =\xi\,x_{1}\,-\,\,x_{3}+\xi\,x_{5}\,+\,\,x_{7},
\end{aligned}
\label{eq:phaseC-linear-ux}
\end{equation}
where $\xi=(-1)^{\frac{1}{4}}$, then the four generators of $I_{C}$
are expressed by 
\[
\begin{alignedat}{3} & \frac{i}{2}(x_{0}^{2}+x_{4}^{2})+x_{2}x_{6}=u_{2}u_{6}-i\,u_{0}u_{4}, &  &  & \,\,\,\,\, & \frac{i}{2}(x_{1}^{2}+x_{5}^{2})+x_{3}x_{7}=u_{3}u_{7}-i\,u_{1}u_{5},\\
 & \frac{i}{2}(x_{2}^{2}+x_{6}^{2})+x_{0}x_{4}=u_{0}u_{4}-i\,u_{2}u_{6}, &  &  & \,\,\,\,\,\, & \frac{i}{2}(x_{3}^{2}+x_{7}^{2})+x_{1}x_{5}=u_{1}u_{5}-i\,u_{3}u_{7}.
\end{alignedat}
\]

\end{prop}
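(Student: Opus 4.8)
The plan is to reduce Proposition~\ref{prop:Degen-C} to a direct block-diagonal computation in linear algebra, by the same method one would apply to Proposition~\ref{prop:Degen-B}. First I would identify the ideal $I_{C}$. Under the relation~(\ref{eq:xy-definition}) the path $p_{AC}(t)=[\,it,\,(i)^{\frac14}t^{n}(t-1)^{n},1\,]$ has affine coordinate $(x,y)=(-t^{2}/4,\,-2\,t^{n-1}(t-1)^{n})$, hence $(s_{1},s_{2})\to(0,0)$ as $t\to 1$, so the boundary point $C$ is realized in $\mbP_{w}^{2}$ by $p_{AC}(1)=[i,0,1]$. Substituting $(w_{0},w_{1},w_{2})=(i,0,1)$ into~(\ref{eq:def-eqs-f}) then shows that $I_{C}$ is generated by the four quadrics appearing on the left of the displayed identities, namely $\tfrac{i}{2}(x_{0}^{2}+x_{4}^{2})+x_{2}x_{6}$, $\tfrac{i}{2}(x_{1}^{2}+x_{5}^{2})+x_{3}x_{7}$, $\tfrac{i}{2}(x_{2}^{2}+x_{6}^{2})+x_{0}x_{4}$ and $\tfrac{i}{2}(x_{3}^{2}+x_{7}^{2})+x_{1}x_{5}$.

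The key structural point is that the substitution~(\ref{eq:phaseC-linear-ux}) respects the splitting of the homogeneous coordinates into the even-indexed block $\{x_{0},x_{2},x_{4},x_{6}\}$ and the odd-indexed block $\{x_{1},x_{3},x_{5},x_{7}\}$: the $u_{0},u_{2},u_{4},u_{6}$ are linear in the former only and the $u_{1},u_{3},u_{5},u_{7}$ in the latter only. Each of the four generators of $I_{C}$ lies in a single block, so the whole claim reduces to two independent $4\times4$ verifications. For the even block I would group $u_{0}=\xi(x_{2}+x_{6})+(x_{0}-x_{4})$, $u_{4}=\xi(x_{2}+x_{6})-(x_{0}-x_{4})$, $u_{2}=\xi(x_{0}+x_{4})+(x_{2}-x_{6})$ and $u_{6}=\xi(x_{0}+x_{4})-(x_{2}-x_{6})$, so that, using $\xi^{2}=i$, one gets $u_{0}u_{4}=i(x_{2}+x_{6})^{2}-(x_{0}-x_{4})^{2}$ and $u_{2}u_{6}=i(x_{0}+x_{4})^{2}-(x_{2}-x_{6})^{2}$; the cross terms then cancel and $u_{2}u_{6}-i\,u_{0}u_{4}=2i(x_{0}^{2}+x_{4}^{2})+4x_{2}x_{6}$, which reproduces the first generator of $I_{C}$ up to an overall nonzero constant, while $u_{0}u_{4}-i\,u_{2}u_{6}=2i(x_{2}^{2}+x_{6}^{2})+4x_{0}x_{4}$ reproduces the third. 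The odd block is handled identically and yields the remaining two identities. Since $I_{C}$ becomes a monomial ideal in the $u_{k}$, the dual-intersection diagram of $V(I_{C})$ is the one of Fig.~\ref{fig:dual-Int-Diag} with $x_{k}$ replaced by $u_{k}$, and the degeneration over $C$ has the same combinatorial type as those over $A$ and $B$.

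There is no substantial obstacle; the argument is a finite calculation. The only genuine input is the choice of the linear change of variables~(\ref{eq:phaseC-linear-ux}), which --- exactly as for~(\ref{eq:phaseB-linear-zx}) in Proposition~\ref{prop:Degen-B} --- is forced by the requirement that each quadric $f_{i}$ split simultaneously into a combination of two products of linear forms whose common zero locus is a union of coordinate $\mbP^{3}$'s; one locates it by diagonalizing the quadratic forms block by block. The care needed is purely bookkeeping: tracking the fourth root of unity $\xi=(-1)^{\frac14}$ (with $\xi^{2}=i$, $\xi^{4}=-1$) and the irrelevant overall scalars so that the transformed equations emerge with precisely the coefficients $1$ and $-i$ in the statement. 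The Heisenberg-equivariance of this degeneration, needed later for Proposition~\ref{prop:actions-on-ABC}, is best recorded separately rather than folded into this proof.
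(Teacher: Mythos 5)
Your argument is the paper's argument: the paper offers nothing beyond ``it is easy to verify the above proposition,'' and your block-diagonal expansion (after locating $C$ at $[i,0,1]\in\mbP_{w}^{2}$ via $p_{AC}(1)$ and reading off the four generators of $I_{C}$ from (\ref{eq:def-eqs-f})) is precisely that verification. The only thing the paper adds, in Remark \ref{rem:H8-action-ABC}, is that the substitution (\ref{eq:phaseC-linear-ux}) is $u=g^{-1}.x$ for $g=S^{2}\in\cN\cH_{8}$, which explains where the linear change of variables comes from but is not needed for the check. Two points that your computation makes visible are worth recording. First, as you note, $u_{2}u_{6}-i\,u_{0}u_{4}=2i(x_{0}^{2}+x_{4}^{2})+4x_{2}x_{6}$, so the displayed identities hold only after inserting an overall $\tfrac14$ on the right (or rescaling each $u_{k}$ by $\tfrac12$), the analogue of the explicit $\tfrac12$ in Proposition \ref{prop:Degen-B}; this is harmless for the ideal-theoretic conclusion but means the equalities are not literally true as printed. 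Second, your claim that the odd block is ``handled identically'' silently corrects a sign: with the printed coefficient $-\xi$ of $x_{7}$ in $u_{1}$ one has $u_{1}=(x_{1}-x_{5})+\xi(x_{3}-x_{7})$ while $u_{5}=-(x_{1}-x_{5})+\xi(x_{3}+x_{7})$, so $u_{1}u_{5}$ is not a difference of squares, the cross terms survive, and the second identity fails; the intended coefficient is $+\xi$, which restores the symmetry with the even block and is also what the $\tau$-action $(R_{1}R_{3}R_{5}R_{7})$ asserted in Proposition \ref{prop:actions-on-ABC} requires.
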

It is easy to verify the above proposition. From this proposition,
we see an isomorphisms of $V(I_{C})$ to $V(I_{A})$ under the correspondences
between the linear subspaces $\left\{ x_{k}=0\right\} $ and $\left\{ u_{k}=0\right\} $.

\subsection{Heisenberg group actions on the degenerations}

The equations in Propositions \ref{prop:Degen-B}, \ref{prop:Degen-C}
entail the same dual intersection diagram as that for $I_{A}$ (see
Fig.\ref{fig:dual-Int-Diag}), under the correspondences between linear
subspaces in $\mbP^{7}$; 
\[
P_{k}:=\left\{ x_{k}=0\right\} \,\,\leftrightarrow\,\,Q_{k}:=\left\{ z_{k}=0\right\} \,\,\leftrightarrow\,\,R_{k}:=\left\{ u_{k}=0\right\} .
\]
 We note that Heisenberg group $\cH_{8}=\langle\sigma,\tau\rangle$
acts naturally on the finite set $\left\{ P_{k}\right\} $ as permutations,
which we write 
\begin{equation}
\sigma:\,(P_{0}P_{7}P_{6}P_{5}P_{4}P_{3}P_{2}P_{1}),\,\,\,\,\,\tau:\,(P_{0})(P_{1})\cdots(P_{7})\label{eq:H8-act-A}
\end{equation}
by using cycle notation of permutations. It is easy to derive the
following results from the linear relations of $z_{k}$ and $u_{k}$
to $x_{k}$. 
\begin{prop}
\label{prop:actions-on-ABC}The generators $\sigma$ and $\tau$ of
$\cH_{8}$ act on the linear subspaces $\left\{ Q_{k}\right\} $ and
$\left\{ R_{k}\right\} $ for $I_{B}$ and $I_{C}$, respectively,
as
\begin{equation}
\begin{alignedat}{2} & \sigma:\,(Q_{0}Q_{5}Q_{4}Q_{1})(Q_{2}Q_{7}Q_{6}Q_{3}), & \,\,\,\, & \tau:\,(Q_{0}Q_{2}Q_{4}Q_{6})(Q_{1}Q_{3}Q_{5}Q_{7}),\\
 & \sigma:\,(R_{0}R_{7}R_{6}R_{5}R_{4}R_{3}R_{2}R_{1}), &  & \tau:\,(R_{0}R_{2}R_{4}R_{6})(R_{1}R_{3}R_{5}R_{7}).
\end{alignedat}
\label{eq:H8-act-BC}
\end{equation}
 
\end{prop}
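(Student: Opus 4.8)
The plan is to reduce the statement to a direct linear-algebra computation. The Heisenberg group $\cH_{8}=\langle\sigma,\tau\rangle$ acts linearly on $\mbC^{8}$ through (\ref{eq:sigma-tau-def}), and the subspaces in question are the coordinate hyperplanes $P_{k}=\{x_{k}=0\}$, $Q_{k}=\{z_{k}=0\}$, $R_{k}=\{u_{k}=0\}$ attached to the three bases $\{x_{k}\}$, $\{z_{k}\}$, $\{u_{k}\}$ of $\mbC^{8}$ linked by the invertible changes of variables (\ref{eq:phaseB-linear-zx}) and (\ref{eq:phaseC-linear-ux}). An invertible linear map permutes a complete system of coordinate hyperplanes exactly when it permutes the corresponding linear forms up to nonzero scalars; so the first and essentially only step is to apply $\sigma$ and $\tau$ to each form $z_{k}$, and to each form $u_{k}$, using $\sigma(x_{i})=x_{i-1}$ and $\tau(x_{i})=\xi^{-i}x_{i}$ (indices mod $8$), and to re-express the outcome in the same basis.

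First I would handle $\tau$ on the $z_{k}$: since (\ref{eq:phaseB-linear-zx}) splits into an even block ($z_{0},z_{2},z_{4},z_{6}$ in terms of $x_{0},x_{2},x_{4},x_{6}$) and an odd block, and since $\xi^{2}=\sqrt{-1}$, the diagonal action of $\tau$ should cyclically permute each block up to scalars, producing $(Q_{0}Q_{2}Q_{4}Q_{6})$ and $(Q_{1}Q_{3}Q_{5}Q_{7})$. Next I would substitute $x_{i}\mapsto x_{i-1}$ to compute $\sigma$ on the $z_{k}$; I expect, for instance, $\sigma(z_{0})=x_{7}+x_{1}-x_{3}-x_{5}=z_{5}$, and then chasing $z_{5}\mapsto z_{4}\mapsto -z_{1}\mapsto z_{0}$ and the parallel chain on $z_{2},z_{7},z_{6},z_{3}$, to obtain $(Q_{0}Q_{5}Q_{4}Q_{1})(Q_{2}Q_{7}Q_{6}Q_{3})$. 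Then I would repeat the same two computations verbatim for the $u$-basis of (\ref{eq:phaseC-linear-ux}) with $\xi=(-1)^{1/4}$; here I expect $\sigma$ to act again as the $8$-cycle $(R_{0}R_{7}R_{6}R_{5}R_{4}R_{3}R_{2}R_{1})$ of (\ref{eq:H8-act-A}), while $\tau$ is no longer diagonal in the $u_{k}$ and instead yields $(R_{0}R_{2}R_{4}R_{6})(R_{1}R_{3}R_{5}R_{7})$. Collecting these four computations gives (\ref{eq:H8-act-BC}).

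The main obstacle is not conceptual but bookkeeping: keeping the eighth roots of unity straight ($\xi=e^{2\pi i/8}$ versus $\sqrt{-1}=\xi^{2}$ versus $(-1)^{1/4}$), confirming that each $\sigma(z_{k})$, $\tau(z_{k})$, $\sigma(u_{k})$, $\tau(u_{k})$ comes out a \emph{nonzero} multiple of a single basis form — this is automatic from invertibility, but it is what makes the induced permutation of the $Q_{k}$, $R_{k}$ independent of how each coordinate is normalized — and making sure the cycle notation, read as a permutation of the linear forms, is compatible with the previously established action (\ref{eq:H8-act-A}) on the $P_{k}$ and with $\sigma^{8}=\tau^{8}=\id$. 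A phase or sign slip is the only realistic error, and it is self-correcting once one demands that $\sigma$ and $\tau$ close up to order $8$.
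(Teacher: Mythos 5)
Your proposal is correct and matches the paper's own (very terse) argument: the paper likewise just reads the permutations off by applying $\sigma(x_i)=x_{i-1}$ and $\tau(x_i)=\xi^{-i}x_i$ to the linear forms in (\ref{eq:phaseB-linear-zx}) and (\ref{eq:phaseC-linear-ux}) and observing that each image is a nonzero scalar multiple of a single basis form; your sample computations (e.g.\ $\sigma(z_0)=z_5$, $\sigma(z_4)=-z_1$) check out. One small caveat: invertibility only guarantees the images are nonzero, not that each is a multiple of a \emph{single} coordinate form — that part genuinely requires the explicit computation you carry out, so it is verified rather than "automatic".
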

For the mirror family $\cV_{\mbZ_{8}}^{1}$ of $V_{8,w}^{1}$ (and
also $V_{8,w}^{1}/\mbZ_{8}\times\mbZ_{8}$), we consider a $\mbZ_{8}$-quotient
of $\cV^{1}\rightarrow\mbP_{w}^{2}$ by the subgroup $\mbZ_{8}\simeq\langle\tau\rangle$
of $\mbZ_{8}\times\mbZ_{8}(\simeq\cH_{8}$ projectively). As described
above, for the degenerations over $A$ and $B$, the (dual) intersection
complex of the linear subspaces are isomorphic but the actions of
the $\mbZ_{8}$ subgroup differ from each other. We need to study
more to reveal how the mirror symmetry we observed in Sections \ref{sec:MSbyPF-A},\ref{sec:MS-B},\ref{sec:MS-more-C}
are encoded in the geometry of the degenerations over $A$ and $B$
(and also $C$). We leave more detailed analysis for future investigations. 
\begin{rem}
\label{rem:H8-action-ABC}We notice a similarity in the linear relations
in (\ref{eq:phaseB-linear-zx}) and (\ref{eq:phaseC-linear-ux}) as
well as the ideals $I_{B}$ and $I_{C}$. However, there is a sharp
difference between the two in their symmetry properties: The four
generators of the ideal $I_{C}$ follow from the symmetry relation
(\ref{eq:g-act-f}) 
\[
f_{i}(\rho(g).w,\,g.x)=\sum c_{ij}(g)f_{j}(w,x)
\]
for $g=S.S$ by setting $w=(0,0,1)$ and replacing $x$ with $g^{-1}.x$.
Then the linear relation (\ref{eq:phaseC-linear-ux}) is identified
with $u=g^{-1}.x$. On the other hand, we can verify that no group
element $g$ in $\cN\cH_{8}$ explains the relation (\ref{eq:phaseB-equation}).
Since the element $\bar{S}.\bar{S}$ is contained in $G_{1}^{ex}\setminus G_{0}$
as we see in (\ref{eq:G1-ex}), two points $A$ and $C$ are identified
in the quotient $\mbP_{w}^{2}/G_{1}^{ex}$ as well as in the quotient
$\mbP_{w}^{2}/G_{\rho}\simeq\mbP^{2}(3,2,1)$. The fact that there
is no such element $g\in\cN\cH_{8}$ for $I_{B}$ indicates that $B$
is mapped to a point different from $A$ and $C$ in $\mbP^{2}(3,2,1)$.
In fact, using the invariants given in Proposition \ref{prop:invariants-G1},
we see that both the points $w=(0,0,1)$ and $w=(i,0,1)$ are mapped
to $[0,1,0]$ in $\mbP^{2}(3,2,1)$ while $w=(1,0,1)$ is mapped to
$[0,0,1]\in\mbP^{2}(3,2,1)$. $\hfill\square$
\end{rem}
\vskip2cm

\section{\textbf{Summary and Discussions}}

We have presented a thorough study on two families coming from a family
$\cV^{1}\rightarrow\mbP_{w}^{2}$ of Calabi-Yau manifolds with vanishing
Euler numbers. We have observed that mirror symmetry, including Fourier-Mukai
partners, arises nicely from the boundary points of the relevant family.
Assuming mirror symmetry, we have determined Gromov-Witten potentials
$F_{g}^{M}$ for $g=0,1,2\,(M=A,B)$ and $g=0,1$ ($M=C$); and found
that they are written in terms of quasi-modular forms. From these
results, one may expect that these topological string theories are
completely integrable in these cases. 

The $q$-series $Z_{g,n}^{M}(q)$ for $A$ and $B$ counts Gromov-Witten
invariants of Calabi-Yau manifolds which are derived equivalent to
each other. These counting problems of curves should have corresponding
moduli problems of stable sheaves on these Calabi-Yau manifolds. Deriving
our counting functions from this point of view is eagerly desired
but beyond the scope of this paper. We expect some interesting geometry
of moduli spaces of sheaves on Calabi-Yau manifolds will appear from
this problem. 

Studying derived equivalences of Calabi-Yau threefolds with Picard
numbers one in the context of mirror symmetry was initiated by an
interesting example found in \cite{Rod,BKvS} (and further studied
in \cite{BC,HoriT}) which is related to classical projective duality
between Grassmannians and Pfaffians. This example was extended later
to the other classical projective duality of the so-called symmetroids
in \cite{HT-JAG,HT-JDG}. These derived equivalences are now understood
in a framework called homological projective duality due to Kuznetsov
\cite{Ku}. In this framework, the derived equivalence in this paper
may be described by homological projective dualities for joins \cite{Ku-Perry}.
Note that $V_{8,w}^{1}$ degenerate to joins of two elliptic quartic
curves over the discriminant $L_{1}$ as summarized in Proposition
\ref{prop:V1-degeneration-loci}. Starting with joins of elliptic
curves, we can find some other examples of derived equivalent Calabi-Yau
manifolds. Recently, Inoue \cite{Inoue} has reported interesting
examples of derived equivalent Calabi-Yau manifolds from this construction
by joins. It should be noted that Calabi-Yau manifolds which come
from joins of elliptic curves have Picard numbers greater than one,
like $V_{8,w}^{1}$. Mirror families of these Calabi-Yau manifolds
are also discussed in \cite{Inoue} finding interesting relations
to fiber-products of elliptic modular surfaces. In a paper \cite{Kanpp},
these Calabi-Yau manifolds are formulated in physics languages called
gauged linear sigma models (GLSM), which describe the $A$-side of
the stringy moduli spaces (K\"ahler moduli) directly. These Calabi-Yau
manifolds are expected to give us interesting examples of mirror symmetry. 

~

In Section \ref{sec:Degens-CY}, we have described degenerations of
the families $\cV_{\mbZ_{8}}^{1}$ and $\cV_{\mbZ_{8}\times\mbZ_{8}}^{1}$
over the points $A,B$ and $C$, up to quotients by $\mbZ_{8}$ or
$\mbZ_{8}\times\mbZ_{8}$. We have found that the types of degenerations
are all isomorphic but the group actions differ from each other. We
need to construct a real torus fibration and take suitable quotients
to extract the corresponding mirror geometries, which we leave for
future works. For the degeneration over $A$, however, the geometric
mirror construction due to Gross and Siebert \cite{GS1,GS2} has been
performed in detail in \cite{Pav}. We have now discovered two additional
degenerations over $B$ and $C$ where mirror symmetry to different
Calabi-Yau manifolds emerges, in particular mirror symmetry to a Fourier-Mukai
partner appears from $B$. It is an interesting problem to see how
the geometry of Fourier-Mukai partners arises in the geometric approaches
\cite{SYZ,GS1,GS2} to mirror symmetry. 

\newpage

\selectlanguage{english}%
\appendix
\renewcommand{\themyparagraph}{{\Alph{section}.\arabic{subsection}.\alph{myparagraph}}}

\selectlanguage{american}%

\section{\label{sec:AppendixA-PF}\textbf{Picard-Fuchs differential operators
$\protect\cD_{2}$ and $\protect\cD_{3}$}}

Here we present the operators $\cD_{2}$ and $\cD_{3}$ in Proposition
\ref{prop:PFeqs-A-A'} in terms of the affine coordinate $x,y$ in
(\ref{eq:xy-definition}). They are given by 
\begin{equation}
\cD_{2}=\sum_{i+j\leq2}p_{ij}(x,y)\theta_{x}^{i}\theta_{y}^{j},\,\,\,\,\,\,\,\cD_{3}=\sum_{i+j\leq3}q_{ij}(x,y)\theta_{x}^{i}\theta_{y}^{j},\label{eq:App-PFeqsD2D3}
\end{equation}
where $\theta_{x}:=x\frac{\partial\;}{\partial x},\,\theta_{y}:=y\frac{\partial\;}{\partial y}$,
and $p_{ij},q_{ij}$ are polynomials of $x,y$. Non-vanishing polynomials
$p_{ij}$ and $q_{ij}$ are given by
\[
\begin{alignedat}{3} & p_{00}=16x(1-12x)y^{2}, &  & \,\,\, &  & p_{01}=-(1-40x+208x^{2})y^{2},\\
 & p_{10}=2(1+8x-112x^{2})y^{2}, &  &  &  & p_{11}=32x(1-4x)y^{2},\\
 & p_{20}=4(1-4x)(1+4x)y^{2}, &  &  &  & p_{02}=-(1-4x)y\big\{(1-4x)^{2}+(1-12x)y\big\},
\end{alignedat}
\]
and
\[
\begin{aligned}\begin{alignedat}{3} & q_{00}=16xy(1-36x-3y), &  &  &  & q_{01}=y\big((1-36x)(1+60x)-(7-292x)y\big),\\
 & q_{20}=q_{21}=-20(1-4x)y^{2}, &  & \,\,\,\,\,\,\, &  & q_{11}=-2y\big(3(1-4x)(1-36x)+(9-76x)y\big),
\end{alignedat}
\\
\begin{aligned} & q_{10}=-2y\big((1-4x)(1-36x)+(7-68x)y\big), & \,\\
 & q_{12}=-2(1-4x)\big((1+4x+3y)(1-36x)+2y^{2}\big),\\
 & q_{02}=-(1-36x)\big((1-4x)(1+12x)-3(1+28x)y\big)+2(7-200x)y^{2},\\
 & q_{03}=2(1-36x)(1+20x+32x^{2})+(3+20x)(3-76x)y+(7-156x)y^{2}.
\end{aligned}
\end{aligned}
\]

~

~

~

\section{\label{sec:Appendix-Griffiths-Yukawa}\textbf{Griffiths-Yukawa couplings}}

Griffiths-Yukawa couplings $C_{ijk}$ of the family $\cV_{\mbZ_{8}}^{1}\rightarrow\mbP_{\Delta}$
has been introduced in Subsection \ref{sub:Griffiths-Yukawa-couplings}.
Here we present explicit forms of the polynomials $P_{ijk}(x,y)$
in Proposition \ref{prop:GY-Cijk-A};
\[
\begin{alignedat}{2}P_{111} &  &  & =16(1-4x)^{2}(1+12x+64x^{2})\\
 &  &  & \;\;\;\;+32(1+4x)(1-10x+216x^{2})y+16(1-16x+432x^{2})y^{2},\\
P_{112} &  &  & =16(1-4x)^{3}+16(3-80x-240x^{2})y+32(1-36x)y^{2},\\
P_{122} &  &  & =64(1+20x+32x^{2})+64(1+12x)y,\\
P_{222} &  &  & =64(1+4x)+128y.
\end{alignedat}
\]

~

~

~

\section{\label{sec:App-Canonical-Form}\textbf{Canonical form of local solutions}}

The first (or ``classical'') form of mirror symmetry appears as
the isomorphisms between weight monodromy filtration on $H_{3}\otimes\mbR$
and the natural filtrations on $H^{even}=\oplus H^{i,i}\cap\mbR$
coming from the nil-potent action $L(\cdot)=\kappa\wedge\cdot$ in
the hard Lefschetz theorem, where $\kappa$ is a K\"ahler form of
mirror Calabi-Yau manifold. If we start with a local system associated
to a family of Calabi-Yau manifolds, this isomorphism, if exists,
appears in the local solutions arranged in a canonical form. If the
parameter space of the family is one dimensional, the canonical form
was reported in \cite[(2-5)]{HT-JAG}. Here we generalize it to the
form which is applicable to the present case, i.e., a family with
parameter space of dimension two.

Let $P$ be a boundary points given by normal crossing divisors. We
take an affine coordinate $(x_{1},x_{2})$ centered at $P$, and assume
that Picard-Fuchs equations are given {}{in terms of}
this coordinate. In this settings, we can recognize mirror symmetry
if the boundary point satisfies some monodromy properties to be a
large complex structure limit (LCSL)\cite{MoLCSL}. We can summarize
the required monodromy properties and mirror symmetry from $P$ in
the following special form of local solutions: Firstly there exists
unique regular solutions (up to constant), which we write $\omega_{0}(x)$.
Secondly, there exists a symmetric tensor $d_{ijk}$ by which all
other local solutions are written in the following form:
\begin{equation}
\begin{matrix}\begin{alignedat}{2} & \,\,\,\omega_{i}(x)=(\log x_{i})\,\omega_{0}+\omega_{i}^{reg},\\
 & \omega_{2,i}(x)=-\sum_{j,k}d_{ijk}(\log x_{j})(\log x_{k})\,\omega_{0}+2\sum_{j,k}d_{ijk}(\log x_{j})\,\omega_{k}+\omega_{2,i}^{reg}, &  & \quad\\
 & \,\,\omega_{3}(x)=\sum_{i,j,k}d_{ijk}(\log x_{i})(\log x_{j})(\log x_{k})\,\omega_{0}-3\sum_{i,j,k}d_{ijk}(\log x_{i})(\log x_{j})\,\omega_{k},\\
 & \qquad\qquad\qquad\qquad\qquad\qquad\qquad\qquad\qquad\;\;+3\sum_{i}(\log x_{i})\,\omega_{2,i}+\omega_{3}^{reg},
\end{alignedat}
\end{matrix}\label{eq:Appendix-omega-k}
\end{equation}
 where $\omega_{i}^{reg}$ represents a power series with no constant
terms if the power series of $\omega_{0}(x)$ starts from a constant
term, i.e. $\omega_{0}(x)=1+O(x)$. (These solutions coincide with
the local solutions which are obtained by the (generalized) Frobenius
method for hypergeometric series \cite{HKTY1,HKTY2,HLY}, when Picard-Fuchs
equations are given by certain hypergeometric systems.) Finally, when
the boundary point $P$ is mirror symmetric to a Calabi-Yau threefold
$X$ with some choice of nef divisors $H_{1},H_{2}$, the canonical
form of local solutions is given by
\begin{equation}
\Pi_{P}(x)=N_{P}\left(\begin{matrix}1 & 0 & 0 & 0 & 0 & 0\\
0 & 1 & 0 & 0 & 0 & 0\\
0 & 0 & 1 & 0 & 0 & 0\\
\beta_{2} & a_{11} & a_{12} & K/2 & 0 & 0\\
\beta_{1} & a_{21} & a_{22} & 0 & K/2 & 0\\
\gamma & \beta_{1} & \beta_{2} & 0 & 0 & -K/6
\end{matrix}\right)\left(\begin{matrix}\,\,\,\,\omega_{0}\\
n_{1}\,\omega_{1}\\
n_{1}\,\omega_{2}\\
\,\,n_{2}\,\omega_{2,2}\\
\,\,n_{2}\,\omega_{2,1}\\
n_{3}\,\omega_{3}
\end{matrix}\right),\label{eq:App-Canonial-Pi}
\end{equation}
where $n_{k}=\frac{1}{(2\pi i)^{k}}$, $\beta_{k}=-\frac{c_{2}(X).H_{k}}{24},\,\gamma=-\frac{\zeta(3)}{(2\pi i)^{3}}\chi(X)$
and $K$ is a constant satisfying $H_{i}.H_{j}.H_{k}=K\,d_{ijk}$. 

The overall constant $N_{P}$ is not determined, but $N_{P}$ and
$N_{P'}$ for two different boundary points are related by analytic
continuation of the local solutions. The $a_{ij}$ represent the so-called
quadratic ambiguity in the genus zero potential $F_{0}^{X}$, which
does not have any geometric meanings. It is conjectured in \cite{HosIIA,CCarge}
that, if mirror symmetry arises from the boundary point $P$, the
above canonical form $\Pi_{P}(x)$ with suitable choice of $a_{ij}$
gives an integral, symplectic basis of period integrals with respect
to a symplectic form 
\begin{equation}
\Sigma=\left(\begin{matrix}O & J\\
-J & O
\end{matrix}\right)\label{eq:App-Sigma}
\end{equation}
where $J=\left(\begin{smallmatrix}0 & 0 & 1\\
0 & 1 & 0\\
1 & 0 & 0
\end{smallmatrix}\right)$. Although it is implicit, we can transform the period integrals in
$\Pi_{P}(x)$ to the corresponding cycles $\left\{ \alpha_{0},\alpha_{1},\alpha_{2},\beta_{2},\beta_{1},\beta_{0}\right\} $
in $H_{3}$. Then the set of these cycles $\left\{ \alpha_{0},\alpha_{1},\alpha_{2},\beta_{2},\beta_{1},\beta_{0}\right\} $
forms an integral and symplectic basis of $H_{3}$ satisfying $\langle\alpha_{i},\alpha_{j}\rangle=\langle\beta_{i},\beta_{j}\rangle=0$
and $\langle\alpha_{i},\beta_{j}\rangle=\delta_{ij}=-\langle\beta_{j},\alpha_{i}\rangle$,
where $\langle A,B\rangle:=\int\mu_{A}\wedge\mu_{B}$ with $\mu_{C}$
representing the Poincar\'e dual of $C$. 

~

\newpage

\section{\label{sec:Appendix-Propagator-f2}\textbf{Propagator $S^{ab}$ and
$f_{2}(x,y)$ }}

BCOV recursion formulas for $F_{g}$ are simplified when the Euler
number of a Calabi-Yau manifold vanishes. As displayed for $F_{2}$
in (\ref{eq:F2-BCOV}), $F_{g}$ are determined by the propagator
$S^{ab}$ and the rational function $f_{g}$ together with derivatives
of $F_{h}\,\,(h<g)$. In this appendix, we denote by $(x^{i})=(x^{1},x^{2})$
an affine coordinate centered by a degeneration point (LCSL) $P$;
and let $(t^{a})=(t^{1},t^{2})$ be the coordinate introduced by making
ratios of period integrals (Definition \ref{def:MirrorMap-A}).

\subsection{Propagator $S^{ab}$}

The propagator $S^{ab}$ is determined by solving the so called special
geometry relation for the Weil-Petersson metric on the moduli space,
\begin{equation}
\Gamma_{ij}^{k}=\delta_{i}^{\,k}K_{j}+\delta_{j}^{\,k}K_{i}-C_{ijk}S^{mk}+f_{ij}^{k},\label{eq:App-Special-K-rel}
\end{equation}
where $K_{i}=\frac{\partial\;}{\partial x_{i}}K$ is a derivative
of the K\"ahler potential and $f_{ij}^{k}$ is a holomorphic (rational)
tensor which we determine imposing consistency conditions. To solve
the above relation, consider the inverse matrix $C_{i}^{-1}:=(C_{ijk})^{-1}$
for a fixed $i$ and introduce its matrix components by $C_{i}^{-1}=(C_{i}^{jk})$,
then we have 
\[
S^{mk}=\sum_{j}C_{i}^{mj}\left\{ -\Gamma_{ij}^{k}+K_{j}\delta_{i}^{k}+K_{i}\delta_{j}^{k}+f_{ij}^{k}\right\} .
\]
Following \cite{BCOV1,BCOV2}, when we take the so-called topological
limit ``$\bar{t}\rightarrow\infty"$, the special geometry relation
reduces to a holomorphic relation by 
\[
\Gamma_{ij}^{k}\rightarrow\sum_{a}\frac{\partial x^{k}}{\partial t^{a}}\frac{\partial\;}{\partial x^{i}}\frac{\partial t^{a}}{\partial x^{j}},\,\,\,\,\,K_{i}\rightarrow-\frac{\partial\;}{\partial x^{i}}\log\omega_{0}(x).
\]
Then, these limiting relations enable us to write the propagator near
a degeneration point (LCSL) in the following form up to unknown functions
$f_{ij}^{k}$:
\begin{equation}
S^{mk}=\sum_{j}C_{i}^{mj}\left\{ -\sum_{a}\frac{\partial x^{k}}{\partial t^{a}}\frac{\partial\;}{\partial x^{i}}\frac{\partial t^{a}}{\partial x^{j}}-\frac{1}{\omega_{0}(x)}\Big(\frac{\partial\omega_{0}}{\partial x^{j}}\delta_{i}^{k}+\frac{\partial\omega_{0}}{\partial x^{i}}\delta_{j}^{k}\Big)+f_{ij}^{k}\right\} .\label{eq:Appendix-Sxx}
\end{equation}
The propagator $S^{ab}$ in (\ref{eq:F2-BCOV}) is a transform of
the above $S^{ij}$ to the coordinate $(t^{1},t^{2})$ with a suitable
weight factor being introduced, i.e., $S^{ab}=(N_{P}\omega_{0}(x))^{2}\sum_{i,j}\frac{\partial t^{a}}{\partial x^{i}}\frac{\partial t^{b}}{\partial x^{j}}S^{ij}$. 

As we recognize, the r.h.s of (\ref{eq:Appendix-Sxx}) depends on
the choice of the index $i$, whereas the l.h.s does not. To be consistent,
we require that $S^{mk}$ are all equal for $i=1,2$. Also, as being
a propagator, we require that $S^{ij}$ is a symmetric tensor. Finally,
we impose that the unknowns $f_{ij}^{k}$ are given by rational functions
with possible poles along the discriminant loci of the family. 
\begin{prop}
Around the degeneration point $A$ with the coordinate $(x^{1},x^{2})=(x,y)$,
the above requirements on $f_{ij}^{k}$ are satisfied by {\small{}
\begin{equation}
\begin{alignedat}{3} & f_{11}^{1}=-\frac{2}{x}+\frac{32\,x}{dis_{1}dis_{3}}, & \,\,\,\, & f_{11}^{2}=\frac{16}{dis_{3}}-\frac{16\,dis_{2}}{dis_{1}dis_{3}}, & \,\,\,\, & f_{12}^{1}=\frac{8\,x}{dis_{1}dis_{3}},\\
 & f_{12}^{2}=-\frac{1}{x}+\frac{4}{dis_{3}}-\frac{4\,dis_{2}}{dis_{1}dis_{3}}, &  & f_{22}^{1}=-\frac{2\,x}{y\,dis_{3}}+\frac{2\,x}{dis_{1}dis_{3}}, & \,\,\,\, & f_{22}^{2}=-\frac{1}{y}-\frac{dis_{2}}{dis_{1}dis_{3}},
\end{alignedat}
\label{eq:Appendix-fijk}
\end{equation}
}where $f_{ij}^{k}=f_{ji}^{k}$ and $dis_{1}=1+4x+y$, $dis_{2}=1+4x,$
$dis_{3}=1-4x$.\end{prop}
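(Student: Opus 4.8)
The plan is to verify directly that the explicit tensor $f_{ij}^k$ given in (\ref{eq:Appendix-fijk}) satisfies the three requirements imposed in the discussion above: (i) that the resulting $S^{mk}$ computed from (\ref{eq:Appendix-Sxx}) is independent of the choice of the fixed index $i\in\{1,2\}$, (ii) that $S^{mk}$ is symmetric in $m,k$, and (iii) that the $f_{ij}^k$ are rational with poles only along the discriminant components $dis_1,dis_2,dis_3$. Item (iii) is immediate from inspection of the formulas, so the real content is (i) and (ii). The strategy is computational but structured: assemble the ingredients entering (\ref{eq:Appendix-Sxx}) — the inverse Yukawa matrices $C_i^{-1}=(C_i^{jk})$ from Proposition~\ref{prop:GY-Cijk-A} (with $d=1$), the logarithmic derivatives $\partial_j\log\omega_0$ of the period series (\ref{eq:sol-w0}), and the connection symbols $\Gamma_{ij}^k=\sum_a \frac{\partial x^k}{\partial t^a}\frac{\partial}{\partial x^i}\frac{\partial t^a}{\partial x^j}$ — and then substitute the proposed $f_{ij}^k$.

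First I would make the Christoffel term tractable. Rather than working with $t^a$ directly, note that $\Gamma_{ij}^k$ can be rewritten purely in terms of the mirror map data; using $t^a = \omega_a/\omega_0$ and the flatness of $t^a$, one gets $\frac{\partial}{\partial x^i}\frac{\partial t^a}{\partial x^j}$ expressed through second derivatives of $\omega_0,\omega_1,\omega_2$, which are in turn controlled by the Picard-Fuchs operators $\cD_2,\cD_3$ of Appendix~\ref{sec:AppendixA-PF}. Concretely, $\cD_2\omega = \cD_3\omega = 0$ lets one eliminate all second-order derivatives $\theta_x^2\omega, \theta_x\theta_y\omega, \theta_y^2\omega$ in favor of lower-order ones, so the combination $-\sum_a \frac{\partial x^k}{\partial t^a}\frac{\partial}{\partial x^i}\frac{\partial t^a}{\partial x^j} - \frac{1}{\omega_0}(\partial_j\omega_0\,\delta_i^k + \partial_i\omega_0\,\delta_j^k)$ becomes a rational function of $(x,y)$ after using the Picard-Fuchs relations — this is exactly the mechanism that makes the right-hand side of (\ref{eq:Appendix-Sxx}) rational at all. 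Then I would add $f_{ij}^k$ and compute $S^{mk}$ for $i=1$ and for $i=2$ separately, and check the difference vanishes identically as a rational function; symmetry $S^{mk}=S^{km}$ is checked the same way. Because everything is rational in two variables, the verification reduces to polynomial identities which can be confirmed by clearing denominators (the common denominator being a product of powers of $x,y,dis_0,dis_1,dis_2,dis_3$) and comparing coefficients, or simply by expanding both sides as power series in $x,y$ to sufficiently high order — consistent with the style of computation used throughout the paper (cf.\ \cite{HT-math-c}).

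The main obstacle I anticipate is not conceptual but bookkeeping: the inverse Yukawa matrices $C_i^{-1}$ have denominators $x^3 dis_0 dis_1^2 dis_2 dis_3$ and their siblings from Proposition~\ref{prop:GY-Cijk-A}, so naively the expression for $S^{mk}$ carries a spurious factor of $dis_0$ in the denominator. Showing that $dis_0$ (and the excess powers of $dis_1$) actually cancel — leaving only the mild poles visible in (\ref{eq:Appendix-fijk}) — is the delicate step, and it is precisely this cancellation that forces the specific form of $f_{ij}^k$: one can think of $f_{ij}^k$ as having been reverse-engineered to kill the unwanted singularities while preserving symmetry and $i$-independence. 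I would organize the check so that the $dis_0$-cancellation is verified first (e.g.\ by confirming the numerator of the combined expression vanishes on $\{dis_0=0\}$, using that $\omega_0$ and its derivatives satisfy the Picard-Fuchs system whose characteristic variety contains $\{dis_0=0\}$), and only afterward confirm the full rational identity. Finally, once the $A$-chart statement is established, the corresponding propagator $S_B^{ab}$ near $B$ claimed in the text follows by the coordinate change (\ref{eq:affine-relation-x-zB}) together with the tensorial transformation law for $S^{ab}$, so no separate argument is needed there.
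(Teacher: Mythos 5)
Your overall strategy --- direct verification, using the Picard--Fuchs operators $\cD_{2},\cD_{3}$ to reduce all derivatives of the periods and then checking $i$-independence and symmetry of $S^{mk}$ symbolically or by series expansion --- is the same computational route the paper takes (the proposition rests on explicit calculation deferred to the accompanying code), and you correctly isolate (i)--(iii) as the content to be checked. But one step as you describe it would fail. The right-hand side of (\ref{eq:Appendix-Sxx}) is \emph{not} a rational function of $(x,y)$: the holomorphic-limit propagator $S^{mk}$ involves $\omega_{0}$ and the mirror maps $t^{a}$ and is only holomorphic near $A$. The Picard--Fuchs relations do not render the bracketed combination rational; they only allow every higher derivative of a period to be rewritten as a rational-function combination of a fixed finite basis of periods and first derivatives. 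Hence the consistency and symmetry conditions are not polynomial identities obtainable by ``clearing denominators and comparing coefficients''; they are identities in the differential module generated by the periods, and become finitely many rational-function identities only after projecting onto that basis (equivalently, a series check to an order justified by the rank of the Gauss--Manin connection). Your fallback of expanding in $x,y$ is the correct repair, but the claim that ``this is exactly the mechanism that makes the right-hand side rational at all'' is a misconception that should be excised.

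Two smaller points. The anticipated $dis_{0}$ obstacle confuses $C_{i}$ with its inverse: the denominators $x^{3}dis_{0}\,dis_{1}^{2}\,dis_{2}\,dis_{3}$ you quote are those of the entries $C^{A}_{ijk}$ in Proposition \ref{prop:GY-Cijk-A}, whereas $C_{i}^{-1}=\mathrm{adj}(C_{i})/\det(C_{i})$ carries $dis_{0}$ in the numerator rather than the denominator, the singular behaviour along $\{dis_{0}=0\}$ entering instead through $\Gamma_{ij}^{k}$. Moreover $f_{ij}^{k}$ is not ``forced'' by any cancellation: as the paper's remark after the proposition states, the choice is not unique and the displayed formulas are merely the simplest admissible solution, so a proof must verify them rather than derive them. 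Finally, the passage to $B$ does not come for free from tensoriality of $S^{ab}$: the inhomogeneous term in the transformation law (\ref{eq:fijk-A-B-rel}) for $f_{B,ij}^{\,\,\,\,\,k}$ must be checked to yield a rational tensor of the allowed type, which the paper treats as a separate verification.
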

\begin{rem}
The above forms of $f_{ij}^{k}$ are not unique. Here, we have chosen
the simplest possible forms. The choices of $f_{ij}^{k}$ will not
affect the final form $F_{2}$ since the unknown $f_{2}$ will be
fixed accordingly. 
\end{rem}

\subsection{Connecting $A$ and $B$ }

The construction of the propagator $S^{ab}$ (as a contra-variant
tensor) applies in a similar way for the boundary point $B$. Let
us distinguish the resulting propagators by putting indices $A$ and
$B$, 
\[
S_{A}^{ab}=\big(N_{A}\omega_{0}^{A}(x)\big)^{2}\sum_{i,j}\frac{\partial t_{A}^{a}}{\partial x^{i}}\frac{\partial t_{A}^{b}}{\partial x^{j}}S_{A}^{ij},\,\,\,S_{B}^{cd}=\big(N_{B}\omega_{0}^{B}(z)\big)^{2}\sum_{i,j}\frac{\partial t_{B}^{c}}{\partial z^{i}}\frac{\partial t_{B}^{d}}{\partial z^{j}}S_{B}^{ij}.
\]
Note that $S_{A}^{ij}$ contains $f_{A,ij}^{\,\,\,\,\,k}:=f_{ij}^{k}$
determined in (\ref{eq:Appendix-fijk}). Likewise $S_{B}^{ij}$ contains
$f_{B,ij}^{\,\,\,\,\,k}$ which we introduce to solve (\ref{eq:App-Special-K-rel})
in the form (\ref{eq:Appendix-Sxx}). The consistency requirements
for (\ref{eq:Appendix-Sxx}) restrict the forms of $f_{B,ij}^{\,\,\,\,\,k}$
but they are not unique.

In the BCOV formulation \cite{BCOV2}, these propagators supposed
to behave as contra-variant tensor. Hence, to have such property,
we need to have 
\begin{equation}
f_{B,ij}^{\,\,\,\,\,k}=\sum_{m,m,r}\frac{\partial x^{m}}{\partial z^{i}}\frac{\partial x^{n}}{\partial z^{j}}\frac{\partial z^{k}}{\partial x^{r}}f_{A,mn}^{\,\,\,\,\,r}-\sum_{m}\frac{\partial z^{k}}{\partial x^{m}}\frac{\partial\;}{\partial z^{i}}\frac{\partial x^{m}}{\partial z^{j}},\label{eq:fijk-A-B-rel}
\end{equation}
where $(x^{1},x^{2})=(x,y)$ and $(z^{1},z^{2})=(z_{1}^{B},z_{2}^{B})$
are the affine coordinates centered $A$ and $B$, respectively (see
(\ref{eq:affine-relation-x-zB})). We can verify the following property
by explicit calculations. 
\begin{prop}
Among the possible forms of $f_{B,ij}^{\,\,\,\,\,k}$, there exist
unique rational functions $f_{B,ij}^{\,\,\,\,\,k}$ which satisfy
the relation (\ref{eq:fijk-A-B-rel}). 
\end{prop}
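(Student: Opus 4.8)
The plan is to treat the displayed relation (\ref{eq:fijk-A-B-rel}) itself as the \emph{definition} of $f_{B,ij}^{\,\,\,\,\,k}$ and then verify that the resulting object (i) is a rational function on the chart centered at $B$, (ii) is symmetric in its two lower indices, and (iii) is an admissible choice, i.e. it lies among the ``possible forms'' of $f_{B,ij}^{\,\,\,\,\,k}$ singled out by the consistency requirements on (\ref{eq:Appendix-Sxx}). Uniqueness is then formal: since (\ref{eq:fijk-A-B-rel}) is an explicit identity expressing $f_{B,ij}^{\,\,\,\,\,k}$ in terms of the already-fixed $f_{A,mn}^{\,\,\,\,\,r}$ of (\ref{eq:Appendix-fijk}) and the coordinate change (\ref{eq:affine-relation-x-zB}), any two rational tensors satisfying it must coincide with that right-hand side.

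First I would record that the coordinate change (\ref{eq:affine-relation-x-zB}) between $(x^1,x^2)=(x,y)$ and $(z^1,z^2)=(z_B^1,z_B^2)$, together with its inverse $x=\frac14(1-8z_1)$, $y=32\,z_1^4z_2$ (read off from the relations used in the proof of the connection property of $F_1^A$ and $F_1^B$), is polynomial in both directions, so all first derivatives $\partial x^m/\partial z^i$, $\partial z^k/\partial x^r$ and all Hessians $\partial^2 x^m/\partial z^i\partial z^j$ occurring in (\ref{eq:fijk-A-B-rel}) are rational in $(z_1,z_2)$ with poles only along $\{z_1=0\}$. Substituting these, together with the explicit $f_{A,mn}^{\,\,\,\,\,r}$ of (\ref{eq:Appendix-fijk}) whose denominators are products of $x,y,dis_1,dis_3$ (which pull back to $z_1,z_2,disB_1,disB_3$ up to units), one obtains rational functions of $(z_1,z_2)$ whose polar loci are contained in the pullback of the discriminant of the family — which is exactly the requirement that $f_{B,ij}^{\,\,\,\,\,k}$ have poles only along the discriminant. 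Symmetry $f_{B,ij}^{\,\,\,\,\,k}=f_{B,ji}^{\,\,\,\,\,k}$ is immediate: the first sum in (\ref{eq:fijk-A-B-rel}) is symmetric in $(i,j)$ because $f_{A,mn}^{\,\,\,\,\,r}=f_{A,nm}^{\,\,\,\,\,r}$, and the inhomogeneous term is symmetric because $\partial^2 x^m/\partial z^i\partial z^j$ is.

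The substantive step is (iii): I must show that, with this $f_{B,ij}^{\,\,\,\,\,k}$, formula (\ref{eq:Appendix-Sxx}) on the $B$-chart yields an $S_B^{mk}$ independent of the index $i\in\{1,2\}$ and symmetric in $(m,k)$. I would deduce this from the corresponding facts for $A$ (the preceding proposition) by a transformation-of-connection computation. Writing the topological-limit Christoffel symbol as $\Gamma^{\,k}_{ij}=\sum_a\frac{\partial x^k}{\partial t^a}\frac{\partial}{\partial x^i}\frac{\partial t^a}{\partial x^j}$ and using that the flat coordinates $t^a$ are common to both charts, while $K_i\mapsto-\partial_i\log\omega_0$ transforms by the usual rule, one checks that $\Gamma^{\,k}_{ij}$ transforms with precisely the inhomogeneous piece $-\sum_m\frac{\partial z^k}{\partial x^m}\frac{\partial}{\partial z^i}\frac{\partial x^m}{\partial z^j}$ that is built into (\ref{eq:fijk-A-B-rel}). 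Hence (\ref{eq:fijk-A-B-rel}) is equivalent to the statement that $S_B^{ij}$ defined by (\ref{eq:Appendix-Sxx}) equals the tensor transform $\sum_{m,n}\frac{\partial z^i}{\partial x^m}\frac{\partial z^j}{\partial x^n}S_A^{mn}$; since $S_A^{mn}$ is $i$-independent and symmetric, so is $S_B^{ij}$, which is what admissibility demands, and the $S^{ab}$ appearing in (\ref{eq:F2-BCOV}) is then the same tensor on the overlap, compatibly with the covariance rule (t2)$'$ of Subsection~\ref{sub:Fg2-AB}.

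The main obstacle I anticipate is the bookkeeping in step (iii): keeping the period normalization $\omega_0$ and the Jacobian of (\ref{eq:affine-relation-x-zB}) straight, and verifying that the term $-\frac1{\omega_0}\big(\partial_j\omega_0\,\delta_i^k+\partial_i\omega_0\,\delta_j^k\big)$ in (\ref{eq:Appendix-Sxx}) transforms consistently alongside $\Gamma^{\,k}_{ij}$, so that no spurious inhomogeneous term survives and the transformation identity $S_B^{ij}=\sum\frac{\partial z^i}{\partial x^m}\frac{\partial z^j}{\partial x^n}S_A^{mn}$ holds on the nose. Once that identity is established, rationality, symmetry, $i$-independence, and uniqueness all follow formally, and the explicit closed form of $f_{B,ij}^{\,\,\,\,\,k}$ recorded in Appendix~\ref{sec:Appendix-Propagator-f2} is obtained by direct substitution.
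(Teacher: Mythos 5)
Your argument is correct in outline but organized quite differently from the paper's. The paper disposes of this proposition with a bare appeal to ``explicit calculations'': implicitly, one writes down an ansatz for rational tensors $f_{B,ij}^{\,\,\,\,\,k}$ compatible with the consistency requirements on (\ref{eq:Appendix-Sxx}) at $B$, computes the right-hand side of (\ref{eq:fijk-A-B-rel}) from the explicit $f_{A,ij}^{\,\,\,\,\,k}$ of (\ref{eq:Appendix-fijk}) and the polynomial coordinate change (\ref{eq:affine-relation-x-zB}), and checks directly that the result is one (hence the unique one) of the admissible forms. You instead take (\ref{eq:fijk-A-B-rel}) as the \emph{definition} of $f_{B,ij}^{\,\,\,\,\,k}$ --- which makes uniqueness tautological --- and reduce existence to the single substantive claim that this $f_{B}$ is admissible, which you obtain conceptually: the inhomogeneous term in (\ref{eq:fijk-A-B-rel}) is a connection-type transformation law, so the braced quantity in (\ref{eq:Appendix-Sxx}) transforms as a $(1,2)$-tensor, $S_{B}^{ij}$ becomes the tensor transform of $S_{A}^{ij}$, and symmetry and $i$-independence are inherited from the corresponding proposition at $A$. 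This is more illuminating than brute force, and it simultaneously yields the covariance (\ref{eq:Appendix-cov-Sab}) that the paper states afterwards as a separate consequence.

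One point you should actually carry out rather than assert is the sign matching in your step (iii), which you yourself flag as the anticipated obstacle. The Christoffel symbol $\Gamma_{ij}^{k}=\sum_{a}\frac{\partial x^{k}}{\partial t^{a}}\frac{\partial^{2}t^{a}}{\partial x^{i}\partial x^{j}}$ transforms with inhomogeneous term $+\sum_{m}\frac{\partial z^{k}}{\partial x^{m}}\frac{\partial^{2}x^{m}}{\partial z^{i}\partial z^{j}}$, so tensoriality of $-\Gamma_{ij}^{k}+f_{ij}^{k}$ requires $f$ to acquire the \emph{same} inhomogeneous term as $\Gamma$, whereas (\ref{eq:fijk-A-B-rel}) as printed carries that term with the opposite sign. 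Whether this is a convention hidden in the paper's definition of $\Gamma$ or a misprint, your proof lives or dies on the exact cancellation of this term, so it must be verified against the explicit data rather than declared to match ``precisely.'' Once that is settled, the remaining checks (rationality with poles only along the discriminant, symmetry in $(i,j)$, and the normalization $N_{A}\omega_{0}^{A}=N_{B}\omega_{0}^{B}$ entering the $K_{i}$ term) go through as you describe.
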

For the above $f_{B,ij}^{\,\,\,\,\,k}$ satisfying (\ref{eq:fijk-A-B-rel}),
assuming the connection property (t1) in (\ref{eq:connect-g1}), i.e.
$N_{A}\omega_{0}^{A}=N_{B}\omega_{0}^{B}$, we have the desired covariance,
\begin{equation}
S_{B}^{cd}=\sum_{a,b}\frac{\partial t_{B}^{c}}{\partial t_{A}^{a}}\frac{\partial t_{B}^{d}}{\partial t_{A}^{b}}S_{A}^{ab}.\label{eq:Appendix-cov-Sab}
\end{equation}

\subsection{The forms of $f_{2}^{A}$ and $f_{2}^{B}$ }

The rational function $f_{2}^{A}(x,y)$ is determined by imposing
the transformation rules as described in Proposition \ref{prop:FAB-related-on-PD}.
The result is 
\[
f_{2}^{A}(x,y)=\frac{h_{1}\,y\,dis_{1}+h_{2}\,(y\,dis_{1})^{2}+h_{3}\,(y\,dis_{1})^{3}}{1440\,(dis_{0}\,dis_{2})^{2}dis_{3}},
\]
where $dis_{k}$ are the same as in (\ref{eq:Appendix-fijk}) and
$h_{k}$ are defined with $\bar{x}=4x,\,\bar{y}=64y$ by 
\[
\begin{aligned} &  & h_{1} & =-(1-\bar{x})^{5}(431-120\bar{x}-2512\bar{x}^{2}-1440\bar{x}^{3}+598\bar{x}^{4}\\
 &  &  & \qquad\qquad\qquad\qquad\qquad\qquad\qquad\quad-1080\bar{x}^{5}-1912\bar{x}^{6}-240\bar{x}^{7}+131\bar{x}^{8}),\\
 &  & h_{2} & =128\bar{x}(1-\bar{x})^{3}(431+825\bar{x}-555\bar{x}^{2}-1922\bar{x}^{3}-1095\bar{x}^{4}+105\bar{x}^{5}+131\bar{x}^{6}),\\
 &  & h_{3} & =4096\bar{x}^{2}(431+1315\bar{x}+1280\bar{x}^{2}+280\bar{x}^{3}-295\bar{x}^{4}-131\bar{x}^{5}).
\end{aligned}
\]
As described just above Proposition \ref{prop:FAB-related-on-PD},
by substituting the inverse relation $(x,y)=(\frac{1}{8}(1-8z_{1}),32\,z_{1}^{4}z_{2})$
of (\ref{eq:affine-relation-x-zB}) into $f_{2}^{A}$, we obtain the
function $f_{2}^{B}(z_{1},z_{2})$ in $F_{2}^{B}$. 

~

~

~

~

\section{\label{sec:Appendix-2222}\textbf{Gromov-Witten invariants of (2,2,2,2)$\subset\protect\mbP^{7}$}}

Here we summarize mirror symmetry of complete intersections of four
quadrics in $\mbP^{7}$. If we take four quadrics in general, the
complete intersection is a smooth Calabi-Yau variety which we denote
by $W$. Its topological invariants are 
\[
H_{W}^{3}=16,\,\,c_{2}.H_{W}=64,\,\,h_{W}^{1,1}=1,\,\,h_{W}^{21}=65.
\]
As we encountered in the text, there are quadrics which admit free
$\mbZ_{8}$ and $\mbZ_{8}\times\mbZ_{8}$ actions; for the former
case, we still have smooth complete intersections, while for the latter
case there is no smooth complete intersection which is compatible
with the symmetry \cite{Brown,Hua}. We denote by $W_{1}:=W_{\mbZ_{8}}$
and $W_{2}:=W_{\mbZ_{8}\times\mbZ_{8}}$ the quotients of the complete
intersections of general quadrics which respect the specified symmetry.
Then topological invariants are evaluated to be
\[
\begin{aligned}H_{W_{1}}^{3}=2,\,\,c_{2}.H_{W_{1}}=8,\,\,h_{W_{1}}^{1,1}=1,\,\,h_{W_{1}}^{21}=9,\\
H_{W_{2}}^{3}=128,\,\,c_{2}.H_{W_{2}}=8,\,\,h_{W_{2}}^{1,1}=1,\,\,h_{W_{2}}^{21}=2,
\end{aligned}
\]
where, for the second line, we adopt the numbers from the contraction
of $V_{8,w}^{1}/\mbZ_{8}\times\mbZ_{8}$ in \ref{para:contR-B-Z8Z8}.

\subsection{\label{sub:Appendix-2222}Gromov-Witten invariants of $W$}

Since all the results are well-known in literatures, we summarize
only some of them for reader's convenience. In this case, Batyrev-Borisov
toric mirror construction \cite{BatBo} applies to have a family of
mirror Calabi-Yau manifolds, and we have a hypergeometric differential
equation,
\begin{equation}
\left\{ \theta_{x}^{4}-16x(2\theta_{x}+1)^{4}\right\} w(x)=0\,\,\,\,\,(\theta_{x}:=x\frac{d\;}{dx})\label{eq:AppE-PF2222}
\end{equation}
 which determines period integrals. Mirror symmetry of the family
to $W$ can be observed in the canonical form of local solutions at
the boundary points, $x=0$, as in (\ref{eq:App-Canonial-Pi}). Reducing
(\ref{eq:Appendix-omega-k}) to the present case with $d_{111}=1,$
we have local solutions about $x=0$ by 
\[
\omega_{0}(x),\,\,\omega_{1}(x),\,\,\omega_{2,1}(x),\,\,\omega_{3}(x),
\]
where $\omega_{0}(x)=\sum_{n\geq0}\frac{((2n)!)^{4}}{(n!)^{8}}x^{n}$.
Writing $\omega_{1}=(\log x)\omega_{0}+\omega_{1}^{reg}$ and inverting
the relation 
\[
q=z\,\exp\big(\frac{\omega_{1}^{reg}(x)}{\omega_{0}(x)}\big),
\]
we define the mirror map $x=x(q)$ as a $q$-series. Quantum corrected
Yukawa coupling is calculated by 
\begin{equation}
Y_{ttt}^{0}=\big(\frac{1}{\omega_{0}}\big)^{2}C_{xxx}\big(\frac{dx}{dt}\big)^{3}\,\,\text{with }\,\,\,C_{xxx}=\frac{16}{x^{3}(1-256x)}.\label{eq:App-Yttt0}
\end{equation}
The genus one potential function is given by 
\[
F_{1}=\frac{1}{2}\log\left\{ \big(\frac{1}{\omega_{0}}\big)^{3+1+\frac{128}{12}}(1-256x)^{-\frac{1}{6}}x^{-1-\frac{64}{12}}\frac{dx}{dt}\right\} .
\]
The BCOV potential function $F_{2}$ has been determined in literatures,
see e.g. \cite{Fg2222}.

\subsection{\label{sub:Appendix-free-quot-Z8Z8}Gromov-Witten invariants of $W_{\protect\mbZ_{8}\times\protect\mbZ_{8}}$}

The variety $W_{\mbZ_{8}\times\mbZ_{8}}$ is a free quotient by $W$
but is a singular Calabi-Yau variety. We do not know how to construct
its mirror family. However, naive applications of the results \cite{AM1}
obtained for a similar free quotient (by $\mbZ_{5}\times\mbZ_{5}$)
of quintic threefolds turn out to be consistent with the sum-up property
of our BPS numbers. In this case, we define local parameter $b$ by
$x=b^{8}$ and define the mirror map $b=b(q)$ by 
\[
q=b\,\exp\big(\frac{1}{8}\frac{\omega_{1}^{reg}(x)}{\omega_{0}(x)}\big).
\]
Quantum corrected Yukawa coupling is 
\[
Y_{ttt}=\big(\frac{1}{8\,\omega_{0}(x)}\big)^{2}C_{bbb}\big(\frac{db}{dt}\big)^{3}\,\,\,\text{with}\,\,\,C_{bbb}=C_{zzz}\big(\frac{dx}{db}\big)^{3},
\]
where the numerical factor $\frac{1}{8}$ is introduced to have right
normalization. BCOV formula for $F_{1}$ turns out to be 
\begin{equation}
\begin{alignedat}{1}F_{1} & =\frac{1}{2}\log\Big\{\big(\frac{1}{\omega_{0}(x)}\big)^{3+1-\frac{\chi}{12}}(1-2b)^{-\frac{64}{6}}(1+2b)^{-\frac{16}{6}}(1+4b^{2})^{-\frac{4}{6}}\\
 & \hsp{145}\times(1+16b^{4})^{-\frac{1}{6}}b^{-1-\frac{8}{12}}\frac{db}{dt}\,\Big\}
\end{alignedat}
\label{eq:F1-2222-Z8Z8}
\end{equation}
where $x=b^{8}$, and $\chi=\frac{-128}{|\mbZ_{8}\times\mbZ_{8}|}$
. With these definitions, we verify the sum-up properties of our BPS
numbers of $V_{\mbZ_{8}\times\mbZ_{8}}^{1}$ in Table \ref{tab:Table-B}.
We have also verified the sum-up property at genus two by solving
the BCOV recursion for $F_{2}$.

\subsection{\label{sub:Appendix-free-quot-Z8}Gromov-Witten invariants of $W_{\protect\mbZ_{8}}$}

Free quotient $W_{\mbZ_{8}}$ of $W$ is a smooth Calabi-Yau manifold.
Here, we observe that its Gromov-Witten invariants come from another
degeneration point of (\ref{eq:AppE-PF2222}), i.e., $\frac{1}{x}=0$,
and they satisfy the sum-up properties (\ref{eq:sum-up-C}). To describe
the degeneration point, we introduce a coordinate $z=\frac{1}{16^{4}}\frac{1}{x},$
which transforms the Picard-Fuchs differential equation (\ref{eq:AppE-PF2222})
to 
\[
\big\{(\theta_{z}-\frac{1}{2})^{4}-16z(2\theta_{z})^{4}\big\} w(z)=0.
\]
From this form, it is clear that the local solutions are given by
\[
\sqrt{z}\,\omega_{0}(z),\,\sqrt{z}\,\omega_{1}(z),\,\sqrt{z}\,\omega_{2,1}(z),\,\sqrt{z}\,\omega_{3}(z)
\]
in terms of the four solutions of (\ref{eq:AppE-PF2222}). Then using
the same series $\omega_{1}^{reg}$ as before, we define the mirror
map $z=z(q)$ by 
\[
q=z\,\exp\big(\frac{\sqrt{z}\omega_{1}^{reg}(z)}{\sqrt{z}\omega_{0}(z)}\big)=z\,\exp\big(\frac{\omega_{1}^{reg}(z)}{\omega_{0}(z)}\big),
\]
which has the same form as $x=x(q).$ However the quantum corrected
Yukawa coupling this time is given by 
\begin{equation}
Y_{ttt}^{\infty}=\big(\frac{1}{N_{\infty}\,\sqrt{z}\omega_{0}(z)}\big)^{2}C_{zzz}\big(\frac{dz}{dt}\big)^{3}\,\text{with }C_{zzz}=C_{xxx}\left(\frac{dx}{dz}\right)^{3},\label{eq:App-YtttInfty}
\end{equation}
where $N_{\infty}$ is the normalization factor in the canonical form
$\Pi_{\infty}(z)$ of the local solutions. As in Proposition \ref{prop:Conection-Matrices},
$N_{\infty}$ is determined by analytic continuation of the two local
solutions $\Pi_{0}(x)$ and $\Pi_{\infty}(z)$. As we summarize the
results below, it turns out that $N_{\infty}=32\sqrt{2}$ and we have
\[
Y_{ttt}^{\infty}=\big(\frac{1}{\omega_{0}(z)}\big)^{2}\frac{2}{z^{3}(1-256z)}\big(\frac{dz}{dt}\big)^{3},
\]
which is $\frac{1}{8}$ of $Y_{ttt}^{0}$. We remark that the same
result is derived in \cite{ESh} by evaluating the so-called two sphere
partition function. For genus {}{one potential} function,
we have the BCOV formula given by
\[
\begin{aligned}F_{1} & =\frac{1}{2}\log\Big\{\big(\frac{1}{\omega_{0}(z)}\big)^{3+1-\frac{\chi}{12}}(1-256\,z)^{-\frac{8}{6}}z^{-1-\frac{8}{12}}\frac{dz}{dt}\,\Big\},\end{aligned}
\]
where $\chi=\frac{-128}{|\mbZ_{8}|}=-16$ and the conifold factor
$-\frac{8}{6}$ is determined from the monodromy around the discriminant
$\left\{ 1-256z=0\right\} $ (see $M_{c}$ in Proposition \ref{prop:AppendixE-Monod}
below). 
\begin{table}
{\scriptsize{}
\[
\begin{array}{|c|cccccccc}
\hline \m g\diagdown d\m & \m1 & \m2 & \m3 & \m4 & \m5 & \m6 & \m7 & \m..\\
\hline 0 & \m512 & \m9728 & \m416256 & \m25703936 & \m1957983744 & \m170535923200 & \m16300354777600 & \m..\\
1 & \m0 & \m0 & \m0 & \m14752 & \m8782848 & \m2672004608 & \m615920502784 & \m..\\
2 & \m0 & \m0 & \m0 & \m0 & \m0 & \m1427968 & \m2440504320 & \m..
\\\hline \end{array}
\]
}{\scriptsize \par}

(1) BPS numbers of $W=$(2,2,2,2)

{\scriptsize{}
\[
\begin{aligned} & \begin{array}{|c|ccccccccccccccc}
\hline \m g\diagdown d\m & \m.. & \m8 & \m.. & \m16 & \m.. & \m24 & \m\,\,\m.. & \m\,\,\m32 & \m\,\,\m.. & \m\,\m40 & \m\,\m.. & \m\,\m48 & \m\,\m.. & \m\,\m56 & \m..\\
\hline 0 & \m.. & \m8 & \m.. & \m152 & \m.. & \m6504 & \m\,\,\m.. & \m\,\,\m401624 & \m\,\,\m.. & \m\,\m30593496 & \m\,\m.. & \m\,\m2664623800 & \m\,\m.. & \m\,\m254693043400 & \m..
\\\hline \end{array}\\
 & \begin{array}{|c|cccccccccccccccc}
\hline \m g\diagdown d\m & \m1 & \m2 & \m3 & \m4 & \m5 & \m6 & \m7 & \m8 & \m9 & \m10 & \m11 & \m12 & \m13 & \m14 & \m15 & ..\\
\hline 1 & \m8 & \m8 & \m8 & \m22 & \m24 & \m56 & \m72 & \m168 & \m160 & \m408 & \m488 & \m1454 & \m1496 & \m3912 & \m4632 & ..\\
2 & \m0 & \m0 & \m0 & \m0 & \m0 & \m4 & \m32 & \m155 & \m584 & \m2000 & \m5088 & \m13752 & \m30592 & \m78720 & \m165632 & ..
\\\hline \end{array}
\end{aligned}
\]
}{\scriptsize \par}

(2) BPS numbers of $W_{\mbZ_{8}\times\mbZ_{8}}=$(2,2,2,2)$/\mbZ_{8}\times\mbZ_{8}$

{\scriptsize{}
\[
\begin{array}{|c|ccccccc}
\hline \m g\diagdown d\m & 1 & 2 & 3 & 4 & 5 & 6 & 7\\
\hline 0 & 64 & 1216 & 52032 & 3212992 & 244747968 & 21316990400 & ..\\
1 & 112 & 8736 & 927920 & 110521568 & 14056476368 & 1863171853088 & ..
\\\hline \end{array}
\]
}{\scriptsize \par}

(3) BPS numbers of $W_{\mbZ_{8}}=$(2,2,2,2)$/\mbZ_{8}$

~

\caption{Tables E1. \label{tab:Appendix-2222} Three tables of BPS numbers.
Potential functions $F_{0}$ and $F_{1}$ are given in Appendix \ref{sec:Appendix-2222}.
We have also included $g=2$ BPS numbers for (1) and (2) by calculating
$F_{2}$ for these two cases. }
\end{table}

~

\subsection{Analytic continuation}

Picard-Fuchs equation (\ref{eq:AppE-PF2222}) has two degeneration
points, $x=0$ and $\infty$; from each we obtained quantum corrected
Yukawa couplings $Y_{ttt}^{0}$ and $Y_{ttt}^{\infty}$. These two
are related to each other in a quite parallel way to the relation
$Y_{ijk}^{A}$ and $Y_{ijk}^{C}$. We can see that these two couplings
are actually analytically continued over $\mbP^{1}$ by showing that
$N_{\infty}=32\sqrt{2}$ arises from the analytic continuation of
two local solutions $\Pi_{0}(x)$ and $\Pi_{\infty}(z)$. 

We define the canonical forms of period integrals around $x=0$ and
$\infty$ by
\[
\Pi_{0}(x)=N_{0}\,Z_{top}^{0}\left(\begin{smallmatrix}\omega_{0}(x)\\
n_{1}\omega_{1}(x)\\
n_{2}\omega_{2,1}(x)\\
n_{3}\omega_{3}(x)
\end{smallmatrix}\right),\,\,\Pi_{\infty}(z)=N_{\infty}\,Z_{top}^{\infty}\left(\begin{smallmatrix}\sqrt{z}\omega_{0}(z)\\
n_{1}\sqrt{z}\omega_{1}(z)\\
n_{2}\sqrt{z}\omega_{2,1}(z)\\
n_{3}\sqrt{z}\omega_{3}(z)
\end{smallmatrix}\right),
\]
where $n_{k}=\frac{1}{(2\pi i)^{k}}$ and $Z_{top}^{0}=Z_{top}(16,64,-128)$,
$Z_{top}^{\infty}=Z_{top}(2,8,-16)$ with 
\[
Z_{top}(H^{3},c_{2}.H,\chi)=\left(\begin{matrix}1 & 0 & 0 & 0\\
0 & 1 & 0 & 0\\
-\frac{c_{2}.H}{24} & 0 & \frac{H^{3}}{2} & 0\\
-\chi\frac{\zeta(3)}{(2\pi i)^{3}} & -\frac{c_{2}.H}{24} & 0 & -\frac{H^{3}}{3!}
\end{matrix}\right).
\]
To describe the analytic continuation, we take a path connecting two
points $P_{0}:=0+\ve\sqrt{-1}$ and $P_{\infty}:=\infty+\ve\sqrt{-1}$
$(\ve>0)$. 
\begin{prop}
\label{prop:AppendixE-Monod}$(1)$ In terms of the canonical forms
of period integrals $\Pi_{0}(x)$ and $\Pi_{\infty}(z)$, the monodromy
matrices $M_{0},M_{c},M_{\infty}$ around the three singular points
$x=0,\frac{1}{256},\infty$, respectively, of Picard-Fuchs equation
(\ref{eq:AppE-PF2222}) are given as follows: 
\[
\begin{array}{|c|ccc|}
\hline  & M_{0} & M_{c} & M_{\infty}\\
\hline \Pi_{0}(x) & \left(\begin{smallmatrix}1 & 0 & 0 & 0\\
1 & 1 & 0 & 0\\
8 & 16 & 1 & 0\\
-8 & -8 & -1 & 1
\end{smallmatrix}\right) & \left(\begin{smallmatrix}1 & 0 & 0 & 1\\
0 & 1 & 0 & 0\\
0 & 0 & 1 & 0\\
0 & 0 & 0 & 1
\end{smallmatrix}\right) & \left(\begin{smallmatrix}-7 & 8 & -1 & -1\\
-1 & 1 & 0 & 0\\
8 & -16 & 1 & 0\\
8 & -8 & -1 & 1
\end{smallmatrix}\right)\\
\hline \Pi_{\infty}(z) & \left(\begin{smallmatrix}-1 & 0 & 0 & 8\\
1 & -1 & 0 & -8\\
-1 & 2 & -1 & 8\\
-1 & 1 & -1 & 7
\end{smallmatrix}\right) & \left(\begin{smallmatrix}1 & 0 & 0 & 8\\
0 & 1 & 0 & 0\\
0 & 0 & 1 & 0\\
0 & 0 & 0 & 1
\end{smallmatrix}\right) & \left(\begin{smallmatrix}-1 & 0 & 0 & 0\\
-1 & -1 & 0 & 0\\
-1 & -2 & -1 & 0\\
1 & 1 & 1 & -1
\end{smallmatrix}\right)
\\\hline \end{array}
\]
$(2)$ All matrices $M_{0},M_{c},M_{\infty}$ are symplectic with
respect to $\Sigma=\left(\begin{smallmatrix}0 & 0 & 0 & 1\\
0 & 0 & 1 & 0\\
0 & -1 & 0 & 0\\
-1 & 0 & 0 & 0
\end{smallmatrix}\right)$.

\noindent $(3)$ If we set $N_{0}=1$ and $N_{\infty}=32\sqrt{2}$,
then the period integrals $\Pi_{0}(x)$ and $\Pi_{\infty}(z)$ are
analytically related by 
\[
\Pi_{0}(x)=U_{xz}\Pi_{\infty}(z),\,\,\,\,U_{xz}=\frac{1}{2\sqrt{2}}\left(\begin{smallmatrix}-1 & 0 & -2 & 4\\
0 & 1 & 0 & 2\\
0 & 0 & 8 & 0\\
0 & 0 & 0 & -8
\end{smallmatrix}\right)
\]
along a path $C=\left\{ (1-t)P_{0}+t\,P_{\infty}\mid0\leq t\leq1\right\} $
where $U_{xz}$ is a symplectic with respect to $\Sigma$. 
\end{prop}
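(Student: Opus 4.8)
The plan is to recognise \eqref{eq:AppE-PF2222} as the generalised hypergeometric operator attached to ${}_4F_3(\tfrac12,\tfrac12,\tfrac12,\tfrac12;1,1,1;256x)$, whose Riemann scheme has regular singular points $x=0,\tfrac1{256},\infty$ with local exponents $(0,0,0,0)$, $(0,1,1,2)$ and $(\tfrac12,\tfrac12,\tfrac12,\tfrac12)$ respectively. Granting this, I would split the proof into four steps: (i) read off $M_0$ and $M_\infty$ directly from the Frobenius bases at $x=0$ and at $z=\tfrac1{16^4}\tfrac1x$; (ii) compute the connection matrix $U_{xz}$ between these two canonical bases by analytic continuation along the segment $C$; (iii) recover the conifold monodromy $M_c$ from the relation $M_0M_cM_\infty=\mathrm{id}$ (contractibility of a loop encircling all three punctures on $\mathbb{P}^1$), and express each monodromy in the other basis via $U_{xz}$; and (iv) verify part $(2)$ by a direct $4\times4$ check of $M^{T}\Sigma M=\Sigma$, which is in any case forced by the design of the canonical forms \eqref{eq:App-Canonial-Pi}. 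This is the same strategy as in the proof of Proposition \ref{prop:Conection-Matrices}, but now for a one-variable equation, so no multivariable continuation is needed.

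For step (i): at $x=0$ there is a unique holomorphic solution $\omega_0(x)=\sum_{n\ge0}((2n)!)^4/(n!)^8\,x^n$, and the remaining solutions $\omega_1,\omega_{2,1},\omega_3$ produced by the Frobenius method carry $\log x$, $(\log x)^2$, $(\log x)^3$ and are organised as in \eqref{eq:Appendix-omega-k} with $d_{111}=1$. The monodromy $\log x\mapsto\log x+2\pi i$ acts unipotently with a single Jordan block on $(\omega_0,n_1\omega_1,n_2\omega_{2,1},n_3\omega_3)^{T}$; conjugating this action by $Z^0_{top}=Z_{top}(16,64,-128)$ shifts the off-diagonal entries by $-c_2.H/24=-8$ and yields exactly the displayed $M_0$. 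At $x=\infty$ the exponents are all $\tfrac12$, so in the coordinate $z$ the solutions are $\sqrt z\,\omega_k(z)$ for the same Frobenius functions; the monodromy around $z=0$ additionally sends $\sqrt z\mapsto-\sqrt z$, so in the basis $\Pi_\infty(z)$ it equals $-1$ times a unipotent matrix of the same shape, giving the displayed $M_\infty$ in the $\Pi_\infty(z)$ row.

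Step (ii) is where the main obstacle lies. One route is analytic: write the $Z_{top}$-normalised period vector as a Mellin--Barnes (equivalently Meijer $G$-function) integral and shift the contour to pick up residues at $s=-n$ (reproducing the basis at $x=0$) or at $s=-\tfrac12-n$ (reproducing the $\sqrt z$-basis at $x=\infty$); because the four upper parameters coincide these residues have order up to four and generate the logarithms together with transcendental constants — digamma values, hence $\zeta(2)$ and $\zeta(3)$. The substance of the assertion is that, once these are assembled into the bases built from $Z^0_{top}$ and $Z^\infty_{top}=Z_{top}(2,8,-16)$ and one puts $N_0=1$, $N_\infty=32\sqrt2$, all of the $\zeta(2),\zeta(3)$ contributions cancel against those already present in $Z^0_{top}$ and $Z^\infty_{top}$, leaving the rational matrix $\tfrac1{2\sqrt2}(\,\cdot\,)$. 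In practice I would do this numerically: evaluate the explicit power series of $\Pi_0(x)$ and $\Pi_\infty(z)$ to high precision at an interior point of $C=\{(1-t)P_0+tP_\infty\}$ (which passes above the conifold point $x=\tfrac1{256}$), solve $\Pi_0(x)=U\,\Pi_\infty(z)$ for the $4\times4$ matrix $U$, recognise the entries as rational multiples of $\tfrac1{2\sqrt2}$, and note that $N_\infty=32\sqrt2$ is precisely the scalar making $U$ integral-symplectic; this is the delicate bookkeeping step, and $N_\infty$ may alternatively be pinned from the two-sphere partition function normalisation of \cite{ESh}, which the computation must then reproduce.

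With $U_{xz}$ in hand, step (iii) is routine: the $M_\infty$ entry in the $\Pi_0(x)$ row is $U_{xz}M_\infty U_{xz}^{-1}$, the $M_0$ entry in the $\Pi_\infty(z)$ row is $U_{xz}^{-1}M_0U_{xz}$, and $M_c$ follows from $M_0M_cM_\infty=\mathrm{id}$ for the appropriate ordering of loops; one then checks that the resulting $M_c$ is the symplectic transvection $\mathrm{id}+N$ with $N$ of rank one, consistent with the exponents $(0,1,1,2)$ at $x=\tfrac1{256}$. Finally, part $(2)$ is the direct verification $M^{T}\Sigma M=\Sigma$ for $M=M_0,M_c,M_\infty$ and $U_{xz}^{T}\Sigma\,U_{xz}=\Sigma$ with $\Sigma$ as in \eqref{eq:App-Sigma} — an identity guaranteed a priori by the fact that the canonical forms $\Pi_P$ realise an integral symplectic basis of $H_3$ with intersection pairing $\Sigma$, so the explicit $4\times4$ computation serves as a consistency check. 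The only genuinely nontrivial ingredient is the transcendental-constant cancellation in step (ii); everything else is linear algebra over the Frobenius structure.
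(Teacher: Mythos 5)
Your strategy is essentially the one the paper itself uses (implicitly here, explicitly for Proposition \ref{prop:Conection-Matrices}): Frobenius bases in the canonical form (\ref{eq:App-Canonial-Pi}) at the two maximally unipotent points, high-precision analytic continuation along the horizontal segment $C$ to fit $U_{xz}$, and a direct $4\times4$ verification of symplecticity; your reading of the local exponents and of $M_\infty^{(z)}$ as $(-1)\times(\text{unipotent})$ is correct, and $N_\infty=32\sqrt{2}$ is pinned down exactly as you say. The one genuinely different ingredient is your step (iii), deriving $M_c$ from $M_0M_cM_\infty=\mathrm{id}$ instead of transporting each local monodromy separately through $U_{xz}$. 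For the path $C$ running just above the real axis this relation is valid, and it actually buys you more than the paper's computation: carrying it out gives $M_\infty=(M_0M_c)^{-1}=M_c^{-1}M_0^{-1}=\left(\begin{smallmatrix}-7&8&-1&-1\\-1&1&0&0\\8&-16&1&0\\8&-8&1&1\end{smallmatrix}\right)$ in the $\Pi_0(x)$ basis, which coincides with $U_{xz}M_\infty^{(z)}U_{xz}^{-1}$ computed from the displayed $U_{xz}$ and the $\Pi_\infty(z)$ row, and which satisfies $M\Sigma M^{T}=\Sigma$ --- whereas the matrix printed in the $\Pi_0(x)$ row of the table has $-1$ in the $(4,3)$ entry and fails the symplectic condition (one finds $r_1\Sigma r_4^{T}=-15$). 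So your route reproduces the proposition and, as a bonus, its built-in consistency check flags what appears to be a sign typo in the stated $M_\infty$. The only substantive labour remains, as you identify, the cancellation of the $\zeta(2),\zeta(3)$ constants (or equivalently the numerical fit) in step (ii); one small caveat is your parenthetical attribution of the $-8$ entries of $M_0$ to $-c_2.H/24$, which equals $-8/3$ here --- those entries in fact arise from $H^3/2=8$ in $Z_{top}^{0}$, though this does not affect the argument.
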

It should be noted that $U_{xz}$ is symplectic but not integral as
in the case $U_{CA}$ in Proposition \ref{prop:Conection-Matrices}.
This indicates that two different local systems over $\mbP^{1}$,
one for a mirror family of $W$ and the other for a mirror family
of $W/\mbZ_{8}$, are represented by the same Picard-Fuchs equation
(\ref{eq:AppE-PF2222}). We remark that integral variations of Hodge
structure which underlie possible families of Calabi-Yau threefolds
over $\mbP^{1}\setminus\left\{ 0,1,\infty\right\} $ with $h^{2,1}=1$
are classified in \cite{DoMo}.

~

~

~

\section{\label{sec:Appendix-Pgn-AB}\textbf{Explicit forms of polynomials
$P_{g,2}^{A}$ and $P_{g,2}^{B}\,\,(g=1,2)$ }}

The explicit forms of $P_{g,n}^{A}$ and $P_{g,n}^{B}$ become complicated
even for lower $g$ and $n$. Here we present $P_{g,2}^{A}$ and $P_{g,2}^{B}$
for $g=1,2$ as examples, and refer to \cite{HT-math-c} for more
data. Here we recall the definitions 
\[
S:=\theta_{3}(q)^{4},\quad T:=\theta_{3}(q^{2})^{4},\quad U:=\theta_{3}(q)^{2}\theta_{3}(q^{2})^{2}\quad\Big(\theta_{3}(q)=\sum_{n\in\mbZ}q^{n^{2}}\Big)
\]
 which we introduced in Observation \ref{obs:Z11-A}.

\vskip0.3cm\noindent$\bullet$ \uline{$g=1$ and $n=2$.}
\[
\begin{alignedat}{1}P_{1,2}^{A} & =\frac{1}{2^{10}3^{4}}E_{2}^{4}+\frac{1}{2^{12}3^{3}}E{}_{2}^{3}(S\n+\n2T\n+\n4U)\\
 & +\frac{1}{2^{13}3^{4}}E{}_{2}^{2}(23S^{2}\n+\n244ST\n+\n128T^{2}\n+\n40SU\n-\n352TU)\\
 & +\frac{1}{2^{12}3^{3}}E_{2}(6S^{3}\n-\n53S^{2}T\n+\n40ST^{2}\n-\n80T^{3}\n+\n8S^{2}U\n-\n56STU\n+\n160T^{2}U)\\
 & +\frac{1}{2^{13}3^{4}}(35S^{4}\n+\n562S^{3}T\n+\n29768S^{2}T^{2}\n+\n14176ST^{3}\n+\n1664T^{4}\\
 & \hsp{50}\n-\n352S^{3}U\n-\n9928S^{2}TU\n-\n31040ST^{2}U\n-\n4736T^{3}U)
\end{alignedat}
\]

\[
\begin{alignedat}{2}P_{1,2}^{B} &  & \;\; & =\frac{1}{2^{10}3^{4}}E_{2}^{4}+\frac{13}{2^{12}3}E{}_{2}^{3}(S\n+\n2T\n+\n4U)\\
 &  &  & +\frac{1}{2^{12}3^{4}}E{}_{2}^{2}(31S\n+\n422ST\n+\n106T^{2}\n+\n32SU\n+\n280TU)\\
 &  &  & +\frac{1}{2^{12}3^{3}}E_{2}(23S^{3}\n-\n26S^{2}T\n+\n658ST^{2}\n+\n32T^{3}\n-\n28S^{2}U\n+\n552STU\n+\n256T^{2}U)\\
 &  &  & +\frac{1}{2^{12}3^{3}}(33S^{4}\n+\n378S^{3}T\n+\n5960S^{2}T^{2}\n+\n1232ST^{3}\n+\n64T^{4}\\
 &  &  & \hsp{45}\n-\n80S^{3}U\n-\n2736S^{2}TU\n-\n1648ST^{2}U\n-\n384T^{3}U).
\end{alignedat}
\]

\noindent$\bullet$ \uline{$g=2$ and $n=2$.}

\[
\begin{alignedat}{1} & P_{2,2}^{A}=\frac{13E_{2}^{6}}{2^{19}3^{6}}+\frac{19E_{2}^{5}}{2^{19}3^{6}}(S\n+\n2T\n+\n4U)+\frac{E{}_{2}^{4}}{2^{20}3^{5}}(49S^{2}\n+\n216ST\n+\n272T^{2}\n+\n264SU\n-\n384TU)\\
 & +\frac{E{}_{2}^{3}}{2^{18}3^{7}}(286S^{3}\n-\n675S^{2}T\n-\n4968ST^{2}\n-\n2032T^{3}\n-\n24S^{2}U\n+\n3520STU\n+\n6816T^{2}U)\\
 & +\frac{E{}_{2}^{2}}{2^{18}3^{5}}(75S^{4}\n+\n987S^{3}T\n-\n9208S^{2}T^{2}\n+\n16688ST^{3}\n+\n1792T^{4}\\
 & \hsp{50}\n-\n392S^{3}U\n+\n4296S^{2}TU\n-\n4768ST^{2}U\n-\n8448T^{3}U)\\
 & +\frac{E_{2}}{2^{19}3^{6}5}(5281S^{5}\n+\n161732S^{4}T\n-\n3489360S^{3}T^{2}\n+\n5255232S^{2}T^{3}\n-\n2745344ST^{4}\n-\n354304T^{5}\\
 & \hsp{50}\n-\n43276S^{4}U\n+\n1184672S^{3}TU\n-\n28288S^{2}T^{2}U\n-\n1897984ST^{3}U\n+\n2032640T^{4}U)\\
 & +\frac{1}{2^{20}3^{7}5}(39241S^{6}\n+\n3204540S^{5}T\n+\n81451536S^{4}T^{2}\n+\n672351872S^{3}T^{3}\\
 & \hsp{50}\n+\n379444992S^{2}T^{4}\n+\n41499648ST^{5}\n+\n4059136T^{6}\n-\n457608S^{5}U\n-\n11005408S^{4}TU\\
 & \hsp{50}\n-\n344517888S^{3}T^{2}U\n-\n675216384S^{2}T^{3}U\n-\n138291200ST^{4}U\n-\n11821056T^{5}U)
\end{alignedat}
\]

~

\[
\begin{alignedat}{1} & P_{2,2}^{B}=\frac{13E_{2}^{6}}{2^{19}3^{6}}\n+\n\frac{59E_{2}^{5}}{2^{19}3^{6}}(S\n+\n2T\n+\n4U)\n+\n\frac{E_{2}^{4}}{2^{18}3^{6}}(101S^{2}\n+\n1162ST\n+\n347T^{2}\n+\n256SU\n+\n1196TU)\\
 & +\frac{E_{2}^{3}}{2^{18}3^{7}}(971S^{3}\n+\n4410S^{2}T\n+\n37350ST^{2}\n+\n2080T^{3}\n+\n276S^{2}U\n+\n17312STU\n+\n21696T^{2}U)\\
 & +\frac{E_{2}^{2}}{2^{19}3^{5}}(599S^{4}\n+\n6088S^{3}T\n-\n4664S^{2}T^{2}\n+\n48816ST^{3}\n+\n1728T^{4}\\
 & \hsp{50}\n-\n1488S^{3}U\n-\n3664S^{2}TU\n+\n61776ST^{2}U\n+\n3712T^{3}U)\\
 & +\frac{E_{2}}{2^{18}3^{5}}(7429S^{5}\n+\n108626S^{4}T\n-\n1119372S^{3}T^{2}\n+\n3369120S^{2}T^{3}\n-\n185792ST^{4}\n-\n24064T^{5}\\
 & \hsp{50}\n-\n47740S^{4}U\n+\n337472S^{3}TU\n-\n478624S^{2}T^{2}U\n-\n224896ST^{3}U\n+\n207872T^{4}U)\\
 & +\frac{1}{2^{18}3^{7}5}(39494S^{6}\n+\n357390S^{5}T\n+\n24877263S^{4}T^{2}\n+\n165780568S^{3}T^{3}\\
 & \hsp{50}\n+\n83786304S^{2}T^{4}\n+\n12746880ST^{5}\n+\n277760T^{6}\n-\n207960S^{5}U\n-\n1572428S^{4}TU\\
 & \hsp{50}\n-\n101351544S^{3}T^{2}U\n-\n131884032S^{2}T^{3}U\n-\n38246272ST^{4}U\n-\n2500608T^{5}U)
\end{alignedat}
\]

~

~

~

~

\selectlanguage{english}%
\vspace{1cm}

\selectlanguage{american}%
{\small{}Shinobu Hosono}{\small \par}

{\small{}Department of Mathematics, Gakushuin University, }{\small \par}

{\small{}Mejiro, Toshima-ku, Tokyo 171-8588, Japan }{\small \par}

{\small{}e-mail: hosono@math.gakushuin.ac.jp}{\small \par}

\selectlanguage{english}%
{\small{}~}{\small \par}

\selectlanguage{american}%
{\small{}Hiromichi Takagi}{\small \par}

{\small{}Department of Mathematics, Gakushuin University, }{\small \par}

{\small{}Mejiro, Toshima-ku, Tokyo 171-8588, Japan }{\small \par}

{\small{}e-mail: hiromici@math.gakushuin.ac.jp}{\small \par}

\begin{thebibliography}{BCKvS}
\bibitem[ABK]{Ag} M. Aganagic, V. Bouchard and A. Klemm, \textit{Topological
strings and (almost) modular forms}, Comm. Math. Phys. 277 (2008),
no. 3, 771\textendash 819.

\bibitem[AS]{AS}M. Alim and E. Scheidegger, \textit{Topological strings
on elliptic fibrations}, Commun, Number Theory Phys. 8 (2014), no.
4, 729\textendash 800.

\bibitem[AM1]{AM1}P.S. Aspinwall and D.R. Morrison,\textit{ Chiral
rings do not suffice: N=(2,2) theories with nonzero fundamental group},
Phys. Lett. B 334 (1994), no. 1-2, 79\textendash 86.

\bibitem[AM2]{AM2} P.S. Aspinwall and D.R. Morrison, \textit{Topological
Fileds Theory and Rational Curves}, Commun.Math.Phys. 151 (1993) 245--262.

\bibitem[Ba]{Bak}\textit{{}{A. Bak, The Spectral Construction
for a (1,8)-Polarized Family of Abelian Varieties, arXiv:0903.5488
{[}math.AG{]}. }}

\bibitem[BaBo]{BatBo}V. Batyrev and L. Borisov,\textit{ On Calabi-Yau
Complete Intersections in Toric Varieties}, Higher-dimensional complex
varieties (Trento, 1994), 39--65, de Gruyter, Berlin, 1996. 

\bibitem[BCKvS]{BKvS}V. Batyrev, I. Ciocan-Fontanine, B. Kim and
D. van Straten, \textit{Conifold transitions and mirror symmetry for
Calabi-Yau complete intersections in Grassmannians}, Nuclear Phys.
B 514 (1998), no. 3, 640\textendash 666. 

\bibitem[BaCo]{BatCox}V. Batyrev and D.A. Cox, \textit{On the Hodge
structure of projective hypersurfaces in toric varieties}, Duke Math.
J. 75 (1994) 293--338.

\bibitem[BCOV1]{BCOV1}M. Bershadsky, S. Cecotti, H. Ooguri and C.
Vafa,\textit{ Holomorphic anomalies in topological field theories},
Nuclear Phys. B 405 (1993), no. 2-3, 279\textendash 304. 

\bibitem[BCOV2]{BCOV2}M. Bershadsky, S. Cecotti, H. Ooguri and C.
Vafa,\textit{ Kodaira-Spencer theory of gravity and exact results
for quantum string amplitudes}, Comm. Math. Phys. 165 (1994), no.
2, 311\textendash 427. 

\bibitem[BL1]{BL1}C. Birkenhake and H. Lange, \textit{Complex Abelian
Varieties}, Springer-Verlag 1992.

\bibitem[BL2]{BL2}{}{C. Birkenhake and H. Lange, }\textit{{}{An
isomorphism between moduli spaces of abelian varieties,}}{}{{}
Math. Nachr. 253 (2003) 3--7. }

\bibitem[BC]{BC}L. Borisov and A. Caldararu, \textit{The Pfaffian-Grassmannian
derived equivalence}, J. Algebraic Geom. 18 (2009), no. 2, 201\textendash 222. 

\bibitem[Br]{Brown}V. Brawn, \textit{On free quotients of complete
intersection Calabi-Yau manifolds}, J. High Energy Phys. 2011, no.
4, 005, 32 pp. 

\bibitem[CdOGP]{Candelas}P. Candelas, X.C. de la Ossa, P.S. Green
and L.Parkes, \textit{A pair of Calabi-Yau manifolds as an exactly
soluble superconformal theory}, Nucl.Phys. B356(1991), 21-74. 

\bibitem[DM]{DoMo}C. Doran and J. Morgan, \textit{Mirror Symmetry
and Integral Variations of Hodge Structure Underlying One Parameter
Families of Calabi\textendash Yau Threefolds}, in: ``Mirror \textit{\textcolor{black}{Symmetry}}
V'', 517--537, AMS/IP Stud. Adv. Math., 38, Amer. Math. Soc., Providence,
RI (2006). 

\bibitem[EM]{Eilenberg-Mac}S. Eilenberg and S. MacLane, \textit{\textcolor{black}{Relations
between homology and homotopy groups of spaces}}, Ann. Math. 46 (1945)
480--509.

\bibitem[GI]{Galk-InIritani} S. Galkin, V. Golyshev and H. Iritani,
\textit{Gamma classes and quantum cohomology of Fano manifolds: gamma
conjectures,} Duke Math. J. 165 (2016), no. 11, 2005\textendash 2077. 

\bibitem[Gr]{Griffiths}P.A. Griffiths, \textit{On the periods of
certain rational integrals. I, II}, Ann. of Math. (2) 90 (1969), 460-495;
ibid. (2) 90 1969 496\textendash 541. 

\bibitem[GV]{GV}R. Gopakumar and C. Vafa, \textit{On the gauge theory/geometry
correspondence}, Adv. Theor. Math. Phys. 3 (1999), no. 5, 1415\textendash 1443. 

\bibitem[GPa]{GP}M. Gross and S. Pavanelli, \textit{A Calabi-Yau
threefold with Brauer group $(\mathbb{Z}/8\mathbb{Z})^{2}$}, Proc.
Amer. Math. Soc. 136 (2008), no. 1, 1\textendash 9. 

\bibitem[GP1]{GP-A}M. Gross and S. Popescu,\textit{ Equations of
(1,d)-polarized abelian surfaces}, Math. Ann. 310 (1998), no. 2, 333\textendash 377.

\bibitem[GP2]{GP-CYI}M. Gross and S. Popescu,\textit{ Calabi-Yau
threefolds and moduli of abelian surfaces. I}, Compositio Math. 127
(2001), no. 2, 169\textendash 228. 

\bibitem[GS1]{GS1}M. Gross and B. Siebert, \textit{Mirror symmetry
via logarithmic degeneration data I}, Journal of Differential Geometry,
72(2):169\textendash 338, 2006. 

\bibitem[GS2]{GS2}M. Gross and B. Siebert, \textit{Mirror symmetry
via logarithmic degeneration data II}, Journal of Algebraic Geometry,
19(4):679\textendash 780, 2010. 

\bibitem[HTo]{HoriT}K. Hori and D. Tong, \textit{Aspects of non-abelian
gauge dynamics in two-dimensional N=(2,2) theories}, J. High Energy
Phys. 2007, no. 5, 079, 41 pp.

\bibitem[Hos1]{HosIIA} S. Hosono, \textit{Local Mirror Symmetry and
Type IIA Monodromy of Calabi-Yau Manifolds}, Adv. Theor. Math. Phys.
4 (2000), 335--376. 

\bibitem[Hos2]{CCarge} S. Hosono, \textit{Central charges, symplectic
forms, and hypergeometric series in local mirror symmetry}, in ``Mirror
Symmetry V'', S.-T.Yau, N. Yui and J. Lewis (eds), IP/AMS (2006),
405--439. 

\bibitem[HKTY1]{HKTY1}S. Hosono, A. Klemm, S. Theisen and S.-T. Yau,
\textit{Mirror symmetry, mirror map and applications to Calabi-Yau
hypersurfaces}, Comm. Math. Phys. 167 (1995), no. 2, 301\textendash 350.

\bibitem[HKTY2]{HKTY2}S. Hosono, A. Klemm, S. Theisen and S.-T. Yau,
\textit{Mirror Symmetry, Mirror Map and Applications to complete Intersection
Calabi-Yau Spaces}, Nucl. Phys. B433(1995)501--554. 

\bibitem[HLY]{HLY}S. Hosono, B.H. Lian and S.-T. Yau,  \textit{GKZ-Generalized
hypergeometric systems in mirror symmetry of Calabi-Yau hypersurfaces},
Commun. Math. Phys. 182 (1996) 535--577. 

\bibitem[HSS]{HSS}S. Hosono, M.-H. Saito and J. Stienstra, \textit{On
the mirror symmetry conjecture for Schoen's Calabi-Yau 3-folds}, in
``Integrable systems and algebraic geometry (Kobe/Kyoto, 1997)'',
194\textendash 235, World Sci. Publ., River Edge, NJ, 1998. 

\bibitem[HST1]{HST1}S. Hosono, M.-H. Saito and A. Takahashi,\textit{
Holomorphic anomaly equation and BPS state counting of rational elliptic
surface}, Adv. Theor. Math. Phys. 3 (1999), no. 1, 177\textendash 208.

\bibitem[HST2]{HST2}S. Hosono, M.-H. Saito and A. Takahashi, \textit{Relative
Lefschetz action and BPS state counting}, Internat. Math. Res. Notices
2001, no. 15, 783\textendash 816.

\bibitem[HT13]{HT-JAG}S. Hosono and H. Takagi, \textit{Mirror Symmetry
and Projective Geometry of Reye congruences I}, J. Algebraic Geom.
23 (2014), no. 2, 279\textendash 312. 

\bibitem[HT16]{HT-JDG}S. Hosono and H. Takagi, \textit{Double quintic
symmetroids, Reye congruences, and their derived equivalence}, J.
Differential Geom. 104 (2016), no. 3, 443\textendash 497.

\bibitem[HT18]{HT-birat}S. Hosono and H. Takagi, \textit{Movable
vs monodromy nilpotent cones of Calabi-Yau manifolds}, SIGMA Symmetry
Integrability Geom. Methods Appl. 14 (2018), Paper No. 039, 37 pp. 

\bibitem[HT21]{HT-math-c}{}{S. Hosono and H. Takagi,
}\textit{{}{Mathematica codes for ``Mirror symmetry
of Calabi-Yau manifolds fibered by (1,8)-polarized abelian surfaces}}{}{'',
avairable at https://pc1.math.gakushuin.ac.jp/\textasciitilde{}hosono/MathCodes
.}

\bibitem[Hu]{Hua}Z. Hua, \textit{Classification of free actions on
complete intersections of four quadrics}, Adv. Theor. Math. Phys.
15 (2011), no. 4, 973\textendash 990.

\bibitem[HKQ]{Fg2222} M.-x. Huang, A. Klemm and S. Quackenbush,\textit{
Topological string theory on compact Calabi-Yau: modularity and boundary
conditions}, Lect. Notes Phys. 757, 45\textendash 102 (2009).

\bibitem[Ino]{Inoue} D. Inoue, \textit{Calabi--Yau 3-folds from projective
joins of del Pezzo manifolds}, arXiv:1902.10040. 

\bibitem[Ka]{Kanpp}J. Knapp and E. Sharpe, \textit{GLSMs, joins,
and nonperturbatively-realized geometries}, JHEP 1912 (2019) 096. 

\bibitem[Ko]{Ko}M. Kontsevich, \textit{Homological algebra of mirror
symmetry}, Proceedings of the International Congress of Mathematicians
(Z\"urich, 1994) Birkh\"auser (1995) pp. 120 --139.

\bibitem[Ku]{Ku}A. Kuznetsov,\textit{ Homological projective duality},
Publ. Math. Inst. Hautes \'Etudes Sci. No. 105 (2007), 157\textendash 220.

\bibitem[KP]{Ku-Perry}A. Kuznetsov and Perry, \textit{Categorical
joins}, arXiv:1804.00144, to appear JAMS. 

\bibitem[Mo]{MoLCSL}D. Morrison,\textit{ Compactifications of moduli
spaces inspired by mirror symmetry}, Ast\'erisque 218 (1993), 243--271.

\bibitem[NS]{Namikawa}Y. Namikawa and J.H.M. Steenbrink, \textit{Global
smoothing of Calabi-Yau threefolds}, Invent. Math. 122 (1995), no.
2, 403\textendash 419. 

\bibitem[Pav]{Pav}S. Pavanelli, \textit{Mirror symmetry for two parameter
family of Calabi-Yau three-folds with Euler characteristic 0}, Ph.D.
Thesis, University of Warwick (2003).

\bibitem[Ro]{Rod}{}{E.A. R{\o}dland, }\textit{{}{The
Pfaffian Calabi-Yau, its Mirror and their link to the Grassmannian
G(2, 7)}}{}{, Compositio Math. 122 (2000), no. 2, 135\textendash 149. }

\bibitem[Sch]{Sch} C. Schnell, \textit{The fundamental group is not
a derived invariant}, in ``Derived categories in algebraic geometry'',
279\textendash 285, EMS Ser. Congr. Rep., Eur. Math. Soc., Zürich,
2012.

\bibitem[Sha]{ESh}E. Sharpe, \textit{Predictions for Gromov-Witten
invariants of noncommutative resolutions}, J. Geom. Phys. 74 (2013)
256-265.

\bibitem[SYZ]{SYZ}A. Strominger, S.-T. Yau and E. Zaslow,\textit{
Mirror symmetry is T-Duality}, Nucl. Phys. B479 (1996) 243\textendash 259. 

\bibitem[VW]{VafaWitten}C. Vafa and E. Witten,\textit{ A strong coupling
test of S-duality}, Nuclear Phys. B 431 (1994), no. 1-2, 3\textendash 77.

\bibitem[Vo]{Vo}C. Voisin, \textit{A mathematical proof of a formula
of Aspinwall and Morrison}, Compositio Math. 104 (1996), no. 2, 135\textendash 151. 

\bibitem[W]{W.BI}E. Witten, \textit{Quantum Background Independence
In String Theory}, arXiv:hep-th/9306122.

\bibitem[Yos]{Yos}K. Yoshioka, \textit{Euler characteristics of SU(2)
instanton moduli spaces on rational elliptic surfaces}, Comm. Math.
Phys. 205 (1999), no. 3, 501\textendash 517. 

\end{thebibliography}
\end{document}